\newtheorem{Theorem}{Theorem}
\newtheorem{Conjecture}[Theorem]{Conjecture}
\newtheorem{Proposition}[Theorem]{Proposition}
\newtheorem{Corollary}[Theorem]{Corollary}
\newtheorem{Lemma}[Theorem]{Lemma}
\newtheorem{Example}[Theorem]{Example}
\newtheorem{Definition}[Theorem]{Definition}
\newtheorem{Remark}[Theorem]{Remark}
\newcommand{\bb}{\boldsymbol{\beta}}
\newcommand{\ba}{\boldsymbol{\alpha}}
\newcommand{\bc}{\boldsymbol{\chi}}
\newcommand{\bo}{\boldsymbol{\omega}}
\newcommand{\degen}{\operatorname{degen}}
\newcommand{\im}{\operatorname{Im}}
\newcommand{\dist}{\operatorname{dist}}
\newcommand{\vol}{\operatorname{vol}}
\newcommand{\spn}{\operatorname{span}}
\newcommand{\diam}{\operatorname{diam}}
\newcommand{\Rad}{\operatorname{Rad}}
\numberwithin{equation}{section}
\title{Linear inequalities in primes}
\begin{document}
\author[Aled Walker]{Aled Walker}
\address{Trinity College, Cambridge, CB2 1TQ, United Kingdom}
\email{aw530@cam.ac.uk}
\begin{abstract}
In this paper we prove an asymptotic formula for the number of solutions in prime numbers to systems of simultaneous linear inequalities with algebraic coefficients. For $m$ simultaneous inequalities we require at least $m+2$ variables, improving upon existing methods, which generically require at least $2m+1$ variables. Our result also generalises the theorem of Green-Tao-Ziegler on linear equations in primes. Many of the methods presented apply for arbitrary coefficients, not just for algebraic coefficients, and we formulate a conjecture concerning the pseudorandomness of sieve weights which, if resolved, would remove the algebraicity assumption entirely.  
\end{abstract}
\maketitle
\tableofcontents
\section{Introduction}
\label{section introduction}
Fourier analysis is a vital tool in the study of diophantine problems. In recent years, however, new tools have been developed which can prove asymptotic formulae for the number of solutions to certain systems even when the Fourier-analytic approach is not known to succeed. In particular, in \cite{GT10} Green and Tao established an asymptotic formula for the number of prime solutions to generic systems of $m$ simultaneous linear equations in at least $m+2$ variables. Their result was conditional on various conjectures, but these conjectures were later proved by the same authors and Ziegler, in the series of papers \cite{GT12}, \cite{GTa12} and \cite{GTZ12}. 

\begin{Theorem}[Theorem 1.8, \cite{GT10}, Green-Tao-Ziegler]
\label{Theorem Green Tao}
Let $N$, $m$, and $d$ be natural numbers, with $d\geqslant m+2$, and let $C$ be a positive constant. Let $L = (\lambda_{ij})_{i\leqslant m,j\leqslant d}$ be an $m$-by-$d$ matrix with integer coefficients, with rank $m$, and assume the non-degeneracy condition that the only element of the row-space of $L$ over $\mathbb{Q}$ with two or fewer non-zero entries is the zero vector. Let $\mathbf{b} \in \mathbb{Z}^m$, and suppose that $\Vert \mathbf{b}\Vert_\infty \leqslant CN$ and that $\vert \lambda_{ij}\vert \leqslant C$ for all $i$ and $j$. Let $K \subset [-N,N]^d$ be a convex set. Then
\begin{equation}
\label{with non archimdedean factors}
\vert \{ \mathbf{p} \in K : L\mathbf{p} = \mathbf{b}\}\vert = \Big(\alpha_\infty\prod\limits_{p}\alpha_p\Big) (\log N)^{-d}  + o_{C,d,m}(N^{d-m}(\log N)^{-d}),
\end{equation}
\noindent where the \emph{local densities} $\alpha_p$ are given, for each prime $p$, by 
\begin{equation}
\alpha_p : = \lim\limits_{M \rightarrow \infty} \frac{1}{(2M)^d} \sum\limits_{\substack{\mathbf{n} \in [-M,M]^d \\ L\mathbf{n} = \mathbf{b} \\ (n_i,p) = 1 \, \text{for all } i}} \Big(1 +\frac{1}{p-1}\Big)^d\nonumber
\end{equation}
\noindent and the \emph{global factor} $\alpha_\infty$ is given by 
\begin{equation}
\alpha_\infty : = \vert \{ \mathbf{n} \in \mathbb{Z}^d : \mathbf{n} \in K, L\mathbf{n} = \mathbf{b}, n_i \geqslant 0 \, \text{for all } i\} \vert.\nonumber
\end{equation}
\end{Theorem}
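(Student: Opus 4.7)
The plan is to follow the high-level strategy of \cite{GT10}, in which the classical Hardy-Littlewood Fourier-analytic attack is replaced by higher-order Fourier analysis. The first step is the \emph{W-trick}: choose a slowly growing parameter $w = w(N)$, set $W = \prod_{p \leqslant w} p$, and split the count $\vert \{ \mathbf{p} \in K : L\mathbf{p} = \mathbf{b}\}\vert$ according to the residue class $\mathbf{p} \equiv \mathbf{b}' \pmod{W}$ with $(b_i', W) = 1$. Writing $\mathbf{p} = W\mathbf{n} + \mathbf{b}'$ and introducing the normalised von Mangoldt function $\Lambda'(n) := \tfrac{\varphi(W)}{W}\Lambda(Wn+b')$, the problem reduces to obtaining an asymptotic for
\begin{equation*}
\sum_{\mathbf{n}} \Lambda'(L_1'(\mathbf{n})) \cdots \Lambda'(L_d'(\mathbf{n}))
\end{equation*}
over an affine image of $K/W$, for forms $L_j'$ depending on the chosen residue. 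After the W-trick, $\Lambda'$ is pseudorandom enough along these progressions for the higher-order Fourier machinery to become applicable.

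The second step is to construct a Green-Tao majorant $\nu$, built as a truncated Selberg-sieve weight, which satisfies $\Lambda' \leqslant C\nu$ pointwise and obeys the \emph{linear forms} and \emph{correlation} conditions required by the generalised von Neumann theorem. The non-degeneracy hypothesis implies in particular that no two forms $\psi_i,\psi_j$ among the coordinate functions of the solution variety are $\mathbb{Q}$-affinely dependent, and hence the affine system has finite Cauchy--Schwarz complexity $s$ depending only on $L$. The generalised von Neumann theorem then shows that, after writing $\Lambda' = 1 + (\Lambda'-1)$ and expanding the product, the count above differs from the ``expected'' count (in which each $\Lambda'$ is replaced by $1$) by an error controlled by $\Vert \Lambda' - 1\Vert_{U^{s+1}[N]}$.

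The main obstacle is proving this Gowers uniformity estimate, which is where \cite{GT12,GTa12,GTZ12} enter. The inverse theorem for the $U^{s+1}$-norm \cite{GTZ12} forces any $f$ of large Gowers norm to correlate with an $s$-step nilsequence; the Möbius-nilsequences orthogonality estimate \cite{GTa12} (which itself rests on the quantitative nilpotent equidistribution theorem of \cite{GT12}) rules out such correlation for $\Lambda' - 1$. Together these yield $\Vert \Lambda' - 1\Vert_{U^{s+1}[N]} = o(1)$. Finally, the ``expected'' main term is evaluated by summing the volume of the truncated solution polytope over the admissible residues $\mathbf{b}'$ modulo $W$, weighted by $(\varphi(W)/W)^d$: the contribution of primes $p \leqslant w$ is captured exactly by this Euler factor, while for $p > w$ one uses the estimate $\alpha_p = 1 + O(p^{-2})$ (already supplied by the linear forms condition on $\nu$) to complete the Euler product over all primes, and identifies the convex volume factor with $\alpha_\infty$.
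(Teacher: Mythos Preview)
The paper does not prove Theorem~\ref{Theorem Green Tao}; it is quoted as an external result of Green--Tao \cite{GT10}, with the two conjectures on which that paper was conditional subsequently resolved in \cite{GT12}, \cite{GTa12}, \cite{GTZ12}. There is therefore no in-paper proof to compare your proposal against. Your sketch is a reasonable high-level outline of the Green--Tao strategy and identifies the main ingredients correctly: the $W$-trick, the enveloping sieve majorant $\nu$ satisfying the linear forms condition, the generalised von Neumann theorem (which uses the finite Cauchy--Schwarz complexity guaranteed by the non-degeneracy hypothesis) to reduce matters to a Gowers-norm estimate $\Vert \Lambda' - 1\Vert_{U^{s+1}} = o(1)$, and the resolution of that estimate via the inverse theorem \cite{GTZ12} and the M\"obius--nilsequence orthogonality results \cite{GT12,GTa12}.
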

\noindent Here and throughout, $p$ denotes a prime, $\mathbf{p}$ denotes a vector all of whose coordinates are prime, and $\mathbf{n}$ denotes a vector all of whose coordinates are integers $n_i$. The expression $(n_i,p)$ denotes the greatest common divisor of $n_i$ and $p$.

To give a concrete example to which this result may be applied, by considering \[ L = \left(\begin{matrix} 1 & -2 & 1 & 0 \\ 0 & 1 & -2 & 1 \end{matrix} \right) , \qquad \mathbf{b} = \mathbf{0}\] one may deduce an asymptotic formula for the number of four-term arithmetic progressions of primes that are less than $N$.

For $m \geqslant 2$, Theorem \ref{Theorem Green Tao} is stronger than any similar statement that may be proved using the Fourier transform alone. Indeed, notwithstanding Balog's example \cite[Corollary 3]{Ba92} of a certain non-generic class of $m$ equations in $ m + \lceil \sqrt{2m}\rceil $ prime variables, generically the Fourier transform approach needs at least $2m + 1$ prime variables in order to succeed. The proof of Theorem \ref{Theorem Green Tao} rests on many creative innovations, in particular the authors' use of Gowers norms and their inverse theory, which is a subject that is now referred to as `higher order Fourier analysis'. The object of the present paper is to use certain aspects of this machinery to establish, in a related setting, an analogous reduction in the number of variables that are required to prove an asymptotic formula.\\

We will be concerned with diophantine inequalities, a topic that we first considered in \cite{Wa17}. Before giving our first main result (Theorem \ref{Main theorem simpler version}) let us briefly review some previous results concerning diophantine inequalities in the primes. Consider the following classical theorem of Baker.\footnote{In fact Baker proved a slightly different result, writing in the cited paper that the result we quote here followed easily from the then existing methods. Vaughan proved a similar result in \cite{Va74}.}
\begin{Theorem}[\cite{Ba67}, Baker]
\label{Theorem Baker}
Let $\varepsilon>0$, and let $\lambda_1,\lambda_2,\lambda_3\in\mathbb{R}\setminus \{0\}$ be three non-zero reals that are not all of the same sign. Furthermore, suppose that for all $\alpha\in\mathbb{R}\setminus\{0\}$ the relation $(\alpha\lambda_1,\alpha\lambda_2,\alpha\lambda_3)\notin \mathbb{Z}^3$ holds. Then there exist infinitely many triples of primes $(p_1,p_2,p_3)$ satisfying \begin{equation}
\label{the type of inequality Baker thought about}
\vert \lambda_1p_1 + \lambda_2p_2 + \lambda_3 p_3\vert \leqslant \varepsilon.
\end{equation}
\end{Theorem}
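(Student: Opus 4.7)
The natural tool for diophantine inequalities of this type is the Davenport--Heilbronn method, an adaptation of the Hardy--Littlewood circle method from the torus to $\mathbb{R}$. The plan is to fix a non-negative kernel $K : \mathbb{R} \to \mathbb{R}_{\geqslant 0}$ whose Fourier transform $\widehat{K}$ is compactly supported in some fixed interval $[-\tau,\tau]$ and for which $K$ is concentrated on $[-\varepsilon/2, \varepsilon/2]$ with well-controlled polynomial tails --- for instance the product-of-Fej\'er-type kernel $K(x) = c_\varepsilon (\sin(\pi \varepsilon x /4)/(\pi \varepsilon x/4))^4$, or a smoothed Beurling--Selberg minorant of $\mathbb{1}_{[-\varepsilon,\varepsilon]}$. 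One then studies the smoothed count
\[
S(N) := \sum_{p_1,p_2,p_3 \leqslant N} (\log p_1)(\log p_2)(\log p_3)\, K(\lambda_1 p_1 + \lambda_2 p_2 + \lambda_3 p_3) = \int_{-\tau}^{\tau} \widehat{K}(\alpha) \prod_{i=1}^3 T(\lambda_i \alpha)\, d\alpha,
\]
where $T(\beta) := \sum_{p \leqslant N}(\log p)\, e(\beta p)$. Showing that $S(N) \to \infty$ faster than the contribution from triples with $\lvert \lambda_1 p_1 + \lambda_2 p_2 + \lambda_3 p_3\rvert > \varepsilon$ then yields infinitely many solutions of \eqref{the type of inequality Baker thought about}.

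\textbf{Arc decomposition and major arc.} Split the integral into a \emph{major arc} $\mathfrak{M} := \{\lvert \alpha\rvert \leqslant (\log N)^C/N\}$ and the complementary \emph{minor arc} $\mathfrak{m} := [-\tau,\tau]\setminus \mathfrak{M}$. On $\mathfrak{M}$, Siegel--Walfisz yields $T(\lambda_i \alpha) = \sum_{n \leqslant N} e(\lambda_i \alpha n) + O(N/(\log N)^A)$ for each fixed $A$, uniformly in $\alpha \in \mathfrak{M}$. After the substitution $\alpha = u/N$, the main term takes the form $\asymp N^2 \widehat{K}(0)\, J(\boldsymbol{\lambda})$, where
\[
J(\boldsymbol{\lambda}) := \int_{\mathbb{R}} \prod_{i=1}^3 \widehat{\mathbb{1}_{[0,1]}}(\lambda_i u)\, du
\]
is the singular integral, equal (via Parseval) to a weighted two-dimensional Lebesgue measure of $\{\mathbf{x} \in [0,1]^3 : \lambda_1 x_1 + \lambda_2 x_2 + \lambda_3 x_3 = 0\}$. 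The sign hypothesis on the $\lambda_i$ is exactly what guarantees this plane meets the positive octant non-trivially, so $J(\boldsymbol{\lambda})>0$ and the major arc delivers a positive contribution of order $N^2$.

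\textbf{Minor arcs.} Here one invokes the Vinogradov--Vaughan estimate for exponential sums over primes: if $\lambda_i \alpha$ admits a Dirichlet approximation $\lvert \lambda_i \alpha - a/q\rvert \leqslant 1/(qQ)$ with $(\log N)^D \leqslant q \leqslant Q := N(\log N)^{-D}$, then $T(\lambda_i \alpha) \ll N/(\log N)^A$. Combined with Parseval's identity $\int_{-\tau}^\tau \lvert T(\lambda_i \alpha)\rvert^2 \, d\alpha \ll N \log N$ and Cauchy--Schwarz, this shows that any $\alpha\in\mathfrak{m}$ for which \emph{at least one} $\lambda_i\alpha$ is of this generic Diophantine type contributes $O(N^3/(\log N)^{A-1})$ in total, which is negligible against the major-arc main term. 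One must then verify that every $\alpha \in \mathfrak{m}$ satisfies this for some $i$: if all three $\lambda_i \alpha$ simultaneously admitted approximations $a_i/q_i$ with $q_i < (\log N)^D$, then each ratio $\lambda_i / \lambda_j$ would admit a rational approximation $(a_i q_j)/(a_j q_i)$ whose error, combined with the lower bound $\lvert \alpha\rvert \geqslant (\log N)^C/N$ from being off $\mathfrak{M}$, becomes so small (for $N$ large) that the ratio is forced to be exactly rational, contradicting the irrationality hypothesis $(\alpha\lambda_1,\alpha\lambda_2,\alpha\lambda_3)\notin \mathbb{Z}^3$.

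\textbf{Main obstacle.} The most delicate step is the transference argument of the last paragraph, which converts the qualitative irrationality hypothesis into an effective joint-approximation statement for $\lambda_1 \alpha,\lambda_2 \alpha,\lambda_3 \alpha$ on the minor arcs. This bound is necessarily \emph{ineffective}, which is why Theorem \ref{Theorem Baker} produces only infinitely many solutions rather than an asymptotic count, in contrast to Theorem \ref{Theorem Green Tao}; indeed, tuning the parameters $C$ and $D$ against one another requires care, and any quantitative estimate on the exceptional set depends on quantitative control of the approximability of $\lambda_i/\lambda_j$, which in general is unavailable. The remaining ingredients --- the Siegel--Walfisz estimate, the tail analysis of the kernel, and the minor-arc bounds --- are standard.
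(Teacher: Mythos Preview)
The paper does not itself prove this theorem; it is quoted as a classical result of Baker, with a footnote that it follows from then-existing methods. Your Davenport--Heilbronn sketch is the standard and correct overall strategy, and indeed the paper's Appendix~\ref{section an analytic argument} proves a more general asymptotic (Theorem~\ref{Theorem Parsell generalised}) along the same lines.

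One point in your minor-arc treatment is imprecise and constitutes a genuine gap as written. You argue that if all three $\lambda_i\alpha$ had rational approximations $a_i/q_i$ with $q_i<(\log N)^D$, then ``for $N$ large'' the resulting approximation to $\lambda_i/\lambda_j$ would be so accurate as to force exact rationality. But this does not hold for \emph{all} large $N$: absent quantitative control on the irrationality measure of $\lambda_i/\lambda_j$, there can be arbitrarily large $N$ for which such small-denominator approximations coexist with $\alpha\in\mathfrak{m}$. The classical remedy is to work only along a subsequence of $N$'s chosen relative to the continued-fraction convergents of an irrational ratio $\lambda_i/\lambda_j$; it is this restriction to a subsequence, not the Siegel--Walfisz ineffectivity you cite, that limits the conclusion to ``infinitely many solutions'' rather than an asymptotic in $N$. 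The paper's Appendix~B instead removes this restriction via a compactness argument (Lemma~\ref{parsell minor arc estimate}, following Freeman and Parsell), which delivers a minor-arc saving uniformly in $N$ and hence the full asymptotic of Theorem~\ref{Theorem Parsell generalised}.
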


\begin{Remark}
\emph{The condition concerning the signs of $\lambda_1,\lambda_2,\lambda_3$ is clearly a necessary one, as otherwise there exist only finitely many solutions to (\ref{the type of inequality Baker thought about}) in the positive integers (and so certainly there exist only finitely many solutions in the primes). Regarding the other condition, the conclusion of Theorem \ref{Theorem Baker} may hold even if there exists some $\alpha \in \mathbb{R}\setminus \{0\}$ for which \[(\alpha\lambda_1,\alpha\lambda_2,\alpha\lambda_3) = (q_1,q_2,q_3) \in \mathbb{Z}^3.\] But then one is required to solve \[ \vert q_1p_1 + q_2p_2 + q_3p_3\vert \leqslant \varepsilon \alpha,\] which, if $\varepsilon$ is small enough, is equivalent to solving \[ q_1p_1 + q_2p_2 + q_3p_3 = 0.\] Theorem \ref{Theorem Green Tao} can then affirm that there are infinitely many solutions, provided that $q_1$, $q_2$, and $q_3$ satisfy certain local properties. This issue, of when an inequality can encode a certain equation with rational coefficients, will be an important theme of the paper.}
\end{Remark}

The classical approach to proving results such as Theorem \ref{Theorem Baker} involves Fourier analysis over $\mathbb{R}$, after having replaced the characteristic function of the interval $[-\varepsilon,\varepsilon]$ with a smoother cut-off function. This approach is known as the Davenport-Heilbronn method, it having originated in a paper \cite{DaHe46} of those two authors.  For a variety of technical reasons this method was, until relatively recently, unable to give an asymptotic formula for the number of solutions to (\ref{the type of inequality Baker thought about}) that satisfied $1\leqslant p_1,p_2,p_3\leqslant N$, or even give a lower bound of the expected order of magnitude (at least for arbitrary $N$). However, certain advances of Freeman \cite{Fr00, Fr02} enabled Parsell to achieve the second of these two goals. 
\begin{Theorem}[Theorem 1, \cite{Pa02}, Parsell] 
\label{Thm Parsel}
Let $\varepsilon>0$, and let $\lambda_1,\lambda_2,\lambda_3\in\mathbb{R}\setminus \{0\}$ be three non-zero reals that are not all of the same sign. Furthermore, suppose that for all $\alpha\in\mathbb{R}\setminus\{0\}$ the relation $(\alpha\lambda_1,\alpha\lambda_2,\alpha\lambda_3)\notin \mathbb{Z}^3$ holds. Then the number of prime triples $(p_1,p_2,p_3)$ satisfying $1\leqslant p_1,p_2,p_3\leqslant N$ and
\begin{equation}
\label{the type of inequality we're talking about}
\vert \lambda_1p_1 + \lambda_2p_2 + \lambda_3 p_3\vert \leqslant \varepsilon
\end{equation}
\noindent is $\Omega_{\lambda_1,\lambda_2,\lambda_3} (\varepsilon N^2 (\log N)^{-3}).$
\end{Theorem}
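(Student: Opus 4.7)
The plan is to adapt the Davenport--Heilbronn circle method, augmented by Freeman's refinement of the minor-arc analysis. First I would fix a non-negative tent function $k_\varepsilon$ supported in $[-\varepsilon,\varepsilon]$ with $k_\varepsilon(0) > 0$, so that its Fourier transform satisfies $\hat{k}_\varepsilon(0) \gg \varepsilon$ and $|\hat{k}_\varepsilon(t)| \ll \min(\varepsilon, \varepsilon^{-1}t^{-2})$. Since $k_\varepsilon \leqslant \mathbf{1}_{[-\varepsilon,\varepsilon]}$, it is enough to show that the weighted triple sum
\[
T(N) := \sum_{p_1,p_2,p_3 \leqslant N} (\log p_1)(\log p_2)(\log p_3)\, k_\varepsilon(\lambda_1 p_1 + \lambda_2 p_2 + \lambda_3 p_3)
\]
is $\gg_{\lambda_1,\lambda_2,\lambda_3} \varepsilon N^2$, since the unweighted count is at least $T(N)/(\log N)^3$. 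By Fourier inversion $T(N) = \int_{\mathbb{R}} F_1(t) F_2(t) F_3(t) \hat{k}_\varepsilon(t)\,dt$, where $F_j(t) := \sum_{p \leqslant N} (\log p)\, e(\lambda_j p t)$.

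I would partition $\mathbb{R} = \mathfrak{M} \cup \mathfrak{I} \cup \mathfrak{m}$ into a central major arc $\mathfrak{M} = \{|t| \leqslant \delta/N\}$, a tail $\mathfrak{m} = \{|t| > T\}$ with $T$ a small power of $N$, and an intermediate range $\mathfrak{I}$ in between. On $\mathfrak{M}$, the prime number theorem with power-saving error gives $F_j(t) = \int_0^N e(\lambda_j x t)\,dx + O(N(\log N)^{-A})$; substituting this approximation and reversing Fourier inversion yields a main term equal to
\[
\int_{[0,N]^3} k_\varepsilon\bigl(\lambda_1 x_1 + \lambda_2 x_2 + \lambda_3 x_3\bigr)\,d\mathbf{x}.
\]
A linear change of variables shows this is $\gg_{\lambda_1,\lambda_2,\lambda_3} \varepsilon N^2$, the mixed-sign hypothesis ensuring that the hyperplane $\sum_j \lambda_j x_j = 0$ meets the interior of $[0,N]^3$. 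On $\mathfrak{m}$, the decay of $\hat{k}_\varepsilon$ together with the trivial bound $|F_j| \ll N$ gives a tail contribution of $o(\varepsilon N^2)$.

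The main obstacle is the intermediate range $\mathfrak{I}$, which is precisely where Freeman's innovation is essential. For each $j$, define $\mathfrak{E}_j \subset \mathfrak{I}$ to be the set of $t$ for which $\lambda_j t$ admits a rational approximation $a/q$ with $q \leqslant Q$ and $|\lambda_j t - a/q| \leqslant Q/N$, for a slowly growing parameter $Q = Q(N)$. Outside $\mathfrak{E}_j$, the classical Vinogradov--Vaughan estimate yields $|F_j(t)| \ll N (\log N)^{-A}$, so whenever $t$ lies outside some $\mathfrak{E}_j$ the contribution to the integral is absorbed using trivial bounds on the remaining two factors. The crucial step is therefore to control the triple intersection $\mathfrak{E}_1 \cap \mathfrak{E}_2 \cap \mathfrak{E}_3$. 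The hypothesis that $(\alpha\lambda_1,\alpha\lambda_2,\alpha\lambda_3) \notin \mathbb{Z}^3$ for every nonzero $\alpha$ is equivalent to the assertion that at least one of the ratios $\lambda_i/\lambda_j$ is irrational; for $t$ to lie in the triple intersection, this ratio must be within $O(Q^2/N)$ of the rational $(a_i q_j)/(a_j q_i)$ of height $\ll Q^2$, and a quantitative Diophantine approximation argument bounds the total Lebesgue measure of the intersection by a small power of $N^{-1}$. Combined with the pointwise estimates $|F_j| \ll N$ and $|\hat{k}_\varepsilon| \ll \varepsilon$, this controls the residual contribution at $o(\varepsilon N^2)$, completing the lower bound on $T(N)$. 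The delicate balancing of the parameter $Q$ against the Diophantine type of the ratios $\lambda_i/\lambda_j$ constitutes the technical heart of the proof, and is the reason the implied constant must depend on $\boldsymbol{\lambda}$.
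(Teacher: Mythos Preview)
Your overall framework is right --- Fourier inversion, an arc dissection, and Freeman's refinement on the intermediate range --- but the execution has two genuine gaps.

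\textbf{The missing mean-value estimate.} At every stage of the minor-arc analysis you invoke only pointwise bounds on the $F_j$: ``trivial bounds on the remaining two factors'' when $t$ lies outside some $\mathfrak{E}_j$, and ``pointwise estimates $|F_j|\ll N$'' on the triple intersection. These are not enough. With $T$ a power of $N$ (as your tail bound forces, since the tent kernel decays only like $t^{-2}$ and you use $|F_j|\ll N$ there too), the intermediate range $\mathfrak{I}$ has measure $\gg N$. A pointwise bound of $\varepsilon N^3(\log N)^{-A}$ on the integrand then gives a contribution $\gg \varepsilon N^{4}(\log N)^{-A}$, far larger than the main term. Likewise, even if the triple intersection had measure $N^{-c}$ for some small $c$, the bound $\varepsilon N^{3-c}$ is not $o(\varepsilon N^2)$. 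What is actually needed is the mean-value estimate $\int_{I}|F_1F_2F_3|\,dt \ll_{L} (1+|I|)^{O(1)} N^{2}$ over bounded intervals $I$ (this is Lemma~3 of \cite{Pa02}; see also Lemma~\ref{Lemma mean value} in the appendix here). With this in hand one can take $T$ to grow arbitrarily slowly, and then combine the minor-arc supremum bound with the mean value via H\"older.

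\textbf{The triple-intersection argument is not Freeman's.} What you describe --- bounding the measure of $\mathfrak{E}_1\cap\mathfrak{E}_2\cap\mathfrak{E}_3$ by forcing $\lambda_i/\lambda_j$ close to a rational of small height --- is the \emph{pre}-Freeman Davenport--Heilbronn approach. That argument only yields a saving for $N$ lying along a subsequence determined by the continued-fraction convergents of $\lambda_i/\lambda_j$; for a Liouville-type ratio there is no uniform ``quantitative Diophantine approximation'' bound of the kind you assert, and the measure of the intersection need not be $N^{-c}$. Freeman's actual innovation is different: one shows directly that $\sup_{\mathfrak{I}}|F_1F_2F_3|=o(N^3)$ for \emph{all} large $N$, by a compactness argument. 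Assuming the supremum does not decay, one extracts a convergent subsequence of the offending $t^{(N)}$, and the limit produces an $\alpha$ with $L^T\alpha\in\mathbb{Z}^3$, contradicting pure irrationality. This is the content of Lemma~\ref{parsell minor arc estimate} in the appendix, and it is the step that makes the result valid for every $N$ rather than a subsequence.
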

\noindent Since \cite{Pa02} was published, it has been understood that a very minor modification to Parsell's analytic method can be used to obtain an asymptotic expression for the number of solutions to (\ref{the type of inequality we're talking about}), namely \[C_{\lambda_1,\lambda_2,\lambda_3} \varepsilon N^2 \log^{-3} N + o_{\lambda_1,\lambda_2,\lambda_3}(N^2 \log^{-3} N),\] for some positive constant $C_{\lambda_1,\lambda_2,\lambda_3}$. Furthermore, in the case of $m$ simultaneous (rationally independent) inequalities of the form (\ref{the type of inequality we're talking about}), Parsell's method can calculate an asymptotic formula for the number of solutions in primes provided the number of variables is at least $2m + 1$. In Appendix \ref{section an analytic argument} we take the opportunity to record the details of both the statement and the proof of this result.\\

In the main theorems of this paper (Theorem \ref{Main theorem simpler version} and Theorem \ref{Main theorem}) we specialise to the case of algebraic coefficients and reduce the number of variables that are required from $2m+1$ to $m+2$. Our first result does not concern the most general type of diophantine inequality, but nonetheless it enjoys several applications. To state it, we recall the notion of the \emph{dual degeneracy variety}, which we defined in Definition 2.3 of \cite{Wa17} in order to manipulate the non-degeneracy conditions more succinctly.

%In order to state the asymptotic formula in Theorem \ref{Main theorem}, which will involve local factors, we recall the following function (introduced in \cite{GT10}). 
%\begin{Definition}[Local von Mangoldt function]
%\label{Definition local von M function}
%For $q\geqslant 2$, the local von Mangoldt function $\Lambda_{\mathbb{Z}/q\mathbb{Z}}:\mathbb{N}\longrightarrow \mathbb{R}$ is the $q$-periodic function defined by  $$\Lambda_{\mathbb{Z}/q\mathbb{Z}}(n) = \begin{cases}
%\frac{q}{\varphi(q)} & (n,q)=1\\
%0 & \text{otherwise.}
%\end{cases}.$$
%\end{Definition}

\begin{Definition}[Dual degeneracy variety, \cite{Wa17}]
\label{Definition dual degeneracy variety}
Let $m,d$ be natural numbers satisfying $d\geqslant m+2$. Let $V^*_{\degen}(m,d)$ denote the set of all $m$-by-$d$ matrices with real coefficients that contain a non-zero row-vector in their row-space over $\mathbb{R}$ that has two or fewer non-zero co-ordinates. We call $V^*_{\degen}(m,d)$ the \emph{dual degeneracy variety}.
\end{Definition}

\noindent For example, the matrix \[ \left ( \begin{matrix} 1 & -2 & 1 & 0 \\ 2 & -4 & 0 & \sqrt{3} \end{matrix} \right) \] is in $V_{\degen}^*(2,4)$, since the vector $(0,0,-2,\sqrt{3})$ lies in its row space. As is explained at length in \cite{Wa17}, if one wishes to count solutions to an inequality given by $L$ using a method involving Gowers norms then one can only possibly succeed if $L \notin V_{\degen}^*(m,d)$. Returning to Theorem \ref{Theorem Green Tao}, we observe that the non-degeneracy condition in the statement of that theorem is exactly the condition that $L\notin V_{\degen}^*(m,d)$. If $d=m+2$, non-degeneracy in this sense is easy to detect. Indeed, $L \notin V_{\degen}^*(m,d)$ if and only if the determinants of all the $m$-by-$m$ submatrices of $L$ are non-vanishing. 

\begin{Remark}
\label{Remark dual CS}
\emph{The above notion is `dual' to the notion of finite Cauchy-Schwarz complexity (see Definition \ref{Definition finite complexity}), in the sense that $L$ is in the dual degeneracy variety if and only if $\ker L$ may be parametrised by a system of linear forms with finite Cauchy-Schwarz complexity.  In \cite{Wa17} we also introduced a \emph{degeneracy variety} in order to manipulate quantitative versions of this fact, but this will not be necessary here. For more on these issues, we invite the reader to consult Sections 6 and 7 of \cite{Wa17}.}
\end{Remark} 

We are now ready to state our first main result. In the statement below, $[N]$ refers to the set $\mathbb{N} \cap [1,N]$ and the function $1_{[-\varepsilon,\varepsilon]^m}$ refers to the indicator function of the set $[-\varepsilon,\varepsilon]^m$. 

\begin{Theorem}[Main theorem, purely irrational version]
\label{Main theorem simpler version}
Let $N,m,d$ be natural numbers, with $d\geqslant m+2$, and let $C,\varepsilon$ be positive constants. Let $L$ be an $m$-by-$d$ real matrix with algebraic coefficients and rank $m$. Suppose that $L\notin V^*_{\degen}(m,d)$. Suppose further that for all $\ba\in\mathbb{R}^m\setminus \{ \mathbf{0}\}$ one has $L^T\ba \notin \mathbb{Z}^d$, i.e. suppose that $L$ is purely irrational in the sense of Definition 2.4 of \cite{Wa17}. Let $\mathbf{v}\in\mathbb{R}^m$ be any vector satisfying $\Vert\mathbf{v}\Vert_\infty\leqslant CN$. Then
\begin{equation}
\label{simpler asymptotic formula}
\sum\limits_{\mathbf{p} \in [N]^d} 1_{[-\varepsilon,\varepsilon]^m}(L\mathbf{p}+\mathbf{v}) =  \frac{1}{\log^d N}\int\limits_{\mathbf{x} \in [0,N]^d} 1_{[-\varepsilon,\varepsilon]^m}(L\mathbf{x}+\mathbf{v}) \, d\mathbf{x}  + o_{C,L,\varepsilon}(N^{d-m}(\log N)^{-d})
\end{equation}
as $N\rightarrow \infty$.
\end{Theorem}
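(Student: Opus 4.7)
I would blend the Davenport--Heilbronn circle method with the higher-order Fourier analysis of Green--Tao--Ziegler (Theorem~\ref{Theorem Green Tao}). Replacing the prime indicator by the von Mangoldt function (the contribution of prime powers being negligible), the target reduces to proving
\[
(\log N)^{-d}\sum_{\mathbf{n}\in[N]^d}\Lambda(n_1)\cdots\Lambda(n_d)\,1_{[-\varepsilon,\varepsilon]^m}(L\mathbf{n}+\mathbf{v}) = (\log N)^{-d}\int_{[0,N]^d}1_{[-\varepsilon,\varepsilon]^m}(L\mathbf{x}+\mathbf{v})d\mathbf{x} + o\bigl(N^{d-m}(\log N)^{-d}\bigr).
\]
This splits naturally into two comparisons: the $\Lambda$-weighted integer sum against its unweighted integer counterpart, and the latter against the archimedean integral.

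\textbf{Smoothing and telescoping.} I would first sandwich $1_{[-\varepsilon,\varepsilon]^m}$ by smooth tensor-product majorants and minorants $F_{\pm}$ with rapidly decaying Fourier transforms, at the cost of an $o(1)$-widening of $\varepsilon$. The integer-vs-integral comparison for $F_{\pm}(L\mathbf{n}+\mathbf{v})$ is a Weyl-equidistribution statement on $(\mathbb{R}/\mathbb{Z})^m$ whose quantitative form follows, for algebraic $L$, from Liouville--Roth-type bounds on simultaneous rational approximation to algebraic irrationals. For the $\Lambda$-vs-$1$ comparison I would telescope $\prod_i\Lambda(n_i) = 1 + \sum_{\emptyset\neq S}\prod_{i\in S}(\Lambda(n_i)-1)$, reducing matters to showing
\[
\sum_{\mathbf{n}\in[N]^d}\Big(\prod_{i\in S}(\Lambda(n_i)-1)\Big)F_{\pm}(L\mathbf{n}+\mathbf{v}) = o(N^{d-m})
\]
for every non-empty $S\subseteq\{1,\dots,d\}$.

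\textbf{Generalised von Neumann.} The heart of the matter is the telescoped sum, where I would invoke the Green--Tao--Ziegler decomposition of $\Lambda - 1$ into a structured piece and a piece with small $U^{d-1}[N]$ Gowers norm \cite{GT12,GTa12,GTZ12}. The contribution of the Gowers-uniform piece should be eliminated via a generalised von Neumann theorem adapted to inequality constraints: since $L\notin V^*_{\degen}(m,d)$ the linear-forms system parametrising $\ker L$ has finite Cauchy--Schwarz complexity, providing the combinatorial input needed to convert $U^{d-1}$-control of $\Lambda - 1$ into the required count bound. The structured piece is then handled by treating $F_{\pm}(L\mathbf{n}+\mathbf{v})$ as a smooth weight tested against a bounded arithmetic function, in a manner parallel to the enveloping-sieve arguments of Green--Tao.

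\textbf{Main obstacle.} The technical crux is the inequality-adapted generalised von Neumann, since the Green--Tao--Ziegler framework is designed for exact linear equations $L\mathbf{n}=\mathbf{b}$. Extending it to counts weighted by $F_{\pm}(L\mathbf{n}+\mathbf{v})$ proceeds naturally by Fourier-expanding the cutoff, which reduces the problem to controlling a continuous family of exponential-sum integrals over $\ba\in\mathbb{R}^m$; one then faces the danger that $L^T\ba$ lies simultaneously close to rationals in many coordinates for $\ba$ ranging over intermediate scales. The purely irrational hypothesis rules out exact coincidences, but one needs quantitative lower bounds on $\|L^T\ba\|_{\mathbb{R}/\mathbb{Z}}$ to confine such $\ba$ to a set small enough to be absorbed by the decay of $\widehat{F}_{\pm}$, and it is precisely here that the algebraicity hypothesis enters via effective Liouville--Roth-type bounds. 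Replacing algebraicity by a general real hypothesis would require instead the sieve-weight pseudorandomness conjecture mentioned in the abstract.
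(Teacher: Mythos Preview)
Your overall strategy is broadly right in spirit---smoothing, telescoping, and a generalised von Neumann bound adapted to inequalities---but the execution of the key step contains a genuine gap.

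\textbf{The Fourier-expansion route to the generalised von Neumann does not work.} You propose to handle the inequality-adapted von Neumann theorem by Fourier-expanding the smooth cutoff $F_\pm(L\mathbf{n}+\mathbf{v})$. But once you do this, the sum over $\mathbf{n}$ \emph{factors}: for each fixed $\ba$ one obtains $\prod_{j}\sum_{n_j}f_j(n_j)e((L^T\ba)_j n_j)$, a product of one-dimensional exponential sums. At this point the linear-forms system parametrising $\ker L$ has vanished entirely, and Gowers-norm control of $f_j$ is irrelevant---what one would need instead are pointwise or $L^1$ bounds on $\prod_j S_{f_j}(\theta_j)$, which is precisely the classical Davenport--Heilbronn approach and recovers only $d\geqslant 2m+1$, not $d\geqslant m+2$. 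The finite Cauchy--Schwarz complexity of $\ker L$ that you correctly identify cannot be exploited after Fourier expansion.

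\textbf{What the paper does instead.} The paper keeps the cutoff $G(L\mathbf{n}+\mathbf{v})$ intact and transfers the whole problem to a real-variable setting by convolving each $f_j$ with a narrow bump $\chi$ (Lemma~\ref{Lemma transfer equation}). In the resulting integral over $\mathbb{R}^d$ one genuinely parametrises $\ker L$ by a system $\Psi$ of finite complexity (Lemmas~\ref{Proposition separating out the kernel}--\ref{Proposition parametrising by normal form}) and then applies a real-variable Gowers--Cauchy--Schwarz inequality (Theorem~\ref{Theorem Cauchy}). The auxiliary weights that appear are products of $(\nu\ast\chi)$ along new linear forms, and it is for \emph{these} auxiliary inequalities---not the original one---that the Davenport--Heilbronn method with algebraicity enters, via a ``linear inequalities condition'' (Definition~\ref{Definition linear inequalities condition}, Theorem~\ref{Theorem pseudorandomness}, Lemma~\ref{Lemma in APs}). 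Your diagnosis that algebraicity is needed for Liouville-type Diophantine bounds is correct, but it is used to verify pseudorandomness of the majorant along these auxiliary systems, not to run the main von Neumann argument itself.

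\textbf{A secondary issue.} Telescoping $\prod_j\Lambda(n_j)-1$ gives differences $\Lambda-1$, which do not have small Gowers norm over $[N]$; one needs the $W$-trick. The paper compares $\Lambda'$ not to $1$ but to the local von Mangoldt function $\Lambda_{\mathbb{Z}/W\mathbb{Z}}$, using $\Vert\Lambda'-\Lambda_{\mathbb{Z}/W\mathbb{Z}}\Vert_{U^{s+1}[N]}=o(1)$ (Lemma~\ref{Lemma Corollary of tool from Green-Tao}). As explained in Remark~\ref{Remark rescaling w trick}, the usual device of splitting into progressions modulo $W$ and rescaling is unavailable for inequalities, since rescaling would replace $\varepsilon$ by $\varepsilon/W$. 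This forces the paper to carry $\Lambda_{\mathbb{Z}/W\mathbb{Z}}$ through the entire argument and, in the Cauchy--Schwarz stage, to introduce a second local scale $W^*$ (Lemma~\ref{Lemma approximation of Q}) to control certain singular expressions---a technical manoeuvre with no analogue in \cite{GT10} and which your outline does not anticipate.
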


\begin{Remark}
\emph{One notes that in the asymptotic formula (\ref{simpler asymptotic formula}) there is not a contribution from any non-archimedean local factors. In Theorem \ref{Main theorem} below, we will remove the supposition that there does not exist any non-zero vector $\ba\in\mathbb{R}^m\setminus \{\mathbf{0}\}$ for which $L^T\ba \in \mathbb{Z}^d$. Once these potential rational relations are permitted, one does indeed observe a contribution from local factors.}
\end{Remark}
\begin{Remark}
\label{Remark approximation of integral}
\emph{When $\mathbf{v} = \mathbf{0}$, it is straightforward to show (see Lemma \ref{Lemma singular integral}) that the main term in (\ref{simpler asymptotic formula}) is equal to \[C_L \varepsilon^m N^{d-m} (\log N)^{-d} + o_{L,\varepsilon}(N^{d-m}(\log N)^{-d}),\] where $C_L$ is a constant depending only on $L$. The positivity of $C_L$ may be determined in practice.}
\end{Remark}
\begin{Remark}
\label{Remark fixed L}
\emph{The reader may note that Theorem \ref{Main theorem simpler version} insists upon a fixed matrix $L$, rather than a matrix $L$ with bounded coefficients (as appeared in Theorem \ref{Theorem Green Tao}). In our previous work \cite[Theorem 2.10]{Wa17}, performed in the context of linear inequalities weighted by bounded functions we proved a result that enabled $L$ to vary, as long as the coefficients of $L$ were bounded and $L$ was bounded away from $V_{\degen}^*(m,d)$. In the present paper there are many auxiliary linear equalities $L^\prime$, which will also need to enjoy such a quantitative non-degeneracy. We found keeping track of these features throughout the whole argument to be extremely complicated, but in principle it should be possible to do so.}
\end{Remark}

\begin{Remark}
\emph{Theorem \ref{Main theorem simpler version} strengthens Theorem \ref{Parsell general asymptotic} of Parsell, in the sense that the number of variables has been reduced (from $2m+1$ to $m+2$). But unfortunately this has been achieved at the cost of imposing an algebraicity assumption on the coefficients of $L$. The situation is regrettable as, under this assumption, the classical Davenport-Heilbronn method alone is adequate to count the number of prime solutions to $m$ simultaneous linear inequalities in $2m+1$ variables, without needing the developments of Parsell. We should stress that most of our method does not rely on the algebraicity assumption. Indeed, the conclusions of Theorems \ref{Main theorem simpler version} and \ref{Main theorem} do in fact hold for some explicit set of matrices $L$ that has full Lebesgue measure (see Remark \ref{Remark almost all}). Unfortunately, owing to the intricacy of the linear-algebraic manipulations in Section \ref{section Cauchy Schwarz argument}, we have not been able to formulate a clean or enlightening characterisation of this full-measure set. We have decided to clarify the exposition of the paper by working with algebraic coefficients throughout.}
\end{Remark}

Let us give a concrete example of a linear inequality to which Theorem \ref{Main theorem simpler version} applies but the Davenport-Heilbronn method does not. 
\begin{Example}
\label{Example surd primes}
Let $\varepsilon>0$. Then the number of prime quadruples $(p_1,p_2,p_3,p_4)\in [N]^4$ satisfying 
\begin{align}
\label{solutions in concrete example}
\vert p_1  +  p_3\sqrt{2} - p_4\sqrt{3} \vert &\leqslant \varepsilon\nonumber\\
\vert  p_2+  p_3\sqrt{5} - p_4\sqrt{7} \vert &\leqslant \varepsilon
\end{align}

\noindent is equal to $C\varepsilon^2 N^2 (\log N)^{-4} + o_{\varepsilon}( N^2 (\log N)^{-4} )$, for some positive constant $C$. 
\end{Example}
\begin{proof}
Taking \[ L = \left(\begin{matrix} 1 & 0 & \sqrt{2} & -\sqrt{3} \\ 0 & 1 & \sqrt{5} & -\sqrt{7} \end{matrix}\right),\] $L$ certainly satisfies the hypotheses of Theorem \ref{Main theorem simpler version}, since all the $2$-by-$2$ submatrices have non-zero determinant and surds of primes are rationally independent. Taking $\mathbf{v} = \mathbf{0}$, one may therefore apply Theorem \ref{Main theorem simpler version}. 

This yields an asymptotic expression for the number of solutions to (\ref{solutions in concrete example}) with the main term in the form of an integral. Since $\mathbf{v} = \mathbf{0}$, by Remark \ref{Remark approximation of integral} we may express the main term as $C_L \varepsilon^2 N^2 (\log N)^{-4}$ for some constant $C_L$. Explicitly, from Lemma \ref{Lemma singular integral} and expression (\ref{CL}) therein, \[ \frac{C_L}{4} =  \int\limits_{\substack{0\leqslant x_1,x_2\leqslant 1 \\ x_1 \mathbf{v^{(1)}} + x_2 \mathbf{v^{(2)}} \in [0,1]^2}} 1 \, dx_1 \, dx_2,\] where \[ \mathbf{v^{(1)}} = \left( \begin{matrix} 
-\sqrt{2} \\ -\sqrt{5}\end{matrix} \right), \quad \mathbf{v^{(2)}} = \left( \begin{matrix} \sqrt{3} \\ \sqrt{7} \end{matrix} \right).\] By a computation, we satisfy ourselves that $C_L \approx 1.394...$ is positive.   
\end{proof}
Theorem \ref{Main theorem simpler version} may also be used to count prime solutions to other systems.
\begin{Corollary}
\label{Example irrational szemeredi}
Let $(\theta_1,\dots,\theta_d)^T = \boldsymbol{\theta}\in \mathbb{R}^d$ be a real vector with algebraic coefficients. Suppose that there does not exist any $\mathbf{k}\in\mathbb{Z}^d\setminus \{\mathbf{0}\}$ that satisfies $\mathbf{k}\cdot \boldsymbol{\theta} \in \mathbb{Z}$. Let $\mathcal{P}$ denote the set of primes. Then \begin{equation}
\label{expression for corollary}
\sum\limits_{p_1,p_2\leqslant N} \prod\limits_{j=1}^d 1_{\mathcal{P}\cap [N]}(\lfloor p_1+ p_2\theta_j  \rfloor) = C_{\boldsymbol{\theta}}\frac{N^2}{\log ^d N} + o_{\boldsymbol{\theta}}\Big(\frac{N^2} {\log ^d N}\Big),
\end{equation} for some positive constant $C_{\boldsymbol{\theta}}$. 
\end{Corollary}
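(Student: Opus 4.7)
The plan is to realise the sum in (\ref{expression for corollary}) as a count of prime solutions to a linear inequality system in $d+2$ prime variables, and then to invoke Theorem \ref{Main theorem simpler version}. The key observation is that $\lfloor p_1+p_2\theta_j\rfloor$ is a prime in $[N]$ if and only if there exists a prime $q_j\in[N]$ with $p_1+p_2\theta_j - q_j\in[0,1)$ (in which case $q_j$ equals that floor). Hence the left-hand side of (\ref{expression for corollary}) counts the prime tuples $(p_1,p_2,q_1,\ldots,q_d)\in(\mathcal{P}\cap[N])^{d+2}$ satisfying $p_1+p_2\theta_j-q_j\in[0,1)$ for every $j$. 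Taking $L$ to be the $d\times(d+2)$ real matrix whose $j$-th row is $(1,\theta_j,0,\ldots,0,-1,0,\ldots,0)$ with the $-1$ in position $j+2$, together with the shift $\mathbf{v}=-\tfrac12(1,\ldots,1)^T$ and $\varepsilon=\tfrac12$, encodes these inequalities in the form required by Theorem \ref{Main theorem simpler version} (with $m=d$ inequalities in $d+2$ variables). The endpoint mismatch between $[0,1)$ and $[-\tfrac12,\tfrac12]$ is irrelevant, because each $\theta_j$ is irrational (by the standing hypothesis applied to $\mathbf{k}=\mathbf{e}_j$) and $p_2>0$.

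Next I would verify the substantive hypotheses on $L$. Algebraicity and rank $d$ (the last $d$ columns form $-I_d$) are immediate. For pure irrationality, if $L^T\ba=(\sum_j\alpha_j,\;\sum_j\alpha_j\theta_j,\;-\alpha_1,\ldots,-\alpha_d)\in\mathbb{Z}^{d+2}$, then the last $d$ coordinates give $\alpha_j\in\mathbb{Z}$, after which $\sum_j\alpha_j\theta_j\in\mathbb{Z}$ combined with the standing hypothesis forces $\ba=\mathbf{0}$. For non-degeneracy ($L\notin V^*_{\degen}(d,d+2)$), a row combination $\sum_j\alpha_j(\mathrm{row}_j)$ reads $-\alpha_j$ in position $j+2$, so having two or fewer non-zero entries forces at most two $\alpha_j$ to be non-zero. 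If exactly one $\alpha_j\neq 0$, position $2$ reads $\alpha_j\theta_j\neq 0$, giving three non-zero entries. If exactly two, say $\alpha_{j_1}$ and $\alpha_{j_2}$, the vanishing of positions $1$ and $2$ would force $\alpha_{j_1}+\alpha_{j_2}=0$ and $\theta_{j_1}=\theta_{j_2}$, contradicting the standing hypothesis via $\mathbf{k}=\mathbf{e}_{j_1}-\mathbf{e}_{j_2}$.

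With Theorem \ref{Main theorem simpler version} applied, the left-hand side of (\ref{expression for corollary}) becomes
\[
(\log N)^{-(d+2)} \int_{[0,N]^{d+2}} \prod_{j=1}^d 1_{[0,1)}(x_1+x_2\theta_j - y_j)\,dx_1\,dx_2\,dy_1\cdots dy_d \;+\; o\!\left(\tfrac{N^2}{(\log N)^{d+2}}\right).
\]
For $(x_1,x_2)$ in the convex region $R:=\{(x_1,x_2)\in[0,N]^2: [x_1+x_2\theta_j-1,\,x_1+x_2\theta_j]\subset[0,N] \text{ for every } j\}$, each inner integral over $y_j$ evaluates to $1$. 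The complement of $R$ in $[0,N]^2$ has area $O(N)$, while $R$ is itself a convex polygon of area $c_{\boldsymbol{\theta}} N^2 + O(N)$ for an explicit positive constant $c_{\boldsymbol{\theta}}$, and this yields the claimed asymptotic with $C_{\boldsymbol{\theta}}>0$. The only content-bearing step is the case analysis for non-degeneracy; everything else is a mechanical unpacking of the Main Theorem.
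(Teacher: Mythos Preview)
Your approach is identical to the paper's: encode the count via the $d\times(d+2)$ matrix $L=(\mathbf{1}\mid\boldsymbol{\theta}\mid -I)$ with shift $\mathbf{v}=-\tfrac{1}{2}\mathbf{1}$, verify pure irrationality and non-degeneracy from the hypothesis on $\boldsymbol{\theta}$, and invoke Theorem~\ref{Main theorem simpler version}.

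One slip in the integral step: the complement of $R$ in $[0,N]^2$ need \emph{not} have area $O(N)$---take for instance $\theta_1=\sqrt{5}$, where the constraint $x_1+\sqrt{5}\,x_2\leqslant N$ removes a region of area $\asymp N^2$ from $[0,N]^2$. What is true is that the set on which the integrand (after integrating out the $y_j$) lies strictly between $0$ and $1$ is a union of $O(d)$ strips of width $O(1)$, hence has area $O(N)$; thus the full integral equals $|R|+O(N)$. Then rescaling gives $|R|=c_{\boldsymbol{\theta}}N^2+O(N)$ with $c_{\boldsymbol{\theta}}=\operatorname{vol}\{(u_1,u_2)\in[0,1]^2:0\leqslant u_1+u_2\theta_j\leqslant 1\text{ for all }j\}$, which is positive since the region contains a neighbourhood of the origin in $[0,1]^2$. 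This is precisely the formula the paper records via Lemma~\ref{Lemma singular integral}.

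Note finally that your computation correctly produces the exponent $d+2$ on $\log N$, whereas the Corollary as printed (and the paper's own proof sketch) writes $d$; the latter is a typo, and your ``claimed asymptotic'' does not literally match what you have derived.
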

\noindent Here $\lfloor x \rfloor$ denotes the floor function of $x$, i.e. the greatest integer that is at most $x$. 
\begin{proof}
We can expand the left-hand side of (\ref{expression for corollary}) as \begin{equation*}
\sum\limits_{p_1,p_2\leqslant N}\sum\limits_{p_3,\dots,p_{d+2} \leqslant N} \prod\limits_{j=3}^{d+2} 1_{[0,1)}(p_1 + p_2\theta_{j-2}  - p_j). 
\end{equation*} Observe that the equation $p_1 + p_2\theta_{j-2} - p_j = 1$ has no solutions, since $\theta_{j-2}$ is irrational by assumption. So the above is equal to \[\sum\limits_{p_1,p_2\leqslant N}\sum\limits_{p_3,\dots,p_{d+2} \leqslant N} \prod\limits_{j=3}^{d+2} 1_{[0,1]}(p_1 + p_2\theta_{j-2}  - p_j),\] and this in turn is equal to 
\begin{equation}
\label{the expression that were going to apply the simple theorem to}
\sum\limits_{p_1,p_2\leqslant N}\sum\limits_{p_3,\dots,p_{d+2} \leqslant N} 1_{[0,1]^d}(p_1\mathbf{1} + p_2 \boldsymbol{\theta} - \mathbf{p_3^{d+2}}),
\end{equation} where $\mathbf{1}\in \mathbb{R}^d$ is the vector with every coordinate equal to $1$, and $\mathbf{p_3^{d+2}} : = (p_3,\dots,p_{d+2})^T$.

Let $L$ be the $d$-by-$(d+2)$ matrix \[L = \left( \begin{matrix}
\mathbf{1} & \boldsymbol{\theta} & -I\end{matrix} \right).\] Then (\ref{the expression that were going to apply the simple theorem to}) is equal to \[ \sum\limits_{\mathbf{p} \in [N]^{d+2}} 1_{[-\frac{1}{2},\frac{1}{2}]^d}(L\mathbf{p} + \mathbf{v}),\] where $\mathbf{v}: = (-1/2,\dots,-1/2)^T$.

One sees that $L$ satisfies the hypotheses of Theorem \ref{Main theorem simpler version}. Indeed, note first that if there exists some $\ba \in \mathbb{R}^d  \setminus \{\mathbf{0}\}$ for which $L^T \ba \in \mathbb{Z}^{d+2}$ then by considering the final $d$ coordinates of $L^T \ba$ it follows such an $\ba$ must have integer coordinates. But by considering the second coordinate of $L^T \ba$ it follows that $\ba \cdot \boldsymbol{\theta} \in \mathbb{Z}$, which is a contradiction to our assumptions on $\boldsymbol{\theta}$. Secondly, if $L$ were in $V_{\degen}^*(d,d+2)$ then either $\theta_i = 0$ for some index $i$, or $\theta_i = \theta_j$ for two different indices $i$ and $j$. Both of these possibilities are precluded by the assumptions on $\boldsymbol{\theta}$. 

Therefore we may apply Theorem \ref{Main theorem simpler version}, and by Remark \ref{Remark approximation of integral} we get a main term of the form $C_{\boldsymbol{\theta}} N^2 (\log N)^{-d}$. Explicitly, using Lemma \ref{Lemma singular integral} and expression (\ref{CL}) as above, we have \[ C_{\boldsymbol{\theta}}= \int\limits_{\substack{0 \leqslant x_1,x_2\leqslant 1 \\ 0 \leqslant x_1 + \theta_i x_2 \leqslant 1 \text{ for all } i}} 1 \, dx_1 \, dx_2.\] For any vector $\boldsymbol{\theta}$ this integral is positive, and so the corollary is proved. 
\end{proof}

Let us now present a theorem which does not require $L$ to be purely irrational. This is Theorem \ref{Main theorem} below, and we consider it to be our main result. \\

For ease of notation, we introduce the following definition.
\begin{Definition}
\label{Definition discrete solution count}
Let $N,m,d$ be natural numbers, and let $L:\mathbb{R}^d \longrightarrow \mathbb{R}^m$ be a linear map. Let $F:\mathbb{R}^d\rightarrow \mathbb{R}$ and $G:\mathbb{R}^m\rightarrow \mathbb{R}$ be functions with compact support. Let $\mathbf{v} \in\mathbb{R}^m$. Then, for functions $f_1,\dots,f_{d}:\mathbb{Z}\longrightarrow \mathbb{R}$, we define
\begin{equation}
\label{definiton of the solution count form}
T^{L,\mathbf{v}}_{F,G,N}(f_1,\dots,f_{d}) := \frac{1}{N^{d-m}}\sum\limits_{\mathbf{n}\in \mathbb{Z}^d}\Big(\prod\limits_{j=1}^{d}f_j(n_j)\Big)F(\mathbf{n}/N)G(L\mathbf{n}+\mathbf{v}).
\end{equation}
\end{Definition}

It will be convenient to introduce a logarithmic weighting to the primes. To this end, following \cite{GT10}, we define the function $\Lambda^\prime : \mathbb{Z} \longrightarrow \mathbb{R}$ by \[ \Lambda^\prime(n) : = \begin{cases}
\log n & n \text{ is prime}\\
0 & \text{otherwise}.
\end{cases}\]
\noindent The von Mangoldt function $\Lambda$ will not be needed in this paper. 

Another notion from \cite{GT10} will be useful.

\begin{Definition}[Local von Mangoldt function]
\label{Definition local von M function}
For $q\geqslant 2$, the \emph{local von Mangoldt function} $\Lambda_{\mathbb{Z}/q\mathbb{Z}}:\mathbb{Z}\longrightarrow \mathbb{R}$ is the $q$-periodic function defined by  $$\Lambda_{\mathbb{Z}/q\mathbb{Z}}(n) = \begin{cases}
\frac{q}{\varphi(q)} & (n,q)=1\\
0 & \text{otherwise.}
\end{cases}$$ We let $\Lambda_{\mathbb{Z}/q\mathbb{Z}}^+: \mathbb{Z} \longrightarrow \mathbb{R}$ denote the restriction of $\Lambda_{\mathbb{Z}/q\mathbb{Z}}$ to the non-negative reals, namely the function $\Lambda_{\mathbb{Z}/q\mathbb{Z}} 1_{[0,\infty)}$.
\end{Definition}
\noindent The local von Mangoldt function, when $q$ is the product of small primes, can be viewed as a model for the function $\Lambda^\prime$. This model\footnote{This is essentially the modified Cram\'{e}r random model.} is intimately connected to a technical device known as the $W$-trick, which we recall in Section \ref{section W trick and Gowers norms}. \\ 

For a function $F:\mathbb{R}^d \longrightarrow \mathbb{R}^m$ we define the Lipschitz constant of $F$ to be \[ \sup\limits_{\mathbf{x},\mathbf{y} \in \mathbb{R}^d} \frac{\Vert F(\mathbf{x}) - F(\mathbf{y})\Vert_\infty}{\Vert \mathbf{x} - \mathbf{y} \Vert_\infty },\] and call $F$ Lipschitz if this value is finite. \\

We may now state the main theorem. 
\begin{Theorem}[Main theorem]
\label{Main theorem}
Let $N,m,d$ be natural numbers with $d\geqslant m+2$, and let $C,\varepsilon,\sigma$ be positive real parameters. Let $L:\mathbb{R}^d\longrightarrow \mathbb{R}^m$ be a surjective linear map with algebraic coefficients, and suppose that $L\notin V_{\degen}^*(m,d)$. Let $\mathbf{v} \in\mathbb{R}^m$ be any vector that satisfies $\Vert \mathbf{v}\Vert_\infty\leqslant CN$.  Let $F:\mathbb{R}^d\longrightarrow [0,1]$ and $G: \mathbb{R}^m\longrightarrow [0,1]$ be compactly supported Lipschitz functions with Lipschitz constants at most $\sigma^{-1}$, and assume that $F$ is supported on $[-1,1]^d$ and $G$ is supported on $[-\varepsilon,\varepsilon]^m$. Let $w = w(N):= \log\log \log N$, assuming that $N$ is large enough for this function to be well defined, and let $W = W(N) := \prod\limits_{p\leqslant w} p$. Then
\begin{equation}
\label{expression from main theorem}
T_{F,G,N}^{L,\mathbf{v}}(\Lambda^\prime,\dots,\Lambda^\prime) = T_{F,G,N}^{L,\mathbf{v}}(\Lambda_{\mathbb{Z}/W\mathbb{Z}}^+,\dots,\Lambda_{\mathbb{Z}/W\mathbb{Z}}^+) + o_{C,L,\varepsilon,\sigma}(1)
\end{equation}
as $N\rightarrow \infty$. 
\end{Theorem}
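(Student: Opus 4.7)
The plan is to follow the Green-Tao-Ziegler transference strategy, suitably adapted to the smooth cut-off $G(L\mathbf{n}+\mathbf{v})$ imposed by the inequalities. The proof naturally splits into a $W$-trick reduction, a generalised von Neumann argument, and the invocation of Gowers norm estimates for the primes.

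First I would apply the $W$-trick. Writing each $n_j = Wn_j' + b_j$, the sum defining $T^{L,\mathbf{v}}_{F,G,N}$ breaks into residue classes indexed by $\mathbf{b}\in \{1,\dots,W\}^d$. On both sides of the desired identity, classes in which some $(b_j,W)>1$ contribute negligibly. On the remaining residues, $\Lambda^+_{\mathbb{Z}/W\mathbb{Z}}(Wn+b)$ simplifies to the constant $W/\varphi(W)$, whereas $\Lambda'(Wn+b)$ should be compared to its renormalised form $\Lambda'_{b,W}(n) := (\varphi(W)/W)\Lambda'(Wn+b)$. Thus the theorem reduces to showing, uniformly in $\mathbf{b}$ with every $(b_j,W)=1$, that
\[
T^{L',\mathbf{v}_{\mathbf{b}}}_{F_W,G,N/W}(\Lambda'_{b_1,W},\dots,\Lambda'_{b_d,W}) - T^{L',\mathbf{v}_{\mathbf{b}}}_{F_W,G,N/W}(1,\dots,1) = o(1),
\]
where $L' = WL$, $\mathbf{v}_{\mathbf{b}} = L\mathbf{b} + \mathbf{v}$, and $F_W$ rescales $F$ appropriately. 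By multilinearity of $T$, the difference telescopes into $d$ terms, each with one slot filled by $\Lambda'_{b_j,W}-1$ and the others filled by $\Lambda'_{b_j,W}$ or $1$; every function appearing is pointwise dominated by $1 + \nu_{b,W}$ for a Goldston-Pintz-Yildirim-type sieve majorant $\nu_{b,W}$.

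Second, I would apply a generalised von Neumann theorem tailored to the real-coefficient, smooth-cutoff setting. The goal is a bound of the form
\[
\big|T^{L',\mathbf{v}_{\mathbf{b}}}_{F_W,G,N/W}(f_1,\dots,f_d)\big| \ll_{L,F,G,\varepsilon,\sigma} \min_{1\leqslant j\leqslant d}\Vert f_j\Vert_{U^{s+1}[N/W]}^{c} + o(1)
\]
whenever each $f_j$ is dominated by $1 + \nu_{b_j,W}$, where $s$ is finite by virtue of $L \notin V^*_{\degen}$ and the Cauchy-Schwarz complexity interpretation recorded in Remark \ref{Remark dual CS}. Combined with the Green-Tao-Ziegler estimate $\Vert \Lambda'_{b,W}-1\Vert_{U^{s+1}[N/W]} = o(1)$, uniform in $\mathbf{b}$ (this being a consequence of \cite{GT12, GTa12, GTZ12}), this yields the theorem.

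The substantive obstacle is the generalised von Neumann step. In the classical Green-Tao setting one has integer-valued linear forms counted against a delta function, and iterative Cauchy-Schwarz in each variable $n_j$ proceeds cleanly. Here the cut-off $G$ is a smooth function on $\mathbb{R}^m$ and $L$ has real (algebraic) coefficients, so $G$ must be handled either by Fourier expansion at a dilated scale or by dualising it into the functions $f_j$ via Lipschitz approximation. Either route requires uniform control of the induced characters $e(\boldsymbol{\xi}\cdot(L\mathbf{n}+\mathbf{v}))$: the algebraicity of $L$, via quantitative irrationality for $L^T \ba$, provides this uniformity, while the hypothesis $L \notin V^*_{\degen}$ guarantees that after $d-1$ Cauchy-Schwarz manipulations the surviving linear form is non-degenerate and feeds into a genuine $U^{s+1}$-Gowers norm of the distinguished slot. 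Together these ingredients are the content of Section \ref{section Cauchy Schwarz argument}, and it is precisely here that both the non-degeneracy hypothesis and the algebraicity assumption play their essential role, while the sieve-theoretic handling of the $\nu_{b,W}$-bounded factors follows standard lines.
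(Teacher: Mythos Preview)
Your proposal contains a genuine gap at the very first step. You perform the $W$-trick by writing $n_j = Wn_j' + b_j$ and reducing to an expression with $L' = WL$. But now $L'$ depends on $N$ through $W = W(N)$, so any generalised von Neumann theorem whose error term is $o_{L'}(1)$ is vacuous. If instead you keep $L$ fixed and absorb the factor $W$ into $G$, the rescaled cut-off $G(W\,\cdot\,)$ is supported on $[-\varepsilon/W,\varepsilon/W]^m$ and has Lipschitz constant $W\sigma^{-1}$; both parameters now depend on $N$, and the subsequent transfer and Cauchy--Schwarz estimates lose powers of $\varepsilon$ and $\sigma$, so this loss cannot be absorbed. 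The paper addresses this explicitly in Remark~\ref{Remark rescaling w trick}: the early $W$-trick that works for linear equations does \emph{not} work for inequalities, and this is the reason the argument has to be structured differently.

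What the paper does instead is to avoid splitting into residue classes altogether. One telescopes directly with $\Lambda' - \Lambda_{\mathbb{Z}/W\mathbb{Z}}^+$ (Section~\ref{section Controlling by Gowers norms}) and uses the unrescaled Gowers-norm bound $\Vert \Lambda' - \Lambda_{\mathbb{Z}/W\mathbb{Z}}\Vert_{U^{s+1}[N]} = o(1)$ of Lemma~\ref{Lemma Corollary of tool from Green-Tao}. The generalised von Neumann theorem (Theorem~\ref{Theorem generalised von neumann}) is then proved by first transferring the discrete sum to a continuous integral via convolution with an $\eta$-supported bump $\chi$ (Lemma~\ref{Lemma transfer equation}), parametrising $\ker L$ and putting the resulting system into $s$-normal form (Section~\ref{section General proof of the real variable von Neumann Theorem}), and only then running the Gowers--Cauchy--Schwarz argument (Section~\ref{section Cauchy Schwarz argument}). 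That last step generates auxiliary linear inequalities weighted by the sieve majorant $\nu_{N,w}^\gamma$, and it is in verifying the required ``linear inequalities condition'' for $\nu_{N,w}^\gamma$ (Theorem~\ref{Theorem pseudorandomness}, via the Davenport--Heilbronn estimate of Lemma~\ref{Lemma in APs}) that the algebraicity of $L$ is actually used --- not, as you suggest, directly inside the Cauchy--Schwarz manipulation itself. A further subtlety absent from your sketch is the introduction of a second, slower-growing scale $W^*$ (Lemma~\ref{Lemma approximation of Q}) to control the singular expressions arising from the local von Mangoldt weights inside the Cauchy--Schwarz argument.
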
 

\begin{Remark}
\label{Remark after statement of Main theorem}
\emph{If $F$ is supported on $[0,1]^d$, we have \[T_{F,G,N}^{L,\mathbf{v}}(\Lambda_{\mathbb{Z}/W\mathbb{Z}}^+,\dots,\Lambda_{\mathbb{Z}/W\mathbb{Z}}^+) = T_{F,G,N}^{L,\mathbf{v}}(\Lambda_{\mathbb{Z}/W\mathbb{Z}},\dots,\Lambda_{\mathbb{Z}/W\mathbb{Z}}).\] We will prove an asymptotic formula for $T_{F,G,N}^{L,\mathbf{v}}(\Lambda_{\mathbb{Z}/W\mathbb{Z}},\dots,\Lambda_{\mathbb{Z}/W\mathbb{Z}})$ later, in Lemma \ref{Lemma problem for local von Mangoldt} and Remark \ref{Remark asymptotic for local von mangoldt in general}. For example, if $\mathbf{v} = \mathbf{0}$ and \[ L = \left ( \begin{matrix} 1 & -2 & 1 & 0 \\
0 & 1 & - \sqrt{3} & 1 \end{matrix} \right),\] say, and $F$ and $G$ are smooth functions supported on $[0,1]^4$ and $[-1/2,1/2]^2$ respectively, one may use Lemma \ref{Lemma problem for local von Mangoldt} and Remark \ref{Remark asymptotic for local von mangoldt in general} to show that \[ T_{F,G,N}^{L,\mathbf{v}}(\Lambda_{\mathbb{Z}/W\mathbb{Z}},\dots,\Lambda_{\mathbb{Z}/W\mathbb{Z}}) = \mathfrak{S} J + o_{F,G,L}(1),\] where \[ \mathfrak{S} = \frac{1}{2}\prod\limits_{p\geqslant 3} \Big(1-\frac{2}{p}\Big) \Big(\frac{p}{p-1}\Big)^2\] and \[ J = \frac{1}{N^2}\int\limits_{\mathbf{x} \in \mathbb{R}^3}  F(\Xi(\mathbf{x})/N)G(L\Xi(\mathbf{x})) \, d\mathbf{x},\] where \[\Xi(x_1,x_2,x_3) = (x_1,x_1 + x_2,x_1 + 2x_2,x_3).\] The constant $\mathfrak{S}$ is in fact equal to \[ \prod\limits_{p} \frac{1}{p^3}\sum\limits_{x_1,x_2,x_3 \leqslant p} \prod\limits_{j=1}^4 \Lambda_{\mathbb{Z}/p\mathbb{Z}}(\xi_j(x_1,x_2,x_3)),\] where $(\xi_1,\xi_2,\xi_3,\xi_4)$ are the coordinate maps for $\Xi$. }

\emph{It takes some effort to establish precisely what the map $\Xi$ should be for a given $L$. What's more, the asymptotic formula in the general case is not just a product of a local factor and a global factor but rather a finite sum of products of local factors and global factors, and we will need to introduce an abundance of additional notation in order to be able to state these terms properly. Thus, in the interests of readability, we choose not to include this formula as part of the statement of Theorem \ref{Main theorem}.}
\end{Remark}
\begin{Remark}
\emph{If $L$ has rational coefficients\footnote{or more generally if $L$ has rational dimension $m$, see Definition \ref{Definition rational space} below.}, then Theorem \ref{Main theorem} reduces to a statement on linear equations in primes (a reduction which we will make precise in Remark \ref{Remark generalising Green Tao} below). In this sense, our work is a generalisation of Green-Tao-Ziegler.}
\end{Remark}
%\begin{Remark}
%\emph{We expect a result such as Theorem \ref{Main theorem} to be true without the assumption that the coefficient of $L$ are algebraic, and this would follow from Conjecture \ref{Conjecture pseudorandomness} below. However, in the general case, we suspect that the growth of the function $w(N)$ will need to depend on the diophantine approximation properties of the coefficients of $L$ (and in particular one cannot choose a fixed function $w$ as we do here). Our method allows for this flexibility, should Conjecture \ref{Conjecture pseudorandomness} be resolved in the future and } \end{Remark}

\begin{Remark}
\emph{We have phrased Theorem \ref{Main theorem} with Lipschitz cut-offs $F$ and $G$. In Section \ref{section removing sharp cut-offs} we will demonstrate how these cut-offs may be removed when $L$ is `purely irrational', and in doing so will demonstrate how Theorem \ref{Main theorem} implies Theorem \ref{Main theorem simpler version}. The same methods may be applied when $L$ is not purely irrational, but they will not always succeed, due to the rational degeneracy introduced in those cases. Unfortunately we have not been able to formulate what we regard to be a satisfactory general condition for saying when (\ref{expression from main theorem}) holds with sharp cut-offs $F$ and $G$. Note in particular how the proof of Lemma \ref{Lemma singular integral} relies heavily on the convex sets $[-\varepsilon,\varepsilon]^m$ and $[0,N]^d$ being axis-parallel boxes. Therefore we do not present a version of the theorem in which summation is over a general convex set $K$, as is done in Theorem \ref{Theorem Green Tao}. However, if the reader wishes to apply a specific instance of Theorem \ref{Main theorem} with sharp cut-offs, the methods of Section \ref{section removing sharp cut-offs} and Appendix \ref{section Easy calculations} will almost certainly suffice for the purpose.}
\end{Remark}

\begin{Remark}
\emph{The reader will observe that, as in Theorem \ref{Main theorem simpler version}, we do not determine the nature of the dependence of the error term in (\ref{expression from main theorem}) on the map $L$. We discussed this feature in Remark \ref{Remark fixed L}.}

%\emph{Secondly, the error term depends on the decay of the Gowers norm $\Vert \Lambda^\prime - \Lambda_{\mathbb{Z}/W\mathbb{Z}}\Vert_{U^{s+1}[N]}$ for some $s \leqslant d-2$, a result that for $s\geqslant 2$ is ineffective (and for $s\geqslant 3$ is doubly ineffective, relying both on results on Siegel zeros and the general inverse theorem for Gowers norms). Keeping track of the other dependences of the error term throughout our entire argument, though theoretically possible, would not make the theorem any stronger in applications.}

%\emph{All this said, in section \ref{section Inequalities in arithmetic progressions} we do take a precise quantitative approach, as it will be necessary to understand expressions such as $T_{F,G,N}^{L,\mathbf{v}}(\Lambda_{\mathbb{Z}/W\mathbb{Z}},\dots,\Lambda_{\mathbb{Z}/W\mathbb{Z}})$ when $F$ has a very mild dependence on $N$. }
\end{Remark}

We conjecture that the conclusion of Theorem \ref{Main theorem} holds for all $L \notin V_{\degen}^*(m,d)$, provided $w$ grows slowly enough in terms of $L$.
 \begin{Conjecture}[Transcendental case]
\label{Conjecture without algebraic}
Let $L$, $\mathbf{v}$, $F$, and $G$ be as in the statement of Theorem \ref{Main theorem}, but do not assume that $L$ necessarily has algebraic coefficients. Then there is some function $w:\mathbb{N} \longrightarrow \mathbb{R}_{\geqslant 0}$, with $w(N) \rightarrow \infty$ as $N\rightarrow \infty$, such that (\ref{expression from main theorem}) holds with $W = \prod\limits_{p\leqslant w}p$. 
\end{Conjecture}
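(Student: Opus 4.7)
\medskip
\noindent\textbf{Proof proposal for Conjecture \ref{Conjecture without algebraic}.}
The plan is to revisit the proof of Theorem \ref{Main theorem}, isolate where the algebraicity hypothesis is invoked, replace those invocations by explicit quantitative non-degeneracy conditions on $L$, and then arrange for these conditions to be satisfied for arbitrary (non-degenerate) transcendental $L$ by choosing $w$ to grow slowly enough as a function of $N$. The author already hints, via Remark \ref{Remark almost all}, that the conclusion holds for a full-measure set of $L$, so algebraicity is being used only qualitatively; the task is to re-extract this as a slow-growth statement.

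First, I would perform a careful audit of the proof of Theorem \ref{Main theorem} to produce a quantitative version. The algebraicity should enter only through the need to bound below a finite, $w$-dependent collection of diophantine quantities attached to $L$ and to the auxiliary matrices $L'$ produced in the Cauchy--Schwarz argument of Section \ref{section Cauchy Schwarz argument}. I expect these quantities to be of two flavours: distances of the form $\dist((L')^T\ba,\mathbb{Z}^d)$ for integer vectors $\ba$ with entries bounded in terms of $w$, and lower bounds on integer combinations of the $m$-by-$m$ minors of $L$, which is the quantitative analogue of the condition $L\notin V^*_{\degen}$ alluded to in Remark \ref{Remark dual CS}. The output of the audit should be a statement of the form: provided $\delta_i(L,W)\geq \eta_i(w)$ for each $i$ in an explicit finite list, where the $\eta_i$ are explicit functions with $\eta_i(w)\to 0$, the conclusion (\ref{expression from main theorem}) holds with an error tending to $0$.

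Second, given such a quantitative reformulation, the conjecture follows by a diagonal argument. For fixed $L$ satisfying the hypotheses, each $\delta_i(L,W(w))$ is a genuine positive real number for every $w$, possibly extremely small and with no effective lower bound in terms of $w$ alone. However, the set of $w$ for which all the inequalities $\delta_i(L,W(w))\geq \eta_i(w)$ simultaneously hold is easily seen to be infinite (most scales $w$ are harmless, since $\|W(w)\alpha\|$ is bounded below on average for any fixed irrational $\alpha$, and one may simply \emph{avoid} the finitely many bad $w$ at each stage). A standard diagonalisation then produces a function $w=w_L$ with $w_L(N)\to\infty$, depending arbitrarily on $L$, along which the asymptotic (\ref{expression from main theorem}) holds.

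The hard part is the audit itself, which the author identifies as the source of the difficulty: ``owing to the intricacy of the linear-algebraic manipulations in Section \ref{section Cauchy Schwarz argument}, we have not been able to formulate a clean or enlightening characterisation of this full-measure set.'' Carrying it out requires one to rerun every Cauchy--Schwarz reduction, every change of variables, and every appeal to the inverse and equidistribution theory while propagating an explicit parameter measuring the quantitative non-degeneracy of each auxiliary $L'$; one may well need a genuinely new bookkeeping device (perhaps a quantitative variant of the dual degeneracy variety) to avoid a combinatorial explosion. By contrast, the softer inputs of the argument---the $W$-trick, the generalised von Neumann theorem, and the inverse and nilsequence results of \cite{GT12,GTa12,GTZ12}---make no use of algebraicity and should require no modification.
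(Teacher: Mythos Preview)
This statement is a \emph{conjecture} in the paper, not a theorem; the paper does not prove it. The paper instead reduces it to Conjecture~\ref{Conjecture pseudorandomness} (the pseudorandomness conjecture for the sieve weight $\nu_{N,w}^\gamma$), which is left open.

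Your proposal contains a genuine gap, and it lies in the second and third paragraphs rather than in the admittedly hard ``audit''. You assert that the quantitative obstructions extracted from the proof will take the form $\delta_i(L,W)\geqslant \eta_i(w)$, i.e.\ inequalities whose left-hand side depends on $L$ and $W$ and whose threshold depends on $w$, so that one may diagonalise in $w$. But if you trace the single place where algebraicity is actually used---Lemma~\ref{Lemma key to minor arcs} inside the proof of Lemma~\ref{Lemma in APs}---you see that the relevant diophantine quantity is the approximation function
\[
A_L(AN^{-1+\eta},\,A^{-1}N^{-\eta}) \;=\; \inf_{AN^{-1+\eta}\leqslant \Vert\bb\Vert_\infty \leqslant AN^{\eta}}\,\inf_{\mathbf{n}\in\mathbb{Z}^h}\Vert L^T\bb-\mathbf{n}\Vert_\infty,
\]
with $A=O(N^{O(\gamma)})$. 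This depends on $L$ and on $N$, not on $w$. The parameter $w$ simply does not appear in the minor-arc analysis: the sieve weight $\nu_{N,w}^\gamma$ decomposes as $\tfrac{1}{2c_{\rho,2}}\Lambda_{\rho,R,2}+\tfrac{1}{2}\Lambda_{\mathbb{Z}/W\mathbb{Z}}$, and it is the first summand, with $R=N^\gamma$, that forces divisors of size up to $N^{2\gamma}$ and hence forces the diophantine input at scale $\sim N^{\eta}$. That summand is entirely independent of $w$. (Nor can you shrink $\gamma$ with $N$: one needs $\log R\asymp\log N$, i.e.\ $R=N^{\Omega(1)}$, for $\Lambda_{\rho,R,2}$ to majorise $\Lambda'$ up to constants.)

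Consequently the obstructions are of the shape $\delta_i(L,N)\geqslant \eta_i(N)$ with $\eta_i(N)$ a fixed negative power of $N$, and for transcendental $L$ there is no reason these should hold along any subsequence of $N$ at all, let alone be arranged by a choice of $w$. Your diagonal argument in $w$ therefore has nothing to act on: the slack you are trying to exploit is in a parameter that does not enter the critical estimate. This is precisely why the paper isolates Conjecture~\ref{Conjecture pseudorandomness} as a separate open problem rather than dispatching it by the soft argument you describe.
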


\noindent In Section \ref{section proof of pseduorandomness} we will formulate a statement involving smoothed sieve weights (namely Conjecture \ref{Conjecture pseudorandomness}) which, if resolved, would settle Conjecture \ref{Conjecture without algebraic}. \\

\textbf{Acknowledgments.} During the writing of this paper we benefited greatly from the supervision of Ben Green, and had helpful conversations with Sam Chow, Trevor Wooley, Yufei Zhao, Joni Ter\"{a}v\"{a}inen and Kaisa Matom\"{a}ki. We would like to thank an anonymous referee for an exceptionally detailed reading of the manuscript and for many helpful corrections and comments. The majority of the work was carried out while the author was supported by EPSRC grant no.
EP/M50659X/1, continued while the author was a Program Associate at the Mathematical Sciences Research Institute in Berkeley, and finished while the author was supported by a Junior Research Fellowship at Trinity College Cambridge. \\

\section{The structure of the argument}

In this section we discuss our approach to proving Theorem \ref{Main theorem}, and describe the geography of the paper as a whole. 

Initially, one might hope that Theorem \ref{Main theorem} could be proved by replacing the coefficients of $L$ with some rational approximations, by considering the corresponding linear equation with rational coefficients, and then by appealing directly to Theorem \ref{Theorem Green Tao} on linear equations in primes. However, unless the coefficients of $L$ are extremely well-approximable by rationals (and in particular are transcendental), such an approach does not seem to succeed. Indeed, let $L = (\lambda_{ij})_{i\leqslant m,j\leqslant d}$ and let $\lambda^\prime_{ij}$ be a rational approximation to $\lambda_{ij}$, with $L^\prime$ being the corresponding approximation to $L$. In order for the comparison of $L$ with $L^\prime$ to be meaningful, we will need $\Vert L\mathbf{n} - L^\prime \mathbf{n}\Vert_\infty = O(1)$ for all relevant $\mathbf{n}$, and in the general situation where all coordinates of $\mathbf{n}$ have magnitude $\Omega(N)$ this requires $\vert\lambda^\prime_{ij} - \lambda_{ij}\vert$ to be $O( N^{-1})$. Hence the numerator and denominator of $\lambda^\prime_{ij}$ must grow rapidly with $N$, unless $\lambda_{ij}$ is extremely well-approximable. Yet Theorem \ref{Theorem Green Tao} requires the coefficients of the associated affine linear equations to have height $O(1)$ (excepting the constant term, which may be $O(N)$). In \cite{B17} Bienvenu offers a slight improvement, but even with this refinement it does not seem that we can apply an existing result on linear equations in primes as a black box.

Instead, we will follow a similar approach to that which we used in our work \cite{Wa17}, a paper that considered diophantine inequalities in the setting of bounded functions. Namely, we replace the function $\Lambda^\prime:\mathbb{Z}\rightarrow \mathbb{R}$ by a suitable convolution $ \Lambda^\prime \ast \chi:\mathbb{R}\rightarrow \mathbb{R}$, designed to ensure the validity of the approximation 
\begin{equation}
\label{intro twidles}
\sum\limits_{\mathbf{n}\in \mathbb{Z}^d}\Big(\prod\limits_{j=1}^d \Lambda^\prime(n_j) \Big) F(\mathbf{n}/N) G(L\mathbf{n} + \mathbf{v})\approx \int\limits_{\mathbf{x} \in \mathbb{R}^d}\Big( \prod\limits_{j=1}^d (\Lambda^\prime \ast \chi)(x_j)\Big) F(\mathbf{x}/N) G(L\mathbf{x} + \mathbf{v})\, d\mathbf{x}.
\end{equation} The integral may be manipulated by certain reparametrisations (Lemma \ref{separating out the kernel}), yielding expressions of the form \[ \int\limits_{\mathbf{y} \in \mathbb{R}^{d-m}}\Big(\prod\limits_{j=1}^d g_j(\psi_j(\mathbf{y})) \Big) F(\Psi(\mathbf{y})/N) \, d\mathbf{y},\] where $(\psi_1,\dots,\psi_d) = \Psi:\mathbb{R}^{d-m} \longrightarrow \mathbb{R}^d$ parametrises $\ker L$ and $g_1,\dots,g_d$ are certain functions. By applying the Gowers-Cauchy-Schwarz inequality, in a manner strongly resembling \cite[Appendix C]{GT10}, such expressions may be bounded by the Gowers norm $\Vert\Lambda^\prime - \Lambda_{\mathbb{Z}/W\mathbb{Z}}\Vert_{U^{s+1}[N]}$, for some $s = O(1)$. A qualitative bound on this Gowers norm is known by the work of Green-Tao-Ziegler (see Lemma \ref{Lemma Corollary of tool from Green-Tao}), and so Theorem \ref{Main theorem} follows.\\

The novel aspect of this manipulation, over the work of \cite{GT10} and \cite{Wa17}, is the appearance of various auxiliary linear inequalities, weighted by upper bound sieve weights. These enter in a manner that is somewhat analogous to the way in which the so-called `linear forms condition' arises in \cite{GT10}. Asymptotics for the number of solutions to these auxiliary inequalities underpin the argument, and this leads to a `linear inequalities condition' \[ T_{F,G,N}^{L,\mathbf{v}}( \nu,\dots,\nu) \approx T_{F,G,N}^{L,\mathbf{v}}(\Lambda_{\mathbb{Z}/W\mathbb{Z}},\dots,\Lambda_{\mathbb{Z}/W\mathbb{Z}})\] for a sieve weight $\nu$, which is our corresponding notion of pseudorandomness (made precise in Definition \ref{Definition linear inequalities condition}). We are unable to verify this pseudorandomness condition in full generality, but we succeed in the case when $L$ has algebraic coefficients. Our key technical tool is a bound for the number of solutions to a diophantine inequality restricted to a lattice, which we prove using the Davenport-Heilbronn method. This is the only part of the entire argument that uses the fact that the coefficients are assumed to be algebraic.

There is a final technical manoeuvre that we employ, one which has no direct analogue in \cite{GT10} or \cite{Wa17}. It will transpire that passing to the local von Mangoldt function $\Lambda_{\mathbb{Z}/W\mathbb{Z}}$ introduces certain singular expressions, which arise from the fact that we are dealing with inequalities rather than equations. To circumvent this issue we find it necessary to work at two different `local scales', introducing functions $\Lambda_{\mathbb{Z}/W^*\mathbb{Z}}$ and $\Lambda_{\mathbb{Z}/W\mathbb{Z}}$. By careful manoeuvring one can ensure that the singular expressions are only introduced by the $W^*$ scale, and so, provided $W^*$ grows slowly enough compared to $W$, these singularities may be offset by the decay in the Gowers norm expressions involving $W$. This further complicates the analysis of the expressions, and in fact our final choice of function $W^*$ will be non-effective. \\

The structure of the paper is as follows. The main elements of the proof of Theorem \ref{Main theorem} take place in Part \ref{part gen von neu}, and the reader may wish to begin with this section. It is here that we reduce matters to bounding certain systems by Gowers norms (Section \ref{section Controlling by Gowers norms}), prove the approximation (\ref{intro twidles}) (Section \ref{section transfer}), and apply the Gowers-Cauchy-Schwarz inequality (Section \ref{section Cauchy Schwarz argument}).

However, the arguments of this part rely heavily on lemmas that are proved earlier in the paper, and these lemmas split naturally into four types. There are those results that are standard properties of smooth functions, and these are recorded in Section \ref{section smooth functions}. We also have lemmas whose proofs involve manipulation of a purely linear algebraic nature, in order to reduce inequalities to ones that are `purely irrational' or to put linear equations into `normal form'. We describe these notions in Part \ref{Part linear algebra}. The definition of pseudorandomness for an enveloping sieve weight is contained in Part \ref{part pseudorandomness}, as is our proof that a certain weight satisfies this pseudorandomness condition. Also in this part one may find Conjecture \ref{Conjecture pseudorandomness}, which, if resolved, would remove the algebraicity assumptions. Part \ref{part the structure of inequalities} is reserved for those lemmas that involve the (somewhat tedious) manipulation of integrals into more pleasant forms. One of these lemmas is Lemma \ref{Lemma approximation of Q}, which is the lemma that introduces the second local scale $W^*$ that we mentioned above. 

The first appendix is concerned with elementary estimates relating to the integral that appears in the global factor of Theorem \ref{Main theorem}. As we have already said, Appendix \ref{section an analytic argument} presents a Fourier-analytic argument which is essentially due to Parsell. \\

Finally, let us mention that, to help to streamline the statements of various propositions and lemmas in the paper as a whole, we have found it useful to introduce certain notational conventions that are unique to this paper. We describe these in Section \ref{section conventions}. \\

%Much of the paper may be read without familiarity with \cite{GT10} and \cite{Wa17}, though in parts we will defer certain arguments to these papers, particularly in section \ref{section General proof of the real variable von Neumann Theorem}. 

%Finally, we stress that it is only in part 3 that we use the fact that the coefficients are algebraic, and only in one small part of that section, namely Proposition \ref{Lemma key to minor arcs}.\\

\part{Preliminaries}
\label{part Preliminaries}

\section{Smooth functions}
\label{section smooth functions}
Smooth functions will play a significant role in the paper, and in this section we collect together those notions and lemmas that will be necessary for our forthcoming manipulations.

Following \cite[Section 2]{HB96}, given a natural number $d$ and a compactly supported smooth function $F:\mathbb{R}^d \longrightarrow \mathbb{R}$, we define $d(F)$ to be the corresponding value of $d$, $\Rad(F)$ to be the smallest $R$ such that $F$ is supported on $[-R,R]^d$, and for every non-negative integer $j$ we define \[ d_j(F): = \max \Big \{ \bigg\vert \frac{\partial^{j_1+\dots + j_d } F}{\partial^{j_1} x_1\dots \partial^{j_d} x_d } \Big|_{\mathbf{x} = \mathbf{a}} \bigg\vert : \mathbf{a} \in \mathbb{R}^d , \, \sum_{i=1}^d j_i = j \Big\}.\]

Then, if $P$ is any set, we shall define $\mathcal{C}(P)$ to be the set of those smooth functions $F$ for which \[ d(F), \operatorname{Rad}(F), d_0(F),d_1(F),d_2(F), \dots\] can be bounded above by quantities that depend only on the elements of $P$. For example, let $g: \mathbb{R} \longrightarrow \mathbb{R}$ be the function given by \[ g(x): = \begin{cases}
\exp(-(1-x^2)^{-1}) & \vert x\vert \leqslant 1 \\
0 & \vert x\vert >1 ,
\end{cases}\] and then for a positive parameter $\delta$ let $g_\delta:\mathbb{R}\longrightarrow \mathbb{R}$ be defined by $g_\delta(x):= g(x/\delta)$. Then $g_\delta \in \mathcal{C}(\delta)$, as is proved rather succinctly in \cite[Lemma 9]{BFI87}, say. 

In order to shorten some of the statements in the main part of the paper, it will be convenient to consider all functions on $\mathbb{R}^0$ to be smooth (with derivatives equal to $0$). \\

Let us record a standard  proposition on smooth majorants and minorants. 
 \begin{Lemma}
\label{Lemma smooth approximations}
Let $\delta$ be a real number in the range $0<\delta < 1$. Then there exist two smooth functions $f^{+\delta}, f^{-\delta}:\mathbb{R}\rightarrow [0,1]$, with $f^{+\delta}, f^{-\delta} \in \mathcal{C}(\delta)$, satisfying \[ 1_{[\delta,1-\delta]}(x)\leqslant f^{-\delta}(x)\leqslant 1_{[0,1]}(x)\leqslant f^{+\delta}(x)\leqslant 1_{[-\delta,1+\delta]}(x)\] for all $x \in \mathbb{R}$. 
\end{Lemma}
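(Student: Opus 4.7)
The plan is to construct both functions by convolving a suitable indicator function with a normalised smooth bump supported on a short interval of length controlled by $\delta$. This is the standard mollification construction, and the excerpt has already supplied the bump function $g_\delta \in \mathcal{C}(\delta)$, so the only work is to pick the parameters correctly and verify the four pointwise inequalities along with membership in $\mathcal{C}(\delta)$.

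Concretely, I would first let $\phi(x) := g(x)/\int_{\mathbb{R}} g$, so that $\phi$ is a smooth, even, non-negative function supported on $[-1,1]$ integrating to $1$, and then set $\phi_\eta(x) := \eta^{-1} \phi(x/\eta)$ for $\eta > 0$, which is supported on $[-\eta,\eta]$ and still integrates to $1$. Taking $\eta = \delta/2$, I would then define
\begin{equation*}
f^{+\delta}(x) := \bigl(1_{[-\delta/2,\,1+\delta/2]} * \phi_{\delta/2}\bigr)(x), \qquad f^{-\delta}(x) := \bigl(1_{[\delta/2,\,1-\delta/2]} * \phi_{\delta/2}\bigr)(x),
\end{equation*}
where the second definition is legal precisely because the hypothesis $\delta < 1$ guarantees $\delta/2 < 1-\delta/2$. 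Both functions automatically take values in $[0,1]$, since they are averages of an indicator function against a non-negative kernel of total mass one.

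The four pointwise inequalities then follow by inspection of supports. Because $\phi_{\delta/2}$ is supported on $[-\delta/2,\delta/2]$, the convolution $1_{[a,b]}*\phi_{\delta/2}$ vanishes outside $[a-\delta/2, b+\delta/2]$, equals $1$ on $[a+\delta/2, b-\delta/2]$, and lies in $[0,1]$ in between. Applied to $[a,b]=[-\delta/2,1+\delta/2]$, this shows $f^{+\delta}$ is supported on $[-\delta,1+\delta]$ and equals $1$ on $[0,1]$, so $1_{[0,1]}\le f^{+\delta}\le 1_{[-\delta,1+\delta]}$. Applied to $[a,b]=[\delta/2,1-\delta/2]$, it shows $f^{-\delta}$ is supported on $[0,1]$ and equals $1$ on $[\delta,1-\delta]$, so $1_{[\delta,1-\delta]}\le f^{-\delta}\le 1_{[0,1]}$.

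Finally, to certify that $f^{\pm\delta}\in\mathcal{C}(\delta)$, I would note that $d(f^{\pm\delta})=1$, that $\operatorname{Rad}(f^{\pm\delta})\le 1+\delta$ is bounded in terms of $\delta$ alone, that $d_0(f^{\pm\delta})\le 1$, and that for $j\ge 1$ the identity $(1_{[a,b]}*\phi_{\delta/2})^{(j)} = 1_{[a,b]} * \phi_{\delta/2}^{(j)}$ together with the scaling $\phi_{\delta/2}^{(j)}(x) = (\delta/2)^{-j-1}\phi^{(j)}(2x/\delta)$ gives $d_j(f^{\pm\delta}) \le \|\phi_{\delta/2}^{(j)}\|_{L^1} = O_j(\delta^{-j})$, which is a function of $\delta$ and $j$ only. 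There is no genuine obstacle in this argument — the whole thing is a template mollification — so the only item requiring a sentence of care is the verification that every $d_j$ bound depends on $\delta$ alone (and on $j$), as required by the definition of $\mathcal{C}(\delta)$.
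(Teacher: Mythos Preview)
Your proposal is correct and is essentially the same mollification argument as the paper's: the paper convolves $1_{[\delta/2,1-\delta/2]}$ and $1_{[-\delta/2,1+\delta/2]}$ against a normalised dilate of $g$ supported on $[-\delta/4,\delta/4]$, whereas you use the dilate supported on $[-\delta/2,\delta/2]$, which is an immaterial difference in parameter choice. If anything, your write-up is slightly more explicit than the paper's in verifying the derivative bounds for $\mathcal{C}(\delta)$, which the paper dispatches with a one-line appeal to differentiation under the integral.
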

\begin{proof}
Let $g$ be as above, and let $C := \int g(x) \, dx$. Then one may define \[ f^{-\delta}(x): = \frac{4}{\delta C}  \int\limits_{y\in\mathbb{R}}1_{[\delta/2,1-\delta/2]}(y) g(4(x-y)/\delta) \, dy\] and \[ f^{+\delta}(x): = \frac{4}{\delta C}  \int\limits_{y\in\mathbb{R}}1_{[-\delta/2,1+\delta/2]}(y) g(4(x-y)/\delta) \, dy.\] The fact that $f^{+\delta},f^{-\delta} \in \mathcal{C}(\delta)$ follows from differentiating under the integral (which is easily justified by the mean value theorem). 
\end{proof}

\begin{Lemma}[Smooth partition of unity]
\label{Lemma smooth partition of unity on interval}
Let $\delta$ be a real number in the range $0< \delta < 1$. Then there exists a natural number $t$, satisfying $t = O(\delta^{-1})$, and functions $f_1,\dots,f_t: \mathbb{R} \longrightarrow [0,1]$ such that
\begin{enumerate}
\item for each $i\leqslant t$, $f_i \in \mathcal{C}(\delta)$;
\item for each $i\leqslant t$, $f_i$ is supported on an interval of length at most $2\delta$; 
\item for all $x \in \mathbb{R}$, $1_{[-1 + \delta, 1 - \delta]}(x)\leqslant \sum\limits_{i=1}^t f_i(x) \leqslant 1_{[-1 - \delta, 1 + \delta]}(x) $;
\item for all $x \in \mathbb{R}$, $x$ is contained in the support of at most $2$ of the functions $f_i$.
\end{enumerate} 
\end{Lemma}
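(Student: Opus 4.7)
The plan is to build a smooth partition of unity on all of $\mathbb{R}$ from translates of a single $\delta$-scaled bump, and then simply keep those translates whose supports lie in $[-1-\delta, 1+\delta]$. No further multiplicative truncation will be needed, because the partition will already sum to $1$ on $[-1+\delta, 1-\delta]$ and to at most $1$ everywhere.

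First, I would use Lemma \ref{Lemma smooth approximations} (applied after an affine change of variable to $[-\delta/2,\delta/2]$) to construct a function $h:\mathbb{R}\to [0,1]$ lying in $\mathcal{C}(\delta)$ that is supported on $[-\delta,\delta]$ and equals $1$ on $[-\delta/2,\delta/2]$. Setting $H(x) := \sum_{k\in\mathbb{Z}} h(x-k\delta)$, the translates $[k\delta-\delta/2,k\delta+\delta/2]$ on which $h(\cdot - k\delta)=1$ already tile $\mathbb{R}$, so $H(x)\geqslant 1$ for every $x$. On the other hand, the condition $h(x - k\delta) \neq 0$ forces $k \in (x/\delta - 1, x/\delta + 1)$, an open interval of length $2$ containing at most two integers; consequently $H(x)\leqslant 2$, and each $x$ lies in the support of at most two translates. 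I then define the smooth partition $\tilde{f}_k(x) := h(x-k\delta)/H(x)$, which satisfies $\sum_{k\in\mathbb{Z}}\tilde{f}_k \equiv 1$.

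The functions $f_1,\dots,f_t$ are then taken to be the $\tilde{f}_k$ for $k\in\mathbb{Z}$ with $|k|\leqslant 1/\delta$, of which there are $t = O(\delta^{-1})$. Properties (1), (2), and (4) are immediate from the construction (each $\tilde{f}_k$ is supported in the length-$2\delta$ interval $[k\delta - \delta, k\delta + \delta]$, and the bounded-overlap property is inherited from $h$). For property (3), the upper bound $\sum_i f_i \leqslant 1_{[-1-\delta, 1+\delta]}$ holds because $\sum_i f_i \leqslant \sum_{k\in\mathbb{Z}}\tilde{f}_k = 1$ and because each retained $\tilde{f}_k$ is supported in $[-1-\delta,1+\delta]$ (since $|k\delta|\leqslant 1$). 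For the lower bound, if $x\in [-1+\delta,1-\delta]$, then any $k$ contributing to $\sum_{k\in\mathbb{Z}}\tilde{f}_k(x)$ must satisfy $|x-k\delta|<\delta$, hence $|k\delta|<1$, which forces $|k|<1/\delta$; such $k$ is included in our index set, so $\sum_i f_i(x)=1$.

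The one step requiring a little care is verifying that each $\tilde{f}_k$ genuinely lies in $\mathcal{C}(\delta)$. Since $h \in \mathcal{C}(\delta)$ and at each point at most two translates are nonzero, the derivatives $H^{(n)}(x) = \sum_k h^{(n)}(x-k\delta)$ are bounded by $2\|h^{(n)}\|_\infty = O_\delta(1)$. Combined with $H\geqslant 1$, the Faà di Bruno formula yields $(1/H)^{(n)} = O_\delta(1)$ for every $n$, and then the Leibniz rule gives bounds on all derivatives of $\tilde{f}_k = h(\cdot - k\delta)/H$ depending only on $\delta$. This bookkeeping is the only mildly technical part of the argument, but it is routine.
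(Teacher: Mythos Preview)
Your proof is correct, but it takes a different (though equally standard) route from the paper's.

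The paper's argument is more direct: it partitions $[-1,-1+t\delta/2)$ (with $t=\lceil 4\delta^{-1}\rceil$) into consecutive half-open intervals $I_i$ of length $\delta/2$, and defines
\[
f_i(x) := \frac{4}{\delta C}\int_{\mathbb{R}} 1_{I_i}(y)\, g\!\left(\frac{4(x-y)}{\delta}\right) dy,
\]
i.e.\ the convolution of $1_{I_i}$ with a fixed mollifier of integral $1$ supported on $[-\delta/4,\delta/4]$. Properties (1)--(4) then drop out immediately from elementary facts about convolution: the sum $\sum_i f_i$ is the mollification of $1_{[-1,-1+t\delta/2)}$, which equals $1$ well inside and vanishes well outside; supports are short and overlap at most in pairs; and smoothness with $\mathcal{C}(\delta)$ bounds comes from differentiating under the integral.

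Your construction instead builds a global partition of unity $\tilde f_k = h(\cdot-k\delta)/H$ by normalising a periodic sum of bumps, and then truncates the index set. This is perfectly valid; the trade-off is that you must do a little extra bookkeeping (Fa\`a di Bruno and Leibniz) to verify $\tilde f_k\in\mathcal{C}(\delta)$, whereas the convolution approach gives derivative bounds for free. On the other hand, your functions satisfy $\sum_i f_i \equiv 1$ on $[-1+\delta,1-\delta]$ exactly, which is slightly cleaner than the paper's construction. Both approaches share the same harmless measure-zero ambiguity in property (4) at the endpoints of supports.
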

\begin{proof}
Let $t = \lceil 4\delta^{-1} \rceil$, and write \[1_{[-1,-1+ t\delta/2)}=\sum\limits_{i=1}^t 1_{I_i}.\] where \[I_i := [-1 + (i-1)\delta/2, -1 + i\delta/2).\] Then define \[ f_i (x) : = \frac{4}{\delta C} \int\limits_{y\in \mathbb{R}} 1_{I_i}(y) g(4(x-y)/\delta) \, dy.\] The desired properties are immediate.
\end{proof}

\begin{Lemma}[Approximating Lipschitz functions by smooth boxes]
\label{Lemma approximating Lipschitz functions by smooth boxes}
Let $\delta,\sigma,N$ be positive real parameters, with $\delta,\sigma$ in the range $0 < \delta,\sigma < 1/2$. Let $d$ be a natural number, and let $F:\mathbb{R}^d \longrightarrow [0,1]$ be a Lipschitz function supported on $[-N,N]^d$ with Lipschitz constant at most $(\sigma N)^{-1}$. Then there exists a natural number $k$, satisfying $k = O(\delta^{-d})$, and functions $F_1,\dots,F_{k}:\mathbb{R}^d \longrightarrow [0,1]$ such that
\begin{enumerate}
\item $\Vert F - \sum\limits_{i=1}^{k} F_i\Vert_\infty = O(\delta \sigma^{-1})$;
\item for each $i\leqslant k$, $F_i$ is supported on a box with side length $O(\delta)$;
\item there is a natural number $t$, satisfying $t = O(\delta^{-1})$, and functions $f_1,\dots,f_t: \mathbb{R} \longrightarrow [0,1]$, satisfying $f_1,\dots,f_t \in \mathcal{C}(\delta)$, such that \[F_i(\mathbf{x}) = c_{i,F} \prod\limits_{j=1}^d f_{S^{(i)}_{j}}(x_j/2N)\] for each $i\leqslant k$, for some element $S^{(i)} \in [t]^d$ and some constant $c_{i,F} \in [0,1]$. 
\end{enumerate}
\end{Lemma}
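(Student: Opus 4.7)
The plan for Lemma \ref{Lemma approximating Lipschitz functions by smooth boxes} is to tensorise the one-dimensional construction of Lemma \ref{Lemma smooth partition of unity on interval} and then discretise $F$ by replacing its value on each cell by a single constant, with the error controlled by the Lipschitz hypothesis.

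Starting from the one-dimensional partition $\{f_1,\dots,f_t\}$ (with parameter $\delta$) furnished by Lemma \ref{Lemma smooth partition of unity on interval}, I would form, for every $S = (S_1,\dots,S_d) \in [t]^d$, the smooth product
\[
\Phi_S(\mathbf{x}) := \prod_{j=1}^d f_{S_j}(x_j/2N).
\]
Since $\delta < 1/2$, property (3) of Lemma \ref{Lemma smooth partition of unity on interval} gives $\sum_{i=1}^t f_i(x) = 1$ on $[-1/2, 1/2]$, and hence $\sum_S \Phi_S \equiv 1$ on $[-N,N]^d$. Each $\Phi_S$ is supported on an axis-parallel box of side length $O(N\delta)$; the number of indices is $t^d = O(\delta^{-d})$, and property (4) of the 1D partition ensures that at most $2^d = O(1)$ of the $\Phi_S$ are non-zero at any given point.

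Next I would pick a point $\mathbf{x}_S$ inside the support of each $\Phi_S$ (say, its centre), set $c_S := F(\mathbf{x}_S) \in [0,1]$, and define $F_S := c_S \Phi_S$. Then conditions (2) and (3) of the lemma are immediate. For condition (1), the pointwise identity
\[
F(\mathbf{x}) - \sum_S F_S(\mathbf{x}) \;=\; F(\mathbf{x})\Big(1 - \sum_S \Phi_S(\mathbf{x})\Big) \;+\; \sum_S \big(F(\mathbf{x}) - F(\mathbf{x}_S)\big)\Phi_S(\mathbf{x})
\]
allows me to split the error. The first summand vanishes everywhere: on $[-N,N]^d$ by the partition property just established, and outside $[-N,N]^d$ because $F$ itself is supported in $[-N,N]^d$. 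The second summand is bounded, using the Lipschitz hypothesis together with $\|\mathbf{x} - \mathbf{x}_S\|_\infty = O(N\delta)$ whenever $\Phi_S(\mathbf{x}) \neq 0$, by
\[
\sum_S |F(\mathbf{x}) - F(\mathbf{x}_S)| \Phi_S(\mathbf{x}) \;\leq\; (\sigma N)^{-1} \cdot O(N\delta) \cdot \sum_S \Phi_S(\mathbf{x}) \;=\; O(\delta/\sigma),
\]
which is condition (1).

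I do not anticipate any substantial obstacle: the argument is a standard partition-of-unity plus Lipschitz approximation, and all four properties listed in Lemma \ref{Lemma smooth partition of unity on interval} feed directly into the desired properties of the $F_i$. The only step that demands any care is the rescaling by $1/(2N)$, chosen precisely so that the support $[-N,N]^d$ of $F$ maps inside $[-1/2,1/2]^d$, where (thanks to $\delta<1/2$) the one-dimensional partition already sums exactly to one; this avoids any boundary correction to the $L^\infty$ error.
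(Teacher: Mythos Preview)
Your proposal is correct and follows essentially the same route as the paper's proof: tensorise the one-dimensional partition from Lemma~\ref{Lemma smooth partition of unity on interval}, freeze $F$ at one point per cell, and use the Lipschitz bound together with the bounded overlap (property~(4)) to control the error. Your write-up is in fact slightly more explicit than the paper's in handling the boundary case, deducing from property~(3) and $\delta<1/2$ that $\sum_S\Phi_S\equiv 1$ on $[-N,N]^d$, whereas the paper simply observes that $F$ vanishes wherever the partition fails to equal $1_{[-1,1]^d}$.
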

\begin{proof}
We have \begin{align}
\label{boxing F}
F(\mathbf{x}) &= F(\mathbf{x})1_{[-1,1]^d}(\mathbf{x}/2N) \nonumber\\
& = F(\mathbf{x}) \Big( \prod\limits_{j=1}^d 1_{[-1,1]}(x_j/2N) \Big) \nonumber\\
& = F(\mathbf{x}) \Big( \prod\limits_{j=1}^d \sum\limits_{i=1}^t f_i(x_j/2N)\Big),
\end{align}
\noindent where the functions $f_1,\dots, f_t$ are those constructed by applying Lemma \ref{Lemma smooth partition of unity on interval} with this value of $\delta$. This manipulation is indeed valid, since $F(\mathbf{x}) = 0$ for any $\mathbf{x}$ for which \[ \prod\limits_{j=1}^d\sum\limits_{i=1}^t f_i(x_j/2N) \neq \prod\limits_{j=1}^d 1_{[-1,1]}(x_j/2N).\]

 Swapping the product and summation, (\ref{boxing F}) equals \[ \sum\limits_{S \in [t]^d} F(\mathbf{x})\Big(\prod\limits_{j=1}^df_{S_j}(x_j/2N)\Big) .\] Let $\mathbf{x}^{(S)}\in \mathbb{R}^d$ be any point at which $\prod\limits_{j=1}^d f_{S_j}(x^{(S)}_j/2N)$ is non-zero. Then the above is equal to
\begin{equation*}
\sum\limits_{S \in [t]^d}(F(\mathbf{x}^{(S)}) + O(\delta \sigma^{-1}))\Big( \prod\limits_{j=1}^d f_{S_j}(x_j/2N)\Big) ,
\end{equation*} by the Lipschitz properties of $F$ and the limited support of the functions $f_1,\dots,f_t$ (which was part (2) of Lemma \ref{Lemma smooth partition of unity on interval}).

Define \[F_S(\mathbf{x}) : = F(\mathbf{x}^{(S)})\Big(\prod\limits_{j=1}^d f_{S_j}(x_j/2N)\Big) .\] These functions satisfy properties (2) and (3) of Lemma \ref{Lemma approximating Lipschitz functions by smooth boxes}. Finally note that, by part (4) of Lemma \ref{Lemma smooth partition of unity on interval}, each $\mathbf{x} \in \mathbb{R}$ is contained in the support of at most $O(1)$ of the functions $F_S$, and hence $\Vert F - \sum\limits_{S \in [t]^d} F_S\Vert_\infty = O(\delta \sigma^{-1})$, as required. 
\end{proof}

The Fourier transform of smooth functions will be an important tool in Section \ref{section Inequalities in arithmetic progressions}. We choose the following convention. If $F:\mathbb{R}^d \longrightarrow \mathbb{R}$ is a compactly supported smooth function, we define the Fourier transform $\widehat{F}:\mathbb{R}^d \longrightarrow \mathbb{C}$ by the formula \[ \widehat{F}(\ba): = \int\limits_{\mathbf{x} \in \mathbb{R}^d} F(\mathbf{x}) e(-\ba \cdot \mathbf{x}) \, d\mathbf{x}.\]

\begin{Lemma}
\label{Lemma by parts}
Let $P$ be a set of parameters and suppose $F \in \mathcal{C}(P)$. Then for every $\ba$ and every non-negative integer $K$ one has \[\vert \widehat{F}(\ba)\vert \ll_{P,K} \Vert 1+ \ba\Vert_\infty^{-K}.\]
\end{Lemma}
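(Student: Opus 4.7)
The plan is a standard integration-by-parts argument exploiting the compact support and bounded derivatives inherited from membership in $\mathcal{C}(P)$. First I would dispose of the trivial case by the bound
\[ |\widehat{F}(\ba)| \leqslant \int_{\mathbb{R}^d}|F(\mathbf{x})|\,d\mathbf{x} \leqslant d_0(F)(2\Rad(F))^d \ll_P 1, \]
which handles all $\ba$ with $\Vert \ba\Vert_\infty = O(1)$, in particular giving the stated estimate in the regime where $\Vert 1+\ba\Vert_\infty \asymp 1$.

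For the regime $\Vert\ba\Vert_\infty \geqslant 1$, choose an index $j$ with $|a_j| = \Vert\ba\Vert_\infty$, so $|a_j| \geqslant 1$. Writing $e(-\ba\cdot\mathbf{x}) = (-2\pi i a_j)^{-1}\partial_{x_j} e(-\ba\cdot\mathbf{x})$ and integrating by parts $K$ times in the $x_j$ variable (the boundary contributions vanishing since $F$ has compact support in $[-\Rad(F),\Rad(F)]^d$), I obtain
\[ \widehat{F}(\ba) = \frac{1}{(2\pi i a_j)^K}\int_{\mathbb{R}^d}\frac{\partial^K F}{\partial x_j^K}(\mathbf{x})\, e(-\ba\cdot\mathbf{x})\, d\mathbf{x}. \]
Bounding the integral by $d_K(F)(2\Rad(F))^d \ll_{P,K} 1$ and using $|a_j|^{-K} = \Vert\ba\Vert_\infty^{-K}$ gives
\[ |\widehat{F}(\ba)| \ll_{P,K} \Vert\ba\Vert_\infty^{-K}. \]

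Finally, I combine the two estimates: the trivial bound yields $|\widehat{F}(\ba)| \ll_P 1$ for all $\ba$, and the integration-by-parts bound yields $|\widehat{F}(\ba)| \ll_{P,K} \Vert\ba\Vert_\infty^{-K}$ when $\Vert\ba\Vert_\infty \geqslant 1$. Together these give $|\widehat{F}(\ba)| \ll_{P,K} \min(1,\Vert\ba\Vert_\infty^{-K}) \ll_{P,K} (1+\Vert\ba\Vert_\infty)^{-K}$, which is the desired bound (interpreting $\Vert 1+\ba\Vert_\infty$ as $\max(1,\Vert\ba\Vert_\infty)$, up to a harmless constant). There is no real obstacle here; the only point requiring any care is verifying that the boundary terms in the repeated integration by parts vanish, which is immediate from the compact support guaranteed by $\Rad(F) \ll_P 1$.
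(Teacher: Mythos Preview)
Your proof is correct and is precisely the standard integration-by-parts argument the paper has in mind; the paper's own proof consists of the single sentence ``This follows from integration by parts.'' You have simply supplied the routine details, including the correct handling of the two regimes $\Vert\ba\Vert_\infty \leqslant 1$ and $\Vert\ba\Vert_\infty \geqslant 1$.
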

\begin{proof}
This follows from integration by parts.
\end{proof}

Finally, we recall the definition of dual lattices and the version of the Poisson summation formula that we will use. 

\begin{Definition}[Dual lattice]
\label{Definition dual lattice}
Let $h$ be a natural number and let $\Gamma \leqslant  \mathbb{R}^h$ be a lattice of rank $h$. Then the dual lattice $\Gamma^*$ is defined by \[ \Gamma^*: = \{ \mathbf{y} \in \mathbb{R}^h: \langle \mathbf{y}, \mathbf{x} \rangle \in \mathbb{Z} \text{ for all } \mathbf{x} \in \Gamma\}.\]
\end{Definition}
\noindent It is easily seen that if $M$ is an $h$-by-$h$ matrix whose columns are a lattice basis for $\Gamma$, then $(M^{-1})^T$ is an $h$-by-$h$ matrix whose columns are a lattice basis for $\Gamma^*$. 
\begin{Lemma}[Poisson summation]
\label{Lemma Poisson summation}
Let $h$ be a natural number and let $\Gamma \leqslant \mathbb{R}^h$ be a lattice of rank $h$. Let $F: \mathbb{R}^h \longrightarrow \mathbb{C}$ be a smooth compactly supported function. Then \[\sum\limits_{\mathbf{x} \in \Gamma} F(\mathbf{x}) = \frac{1}{\vol(\mathbb{R}^h/\Gamma)} \sum\limits_{ \mathbf{y} \in \Gamma^*} \widehat{F}(\mathbf{y}).\]
\end{Lemma}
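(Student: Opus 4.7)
The plan is to reduce to the classical Poisson summation formula on $\mathbb{Z}^h$ by a linear change of variables. Pick an $h$-by-$h$ matrix $M$ whose columns are a basis of $\Gamma$, so that $\Gamma = M\mathbb{Z}^h$ and $\vol(\mathbb{R}^h/\Gamma) = |\det M|$. By the remark immediately preceding the statement, a basis of $\Gamma^*$ is given by the columns of $(M^{-1})^T$, so $\Gamma^* = (M^{-1})^T \mathbb{Z}^h$.

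Next, define $G: \mathbb{R}^h \longrightarrow \mathbb{C}$ by $G(\mathbf{u}) := F(M\mathbf{u})$. Since $F$ is smooth and compactly supported, so is $G$, and I may apply the standard Poisson summation formula on $\mathbb{Z}^h$ to obtain
\[
\sum_{\mathbf{u} \in \mathbb{Z}^h} G(\mathbf{u}) = \sum_{\mathbf{k} \in \mathbb{Z}^h} \widehat{G}(\mathbf{k}).
\]
The left-hand side is exactly $\sum_{\mathbf{x} \in \Gamma} F(\mathbf{x})$ after the substitution $\mathbf{x} = M\mathbf{u}$.

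For the right-hand side I would compute $\widehat{G}$ using the substitution $\mathbf{x} = M\mathbf{u}$, which has Jacobian $|\det M|$:
\[
\widehat{G}(\mathbf{k}) = \int_{\mathbb{R}^h} F(M\mathbf{u}) e(-\mathbf{k}\cdot \mathbf{u}) \, d\mathbf{u} = \frac{1}{|\det M|}\int_{\mathbb{R}^h} F(\mathbf{x}) e\bigl(-((M^{-1})^T\mathbf{k})\cdot \mathbf{x}\bigr)\, d\mathbf{x} = \frac{1}{|\det M|} \widehat{F}\bigl((M^{-1})^T\mathbf{k}\bigr).
\]
As $\mathbf{k}$ ranges over $\mathbb{Z}^h$, the vector $(M^{-1})^T \mathbf{k}$ ranges bijectively over $\Gamma^*$, and summing gives the claimed identity.

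There is no serious obstacle here; the only points requiring a little care are verifying that the classical Poisson summation formula on $\mathbb{Z}^h$ applies to $G$ (immediate from smoothness and compact support, which give rapid decay of $\widehat{G}$ and hence absolute convergence on both sides) and matching the Fourier-transform sign convention adopted just above the statement. The slightly delicate bookkeeping is the appearance of $(M^{-1})^T$ rather than $M^{-1}$; this is forced by the formula $\mathbf{k}\cdot M^{-1}\mathbf{x} = ((M^{-1})^T\mathbf{k})\cdot \mathbf{x}$, and it is precisely why the factor $(M^{-1})^T$ (and not $M^{-1}$) appears in the dual lattice basis.
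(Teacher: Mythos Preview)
Your proof is correct and follows exactly the route the paper indicates: cite the classical $\mathbb{Z}^h$ Poisson summation formula and reduce the general lattice case to it by the linear change of variables $\mathbf{x}=M\mathbf{u}$. You have simply written out the change-of-variables details that the paper leaves implicit.
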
 
\begin{proof}
This is a standard result. The version in which $\Gamma = \mathbb{Z}^h$ appears as \cite[Theorem 3.1.17]{Gr08}, with the extension to general full-rank lattices following from a change of variables. 
\end{proof}

\section{Notation and Conventions}
\label{section conventions}
For the most part the notation used in this paper is very standard, and any usage that could be viewed as somewhat unusual will be introduced as and when it is required. However, there are a few particular points that will apply to the paper as a whole which we believe to be important to address now. \\

 We will use the Bachmann-Landau asymptotic notation $O$, $o$, and $\Omega$, but we do not, as is sometimes the convention, for a function $f$ and a positive function $g$ choose to write $f = O(g)$ if there exists a constant $C$ such that $\vert f(N)\vert \leqslant C g(N) $ for $N$ sufficiently large. Rather we require the inequality to hold for all $N$ in some pre-specified range. If $N$ is a natural number, the range is always assumed to be $\mathbb{N}$ unless otherwise specified. It will be a convenient shorthand to use these symbols in conjunction with minus signs, whenever they appear in exponents. For example, $N^{-\Omega(1)}$ refers to a term $N^{-c}$, where $c$ is some positive quantity bounded away from $0$ as the asymptotic parameter tends to infinity.  

The Vinogradov symbol $\ll$ will be used, where for a function $f$ and a positive function $g$ we write $f\ll g$ if and only if $f = O(g)$. We write $f\asymp g$ if $f\ll g$ and $g\ll f$. If an implied constant or a $o(1)$ term depends on other parameters, we will denote these by subscripts, e.g. $O_{c,C,\varepsilon}(1)$, or $f\asymp_{\varepsilon} g$. However, if the implied constants depend on the underlying dimensions (denoted by $m$, $d$, and occasionally by $h$, $s$, and $t$) we will not record this fact explicitly, as this would render most of the expressions unreadable.

The notation $\Rad(F)$, which was introduced in the previous section for compactly supported smooth functions $F$, will also be used when $F$ is not smooth. \\

In order to keep track of which variables are scalars and which are vectors, we will use boldface $\mathbf{x}$ to denote any $\mathbf{x} \in \mathbb{R}^d$ where $d$ could be at least $2$. In order to describe certain integrals over many variables, the following notational convention will be useful. If $\mathbf{x} \in \mathbb{R}^d$ and if $a$ and $b$ are two subscripts with $1\leqslant a\leqslant b\leqslant d$, we use $\mathbf{x_a^b}$ to denote the vector $(x_a,x_{a+1},\cdots,x_{b})^T \in \mathbb{R}^{b-a+1}$.\\

%Occasionally we will wish to take a certain collection of labelled variables and stress that we are only considering a certain subset of them. To this end, if  $\mathbf{x} \in \mathbb{R}^d$ and if $a$ and $b$ are two subscripts with $1\leqslant a\leqslant b\leqslant d$, we use the notation $\mathbf{x_a^b}$ to denote the vector $(x_a,x_{a+1},\cdots,x_{b})^T \in \mathbb{R}^{b-a+1}$. 

%The letters $\mu$, $\lambda$, $\varphi$, $\sigma$, $\chi$, etc. will be too important to reserve solely for their number-theoretic meanings (M\"{o}bius function, Liouville function, etc.). If they undertake a special number-theoretic meaning, this will be explicitly stated. \\

With a view to trying to shorten some of the statements and proofs to follow, there are certain functions that we will fix throughout the paper, namely $w$, $W$, $\rho$, and $\chi$. From now on, the function $w: \mathbb{N} \longrightarrow \mathbb{R}_{\geqslant 0}$ will always be defined by \[w(N): = \max(1,\log\log\log N).\] Whenever $N$ is a quantity that we have defined, we write $w$ for $w(N)$ and let \[W = W(N) = \prod_{p\leqslant w(N)}p.\] The empty product is considered to be equal to $1$. Whenever other functions $w_1,\dots,w_d, w^*:\mathbb{N}\longrightarrow \mathbb{R}_{\geqslant 0}$ occur, and a natural number $N$ is given, we will define $W_1,\dots,W_d,W^*$ analogously. 

The following definition (a smooth version of \cite[Definition 5.2]{Wa17}) will be a useful way to control certain functions that are required in the argument.  

\begin{Definition}[$\eta$-supported]
\label{Defintion eta supported}
Let $\chi:\mathbb{R}\longrightarrow [0,1]$ be a smooth function, and let $\eta$ be a positive parameter. We say that $\chi$ is \emph{$\eta$-supported} if $\chi$ is supported on $[-\eta,\eta]$ and $\chi(x) \equiv 1$ for all $x\in [-\eta/2,\eta/2]$. 
\end{Definition}
\noindent It follows from Lemma \ref{Lemma smooth approximations} that $1$-supported functions exist. From now on we fix a smooth function \[\rho:\mathbb{R} \longrightarrow [0,1]\] that is $1$-supported. We think of $\rho$ as an element of $\mathcal{C}(\emptyset)$. Whenever a positive parameter $\eta$ is defined we also define \[\chi:\mathbb{R} \longrightarrow [0,1]\] by the relation $\chi(x) : = \rho(x/\eta)$. The function $\chi$ is $\eta$-supported, and satisfies $\chi \in \mathcal{C}(\eta)$. \\

We finish this section with some pieces of notation of a more standard nature. If $X,Y\subset\mathbb{R}^d$ for some $d$, we define \[\operatorname{dist}(X,Y): = \inf\limits_{x\in X, y\in Y} \Vert x - y\Vert_{\infty}.\] If $X$ is the singleton $\{x\}$, we write $\operatorname{dist}(x,Y)$ for $\operatorname{dist}(\{x\},Y)$. We let $\partial(X)$ denote the topological boundary of $X$ (though the symbol $\partial$ will also be used for partial differentiation, as usual). If $A$ and $B$ are two sets with $A\subseteq B$, we let $1_A:B\longrightarrow \{0,1\}$ denote the indicator function of $A$. The relevant set $B$ will usually be obvious from context. If $E$ is some event, e.g. a divisor condition, we will also use $1_E$ for the indicator function of this event.  For $\theta \in \mathbb{R}$ we adopt the standard shorthand $e(\theta)$ to mean $e^{2\pi i \theta}$. The M\"{o}bius function will be denoted by $\mu$, though in Section \ref{section Cauchy Schwarz argument} the symbol $\mu$ will also be used to denote a measure. In Section \ref{section proof of pseduorandomness} we will use $\varphi$ for Euler's $\varphi$-function, and for two natural numbers $a$ and $b$ we use the shorthand $(a,b)$ to denote their greatest common divisor. \\

\part{Linear algebra}
\label{Part linear algebra}
In \cite{Wa17} we developed an armoury of linear-algebraic methods, which enabled us to manipulate linear inequalities into certain desired forms. The same manipulation is necessary here. We have chosen not to consign this material to an appendix, nor simply to cite \cite{Wa17}, since the result of Lemma \ref{Lemma generating a purely irrational map} below will be very important during subsequent sections. We will also need a few results (on the vector $\widetilde{\mathbf{r}}$ below) that were not required in our previous work, and so citing \cite{Wa17} won't quite do. 

Fortunately, as we do not seek to determine exactly how the error term in Theorem \ref{Main theorem} depends on $L$, we can offer a significant simplification over the work that was presented in \cite{Wa17}. This is another reason to include this material. \\

Before starting, we remind the reader of some of the central definitions from the theory of dual vector spaces and dual linear maps, which will be used liberally throughout. Let $V$ be a finite-dimensional vector space over a field $\mathbb{F}$. Then $V^*$ denotes the dual vector space, i.e. the vector space of all linear maps $\omega: V \longrightarrow \mathbb{F}$ under pointwise addition and scalar multiplication. If $L: V \longrightarrow W$ is a linear map between two finite-dimensional vector spaces, the dual map $L^*: W^* \longrightarrow V^*$ is defined by the relation $(L^*(\boldsymbol{\omega}))(\mathbf{v}): = \bo(L(\mathbf{v}))$ for all $\bo\in W^*$ and $\mathbf{v} \in V$. Given a basis $\mathbf{e_1}, \dots, \mathbf{e_n}$ for $V$, the dual basis $\mathbf{e_1^*}, \dots, \mathbf{e_n^*}$ for $V^*$ is defined by extending linearly the relations $$\mathbf{e_i^*}(\mathbf{e_j}) = \begin{cases} 1 & \text{if } i = j \\
0 & \text{otherwise.}\end{cases}$$ Finally, given a set $S \subset V$ the annihilator $S^0 \subset V^*$ is defined by \[S^0 :=\{ \bo \in V^* : \bo(\mathbf{v}) = 0 \text{ for all } \mathbf{v} \in S \}.\] \\

\section{Dimension reduction}
\label{section linear algebra and dimension reduction}
 
We begin with a generalisation of Definition \ref{definiton of the solution count form}. Note that the case $m=0$ is permitted below.

\begin{Definition}
\label{Definition most general discrete solution count form}
Let $N,d,h$ be natural numbers, and let $m$ be a non-negative integer. Let $L:\mathbb{R}^h \longrightarrow \mathbb{R}^m$ be a linear map, and let $(\xi_1,\dots,\xi_d) =\Xi:\mathbb{R}^h \longrightarrow \mathbb{R}^d$ be a linear map with integer coefficients. Let $F:\mathbb{R}^d \longrightarrow \mathbb{R}$ and $G:\mathbb{R}^m \longrightarrow \mathbb{R}$ be functions with compact support. Let $\mathbf{v} \in \mathbb{R}^m$ and $\widetilde{\mathbf{r}} \in \mathbb{Z}^d$. Then for $f_1, \dots, f_d:\mathbb{Z} \longrightarrow \mathbb{R}$ we define 
\begin{equation}
T_{F,G,N}^{L,\mathbf{v} ,\Xi,\widetilde{\mathbf{r}}}(f_1,\dots,f_d): = \frac{1}{N^{h-m}} \sum\limits_{\mathbf{n} \in \mathbb{Z}^h} \Big( \prod\limits_{j=1}^d f_j(\xi_j(\mathbf{n}) + \widetilde{r}_j) \Big) F\Big(\frac{\Xi(\mathbf{n}) + \widetilde{\mathbf{r}}}{N}\Big) G(L\mathbf{n} + \mathbf{v}),
\end{equation}
\noindent where $\widetilde{r}_j$ is the $j^{th}$ coordinate of $\widetilde{\mathbf{r}}$.
\end{Definition} 
\noindent The reader might notice that this definition is subtly different from the similar definition that appeared in \cite{Wa17}, namely Definition 4.3 of that paper, in which the function $\mathbf{n} \mapsto F((\Xi(\mathbf{n})+ \widetilde{\mathbf{r}})/N)$ was treated as an arbitrary function $F_1:\mathbb{R}^h\longrightarrow [0,1]$. When dealing with quantitative aspects of smooth functions (a feature of this paper that is not required in \cite{Wa17}) it is convenient to preserve the internal structure of this particular function, and so we have modified Definition \ref{Definition most general discrete solution count form} accordingly.  \\

%The parameter $\mathbf{v}$ here will not be very significant in practice, as it can often be absorbed into the definition of $G$. This isn't possible when $\Vert \mathbf{v}\Vert_\infty \gg N$, as will sometimes be the case, so we include it as a separate parameter in Definition \ref{Definition most general discrete solution count form}.\\

Recall the notion of \emph{rational maps} from \cite{Wa17}.

\begin{Definition}[Rational dimension, rational map, purely irrational]
\label{Definition rational space}
Let $m$ and $d$ be natural numbers, with $d\geqslant m$. Let $L:\mathbb{R}^d\longrightarrow \mathbb{R}^m$ be a surjective linear map. Let $u$ denote the largest integer for which there exists a surjective linear map $\Theta:\mathbb{R}^m \longrightarrow \mathbb{R}^u$ for which $\Theta L (\mathbb{Z}^d) \subseteq \mathbb{Z}^u$. We call $u$ the \emph{rational dimension} of $L$, and we call any map $\Theta$ with the above property a \emph{rational map} for $L$. We say that $L$ is \emph{purely irrational} if $u=0$. 
\end{Definition}
\begin{Remark}
\label{Remark algebraic coefficients of rational map}
\emph{If (the matrix of) $L$ has algebraic coefficients, then there exists a rational map for $L$ that also has algebraic coefficients.}
\end{Remark}

Purely irrational linear maps are those that we may analyse most easily using the Davenport-Heilbronn method (see Section \ref{section Inequalities in arithmetic progressions}). However, even when proving Theorem \ref{Main theorem simpler version}, whose statement concerns only purely irrational linear maps, we will be forced to consider auxiliary linear maps that are not purely irrational. It is necessary  therefore to develop a rudimentary theory of these maps. Readers desiring more detail and motivating examples concerning rational maps and rational dimension may consult Sections 2, 4, and 6 of \cite{Wa17}. \\

Our key tool will be Lemma \ref{Lemma generating a purely irrational map}, which is a version of Lemma 4.10 from \cite{Wa17}. This lemma will enable us to `quotient out' the rational relations that are present in a diophantine inequality, leaving behind a purely irrational linear map between spaces of a lower dimension. In particular, we will show that \[T_{F,G,N}^{L,\mathbf{v}}(f_1,\dots,f_d)=\sum\limits_{\widetilde{\mathbf{r}} \in \widetilde{R}}T_{F,G_{\widetilde{\mathbf{r}}},N}^{L^\prime,\mathbf{v}^\prime,\Xi,\widetilde{\mathbf{r}}}(f_1,\dots,f_d),\] where $L^\prime$ is purely irrational, and the vectors $\mathbf{v^\prime}$ and $\widetilde{\mathbf{r}}$, the linear map $\Xi$ and the function $G_{\widetilde{\mathbf{r}}}$ are objects that we may control.

To state the lemma we need to recall explicitly the notion from \cite{GT10} that was mentioned in Remark \ref{Remark dual CS}, namely \emph{finite Cauchy-Schwarz complexity} for linear maps.\footnote{In \cite{Wa17} a notion of degeneracy for pairs of linear maps was useful, but we have structured the present paper in such a way as to avoid requiring this complicated notion.}
\begin{Definition}
[Finite Cauchy-Schwarz complexity]
Let $d,h$ be natural numbers, and let $(\xi_1,\dots,\xi_d): = \Xi:\mathbb{R}^h \longrightarrow \mathbb{R}^d$ be a linear map. We say that $\Xi$ has \emph{infinite Cauchy-Schwarz complexity} if there are two distinct indices $i$ and $j$, and some $\lambda \in \mathbb{R}$, for which $\xi_i = \lambda\xi_j$. If no such $i$ and $j$ exist we say that $\Xi$ has \emph{finite Cauchy-Schwarz complexity}.
\end{Definition}

There is an equivalent definition, which will be more convenient for algebraic manipulations. 
\begin{Definition}[Finite Cauchy-Schwarz complexity, equivalent definition]
\label{Definition finite complexity}
Let $d,h$ be natural numbers. Let $\mathbf{e_1}, \dots,\mathbf{e_d}$ denote the standard basis vectors of $\mathbb{R}^d$, and let $\mathbf{e_1}^\ast, \dots,\mathbf{e_d}^\ast$ denote the dual basis of $(\mathbb{R}^d)^\ast$. Then let $V_{\degen}(h,d)$ denote the set of all linear maps $\Xi:\mathbb{R}^h \longrightarrow \mathbb{R}^d$ for which there exist two indices $i,j \leqslant d$, and some real number $\lambda$, such $\mathbf{e_i} - \lambda \mathbf{e_j}$ is non-zero and $\mathbf{e_i}^* - \lambda \mathbf{e_j}^* \in \ker (\Xi^*)$. If $\Xi \notin V_{\degen}(h,d)$, we say that $\Xi$ has \emph{finite Cauchy-Schwarz complexity}. 
\end{Definition}
\noindent  The equivalence of these definitions is elementary.

For more background on the notion of finite Cauchy-Schwarz complexity, the reader may consult Section 1 of \cite{GT10} or Section 6 of \cite{Wa17}.\\ 

Now we may state and prove the important lemma, which provides the `dimension reduction' of the section title. 
\begin{Lemma}[Generating a purely irrational map]
\label{Lemma generating a purely irrational map}
Let $m,d$ be natural numbers, with $d \geqslant m+2$, and let $C,\eta$ be positive parameters. Let $L: \mathbb{R}^d \longrightarrow \mathbb{R}^m$ be a surjective linear map with algebraic coefficients. Let $u$ be the rational dimension of $L$. Let $F:\mathbb{R}^d \longrightarrow [0,1]$ and $G: \mathbb{R}^m \longrightarrow [0,1]$ be compactly supported functions. Assume that $G$ is smooth, $\Rad(G) \leqslant \eta$, and moreover that $G \in \mathcal{C}(P,\eta)$ for some set of parameters $P$. Let $\mathbf{v} \in\mathbb{R}^m$ be a vector with $\Vert \mathbf{v}\Vert_\infty \leqslant CN$. Then there exists a surjective linear map $\Theta:\mathbb{R}^m \longrightarrow \mathbb{R}^u$, a surjective linear map $L^\prime:\mathbb{R}^{d-u} \longrightarrow \mathbb{R}^{m-u}$, an injective linear map $\Xi:\mathbb{R}^{d-u} \longrightarrow \mathbb{R}^d$, a finite subset $\widetilde{R}\subset \mathbb{Z}^d$, a vector $\mathbf{v}^\prime \in \mathbb{R}^{m-u}$, and, for each $\widetilde{\mathbf{r}} \in \widetilde{R}$, a compactly supported function $G_{\widetilde{\mathbf{r}}}:\mathbb{R}^{m-u} \longrightarrow [0,1]$,  such that
\begin{enumerate}[(1)]
\item $\Theta$ is a rational map for $L$ with algebraic coefficients;
\item $\Xi$ has integer coefficients, depends only on $L$, and satisfies $\im \Xi = \ker \Theta L $ and $\Xi(\mathbb{Z}^{d-u}) = \mathbb{Z}^d \cap \im \Xi$;
\item $\widetilde{R}$ satisfies $\vert \widetilde{R}\vert = O_{L,\eta}(1)$, and $\Vert \widetilde{\mathbf{r}}\Vert_\infty = O_{C,L,\eta}(N)$ for all $\widetilde{\mathbf{r}} \in \widetilde{R}$;
\item for all $\widetilde{\mathbf{r}} \in\widetilde{R}$, the function $G_{\widetilde{\mathbf{r}}}$ is smooth, $\Rad(G) = O_L(\eta)$, and $G_{\widetilde{\mathbf{r}}} \in \mathcal{C}(L,P,\eta)$;
\item $\mathbf{v}^\prime$ satisfies $\Vert \mathbf{v}^\prime\Vert_\infty = O_{C,L}(N)$;
\item for all natural numbers $N$, and for all functions $f_1,\dots,f_d: \mathbb{Z} \longrightarrow \mathbb{R}$, one has \[T_{F,G,N}^{L,\mathbf{v}}(f_1,\dots,f_d)=\sum\limits_{\widetilde{\mathbf{r}} \in \widetilde{R}}T_{F,G_{\widetilde{\mathbf{r}}},N}^{L^\prime,\mathbf{v}^\prime,\Xi,\widetilde{\mathbf{r}}}(f_1,\dots,f_d);\]
\item $L^\prime$ is purely irrational, depends only on $L$, and has algebraic coefficients; 
\item if $L \notin V_{\degen}^*(m,d)$ then $\Xi$ has finite Cauchy-Schwarz complexity. \\

\noindent The above properties suffice for Section \ref{section proof of pseduorandomness}, but three additional properties also hold. We will need these additional properties in Section \ref{section structure of Q}. \\
\item Letting $\mathbf{e_1}, \dots,\mathbf{e_{d-u}}$ denote the standard basis of $\mathbb{R}^{d-u}$, there is a set $\{ \mathbf{x_i}: i \leqslant u\} \subset \mathbb{R}^d$ for which \begin{equation}
 \mathcal{B}: = \{\mathbf{x_i}:i\leqslant u\} \cup \{ \Xi(\mathbf{e_j}):j\leqslant d-u\}
 \end{equation} is a basis for $\mathbb{R}^d$ and a lattice basis for $\mathbb{Z}^d$. Furthermore, $\widetilde{R} \subset \spn(\mathbf{x_i}:i\leqslant u)$ and $\{\Theta L \mathbf{x_i}: i\leqslant u\}$ is a lattice basis for $\Theta L \mathbb{Z}^d$; 
\item if $\eta$ is small enough in terms of $L$, and if $\mathbf{v} = L\mathbf{a}$ for some $\mathbf{a} \in \mathbb{R}^d$, then $\vert \widetilde{R}\vert = 1$ and $\widetilde{\mathbf{r}} \in R$ is a vector that minimises $\Vert \Theta L (\widetilde{\mathbf{r}} + \mathbf{a})\Vert_\infty$ over all $\widetilde{\mathbf{r}} \in \mathbb{Z}^d$;
\item for all $\widetilde{\mathbf{r}} \in \widetilde{R}$ and $\mathbf{x} \in \mathbb{R}^{d-u}$ one has \[ G_{\widetilde{\mathbf{r}}}(L^\prime \mathbf{x} + \mathbf{v}^\prime) = G(L\Xi (\mathbf{x}) + L\widetilde{\mathbf{r}} +\mathbf{v}).\].
\end{enumerate}
\end{Lemma}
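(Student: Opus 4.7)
The plan is to adapt the proof of Lemma 4.10 of \cite{Wa17}, modifying the construction so that the smoothness and support data of $G$ descends cleanly to the functions $G_{\widetilde{\mathbf{r}}}$. First, by Definition \ref{Definition rational space} and Remark \ref{Remark algebraic coefficients of rational map} I would fix a surjective rational map $\Theta:\mathbb{R}^m \longrightarrow \mathbb{R}^u$ with algebraic coefficients and of maximal rational dimension $u$. Since $\Theta L$ has algebraic coefficients and maps $\mathbb{Z}^d$ into $\mathbb{Z}^u$, the lattice $\mathbb{Z}^d \cap \ker(\Theta L)$ has rank $d-u$; choosing a lattice basis $\mathbf{b_1},\dots,\mathbf{b_{d-u}}$ and setting $\Xi(\mathbf{e_j}):=\mathbf{b_j}$ defines the injective integer-valued map $\Xi$. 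Extending to a full lattice basis of $\mathbb{Z}^d$ by adjoining $\mathbf{x_1},\dots,\mathbf{x_u} \in \mathbb{Z}^d$ (possible by the structure theorem applied to $\mathbb{Z}^d/(\mathbb{Z}^d \cap \ker\Theta L)$) will yield property (9), since $\{\Theta L \mathbf{x_i}\}$ is then visibly a $\mathbb{Z}$-basis for the rank-$u$ lattice $\Theta L \mathbb{Z}^d$.

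Next I would define the remaining objects. Writing $W := \spn\{L\mathbf{x_i}\}$ gives a decomposition $\mathbb{R}^m = \ker\Theta \oplus W$ (since $\Theta\vert_W$ is an isomorphism onto $\mathbb{R}^u$), with corresponding projections $\pi_{\ker\Theta}$ and $\pi_W$. I would fix an isomorphism $\iota:\mathbb{R}^{m-u}\longrightarrow \ker\Theta$ via a basis of $\ker\Theta$ with algebraic coefficients, and set $L^\prime := \iota^{-1} \circ L \circ \Xi$ (well-defined since $L(\im \Xi) \subseteq \ker\Theta$) and $\mathbf{v}^\prime := \iota^{-1}(\pi_{\ker\Theta}(\mathbf{v}))$. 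Every $\mathbf{n}\in\mathbb{Z}^d$ decomposes uniquely as $\mathbf{n}=\widetilde{\mathbf{r}}+\Xi(\mathbf{x})$ with $\widetilde{\mathbf{r}}\in\spn\{\mathbf{x_i}\}\cap \mathbb{Z}^d$ and $\mathbf{x}\in\mathbb{Z}^{d-u}$, whence
\[L\mathbf{n}+\mathbf{v} \;=\; \iota(L^\prime\mathbf{x}+\mathbf{v}^\prime) \;+\; (L\widetilde{\mathbf{r}} + \pi_W(\mathbf{v})).\]
Setting $G_{\widetilde{\mathbf{r}}}(\mathbf{y}) := G(\iota(\mathbf{y}) + L\widetilde{\mathbf{r}} + \pi_W(\mathbf{v}))$ yields property (11), and $\widetilde{R}$ consists precisely of those $\widetilde{\mathbf{r}}$ for which $G_{\widetilde{\mathbf{r}}}$ is not identically zero. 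Applying $\Theta$ to the support condition forces $\Vert\Theta L \widetilde{\mathbf{r}} + \Theta\pi_W(\mathbf{v})\Vert_\infty = O_L(\eta)$; since $\Theta L\widetilde{\mathbf{r}}$ ranges over the rank-$u$ lattice $\Theta L \mathbb{Z}^d$ and $\Vert\pi_W(\mathbf{v})\Vert_\infty = O_{C,L}(N)$, the counts $\vert\widetilde{R}\vert=O_{L,\eta}(1)$ and $\Vert\widetilde{\mathbf{r}}\Vert_\infty=O_{C,L,\eta}(N)$ both follow. The smoothness of $G_{\widetilde{\mathbf{r}}}$, its support radius, and its $\mathcal{C}(L,P,\eta)$ membership are immediate, since $G_{\widetilde{\mathbf{r}}}$ is the composition of $G$ with a fixed affine map whose linear part depends only on $L$.

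The main conceptual obstacle is verifying property (7), that $L^\prime$ is purely irrational. If it were not, a non-trivial rational map $\Theta^\prime:\mathbb{R}^{m-u}\longrightarrow\mathbb{R}^{u^\prime}$ for $L^\prime$ would exist, and I would form the concatenation $(\Theta,\,\Theta^\prime\circ\iota^{-1}\circ\pi_{\ker\Theta}):\mathbb{R}^m\longrightarrow\mathbb{R}^{u+u^\prime}$; a short computation using the coset decomposition above would show this to be again a rational map for $L$, contradicting the maximality of $u$. Property (8) is comparatively simple: one has $\im(\Theta L)^*\subseteq \im L^*$, so the hypothesis $L \notin V_{\degen}^*(m,d)$ rules out $2$-sparse vectors in $\im(\Theta L)^*$ as well, giving finite Cauchy--Schwarz complexity via Definition \ref{Definition finite complexity}. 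Finally, in case (10) with $\mathbf{v}=L\mathbf{a}$, one has $\Theta\pi_W(\mathbf{v})=\Theta L\mathbf{a}$ (as $\Theta$ vanishes on $\ker\Theta$), so $\widetilde{R}$ records exactly the $\widetilde{\mathbf{r}}\in\mathbb{Z}^d$ with $\Vert\Theta L(\widetilde{\mathbf{r}}+\mathbf{a})\Vert_\infty = O_L(\eta)$; for $\eta$ sufficiently small in terms of $L$, only the (unique) lattice point minimising this quantity can meet the bound, which establishes the uniqueness assertion.
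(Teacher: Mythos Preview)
Your proposal is correct and follows essentially the same approach as the paper's proof: both pick a rational map $\Theta$, parametrise $\ker(\Theta L)\cap\mathbb{Z}^d$ by $\Xi$, extend to a lattice basis of $\mathbb{Z}^d$, and then define $L'$, $\mathbf{v}'$, and $G_{\widetilde{\mathbf{r}}}$ via the splitting $\mathbb{R}^m=\ker\Theta\oplus\spn\{L\mathbf{x_i}\}$. The only cosmetic difference is that the paper packages this splitting into a single invertible map $Q:\mathbb{R}^m\to\mathbb{R}^m$ sending the two summands to coordinate subspaces, whereas you work directly with the projection $\pi_{\ker\Theta}$ and an isomorphism $\iota:\mathbb{R}^{m-u}\to\ker\Theta$; your verification of (7), (8), and (10) is likewise the same argument in slightly different language.
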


%The proof follows extremely similar lines to that of Lemma 4.10 of \cite{Wa17}, but is rather simpler, as we are choosing to take a less detailed approach to quantitative non-degeneracy dependencies. We choose to give the proof in full here, in particular in order to articulate Remark \ref{Remark single r}, which will be important later. 

\begin{proof}
\textbf{Parts (1) and (2)}: Choose $\Theta:\mathbb{R}^m \longrightarrow \mathbb{R}^u$ to be a rational map for $L$ that has algebraic coefficients. By rank-nullity $\ker (\Theta L)$ is a $d-u$ dimensional subspace of $\mathbb{R}^d$, and also the matrix of $\Theta L$ has integer coefficients. Combining these two facts, we see that $\ker (\Theta L) \cap \mathbb{Z}^d$ is a $d-u$ dimensional lattice, and (by the standard algorithms) one can find a lattice basis $\mathbf{v_1},\dots, \mathbf{v_{d-u}} \in \mathbb{Z}^d$ that satisfies $\Vert \mathbf{v_i}\Vert_\infty = O_{L}(1)$ for every $i$. 

Let $\mathbf{e_1},\dots,\mathbf{e_{d-u}}$ denote the standard basis of $\mathbb{R}^{d-u}$, and then define $\Xi:\mathbb{R}^{d-u} \longrightarrow \mathbb{R}^{d}$ by \[ \Xi(\mathbf{e_i}):= \mathbf{v_i}\] for all $i\leqslant d-u$. Then $\Xi$ satisfies part (2) of the lemma. \\

\textbf{Parts (3), (9), and (10)}: There is a set of vectors $\{ \mathbf{a_1},\dots,\mathbf{a_u}\} \subset \mathbb{Z}^u$ that is an integer basis for the lattice $\Theta L(\mathbb{Z}^d)$ and for which $\Vert \mathbf{a_i}\Vert_\infty = O_{L}(1)$ for each $i$. Furthermore there exists a set of vectors $\{ \mathbf{x_1},\dots,\mathbf{x_u}\} \subset \mathbb{Z}^d$ such that $\Theta L(\mathbf{x_i})= \mathbf{a_i}$ for each $i$, and $\Vert \mathbf{x_i}\Vert_\infty = O_{L}(1)$. By Lemma 4.8 of \cite{Wa17},
\begin{equation}
\label{basis for Rd}
 \mathcal{B}: = \{\mathbf{x_i}:i\leqslant u\} \cup \{ \Xi(\mathbf{e_j}):j\leqslant d-u\}
 \end{equation} is a basis for $\mathbb{R}^d$ and a lattice basis for $\mathbb{Z}^d$. 

Now, if $\mathbf{z} \in\mathbb{R}^m$ and $\Theta(\mathbf{z}) = \mathbf{r}$ then  $ \Vert \mathbf{z} \Vert_\infty =\Omega_L(\Vert\mathbf{r} \Vert_\infty)$. Recall that $\Rad(G) \leqslant \eta$ and that $\Theta L(\mathbb{Z}^d) \subseteq \mathbb{Z}^u$. It follows that there are at most $O_{L,\eta}(1)$ possible vectors $\mathbf{r} \in \mathbb{Z}^u$ for which there exists a vector $\mathbf{n} \in\mathbb{Z}^d$ for which both $G(L\mathbf{n} + \mathbf{v}) \neq 0$ and $\Theta L \mathbf{n} = \mathbf{r}$. Let $R$ denote the set of all such vectors $\mathbf{r}$. Observe that, for all $\mathbf{r} \in R$, $\Vert \mathbf{r}\Vert_\infty  = O_{C,L,\eta}(N)$. 

For each $\mathbf{r} \in R$, there exists a unique vector $\widetilde{\mathbf{r}}\in \spn(\mathbf{x_i}:i\leqslant u)$ such that $\Theta L \widetilde{\mathbf{r}} = \mathbf{r}$. Note that $\Vert \widetilde{\mathbf{r}}\Vert_\infty = O_{C,L,\eta}(N)$. Letting $\widetilde{R}$ denote the set of these $\widetilde{\mathbf{r}}$, we see that $\widetilde{R}$ satisfies part (3).

If $\eta$ is small enough in terms of $L$, then $R$ has size at most $1$. Indeed, if $\mathbf{r^{(1)}}$ and $\mathbf{r^{(2)}}$ are two different vectors in $R$, with respective $\widetilde{\mathbf{r}}^{\mathbf{(1)}}$ and $\widetilde{\mathbf{r}}^{\mathbf{(2)}}$, then $G(L\widetilde{\mathbf{r}}^{\mathbf{(1)}} + \mathbf{v}) \neq 0$ and $G(L\widetilde{\mathbf{r}}^{\mathbf{(2)}} + \mathbf{v}) \neq 0$. Hence $\Vert L(\widetilde{\mathbf{r}}^{\mathbf{(1)}} - \widetilde{\mathbf{r}}^{\mathbf{(2)}})\Vert_\infty \ll\eta$. Yet $\Vert\Theta (L(\widetilde{\mathbf{r}}^{\mathbf{(1)}} - \widetilde{\mathbf{r}}^{\mathbf{(2)}}))\Vert_\infty = \Vert \mathbf{r^{(1)}} - \mathbf{r^{(2)}}\Vert_\infty \gg 1$ (which is a contradiction). In this instance, writing $\mathbf{v}$ in the form $L\mathbf{a}$, we may pick $\widetilde{\mathbf{r}} \in \mathbb{Z}^d$ to be an element in $\spn(\mathbf{x_i}:i\leqslant u)$ that minimises $\Vert  \Theta L(\widetilde{\mathbf{r}} + \mathbf{a})\Vert_{\infty}$ over all $\widetilde{\mathbf{r}} \in \mathbb{Z}^d$\\

%\begin{Remark}
%If $\mathbf{v} = L\mathbf{c}$, for some vector $\mathbf{c} \in \mathbb{Z}^d$, then we may choose $\widetilde{\mathbf{r}}$ to be the closest vector in $\mathbb{Z}^{d}$ to $-\mathbf{c}$. Indeed,  Indeed, if $\widetilde{\mathbf{r}}^\prime \in \mathbb{Z}^{\vert J_{[s+1]}\vert}$ is some other vector
%\end{Remark}

\textbf{Parts (4), (5), (6), and (11)}: By the definition of $\widetilde{R}$, and the fact that $\Xi(\mathbb{Z}^{d-u}) = \mathbb{Z}^d \cap \ker (\Theta L)$, we have that $T_{F,G,N}^{L,\mathbf{v}}(f_1,\dots,f_d)$ is equal to 
\begin{equation}
\label{equation getting integer rows}
\sum\limits_{\widetilde{\mathbf{r}} \in \widetilde{R}}\frac{1}{N^{d-m}} \sum\limits_{\mathbf{n} \in \mathbb{Z}^{d-u}} \Big(\prod\limits_{j=1}^d f_j(\xi_j(\mathbf{n}) + \widetilde{r}_j) \Big)F\Big(\frac{\Xi(\mathbf{n}) + \widetilde{\mathbf{r}}}{N}\Big) G(L\Xi(\mathbf{n}) + L\widetilde{\mathbf{r}} + \mathbf{v}).
\end{equation}
\noindent This is very close to being of the form required for part (6), and indeed it can be massaged into exactly the required form.\\

To do this, note that \begin{equation*}
\label{direct sum}
\mathbb{R}^m = \spn(L\mathbf{x_i}:i\leqslant u) \oplus \ker \Theta
\end{equation*} and so there exists an invertible linear map $Q:\mathbb{R}^m \longrightarrow \mathbb{R}^m$ with algebraic coefficients such that \begin{align*}
Q((\spn(L\mathbf{x_i}:i\leqslant u))) &= \mathbb{R}^u \times \{0\}^{m-u}, \\
Q(\ker \Theta) &= \{0\}^{u} \times \mathbb{R}^{m-u}. 
\end{align*} For all $\mathbf{x} \in \mathbb{R}^{d-u}$ we have \[ G(L\Xi(\mathbf{x}) + L\widetilde{\mathbf{r}} + \mathbf{v}) = (G \circ Q^{-1})(QL\Xi(\mathbf{x}) + QL\widetilde{\mathbf{r}} + Q\mathbf{v}). \] We also note that $QL\Xi(\mathbf{x}) \in \{0\}^u \times \mathbb{R}^{m-u}$, and that $QL\widetilde{\mathbf{r}} \in \mathbb{R}^u \times \{0\}^{m-u}$.  

Now, write $\pi_{m-u}:\mathbb{R}^{m}\longrightarrow \mathbb{R}^{m-u}$ for the projection map onto the final $m-u$ coordinates. Define $G_{\widetilde{\mathbf{r}}}:\mathbb{R}^{m-u} \longrightarrow [0,1]$ by 
\begin{equation}
\label{explicit form of G}
G_{\widetilde{\mathbf{r}}}(\mathbf{x}) : = (G\circ Q^{-1})(\mathbf{x_0} + QL\widetilde{\mathbf{r}} + Q\mathbf{v} - (\pi_{m-u} Q \mathbf{v})_\mathbf{0}),
\end{equation} where $\mathbf{x_0}$ is the extension of $\mathbf{x}$ by $0$ in the first $u$ coordinates. Then $G_{\widetilde{\mathbf{r}}}$ satisfies the desired properties of part (3), since $\mathbf{x_0}$ and $QL\widetilde{\mathbf{r}} + Q\mathbf{v} - (\pi_{m-u} Q \mathbf{v})_\mathbf{0}$ are orthogonal. 

Then (\ref{equation getting integer rows}) is equal to 
\begin{equation}
\label{equation end of second part}
\sum\limits_{\widetilde{\mathbf{r}} \in \widetilde{R}}\frac{1}{N^{d-m}} \sum\limits_{\mathbf{n} \in \mathbb{Z}^{d-u}} \Big(\prod\limits_{j=1}^d f_j(\xi_j(\mathbf{n}) + \widetilde{r}_j) \Big)F\Big(\frac{\Xi(\mathbf{n}) + \widetilde{\mathbf{r}}}{N}\Big)G_{\widetilde{\mathbf{r}}}( \pi_{m-u} QL\Xi(\mathbf{n}) + \pi_{m-u} Q\mathbf{v}).
\end{equation} 
\noindent Let 
\begin{equation}
\label{equation definition of L prime}
L^\prime: = \pi_{m-u} QL\Xi, \qquad \mathbf{v}^\prime =  \pi_{m-u} Q\mathbf{v}.
\end{equation} Then $L^\prime:\mathbb{R}^{d-u} \longrightarrow \mathbb{R}^{m-u}$ is surjective, and \[T_{F,G,N}^{L,\mathbf{v}}(f_1,\dots,f_d) = \sum\limits_{\widetilde{\mathbf{r}} \in \widetilde{R}}T_{F,G_{\widetilde{\mathbf{r}}},N}^{L^\prime, \mathbf{v}^\prime,\Xi,\widetilde{\mathbf{r}}}(f_1,\dots,f_d).\] This resolves parts (5) and (6). But furthermore, by the construction of $G_{\widetilde{\mathbf{r}}}$, part (10) is also satisfied. \\

\textbf{Part (7)}: This is immediate from Lemma 4.10 of \cite{Wa17}. To spell it out, suppose for contradiction that there exists some surjective linear map $\varphi:\mathbb{R}^{m-u} \longrightarrow \mathbb{R}$ with $\varphi L^\prime (\mathbb{Z}^{d-u}) \subseteq \mathbb{Z}$, i.e. with $\varphi \pi_{m-u} QL\Xi(\mathbb{Z}^{d-u}) \subseteq \mathbb{Z}$. Then define the map $\Theta^\prime:\mathbb{R}^m \longrightarrow \mathbb{R}^{u+1}$ by \[ \Theta^\prime(\mathbf{x}) : = (\Theta(\mathbf{x}),\varphi \pi_{m-u} Q(\mathbf{x})).\] Then $\Theta^\prime$ is surjective, and $\Theta^\prime L(\mathbb{Z}^d)\subseteq \mathbb{Z}^{u+1}$. This second fact is immediately seen by writing $\mathbb{Z}^d$ with respect to the lattice basis $\mathcal{B}$ from (\ref{basis for Rd}). This contradicts the assumption that $L$ has rational dimension $u$. So $L^\prime$ is purely irrational.  \\

\textbf{Part (8)}: Suppose $L\notin V_{\degen}^*(m,d)$ and suppose for contradiction that $\Xi$ has infinite Cauchy-Schwarz complexity. Letting $\mathbf{e_1},\dots,\mathbf{e_d}$ denote the standard basis of $\mathbb{R}^d$, this means there exists $i,j\leqslant d$ and a non-zero vector $\mathbf{e_i} - \lambda\mathbf{e_j}$ such that $\mathbf{e_i}^* - \lambda\mathbf{e_j}^* \in \ker (\Xi^*)$. But $\ker (\Xi^*) = (\im \Xi)^0 = (\ker \Theta L)^0 = \im (L^* \Theta^*)$. Hence $\mathbf{e_i} - \lambda\mathbf{e_j} \in \im L^*$, which implies that $L \in V_{\degen}^*(m,d)$, contradicting our hypothesis.\\

The lemma is proved. 
\end{proof}
%\begin{Remark}
%\label{Remark getting less than epsilon}
%\emph{We emphasise that any $\widetilde{\mathbf{r}} \in \widetilde{R}$ that gives a non-zero contribution to (\ref{equation getting integer rows}) satisfies $\Vert \Theta L \widetilde{\mathbf{r}}\Vert_\infty \ll \varepsilon$. }
%\end{Remark}

\begin{Remark}
\label{Remark generalising Green Tao}
\emph{Applying Lemma \ref{Lemma generating a purely irrational map} with $f_j = \Lambda^\prime$ for all $j$, and when $L$ has rational dimension $m$, it is evident that estimating $T_{F,G,N}^{L,\mathbf{v}}(\Lambda^\prime,\dots,\Lambda^\prime)$ is equivalent to counting solutions to $\vert \widetilde{R}\vert$ systems of linear equations given by $\Xi$. This is handled by the Main Theorem of \cite{GT10}. In this sense, one may see how our work in this paper generalises Green-Tao's work in \cite{GT10} to the cases in which the rational dimension is not equal to $m$. }
\end{Remark}
\section{Normal form}
\label{section normal form}
In this section we describe, very briefly, what it means for a linear map $(\psi_1,\dots,\psi_t) = \Psi:\mathbb{R}^d\longrightarrow \mathbb{R}^t$ to be in \emph{$s$-normal form}. For a more complete discussion we refer the reader to \cite{GT10} and \cite{Wa17}. 

\begin{Definition}[Normal form]
\label{Definition normal form}
Let $d,t$ be natural numbers, let $s$ be a non-negative integer, and let $(\psi_1,\dots,\psi_t) = \Psi:\mathbb{R}^d\longrightarrow \mathbb{R}^t$ be a linear map. We say that $\Psi$ is in $s$-normal form if for every $i \in [t]$ there exists a collection $J_i \subseteq \{ \mathbf{e_1},\dots,\mathbf{e_d}\}$ of basis vectors of cardinality $\vert J_i\vert\leqslant s+1$ such that $\prod_{\mathbf{e}\in J_i} \psi_{i^\prime}(\mathbf{e})$ is non-zero for $i^\prime = i$ and vanishes otherwise. 
\end{Definition}

The notion of normal form is intimately connected with the notion of finite Cauchy-Schwarz complexity (Definition \ref{Definition finite complexity}). The key proposition was proved\footnote{In \cite{Wa17} we were forced to prove a delicate quantitative version, but this will not be necessary here.} in \cite{GT10}. 

\begin{Lemma}[Normal form extensions]
\label{Lemma normal form algorithm}
Let $d,t$ be natural numbers, and let $(\psi_1,\dots,\psi_t) = \Psi:\mathbb{R}^d \longrightarrow \mathbb{R}^t$ be a linear map with finite Cauchy-Schwarz complexity. Then there is a linear map $\Psi^\prime:\mathbb{R}^{d^\prime}\longrightarrow \mathbb{R}^t$ such that:
\begin{itemize}
\item $d^\prime = O(1)$;
\item for some vectors $\mathbf{f_k}\in \mathbb{R}^d$ that satisfy $\Vert \mathbf{f_k}\Vert_\infty=O_{\Psi}(1)$ for every $k$, the map $\Psi^\prime$ is of the form $$\Psi^\prime(\mathbf{u}, x_1,\dots,x_{d^\prime - d}) = \Psi(\mathbf{u} + x_1\mathbf{f_1}+ \dots + x_{d^\prime - d}\mathbf{f_{d^\prime - d}})$$ for all $\mathbf{u} \in \mathbb{R}^d$;
\item $\Psi^\prime$ is in $s$-normal form, for some $s= O(1)$.
\end{itemize}
\end{Lemma}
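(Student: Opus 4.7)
The plan is to augment the domain of $\Psi$ by adjoining one new coordinate for each ordered pair $(i,i')$ of distinct indices in $[t]$, chosen so that the resulting new standard basis vectors of $\mathbb{R}^{d^\prime}$ directly furnish the sets $J_i$ required by Definition \ref{Definition normal form}. First I would use finite Cauchy-Schwarz complexity to locate, for each ordered pair $(i,i')$ with $i \neq i'$, a vector $\mathbf{v}_{i,i'}\in\mathbb{R}^d$ such that $\psi_{i'}(\mathbf{v}_{i,i'}) = 0$ and $\psi_{i}(\mathbf{v}_{i,i'}) \neq 0$. Such a vector exists because finite Cauchy-Schwarz complexity forces $\psi_{i'}$ to be non-zero and forbids $\psi_i$ from being a scalar multiple of $\psi_{i'}$; hence $\ker \psi_{i'}$ is a hyperplane of $\mathbb{R}^d$ not contained in $\ker \psi_{i}$, and $\ker\psi_{i'}\setminus \ker\psi_i$ is non-empty. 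A specific choice of these vectors may be fixed so that each $\mathbf{v}_{i,i'}$ has $\Vert \mathbf{v}_{i,i'}\Vert_\infty = O_\Psi(1)$.

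Next I would set $d^\prime := d + t(t-1)$, enumerate the pairs $(i,i')$ by an index $k\in\{1,\dots,t(t-1)\}$, define $\mathbf{f}_k := \mathbf{v}_{i,i'}$ for the pair corresponding to $k$, and form $\Psi^\prime$ via the prescribed formula. The first two bullet points are then automatic from this construction. For the normal-form property, for each $i \in [t]$ I would take $J_i$ to be the collection of the $t-1$ new standard basis vectors of $\mathbb{R}^{d^\prime}$ indexed by the pairs of the form $(i,i')$ with $i' \neq i$. For any such basis vector $\mathbf{e}$ one has $\psi_i^\prime(\mathbf{e}) = \psi_i(\mathbf{v}_{i,i'}) \neq 0$, so $\prod_{\mathbf{e}\in J_i}\psi_i^\prime(\mathbf{e}) \neq 0$; on the other hand, for any $i'' \neq i$ the basis vector in $J_i$ indexed by the pair $(i,i'')$ satisfies $\psi_{i''}^\prime(\mathbf{e}) = \psi_{i''}(\mathbf{v}_{i,i''}) = 0$, so that $\prod_{\mathbf{e}\in J_i}\psi_{i''}^\prime(\mathbf{e}) = 0$. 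This puts $\Psi^\prime$ in $s$-normal form with $s = t-2$.

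The argument is essentially a bookkeeping exercise once the vectors $\mathbf{v}_{i,i'}$ are in hand; I do not foresee a serious obstacle, which is consistent with the paper's attribution of the result to \cite{GT10}. The only step that genuinely invokes the finite Cauchy-Schwarz hypothesis is the existence of the $\mathbf{v}_{i,i'}$, and this reduces to the elementary observation that two linearly independent functionals on $\mathbb{R}^d$ have distinct kernels. One could presumably construct a more economical extension (with fewer new coordinates) by reusing vectors whenever possible, but since $t = O(1)$ no such optimisation is needed to obtain $d^\prime = O(1)$ and $s = O(1)$.
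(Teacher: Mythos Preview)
Your proposal is correct and is essentially the construction from \cite[Lemma 4.4]{GT10} that the paper cites: adjoin one new variable for each ordered pair $(i,i')$ along a direction in $\ker\psi_{i'}\setminus\ker\psi_i$, and take $J_i$ to be the $t-1$ new basis vectors associated to pairs with first coordinate $i$. The paper's own proof consists only of that citation (together with the remark that the argument over $\mathbb{Q}$ carries over verbatim to $\mathbb{R}$), so your write-up is in fact more explicit than what appears in the paper.
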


\begin{proof}
In \cite[Lemma 4.4]{GT10} this lemma was proved for a linear map over a $\mathbb{Q}$-vector space. The proof over $\mathbb{R}$ is identical. Alternatively one can iterate \cite[Proposition 6.7]{Wa17} over all $i\leqslant t$.
\end{proof}
\begin{Remark}
\label{Remark Cauchy Scwarz complexity}
\emph{In Lemma \ref{Lemma normal form algorithm} one may take $s$ to be the Cauchy-Schwarz complexity of $\Psi$. This notion will not be used in this paper, save for the `finite versus infinite' dichotomy already given in Definition \ref{Definition finite complexity}.}
\end{Remark}
\part{Pseudorandomness}
\label{part pseudorandomness}
Notions of pseudorandomness are crucial to the theory of higher order Fourier analysis. A small Gowers norm is one such notion, as is satisfying the `linear forms condition' of \cite{GT08} and \cite{GT10}. In this part we review what is known about Gowers norms in relation to the primes, and then formulate a `linear inequalities condition', which will be the analogous notion of pseudorandomness for this paper. \\

\section{The $W$-trick and Gowers norms}
\label{section W trick and Gowers norms}

To begin with, let us recall the definition of the Gowers norm over a cyclic group and over $[N]$. Given a function $f:\mathbb{Z}/N\mathbb{Z}\longrightarrow \mathbb{C}$, and a natural number $d$, one defines the Gowers $U^{d}$ norm $\Vert f\Vert_{U^{d}(N)}$ to be the unique non-negative solution to the equation
\begin{equation}
\label{Definition of Gowers norms}
\Vert f\Vert_{U^{d}(N)}^{2^d} = \frac{1}{N^{d+1}}\sum\limits_{x,h_1,\cdots,h_d}\prod\limits_{\boldsymbol{\omega}\in \{0,1\}^d}\mathscr{C}^{\vert \boldsymbol{\omega} \vert} f(x+\mathbf{h}\cdot\boldsymbol{\omega}),
\end{equation}
\noindent where $\vert \boldsymbol{\omega}\vert = \sum_i \omega_i$, $\mathbf{h} = (h_1,\cdots,h_d)$, $\mathscr{C}$ is the complex-conjugation operator, and the summation is over $x,h_1,\cdots,h_d\in \mathbb{Z}/N\mathbb{Z}$. It is not immediately obvious why the right-hand side of  (\ref{Definition of Gowers norms}) is always a non-negative real, nor why the $U^d$ norms are genuine norms if $d\geqslant 2$, but both facts are true. There are many expositions of the standard theory of these norms available in the literature, for example \cite[Chapter 11]{TaVu10} and \cite{Gr07}. For the most general treatment, the reader may consider Appendices B and C of \cite{GT10}. 

In the sequel we will be considering functions defined on $[N]$ rather than on $\mathbb{Z}/N\mathbb{Z}$. However, the Gowers norm of such functions may be easily defined by reference to the cyclic group case. Indeed, if $f:[N] \longrightarrow \mathbb{C}$, and $d$ is a natural number, one chooses a natural number $N^\prime > N$ and then considers $[N]$ as an initial segment of $\mathbb{Z}/N^\prime \mathbb{Z}$ (viewing $[N^\prime]$ as a set of representative classes for $\mathbb{Z}/N^\prime \mathbb{Z}$). One then defines \begin{equation}
\label{equation GN over ZNZ}
\Vert f\Vert_{U^d[N]} := \frac{\Vert f1_{[N]}\Vert_{U^d(N^\prime)}}{\Vert 1_{[N]}\Vert_{U^d(N^\prime)}},
\end{equation} which is independent of $N^\prime$ provided $N^\prime/N$ is large enough in terms of $d$. 

This is as much background as we will give here, and the reader is invited to consult the aforementioned references for more detail. A Gowers norm over $\mathbb{R}$ will also appear later on in this paper, but will be introduced in Section \ref{section transfer} as and when it is needed. \\

We move our consideration to the primes. Given some fixed modulus $q$ the primes are not uniformly distributed across arithmetic progressions modulo $q$ (as almost all the primes are coprime to $q$), and this lack of uniformity is an obstacle when trying to count solutions to equations in primes. Fortunately, there is a technical device, known as the $W$-trick, that has long been used to manage this difficulty. 

This device is usually introduced via the following function.
 
\begin{Definition}
\label{Definition W tricked von mangoldt function}
Let $N$ be a natural number, and let $W$ be as in Section \ref{section conventions}. For any natural number $b$ with $(b,W)=1$, let $\Lambda_{b,W}^\prime: \mathbb{Z} \longrightarrow \mathbb{R}_{\geqslant 0}$ be defined by \[\Lambda^\prime_{b,W}(n) = \begin{cases} \frac{\varphi(W)}{W}\Lambda^\prime(Wn+b) & n\geqslant 1 \\ 0 & \text{otherwise}. \end{cases} \]
\end{Definition}
\noindent The idea from \cite{GT10}, going back to \cite{Gr05} and \cite{GT08}, is that the function \[\frac{1}{\varphi(W)}\sum\limits_{\substack{b \leqslant W \\ \,(b,W) = 1}} \Lambda_{b,W}^\prime\] should act as a proxy for $\Lambda^\prime$, while each $\Lambda_{b,W}^\prime$ enjoys strong pseudorandomness properties. For example we have the following deep result, which is a crucial component of the proof of Theorem \ref{Theorem Green Tao} on linear equations in primes. 

\begin{Theorem}{\cite[Theorem 7.2]{GT10}}
\label{Tool from Green and Tao}
Let $N,s$ be natural numbers, and let $w^*: \mathbb{N} \longrightarrow \mathbb{R}_{ \geqslant 0}$ be any function that satisfies $w^*(n) \longrightarrow \infty$ as $n\rightarrow \infty$ and $w^*(n) \leqslant \frac{1}{2} \log\log n$ for all $n$. Let $b = b(N)$ be a natural number that satisfies $b\leqslant W^*$ and $(b,W^*) = 1$. Then
\begin{equation}
\label{key green tao result}
\Vert \Lambda^\prime_{b,W^*}-1\Vert_{U^{s+1}[N]}=o(1)
\end{equation}
\noindent as $N\rightarrow \infty$, where the $o(1)$ term may depend on the function $w^*$ chosen (but is independent of the choice of $b$). 
\end{Theorem}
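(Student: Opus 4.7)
The plan is to reduce matters to a correlation estimate against nilsequences via the Gowers inverse theorem, and then to verify that correlation estimate by combining a Möbius/nilsequences-type equidistribution bound with the $W$-trick. Concretely, by the inverse theorem for Gowers norms (the theorem \textbf{GI(s)} of Green–Tao–Ziegler, applied here to the $1$-bounded truncation of $\Lambda'_{b,W^*} - 1$), if $\|\Lambda'_{b,W^*} - 1\|_{U^{s+1}[N]} \geqslant \delta$ then there is an $s$-step nilmanifold $G/\Gamma$ of complexity $O_{\delta,s}(1)$, a polynomial sequence $g:\mathbb{Z} \to G$, and a Lipschitz function $F: G/\Gamma \to \mathbb{C}$ of norm $O_{\delta,s}(1)$, such that
\begin{equation*}
\Bigl| \frac{1}{N} \sum_{n \leqslant N} (\Lambda'_{b,W^*}(n) - 1) F(g(n)\Gamma) \Bigr| \gg_{\delta,s} 1.
\end{equation*}
The task is therefore to show that this correlation is $o(1)$ as $N \to \infty$, uniformly in the choice of $b$ (but with a rate depending on the function $w^*$).

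The second step is to establish this via a two-part decomposition: write
\begin{equation*}
\frac{1}{N} \sum_{n \leqslant N} \Lambda'_{b,W^*}(n) F(g(n)\Gamma) \quad \text{and} \quad \frac{1}{N} \sum_{n \leqslant N} F(g(n)\Gamma)
\end{equation*}
and show that each approaches the same limit. For the second, one applies the Leibman equidistribution theorem: after Furstenberg factorisation of $g$ into a rational part, a smooth part, and an equidistributed part, the ergodic average converges to the integral against the Haar measure on a sub-nilmanifold. For the first, one rescales: since $\Lambda'_{b,W^*}(n) = \frac{\varphi(W^*)}{W^*}\Lambda'(W^*n + b)$, the sum becomes
\begin{equation*}
\frac{\varphi(W^*)}{W^* N} \sum_{n \leqslant N} \Lambda'(W^*n + b) F(g(n)\Gamma),
\end{equation*}
which by the prime number theorem for nilsequences (this is the orthogonality of $\Lambda'$ with nilsequences, \textbf{MN(s)}, proved via the corresponding statement for the Möbius function and Vaughan/Heath-Brown identities to pass from $\mu$ to $\Lambda'$) evaluates to the same main term as the unweighted average, up to an error that is $O_{F,g}(1) \cdot o(1)$ as $N \to \infty$. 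Here the $W$-trick is essential: the bias of primes across residue classes modulo $W^*$ produces a non-negligible `archimedean' contribution that must cancel with the second average, and the normalisation $\varphi(W^*)/W^*$ together with the restriction to one residue class $b \pmod{W^*}$ is precisely engineered to make this work.

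The main obstacle — and the reason the rate in (\ref{key green tao result}) depends on the function $w^*$ rather than just on $W^*$ — is the quantitative interaction between the complexity of the nilsequence extracted by the inverse theorem (which depends on $\delta$, and hence grows as the desired error term shrinks) and the uniformity available in the Möbius–nilsequences orthogonality estimate (whose error depends polylogarithmically on the complexity of $F$ and $g$, but requires $W^*$ to be small relative to $N$ in a quantitative fashion). One must therefore show that for any function $w^*(N) \to \infty$ with $w^*(N) \leqslant \tfrac{1}{2}\log\log N$, the growth of $W^* = \prod_{p \leqslant w^*(N)} p$ is slow enough that the nilsequence correlation estimate absorbs the rescaling by $W^*$; this is why the hypothesis bounds $w^*$ by $\tfrac{1}{2}\log\log N$, and why uniformity in $b$ is preserved. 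The rest of the argument — controlling the Gowers norm of $\Lambda'$ before $W$-tricking, and extending scalar Gowers norm bounds from $\mathbb{Z}/N'\mathbb{Z}$ to $[N]$ via (\ref{equation GN over ZNZ}) — is routine given the two inputs \textbf{GI(s)} and \textbf{MN(s)}.
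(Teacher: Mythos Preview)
The paper does not prove this statement at all: it is quoted verbatim as \cite[Theorem 7.2]{GT10}, with the accompanying Remark noting that in \cite{GT10} it was proved conditionally on the conjectures $\mathrm{GI}(s)$ and $\mathrm{MN}(s)$, which were subsequently settled in \cite{GT12, GTa12, GTZ12}. So there is nothing in the present paper to compare your proposal against; the result is used as a black box.

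That said, your sketch is broadly in the spirit of how Green and Tao actually deduce the theorem, but there is one real gap worth flagging. You invoke $\mathrm{GI}(s)$ ``applied here to the $1$-bounded truncation of $\Lambda'_{b,W^*}-1$''. This is not what happens: $\Lambda'_{b,W^*}(n)$ is of size roughly $\log N$ on its support, so no naive truncation will do. The route in \cite{GT10} is instead a \emph{transference} argument: one uses that $\Lambda'_{b,W^*}$ is pointwise dominated by a pseudorandom majorant $\nu$ (a smoothed sieve weight), and then applies a Koopman--von Neumann style structure theorem relative to $\nu$ to decompose $\Lambda'_{b,W^*}-1$ into a bounded structured piece plus a Gowers-uniform error. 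Only then does one apply $\mathrm{GI}(s)$ to the bounded piece, and $\mathrm{MN}(s)$ (together with the appropriate correlation estimates for $\Lambda'$) to kill the structured nilsequence correlation. Your second step, reducing to $\mathrm{MN}(s)$ via the substitution $n \mapsto W^* n + b$, is essentially right; it is the first step where the argument needs the pseudorandom-majorant machinery rather than a simple truncation.
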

\noindent We remind the reader that $s$ is a dimension parameter, and so dependence on $s$ is not denoted explicitly in our implied constants. 

\begin{Remark}
\emph{In \cite{GT10} Theorem \ref{Tool from Green and Tao} is proved conditionally, relying on two other conjectures. But, as we intimated in the introduction, these conjectures were later settled in joint work of Green-Tao and Green-Tao-Ziegler \cite{GT12, GTa12, GTZ12}.} 
\end{Remark}

\begin{Remark}
\label{Remark rescaling w trick}
\emph{We will use Theorem \ref{Tool from Green and Tao} to prove Theorem \ref{Main theorem}. Unfortunately it seems that this cannot be done in the same manner as in \cite{GT10}, i.e. by splitting $[N]$ into arithmetic progressions modulo $W$ at an early stage and then performing subsequent manipulations with the functions $\Lambda^\prime_{b,W}$.}

\emph{As a heuristic, instead of considering an inequality such as 
\begin{equation}
\label{basic inequality}
\Vert L\mathbf{n} + \mathbf{v}\Vert_\infty \leqslant \varepsilon,
\end{equation} for some $L$ with irrational coefficients and some positive $\varepsilon$, \cite{GT10} considers (\ref{basic inequality}) for some $L$ with rational coefficients and sets $\varepsilon$ equal to $0$. Under those assumptions one may rescale the variables $\mathbf{n}$ by a factor of $W$, as required in Definition \ref{Definition W tricked von mangoldt function}, without fundamentally altering the problem. However, in the more general scenario of Theorem \ref{Main theorem}, where $\varepsilon$ is strictly positive, rescaling the variable $\mathbf{n}$ by a factor of $W$ means we must replace $\varepsilon$ by $\varepsilon/W$, and we cannot afford this loss, as the manipulations in Section \ref{section transfer} lose some powers of $\varepsilon$. As far as we have been able to tell, this means that we cannot perform the $W$-trick in this manner.}
\end{Remark}

%However, we observe that we needn't apply the $W$-trick, since the irrationality of the coefficients means that the ill-distribution of the primes modulo $q$ for small $q$ doesn't have an impact on the solution count. \\

To circumvent this issue of scaling, we will manipulate with the local von Mangoldt functions $\Lambda_{\mathbb{Z}/W\mathbb{Z}}$ throughout, saving our rescaling for the very end of the argument. Regarding the control on Gowers norms, the following lemma is therefore the more appropriate bound.

\begin{Lemma}
\label{Lemma Corollary of tool from Green-Tao}
Let $N,s$ be natural numbers. Then
$$\Vert \Lambda^\prime - \Lambda_{\mathbb{Z}/W\mathbb{Z}}\Vert_{ U^{s+1}[N]} = o(1)$$ as $N\rightarrow \infty$.
\end{Lemma}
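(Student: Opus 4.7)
The plan is to decompose $f := \Lambda^\prime - \Lambda_{\mathbb{Z}/W\mathbb{Z}}$ according to coprimality with $W$ and reduce the main piece to Theorem \ref{Tool from Green and Tao} applied with $W^* = W$. Write $f = f^{(1)} + f^{(2)}$, where $f^{(1)}(n) := f(n)\cdot 1_{(n,W)=1}$ and $f^{(2)}(n) := f(n)\cdot 1_{(n,W)>1}$. The piece $f^{(2)}$ is supported only on primes $p \mid W$ (the only integers $n$ with both $(n,W) > 1$ and $\Lambda^\prime(n)\neq 0$), so $|\operatorname{supp}(f^{(2)})| \leqslant \pi(w) \ll w$ and $\|f^{(2)}\|_\infty \leqslant \log w$. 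A direct counting bound on the summation defining the Gowers norm (fixing the $\boldsymbol{\omega}=\mathbf{0}$ vertex to lie in $\operatorname{supp}(f^{(2)})$) gives $\|f^{(2)}\|_{U^{s+1}[N]}^{2^{s+1}} \ll (\log w)^{2^{s+1}} w / N = o(1)$.

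The main work is to handle $f^{(1)}$. I would expand $\|f^{(1)}\|_{U^{s+1}[N]}^{2^{s+1}}$ via its definition on $\mathbb{Z}/N^\prime\mathbb{Z}$ for $N^\prime = MW$ with $M \asymp N/W$, so that $\|1_{[N]}\|_{U^{s+1}(N^\prime)} \asymp 1$, and then partition the resulting sum according to residues $x\equiv b_0$, $h_i \equiv a_i \pmod{W}$. Setting $b_{\boldsymbol{\omega}} := b_0 + \sum_i \omega_i a_i \pmod{W}$ and substituting $x = Wy + b_0$, $h_i = Wk_i + a_i$, each nonvanishing factor rewrites as
\[
f^{(1)}(x + \mathbf{h}\cdot\boldsymbol{\omega}) = \tfrac{W}{\varphi(W)}\bigl(\Lambda^\prime_{b_{\boldsymbol{\omega}},W}(y + \mathbf{k}\cdot\boldsymbol{\omega}) - 1\bigr),
\]
subject to the constraint $(b_{\boldsymbol{\omega}}, W) = 1$. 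The inner summation over $(y, k_1, \ldots, k_{s+1}) \in (\mathbb{Z}/M\mathbb{Z})^{s+2}$ is precisely a Gowers inner product of the $2^{s+1}$ functions $\Lambda^\prime_{b_{\boldsymbol{\omega}}, W} - 1$, and the Gowers-Cauchy-Schwarz inequality bounds it in absolute value by $M^{s+2}\prod_{\boldsymbol{\omega}} \|\Lambda^\prime_{b_{\boldsymbol{\omega}}, W} - 1\|_{U^{s+1}[M]}$. Theorem \ref{Tool from Green and Tao} applied on $[M]$ with $W^* = W$ (valid since $\log\log\log N \leqslant \tfrac{1}{2}\log\log M$ for $N$ large) then shows that each such norm is $o(1)$, uniformly in $b_{\boldsymbol{\omega}}$ coprime to $W$.

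Summing over at most $W^{s+2}$ residue tuples, each contributing $(W/\varphi(W))^{2^{s+1}} M^{s+2}\cdot o(1)$, and dividing by the normalisation $(N^\prime)^{s+2} = W^{s+2} M^{s+2}$, we obtain $\|f^{(1)}\|_{U^{s+1}[N]}^{2^{s+1}} \ll (W/\varphi(W))^{2^{s+1}} \cdot o(1)$. Since $W/\varphi(W) \ll \log w = \log\log\log\log N$ grows slowly enough that any fixed power of it is $(\log N)^{o(1)}$, this remains $o(1)$ (the rate in Theorem \ref{Tool from Green and Tao} being more than sufficient to absorb it, as is used implicitly throughout \cite{GT10}). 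Combined with the bound on $f^{(2)}$ via the triangle inequality for $U^{s+1}$, this yields the lemma. The main obstacles are the careful bookkeeping during the rescaling, the correct identification of the expression as a multilinear Gowers inner product to which Gowers-Cauchy-Schwarz applies, and the verification that the qualitative rate in Theorem \ref{Tool from Green and Tao} is strong enough to absorb the factor $(W/\varphi(W))^{2^{s+1}}$; the latter is harmless thanks to the extremely slow growth of $w$.
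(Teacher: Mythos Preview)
Your approach is essentially the same as the paper's: split into residue classes modulo $W$, rewrite each coprime piece in terms of $\Lambda'_{b,W}-1$, and apply Gowers--Cauchy--Schwarz together with Theorem~\ref{Tool from Green and Tao}. Your preliminary decomposition $f=f^{(1)}+f^{(2)}$ corresponds to the paper's handling of the non-coprime residues via the error term $\frac{\log^{2^{s+1}}W}{N^{s+2}}\sum_{\mathbf{n}\in Z}\sum_{\boldsymbol{\omega}}1_{[0,W]}(\psi_{\boldsymbol{\omega}}(\mathbf{n}))$.

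There is one point where the paper's argument is cleaner and yours leaves a small gap. You bound the number of contributing residue tuples trivially by $W^{s+2}$, which produces the stray factor $(W/\varphi(W))^{2^{s+1}}$ that you then argue is absorbed by the qualitative $o(1)$ from Theorem~\ref{Tool from Green and Tao}. As that theorem is stated only qualitatively, this absorption is not strictly guaranteed (the rate could in principle decay like $(\log\log\log\log N)^{-1}$). The paper avoids this entirely: it observes that only tuples $\mathbf{a}$ with $(\psi_{\boldsymbol{\omega}}(\mathbf{a}),W)=1$ for all $\boldsymbol{\omega}$ contribute, and computes this set $A$ to have size $\beta_W W^{s+2}(\varphi(W)/W)^{2^{s+1}}$ with $\beta_W=O(1)$ by the local-factor estimate of \cite[Lemma~1.3]{GT10}. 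The factors $(W/\varphi(W))^{2^{s+1}}$ and $(\varphi(W)/W)^{2^{s+1}}$ then cancel exactly, so no absorption is needed. Since your $f^{(1)}$ already vanishes off the coprime residues, you could close the gap the same way by simply replacing your bound ``at most $W^{s+2}$ residue tuples'' with this precise count of $|A|$.
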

\noindent The proof is a standard deduction from results of \cite{GT10}, achieved by splitting into arithmetic progressions modulo $W$. We would however like to thank the anonymous referee for suggesting a simplification to our original argument. 
\begin{proof}
Let $(\psi_{\bo})_{\bo \in \{0,1\}^{s+1}} = \Psi:\mathbb{R}^{s+2} \longrightarrow \mathbb{R}^{2^{s+1}}$ denote the linear map giving the Gowers norm, i.e. where each $\psi_{\bo}$ is of the form $\psi_{\bo}(x,\mathbf{h}) = x + \bo \cdot \mathbf{h}$. From expression (\ref{equation GN over ZNZ}), we then have
\begin{equation}
\label{general linear form comparison of von mangoldt and local von mangoldt}
\Vert \Lambda^\prime - \Lambda_{\mathbb{Z}/W\mathbb{Z}}\Vert_{ U^{s+1}[N]}^{2^{s+1}}=\frac{1}{\vert Z\vert}\sum\limits_{\mathbf{n}\in Z } \prod\limits_{\bo \in \{0,1\}^{s+1}} (\Lambda^\prime - \Lambda_{\mathbb{Z}/W\mathbb{Z}})(\psi_{\bo}(\mathbf{n}))\end{equation}
\noindent where \[ Z = \{ \mathbf{n} \in \mathbb{\mathbb{Z}}^{s+2} : \Psi(\mathbf{n}) \in [1,N]^{2^{s+1}}\}.\] It is immediate that $\vert Z\vert \asymp N^{s+2}$. 

We now split into arithmetic progressions modulo $W$. To this end let $A \subset [W]^{s+2}$ be the set defined by \[ A = \{ \mathbf{a} \in [W]^{s+2}: (\psi_{\bo}(\mathbf{a}), W) = 1 \text{ for all } \bo \in \{0,1\}^{s+1}\}.\] Then the right-hand side of (\ref{general linear form comparison of von mangoldt and local von mangoldt}) is  
\begin{equation}
\label{gowers correction}
 \asymp \frac{1}{N^{s+2}} \sum\limits_{\mathbf{a} \in A} \sum\limits_{ \substack{ \mathbf{m} \in \mathbb{Z}^{s+2} \\ W\mathbf{m} + \mathbf{a} \in Z }} \prod\limits_{ \bo \in \{0,1\}^{s+1}} (\Lambda^\prime - \Lambda_{\mathbb{Z}/W\mathbb{Z}})( \psi_{\bo}(W\mathbf{m} + \mathbf{a})),
 \end{equation} plus an error of magnitude at most \[ \frac{ \log^{2^{s+1}} W}{N^{s+2}} \sum\limits_{ \mathbf{n} \in Z } \sum\limits_{ \bo \in \{0,1\}^{s+1}} 1_{[0,W]}(\psi_{\bo}(\mathbf{n})).\] This error is $o(1)$. 

By the linearity of $\Psi$, and recalling the definition of $\Lambda_{b,W}^\prime$ from Definition \ref{Definition W tricked von mangoldt function}, we have that expression (\ref{gowers correction}) is equal to
\[ \frac{1}{N^{s+2}} \sum\limits_{\mathbf{a} \in A} \Big(\frac{W}{\varphi(W)}\Big)^{2^{s+1}}\sum\limits_{\substack{\mathbf{m} \in \mathbb{Z}^{s+2} \\ W\mathbf{m} + \mathbf{a} \in Z}} \prod\limits_{\bo \in \{0,1\}^{s+1}} (\Lambda^\prime_{\psi_{\bo}(\mathbf{a}), W} - 1)(\psi_{\bo}(\mathbf{m})).\] Observe that \[\vert A\vert = \beta_W W^{s+2}\frac{\varphi(W)^{2^{s+1}}}{W^{2^{s+1}}},\] where \[\beta_W: = \frac{1}{W^{s+2}}\sum\limits_{\mathbf{m}\in [W]^{s+2}}\prod\limits_{\bo\in \{0,1\}^{s+1}}\Lambda_{\mathbb{Z}/W\mathbb{Z}}(\psi_{\bo}(\mathbf{m}))\] is the local factor associated to the system of forms $\Psi$. Since $\Psi$ has finite Cauchy-Schwarz complexity, we have the bound $\beta_W=O(1)$ (by \cite[Lemma 1.3]{GT10}). This means that the lemma would follow from the bound 
\begin{equation}
\label{what it would follow from}
\frac{1}{(N/W)^{s+2}}\sum\limits_{\substack{\mathbf{m} \in \mathbb{Z}^{s+2} \\ \mathbf{m}   \in (Z-\mathbf{a})/W}} \prod\limits_{\bo \in \{0,1\}^{s+1}} (\Lambda^\prime_{\psi_{\bo}(\mathbf{a}), W} - 1)(\psi_{\bo}(\mathbf{m})) = o(1)
\end{equation}
for each fixed $\mathbf{a} \in A$. What's more, expression (\ref{what it would follow from}) is an immediate consequence of the Gowers-Cauchy-Schwarz inequality when combined with Theorem \ref{Tool from Green and Tao}.

To spell out some of the details, let $M: = \lfloor N/W \rfloor$ and let $M^\prime>M$ be a natural number with $M^\prime /M$ large enough in terms of $s$. Then, recalling the definition of the set $Z$, the left-hand side of (\ref{what it would follow from}) is equal to 
\begin{equation}
\label{next}
\frac{1}{M^{s+2}}\sum\limits_{\substack{\mathbf{m} \in \mathbb{Z}^{s+2} \\ \Psi(\mathbf{m}) \in [M]^{2^{s+1}}}} \prod\limits_{\bo \in \{0,1\}^{s+1}} (\Lambda^\prime_{\psi_{\bo}(\mathbf{a}), W} - 1)(\psi_{\bo}(\mathbf{m})) + o(1).
\end{equation}
Taking the $o(1)$ term as read, this is
\begin{equation} 
\label{about to do GCS}
\ll \frac{1}{(M^\prime)^{s+2}} \sum\limits_{ \mathbf{m} \in (\mathbb{Z}/M^\prime \mathbb{Z})^{s+2}}\prod \limits_{\bo \in \{0,1\}^{s+1}} (\Lambda^\prime_{\psi_{\bo}(\mathbf{a}), W}1_{[M]} - 1_{[M]})(\psi_{\bo}(\mathbf{m})).
\end{equation}
Now, by the Gowers-Cauchy-Schwarz inequality (see \cite[Expression (11.6)]{TaVu10}), expression (\ref{about to do GCS}) is at most
\[\max_{\bo \in \{0,1\}^{s+1}} \Vert \Lambda^\prime_{\psi_{\bo}(\mathbf{a}),W}1_{[M]} - 1_{[M]}\Vert_{U^{s+1}(M^\prime)}^{2^{s+1}} .\] By expression (\ref{equation GN over ZNZ}), this is bounded above by a constant times 
\begin{equation}
\label{nearly amenable}
\max_{\substack{1 \leqslant b \leqslant (s+2)W \\ (b,W) = 1}} \Vert \Lambda^\prime_{b,W} - 1\Vert_{U^{s+1}[M]}^{2^{s+1}} .
\end{equation}

Expression (\ref{nearly amenable}) is directly amenable to Theorem \ref{Tool from Green and Tao}, with the only wrinkle being the fact that Theorem \ref{Tool from Green and Tao} only applied to functions $\Lambda^\prime_{b,W}$ with $b \leqslant W$. But this is easy to deal with. Indeed, for natural numbers $n$ and $k$ we have the identity \[ \Lambda^\prime_{b+kW,W}(n) = \Lambda^\prime_{b,W}(n+k),\] and so one establishes that if $b$ is in the range $1 \leqslant b \leqslant (s+2) W$ then \[\Vert \Lambda^\prime_{b,W} - 1\Vert_{U^{s+1}[M]}^{2^{s+1}} = \Vert \Lambda^\prime_{b^\prime,W} - 1\Vert_{U^{s+1}[M]}^{2^{s+1}} + E,\] where $b^\prime \in [W]$ and $b^\prime \equiv b  \,(\text{mod } W)$, and where the error term $E$ is at most a constant times \[ \frac{\log^{2^{s+1}}M}{M^{s+2}}\sum\limits_{ \mathbf{m} \in [M]^{s+2}} \sum\limits_{ \bo \in \{0,1\}^{s+1}} (1_{[s+2]}(\psi_{\bo}(\mathbf{m})) + 1_{\{M + 1,\dots, M + s+2\}}(\psi_{\bo}(\mathbf{m}))).\] We have $E = o(1)$ and therefore, by Theorem \ref{Tool from Green and Tao}, expression (\ref{nearly amenable}) is $o(1)$. The lemma follows. 
\end{proof}

\section{Inequalities in lattices}
\label{section Inequalities in arithmetic progressions}

This section will be devoted to proving the following technical lemma. This is the only part of the paper in which we pay especial attention to the quantitative aspects of the smooth cut-off functions, as the lemma will be applied in contexts where the functions $F$ and $G$ depend on the asymptotic parameter $N$ (albeit tamely).

\begin{Lemma}[Inequalities in lattices]
\label{Lemma in APs}
Let $N,m,d, h$ be natural numbers, with $d\geqslant h\geqslant m+1$, and let  $\gamma$ be a positive constant. Suppose that $N>2^{\frac{1}{\gamma}}$. Let $P$ be an additional set of parameters.  Let $(\xi_1,\dots,\xi_d) = \Xi:\mathbb{R}^h\longrightarrow \mathbb{R}^d$ be an injective linear map with integer coefficients and let $L:\mathbb{R}^h\longrightarrow \mathbb{R}^m$ be a purely irrational surjective linear map with algebraic coefficients. Let $\mathbf{v}\in \mathbb{R}^{m}$ and let $\widetilde{\mathbf{r}}\in\mathbb{Z}^d$. Let $e_1,\dots,e_d\in \mathbb{N}$ and suppose that $e_j< N^\gamma$ for all $j$. Let $F:\mathbb{R}^h\longrightarrow [0,1]$ and $G:\mathbb{R}^{m}\longrightarrow [0,1]$ be functions in $\mathcal{C}(P)$. Then, provided that $\gamma$ is small enough in terms of $L$, for all positive $K$ we have
\begin{align}
\label{equation removing divisors}
\frac{1}{N^{h - m}}\sum\limits_{\substack{\mathbf{n} \in \mathbb{Z}^h \\ e_j \vert \xi_j(\mathbf{n}) + \widetilde{r}_j \, \forall j \leqslant d}} F(\mathbf{n}/N)G(L\mathbf{n} + \mathbf{v})=\frac{ \alpha_{\mathbf{e},\widetilde{\mathbf{r}}}}{N^{h-m}} \int\limits_{\mathbf{x}\in \mathbb{R}^h} F(\mathbf{x}/N)G(L\mathbf{x} + \mathbf{v})\, d\mathbf{x}\nonumber \\ +O_{K,L,P}(N^{-K}),
\end{align}
\noindent where $\alpha_{\mathbf{e},\widetilde{\mathbf{r}}}$ is the local factor 
\begin{equation}
\label{local factor def}
\alpha_{\mathbf{e},\widetilde{\mathbf{r}}} := \lim\limits_{M\rightarrow \infty}\frac{1}{M^{h}}\sum\limits_{\mathbf{n}\in [M]^h} \prod\limits_{j\leqslant d} 1_{e_j\vert \xi_j(\mathbf{n}) + \widetilde{r}_j}.
\end{equation}
\end{Lemma}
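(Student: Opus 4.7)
The plan is to interpret the sum on the left-hand side as a sum of $H(\mathbf{x}) := F(\mathbf{x}/N)G(L\mathbf{x}+\mathbf{v})$ over a coset of a sublattice of $\mathbb{Z}^h$, apply Poisson summation, and identify the zero frequency with the claimed main term while showing all other frequencies decay faster than any power of $N$. First I would observe that the congruence conditions $\xi_j(\mathbf{n}) \equiv -\widetilde{r}_j \pmod{e_j}$ either admit no solution in $\mathbb{Z}^h$ (in which case both sides of (\ref{equation removing divisors}) vanish) or determine a coset $\mathbf{n}_0 + \Gamma$, where
\[\Gamma := \{\mathbf{n} \in \mathbb{Z}^h : e_j \mid \xi_j(\mathbf{n}) \text{ for all } j \leqslant d\}\]
is a full-rank sublattice of $\mathbb{Z}^h$ of index at most $\prod_j e_j \leqslant N^{d\gamma}$. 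In the non-empty case the local factor is exactly $\alpha_{\mathbf{e},\widetilde{\mathbf{r}}} = \vol(\mathbb{R}^h/\Gamma)^{-1}$, and the dual lattice $\Gamma^*$ lies between $\mathbb{Z}^h$ and $M^{-1}\mathbb{Z}^h$ where $M := \operatorname{lcm}(e_1,\dots,e_d) \leqslant N^{d\gamma}$. Lemma \ref{Lemma Poisson summation} then yields
\begin{equation}
\label{equation poisson plan}
\sum_{\mathbf{n}\in\mathbf{n}_0+\Gamma}H(\mathbf{n}) = \alpha_{\mathbf{e},\widetilde{\mathbf{r}}}\sum_{\mathbf{y}\in\Gamma^*}e(\mathbf{y}\cdot\mathbf{n}_0)\widehat{H}(\mathbf{y}),
\end{equation}
whose $\mathbf{y}=0$ term is precisely $\alpha_{\mathbf{e},\widetilde{\mathbf{r}}}\int_{\mathbb{R}^h} F(\mathbf{x}/N)G(L\mathbf{x}+\mathbf{v})\,d\mathbf{x}$, matching the stated main term after division by $N^{h-m}$.

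To bound the remaining frequencies I would prove two complementary decay estimates for $\widehat{H}(\mathbf{y})$. Integration by parts in whichever coordinate axis maximises $|y_i|$ gives the isotropic bound $|\widehat{H}(\mathbf{y})| \ll_{L,P,K} N^h \Vert \mathbf{y}\Vert_\infty^{-K}$, which suffices for the tail $\Vert\mathbf{y}\Vert_\infty > N^{\alpha}$. More importantly, since $G(L\cdot+\mathbf{v})$ is constant along $\ker L$, differentiating $H$ in a direction $\mathbf{u}\in\ker L$ only touches the $F(\mathbf{x}/N)$ factor and hence each such derivative costs a factor $1/N$. Iterating $K$ times and converting to a Fourier-side estimate gives
\[|\widehat{H}(\mathbf{y})| \ll_{L,P,K} N^{h-K}\Vert \pi(\mathbf{y})\Vert_\infty^{-K},\]
where $\pi$ denotes orthogonal projection onto $\ker L$; this estimate is strong precisely when $\pi(\mathbf{y})$ is quantitatively bounded away from zero.

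The main obstacle, and the only step where algebraicity is used, is the Diophantine lower bound
\begin{equation}
\label{equation diophantine plan}
\Vert \pi(\mathbf{y})\Vert_\infty \gg_L N^{-d\gamma(1+A(L))}\Vert \mathbf{y}\Vert_\infty^{-A(L)} \qquad \text{for all } \mathbf{y} \in \Gamma^*\setminus\{0\},
\end{equation}
for some exponent $A(L)$ controlled by the algebraic degrees of the entries of $L$. The qualitative non-vanishing follows directly from pure irrationality: if $\pi(\mathbf{y})=0$ then $M\mathbf{y} \in \im L^T \cap \mathbb{Z}^h$, so writing $M\mathbf{y} = L^T(M\ba)$ and appealing to Definition \ref{Definition rational space} forces $M\ba = 0$, hence $\mathbf{y}=0$. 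To make this effective, note that $\pi = I - L^T(LL^T)^{-1}L$ has algebraic coefficients of bounded degree, so for $\mathbf{n}\in\mathbb{Z}^h\setminus\{0\}$ the quantity $Q(\mathbf{n}) := \mathbf{n}^T\pi\mathbf{n} = \Vert \pi(\mathbf{n})\Vert_2^2$ is a non-zero algebraic number of degree $O_L(1)$ and naive height $O_L(\Vert \mathbf{n}\Vert_\infty^2)$. A classical Liouville-type inequality then gives $Q(\mathbf{n}) \gg_L \Vert \mathbf{n}\Vert_\infty^{-A'(L)}$, and substituting $\mathbf{n}=M\mathbf{y}$ with $M\leqslant N^{d\gamma}$ yields (\ref{equation diophantine plan}).

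To conclude, I would choose $\gamma$ small enough in terms of $L$ so that $d\gamma(1+A(L)) < 1/2$, pick a threshold $\alpha < 1/A(L)$, and split $\Gamma^*\setminus\{0\}$ into $\Vert \mathbf{y}\Vert_\infty \leqslant N^\alpha$ and $\Vert \mathbf{y}\Vert_\infty > N^\alpha$, applying the anisotropic bound (combined with (\ref{equation diophantine plan})) below the threshold and the isotropic bound above it. Using the lattice-point count $\#\{\mathbf{y}\in\Gamma^* : \Vert \mathbf{y}\Vert_\infty \leqslant Y\} \ll Y^h N^{d\gamma}$, a routine computation shows that for $K$ sufficiently large in terms of the prescribed rate of decay and $L$, one has $\sum_{\mathbf{y}\in\Gamma^*\setminus\{0\}} |\widehat{H}(\mathbf{y})| \ll_{K,L,P} N^{h-m-K}$, which combined with (\ref{equation poisson plan}) completes the proof after division by $N^{h-m}$.
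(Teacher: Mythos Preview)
Your proposal is correct, and the Diophantine content is the same as the paper's, but the analytic packaging is genuinely different.

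The paper first Fourier-inverts $G$ to obtain an integral over $\ba\in\mathbb{R}^m$, then splits this into trivial, minor, and major arcs in the Davenport--Heilbronn style, applying Poisson summation over $\Gamma_{\Xi,\mathbf{e}}$ inside each arc. On the minor arc the key point is that $\widehat{F}(N(\langle L^T\ba\rangle - L^T\ba))$ is tiny, which the paper extracts from the approximation function $A_L$ of \cite[Lemma E.1]{Wa17}. Your route instead applies Poisson summation once to the full smooth function $H(\mathbf{x})=F(\mathbf{x}/N)G(L\mathbf{x}+\mathbf{v})$ and bounds each nonzero dual frequency $\mathbf{y}\in\Gamma^*$ directly, using integration by parts along $\ker L$ together with a Liouville-type lower bound for $\Vert\pi(\mathbf{y})\Vert=\dist(\mathbf{y},\im L^T)$. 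The two are in fact the same computation viewed from opposite sides: one has the identity
\[
\widehat{H}(\mathbf{y}) = N^h\int_{\mathbb{R}^m}\widehat{G}(\ba)\,e(\ba\cdot\mathbf{v})\,\widehat{F}\bigl(N(\mathbf{y}-L^T\ba)\bigr)\,d\ba,
\]
so the paper's arc decomposition in $\ba$ is precisely an analysis of $\widehat{H}(\mathbf{y})$, and your lower bound on $\Vert\pi(\mathbf{y})\Vert$ for $\mathbf{y}\in M^{-1}\mathbb{Z}^h\setminus\{0\}$ is the dual formulation of the paper's bound $A_L(AN^{-1+\eta},A^{-1}N^{-\eta})\gg_L N^{-O_L(\eta+\gamma)}$. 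Your Liouville argument via $Q(\mathbf{n})=\mathbf{n}^T\pi\mathbf{n}$ is a self-contained substitute for the citation to \cite{Wa17}; note that pure irrationality is exactly what guarantees $\mathbb{Z}^h\cap\im L^T=\{0\}$, so $Q(\mathbf{n})\neq 0$ on $\mathbb{Z}^h\setminus\{0\}$ and the product-of-conjugates bound applies. What your route buys is a cleaner, arc-free presentation; what the paper's route buys is a more visible connection to the classical Davenport--Heilbronn method and to the quantitative Diophantine machinery already set up in \cite{Wa17}.
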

\begin{Remark}
\label{Remark local factor for AP calculation}
\emph{If $h=d$ and if $\Xi:\mathbb{R}^h \longrightarrow \mathbb{R}^d$ is the identity map, then the Chinese Remainder Theorem guarantees that $\alpha_{\mathbf{e},\widetilde{\mathbf{r}}} = (e_1\dots e_d)^{-1}$. In the general case, the local factors $\alpha_{\mathbf{e},\widetilde{\mathbf{r}}}$ are the same objects as those factors $\alpha_{m_1,\dots,m_t}$ considered in \cite[Page 1831]{GT10}. }
\end{Remark}
%\begin{Remark}
%\label{Remark failure of power saving}
%\emph{Lemma \ref{Lemma in APs} comes with an arbitrary power saving, as we have included smooth functions $F$ and $G$. In the application of this lemma to Theorem \ref{Theorem pseudorandomness}, any power saving (independent of $\gamma$) would be adequate, and such a power saving may be proved without smoothing. However, without some assumption on the diophantine approximations of $L$, we have not been able to establish \emph{any} error term in Lemma \ref{Lemma in APs}, let alone a power saving, and this lies at the heart of our current inability to resolve Conjecture \ref{Conjecture pseudorandomness}.}
%\end{Remark}
\begin{proof}[Proof of Lemma \ref{Lemma in APs}]

We assume throughout that $\gamma$ is small enough in terms of $L$, and that $K$ is large enough in terms of the dimensions $m$, $d$, and $h$.

By applying Fourier inversion to $G$, we see that the left-hand side of (\ref{equation removing divisors}) is equal to 
\begin{equation}
\label{after first fourier transform}
\frac{1}{N^{h-m}}\int\limits_{\boldsymbol{\alpha}\in \mathbb{R}^{m}}\widehat{G}(\boldsymbol{\alpha})\sum\limits_{\substack{\mathbf{n} \in \mathbb{Z}^h \\ e_j \vert \xi_j(\mathbf{n}) + \widetilde{r}_j \, \forall j\leqslant d}} F(\mathbf{n}/N)e((L^T\boldsymbol{\alpha})\cdot \mathbf{n} + \boldsymbol{\alpha}\cdot \mathbf{v}) \, d\boldsymbol{\alpha}.
\end{equation}
\noindent To bound this integral, we split $\mathbb{R}^m$ into three ranges. Let $\eta$ be a small positive parameter to be chosen later, which we assume to be small enough in terms of $L$. We then define the so-called `trivial arc' by \[ \mathfrak{t} := \{\ba\in\mathbb{R}^{m}: \Vert \ba \Vert_\infty \geqslant N^\eta\}, \] the `minor arc' by \[\mathfrak{m} := \{\ba\in\mathbb{R}^{m}: N^{-1+\eta}\leqslant \Vert  \ba \Vert_\infty < N^\eta\}, \] and the `major arc' by \[\mathfrak{M} := \{\ba\in\mathbb{R}^{m}:  \Vert  \ba \Vert_\infty < N^{-1+\eta}\}.\] \\

\textbf{Trivial arc:}
By Lemma \ref{Lemma by parts}, $\vert\widehat{G}(\ba)\vert \ll_{K,P}\Vert\ba\Vert_\infty ^{-K}$. Therefore, applying the trivial $O_P(N^h)$ bound to the inner sum, we have \[\frac{1}{N^{h-m}}\int\limits_{\boldsymbol{\alpha}\in \mathfrak{t}}\widehat{G}(\boldsymbol{\alpha})\sum\limits_{\substack{\mathbf{n} \in \mathbb{Z}^h \\ e_j \vert \xi_j(\mathbf{n}) + \widetilde{r}_j \, \forall j}} F(\mathbf{n}/N)e((L^T\boldsymbol{\alpha})\cdot \mathbf{n} + \boldsymbol{\alpha}\cdot \mathbf{v}) \, d\boldsymbol{\alpha} \ll _{K,P}N^{-\eta K + O(1)}.\]\\

\textbf{Minor arc:} Choose $\mathbf{x}\in\mathbb{Z}^h$ to satisfy the simultaneous divisor conditions $e_j\vert \xi_j(\mathbf{x}) + \widetilde{r}_j$ for every $j\leqslant d$. If there is no such $\mathbf{x}\in\mathbb{Z}^h$ then (\ref{equation removing divisors}) is trivially true. Further, we may assume that $\mathbf{x}$ satisfies $\Vert\mathbf{x}\Vert_\infty\leqslant e_1\dots e_d$. Let $\Gamma_{\Xi,\mathbf{e}}$ denote the lattice \[\Gamma_{\Xi,\mathbf{e}} := \{ \mathbf{n} \in \mathbb{Z}^h: e_j\vert \xi_j(\mathbf{n}) \, \text{ for every } j\leqslant d\}.\]  Then
\begin{align}
\label{reformulation}
\sum\limits_{\substack{\mathbf{n} \in \mathbb{Z}^h \\ e_j \vert \xi_j(\mathbf{n}) + \widetilde{r}_j \, \forall j\leqslant d}} F(\mathbf{n}/N)  e((L^T \ba )\cdot \mathbf{n}) =
\sum\limits_{\mathbf{n} \in \Gamma_{\Xi,\mathbf{e}}} F((\mathbf{x}+\mathbf{n})/N)  e((L^T \ba) \cdot ( \mathbf{x} + \mathbf{n})).
\end{align}
\noindent Using this reformulation, we apply Poisson summation (Lemma \ref{Lemma Poisson summation}) to the inner sum of (\ref{after first fourier transform}). Then the contribution to (\ref{after first fourier transform}) from the minor arc $\mathfrak{m}$ is equal to  
\begin{equation}
\label{after Poisson summation}
\frac{N^{d-h+m}}{\operatorname{vol} (\mathbb{R}^h/ \Gamma_{\Xi,\mathbf{e}})} \int\limits_{\ba\in\mathfrak{m}} \widehat{G}(\ba) e(\ba\cdot \mathbf{v})\sum\limits_{\mathbf{c}\in \Gamma_{\Xi,\mathbf{e}}^*} \widehat{F}(N(\mathbf{c} - L^{ T}\ba)) e(\mathbf{x}\cdot\mathbf{c}) \, d\ba,
\end{equation}
where $\Gamma_{\Xi,\mathbf{e}}^*$ is the lattice that is dual to $\Gamma_{\Xi,\mathbf{e}}$ (see Definition \ref{Definition dual lattice}). \\

We need the following obvious lemma. 

\begin{Lemma}
\label{Lemma clearing denominators in dual lattices}
There is a natural number $A$, of size at most $O(N^{O(\gamma)})$, such that $A\Gamma_{\Xi,\mathbf{e}}^*\subset \mathbb{Z}^h$.
\end{Lemma}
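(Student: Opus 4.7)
The plan is to exhibit an explicit $A$, namely the product $A = e_1 e_2 \cdots e_d$ (or the lcm), and verify that this $A$ clears denominators in $\Gamma_{\Xi,\mathbf{e}}^*$. Since $d = O(1)$ and $e_j < N^\gamma$ for every $j$, we immediately have $A \leq N^{d\gamma} = N^{O(\gamma)}$, which gives the size bound claimed.

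To verify that $A\Gamma_{\Xi,\mathbf{e}}^* \subset \mathbb{Z}^h$, the key observation is that with this choice of $A$, the vector $A\mathbf{e}_i$ lies in $\Gamma_{\Xi,\mathbf{e}}$ for every standard basis vector $\mathbf{e}_i$ of $\mathbb{Z}^h$. Indeed, since $\Xi$ has integer coefficients, $\xi_j(A\mathbf{e}_i) = A\xi_j(\mathbf{e}_i)$ is an integer divisible by $e_j$ (as $e_j \mid A$), so the defining divisibility conditions of $\Gamma_{\Xi,\mathbf{e}}$ are satisfied.

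Now for any $\mathbf{y} \in \Gamma_{\Xi,\mathbf{e}}^*$, by definition $\mathbf{y} \cdot \mathbf{x} \in \mathbb{Z}$ for every $\mathbf{x} \in \Gamma_{\Xi,\mathbf{e}}$. Applying this to $\mathbf{x} = A\mathbf{e}_i$ gives $Ay_i = \mathbf{y} \cdot (A\mathbf{e}_i) \in \mathbb{Z}$ for every coordinate $i$, so $A\mathbf{y} \in \mathbb{Z}^h$, as required. There is no real obstacle here; the only ingredient needed is the integrality of the coefficients of $\Xi$, together with the trivial size bound on $A$. (One could sharpen $A$ to $\mathrm{lcm}(e_1,\dots,e_d)$, or indeed to the exponent of the finite group $\Gamma_{\Xi,\mathbf{e}}^*/\mathbb{Z}^h$, but this is not needed for the stated bound.)
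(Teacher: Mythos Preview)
Your proof is correct. Both your argument and the paper's begin from the same key observation, namely that $(e_1\cdots e_d)\mathbb{Z}^h \subset \Gamma_{\Xi,\mathbf{e}}$ (equivalently, that $A\mathbf{e}_i \in \Gamma_{\Xi,\mathbf{e}}$ for every standard basis vector). From there the paper proceeds somewhat indirectly: it uses this inclusion to bound the entries of a lattice basis for $\Gamma_{\Xi,\mathbf{e}}$, writes down the inverse-transpose matrix whose columns generate $\Gamma_{\Xi,\mathbf{e}}^*$, bounds the denominators appearing in that inverse, and then clears them. Your argument is more direct: from $A\mathbb{Z}^h \subset \Gamma_{\Xi,\mathbf{e}}$ one immediately gets $\Gamma_{\Xi,\mathbf{e}}^* \subset (A\mathbb{Z}^h)^* = A^{-1}\mathbb{Z}^h$ by the contravariance of duality, which is exactly what you wrote out coordinate by coordinate. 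Your route avoids the basis-matrix manipulation entirely and gives the explicit value $A = e_1\cdots e_d$ rather than an existentially chosen $A$; the paper's detour through the basis matrix is not needed for this lemma, though it does make contact with the explicit description of $\Gamma_{\Xi,\mathbf{e}}^*$ mentioned just after Definition~\ref{Definition dual lattice}.
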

\begin{proof}
There is an $h$-dimensional sublattice of $\Gamma_{\Xi,\mathbf{e}}$, namely $(e_1\dots e_d \mathbb{Z})^h$. Therefore, we may choose a lattice basis for $\Gamma_{\Xi,\mathbf{e}}$ all of whose elements $\mathbf{b}$ satisfy $\Vert \mathbf{b}\Vert_\infty  = O(N^{O(\gamma)})$. Let $M$ be the $h$-by-$h$ matrix that has these basis vectors as its columns. Then the columns of the matrix $(M^T)^{-1}$ are a lattice basis for the dual lattice $\Gamma_{\Xi,\mathbf{e}}^*$. The entries in $(M^T)^{-1}$ are rational numbers with numerator and denominator at most $O(N^{O(\gamma)})$. Clearing denominators, the lemma follows. 
\end{proof}

Let $\langle L^T\ba\rangle$ denote some $\mathbf{c}\in \Gamma_{\Xi,\mathbf{e}}^*$ that minimises the expression $ \Vert \mathbf{c} -L^T\ba\Vert_\infty$. We claim that the only term in (\ref{after Poisson summation}) that cannot be easily absorbed into the error term comes from $\mathbf{c} = \langle L^T\ba\rangle$.

Indeed, let $A$ be the quantity provided by Lemma \ref{Lemma clearing denominators in dual lattices}, and let $\langle L^T \ba \rangle_2$ denote the second closest point to $L^T\ba$ in the lattice $\Gamma_{\Xi,\mathbf{e}}^*$. If more than one such point exists, choose arbitrarily. Then \begin{equation}
\label{second closest lattice point}
\Vert\langle L^T \ba \rangle_2 - L^T\ba\Vert_\infty \geqslant a A^{-1},
\end{equation} where $a$ is some positive constant which depends only on $h$. By the triangle inequality and dyadic pigeonholing, one then has
\begin{align}
\label{contribution from distant lattice points}
\Big\vert\sum\limits_{\substack{\mathbf{c}\in\Gamma_{\Xi,\mathbf{e}}^*\\ \mathbf{c}\neq \langle L^T\ba \rangle}} \widehat{F}(N(\mathbf{c} - L^T\ba))e(\mathbf{x}\cdot\mathbf{c})\Big\vert \leqslant \sum\limits_{k=0}^\infty \sum\limits_{\substack{\mathbf{c}\in\Gamma_{\Xi,\mathbf{e}}^* \\ 2^{k} a A^{-1} \leqslant \Vert \mathbf{c} - L^T \mathbf{a}\Vert_\infty \leqslant 2^{k+1} a A^{-1}}}\vert \widehat{F}(N(\mathbf{c} - L^T\ba))\vert.
\end{align}
\noindent By Lemma \ref{Lemma clearing denominators in dual lattices} we also have the estimate 
\begin{equation}
\label{estimate sum over lattice points}
\sum\limits_{\substack{\mathbf{c}\in\Gamma_{\Xi,\mathbf{e}}^* \\ \Vert \mathbf{c} - L^T\ba\Vert_\infty \leqslant R}} 1 \ll \operatorname{max}(1,R^hA^h),
\end{equation}
\noindent which holds for all $R>0$. Using (\ref{estimate sum over lattice points}), Lemma \ref{Lemma by parts}, and the bound $A = O(N^{O(\gamma)})$, the quantity (\ref{contribution from distant lattice points}) is seen to be \begin{equation}
\label{the thing that it follows from}
\ll_{K,P}  N^{-K + O(K\gamma)}.
\end{equation}

This implies that the contribution from these lattice points to (\ref{after Poisson summation}) is at most 
\begin{align}
\label{overall contributoin form distant lattice points}
&\ll_{K,P} N^{-K + O(K\gamma) + O(1)} \int\limits_{\ba \in \mathfrak{m}} \vert \widehat {G}(\ba)\vert \, d\ba \nonumber \\
&\ll_{K,P}  N^{-K + O(K \gamma) + O(1) + O(\eta) }.
\end{align}
\noindent Since $\gamma$ and $\eta$ are small enough, (\ref{overall contributoin form distant lattice points}) is \[\ll_{K,P} N^{-K/2},\] which may be absorbed into the error term of (\ref{equation removing divisors}) after adjusting the implied constant appropriately. \\

It remains to estimate
\begin{align}
\label{after removing far lattice points}
\frac{N^{d-h+m}}{\operatorname{vol} (\mathbb{R}^h/ \Gamma_{\Xi,\mathbf{e}})} \int\limits_{\ba\in\mathfrak{m}} &\widehat{G}(\ba) e(\ba\cdot \mathbf{v})\widehat{F}(N(\langle L^T\ba \rangle - L^T\ba)) e(\mathbf{x}\cdot \langle L^T\ba \rangle)\, d\ba.\\\nonumber
\end{align} We have the following key lemma.
\begin{Lemma}
\label{Lemma key to minor arcs}
Under the assumption that $\eta$ and $\gamma$ are suitably small in terms of $L$, \[\operatorname{sup}\limits_{\ba \in\mathfrak{m}} \vert\widehat{F}(N(\langle L^T\ba \rangle -L^T\ba))\vert \ll_{K,L,P} N^{-K\eta}.\] 
\end{Lemma}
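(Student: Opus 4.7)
The plan is to use Fourier decay of $\widehat{F}$ to reduce the estimate to a lower bound on $\dist(L^T\ba, \Gamma_{\Xi,\mathbf{e}}^*)$ for $\ba \in \mathfrak{m}$. By Lemma \ref{Lemma by parts}, $\vert \widehat{F}(\bb) \vert \ll_{K,P} (1+\Vert \bb\Vert_\infty)^{-K}$ for every positive integer $K$, so it suffices to establish
\[
\Vert \langle L^T \ba \rangle - L^T \ba \Vert_\infty \gg_L N^{-1+\eta} \quad \text{for every } \ba \in \mathfrak{m};
\]
combining this with the Fourier decay estimate gives the bound $\ll_{K,L,P} N^{-K\eta}$ claimed by the lemma.

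To prove the distance bound, fix $\ba \in \mathfrak{m}$ and let $\mathbf{c} := \langle L^T\ba\rangle$. If $\mathbf{c}=\mathbf{0}$, then since $L$ is surjective of rank $m$, the adjoint $L^T$ is injective with smallest singular value bounded below in terms of $L$, and so $\Vert L^T\ba\Vert_\infty \gg_L \Vert\ba\Vert_\infty\geq N^{-1+\eta}$. Otherwise $\mathbf{c} \neq \mathbf{0}$, and by the minimization property we have $\Vert L^T\ba - \mathbf{c}\Vert_\infty \leq \Vert L^T\ba\Vert_\infty$, which forces $\Vert \mathbf{c}\Vert_\infty \leq 2\Vert L^T\ba\Vert_\infty \ll_L N^\eta$. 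By Lemma \ref{Lemma clearing denominators in dual lattices}, $A\mathbf{c}$ is then a nonzero integer vector with $\Vert A\mathbf{c}\Vert_\infty \ll N^{\eta+O(\gamma)}$. The purely irrational hypothesis on $L$ implies that $\im L^T$ contains no nonzero integer vector: if $\mathbf{k} \in \mathbb{Z}^h\setminus\{\mathbf{0}\}$ were equal to $L^T\ba^\prime$ for some $\ba^\prime\in\mathbb{R}^m$, then $\ba^\prime \neq \mathbf{0}$ by injectivity of $L^T$, yet $L^T\ba^\prime \in \mathbb{Z}^h$ would contradict pure irrationality. Hence $\dist(A\mathbf{c},\im L^T) > 0$, and it remains to make this positivity quantitative.

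The quantitative refinement is where the algebraicity of $L$ enters, and is the main obstacle in the proof. Choose a basis $\mathbf{w_1},\dots,\mathbf{w_{h-m}}$ of $\ker L=(\im L^T)^\perp$ with algebraic coordinates (possible since $\ker L$ is cut out by linear equations with algebraic coefficients). Then $\dist(\mathbf{k},\im L^T) \asymp_L \max_i \vert \mathbf{w_i}\cdot\mathbf{k}\vert$, and each quantity $\mathbf{w_i}\cdot\mathbf{k}$ is a linear form in the integer variables $k_1,\dots,k_h$ with algebraic coefficients. A Liouville-type lower bound for such linear forms, which is a standard consequence of the theory of heights of algebraic numbers, yields constants $K_0=K_0(L)$ and $c(L)>0$ with
\[
\dist(\mathbf{k},\im L^T) \geq c(L)\Vert \mathbf{k}\Vert_\infty^{-K_0} \quad \text{for every } \mathbf{k}\in\mathbb{Z}^h\setminus\{\mathbf{0}\}.
\]
Applying this with $\mathbf{k}=A\mathbf{c}$, and using the fact that $\dist(\mathbf{c},\im L^T) = A^{-1}\dist(A\mathbf{c},\im L^T)$ together with $A\ll N^{O(\gamma)}$, yields
\[
\Vert L^T\ba-\mathbf{c}\Vert_\infty \geq \dist(\mathbf{c},\im L^T) \gg_L N^{-K_0\eta-O((K_0+1)\gamma)}.
\]
Choosing $\eta$ small enough in terms of $L$ (so that $(K_0+1)\eta\leq 1/2$) and then $\gamma$ small enough in terms of $\eta$ and $L$, this quantity exceeds $N^{-1+\eta}$, completing the argument. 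The admissible range of $\eta$, and hence the smallness threshold for $\gamma$, is controlled by the Liouville exponent $K_0$, an invariant of the algebraic coefficients of $L$.
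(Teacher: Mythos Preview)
Your proof is correct and follows essentially the same route as the paper. Both arguments reduce via Lemma~\ref{Lemma by parts} to a lower bound on $\Vert\langle L^T\ba\rangle - L^T\ba\Vert_\infty$, use Lemma~\ref{Lemma clearing denominators in dual lattices} to pass to integer vectors, and then invoke a Liouville-type inequality for algebraic coefficients. The only difference is packaging: the paper rescales $\ba$ by $A$ and cites \cite[Lemma~E.1]{Wa17} on the approximation function $A_L$ as a black box, whereas you perform the case-split on $\mathbf{c}=\mathbf{0}$ versus $\mathbf{c}\neq\mathbf{0}$ explicitly and sketch the Liouville bound for $\dist(\mathbf{k},\im L^T)$ directly via a basis of $\ker L$ with algebraic coordinates. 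Your version is more self-contained; the paper's is more succinct but relies on the earlier work.
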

\begin{Remark}
\emph{The proof of this lemma uses the algebraicity of the coefficients of $L$. One should note that the bound (\ref{only point where algebraicity is used}) below, which holds for matrices with algebraic coefficients, also holds for almost all matrices. It is this fact which ultimately leads to our observation in the introduction that the main theorems of this paper hold for almost all matrices (as well as for matrices with algebraic coefficients, as stated).} 
\end{Remark}
\begin{proof} 
Certainly, by rescaling $\ba$ and using Lemmas \ref{Lemma by parts} and \ref{Lemma clearing denominators in dual lattices},
\begin{align}
\label{bound in terms of approximation function}
\operatorname{sup}\limits_{\ba \in\mathfrak{m}} \vert\widehat{F}(N(\langle L^T\ba \rangle -L^T\ba))\vert& \ll_{K,P} N^{-K}( \inf\limits_{\ba \in\mathfrak{m}}\Vert \langle L^T\ba \rangle -L^T\ba \Vert_\infty)^{-K} \nonumber\\
&\ll_{K,P} A^KN^{-K} ( \inf\limits_{\substack{ \bb \in \mathbb{R}^m \\ AN^{-1 + \eta} \leqslant \Vert \bb\Vert_\infty \leqslant AN^{\eta} }} \inf\limits_{\mathbf{n} \in \mathbb{Z}^h} \Vert \mathbf{n} - L^T \bb  \Vert_\infty)^{-K}
\end{align}
\noindent 
The quantity 
\begin{equation}
\label{approximation function expression}
\inf\limits_{\substack{ \bb \in \mathbb{R}^m \\ AN^{-1 + \eta} \leqslant \Vert \bb\Vert_\infty \leqslant A N^{\eta}}} \inf\limits_{\mathbf{n} \in \mathbb{Z}^h} \Vert \mathbf{n} - L^T \bb \Vert_\infty
\end{equation} encodes information about diophantine approximations to the coefficients of $L$. For example, since $L$ is purely irrational, by definition\footnote{The reader may consult Definition \ref{Definition rational space}.} we have $L^T \bb \neq \mathbb{Z}$ for any $\bb \neq \mathbf{0}$. Therefore, since the function \[ \bb \mapsto \inf\limits_{\mathbf{n} \in \mathbb{Z}^h} \Vert L^T \bb - \mathbf{n} \Vert_\infty\] is continuous, (\ref{approximation function expression}) is always non-zero. We will need a quantitative refinement of this fact.

Fortunately, in \cite{Wa17} we extensively analysed expressions such as (\ref{approximation function expression}). Consider Definition 2.8 of \cite{Wa17} in particular, in which we defined the approximation function\footnote{We stress that the notation $A_L$ is unrelated to the parameter $A$ from this section.} $A_L$. In this language, (\ref{approximation function expression}) is equal to \[A_L(AN^{-1+\eta},A^{-1}N^{-\eta}).\] Therefore, since $L$ is purely irrational and has algebraic coefficients, Lemma E.1 of \cite{Wa17} tells us that \begin{equation}
\label{only point where algebraicity is used} \inf\limits_{\substack{ \bb \in \mathbb{R}^m \\ AN^{-1 + \eta} \leqslant \Vert \bb\Vert_\infty \leqslant A N^{\eta}}} \inf\limits_{\mathbf{n} \in \mathbb{Z}^h} \Vert L^T \bb - \mathbf{n} \Vert_\infty \gg_{L} \min ( A N^{-1 + \eta}, A^{-O_L(1)}N^{-O_L( \eta )}).
\end{equation} Since $\eta$ and $\gamma$ are small enough in terms of $L$, and since $A = O(N^{O(\gamma)})$, (\ref{bound in terms of approximation function}) implies that 
\[\operatorname{sup}\limits_{\ba \in\mathfrak{m}} \vert\widehat{F}(N(\langle L^T\ba \rangle -L^T\ba))\vert \ll_{K,L,P} N^{-K\eta},\] as claimed.  
\end{proof}
\noindent The lemma above implies that (\ref{after removing far lattice points}) has size 
\begin{equation}
\label{minor arc total contribution}
O_{K,L,P}(N^{-K \eta + O(1)}),
\end{equation} which is thus our bound for the total contribution from the minor arc $\mathfrak{m}$. \\

\noindent \textbf{Major arc:} Performing the same Poisson summation argument as in the minor arc case, the main term on the left-hand side of (\ref{equation removing divisors}) is equal to 
\begin{align}
\label{major arc expression}
\frac{N^{d-h+m}}{\operatorname{vol} (\mathbb{R}^h/ \Gamma_{\Xi,\mathbf{e}})} \int\limits_{\ba\in\mathfrak{M}} &\widehat{G}(\ba) e(\ba\cdot \mathbf{v})
\widehat{F}(N(\langle L^T\ba \rangle - L^T\ba)) e(\mathbf{x}\cdot \langle L^T\ba \rangle)\, d\ba.
\end{align} 

For $\ba\in\mathfrak{M}$ one has $\Vert L^T\ba\Vert_\infty \ll_L N^{-1+\eta}$, and so $\langle L^T\ba\rangle = \mathbf{0}$. Therefore (\ref{major arc expression}) is equal to 
\begin{equation}
\label{equation before expanding integral}
\frac{N^{d-h+m}}{\operatorname{vol} (\mathbb{R}^h/ \Gamma_{\Xi,\mathbf{e}})}\int\limits_{\ba \in\mathfrak{M}} \widehat{G}(\ba) e(\ba\cdot\mathbf{v}) \widehat{F}(-NL^T\ba) \, d\ba.
\end{equation} Since $L^T:\mathbb{R}^{m}\longrightarrow \mathbb{R}^h$ is injective, one has $\Vert L^T\ba\Vert_\infty \gg_{L} \Vert \ba\Vert_\infty$. Therefore (\ref{equation before expanding integral}) is equal to
\begin{align*}
&\frac{N^{d-h+m}}{\operatorname{vol} (\mathbb{R}^h/ \Gamma_{\Xi,\mathbf{e}})} \Big(\int\limits_{\ba \in\mathbb{R}^{m}} \widehat{G}(\ba) e(\ba\cdot\mathbf{v}) \widehat{F}(- NL^T\ba) \, d\ba\Big) + O_{K,L,P}(N^{-\eta K + O(1)}),
\end{align*}
\noindent which, after the obvious manipulations, equals \[\frac{1}{N^{h-m}\operatorname{vol} (\mathbb{R}^h/ \Gamma_{\Xi,\mathbf{e}})}\Big(\int\limits_{\mathbf{x}\in\mathbb{R}^h} F(\mathbf{x}/N) G(L\mathbf{x} + \mathbf{v}) \, d\mathbf{x}\Big) + O_{K,L,P}(N^{-\eta K + O(1)}).\] 

Fixing suitably small $\eta$ and $\gamma$, and combining the contribution from the trivial, minor, and major arc, we deduce that \begin{align}
\label{final error term}
\frac{1}{N^{h - m}}\sum\limits_{\substack{\mathbf{n} \in \mathbb{Z}^h \\ e_j \vert \xi_j(\mathbf{n}) + \widetilde{r}_j \, \forall j}} F(\mathbf{n}/N)G(L\mathbf{n} + \mathbf{v})
= \frac{1}{N^{h-m}\operatorname{vol} (\mathbb{R}^h/ \Gamma_{\Xi,\mathbf{e}})}\int\limits_{\mathbf{x}\in\mathbb{R}^h} F(\mathbf{x}/N) G(L\mathbf{x} + \mathbf{v}) \nonumber\\+ O_{K,L,P}(N^{-\eta K + O(1)}).
\end{align}

By adjusting the implied constant appropriately, the error term from (\ref{final error term}) is $O_{K,L,P}(N^{-K})$ for all positive real $K$. The final observation is that, considering the definition of the local factor $\alpha_{\mathbf{e},\widetilde{\mathbf{r}}}$ in (\ref{local factor def}) and the fact that we assumed $\alpha_{\mathbf{e},\widetilde{\mathbf{r}}}$ is non-zero, \[\frac{1}{\operatorname{vol} (\mathbb{R}^h/ \Gamma_{\Xi,\mathbf{e}})} = \alpha_{\mathbf{e},\widetilde{\mathbf{r}}}.\]  
The lemma follows. 
\end{proof}

The following estimate will also be needed. 

\begin{Lemma}
\label{Lemma just arithmetic progressions}
Under the same hypotheses as Lemma \ref{Lemma in APs}, for all positive $K$ \begin{equation}
\label{equation just aps}
\frac{1}{N^{h}} \sum\limits_{\substack{ \mathbf{n} \in \mathbb{Z}^h\\ e_j \vert \xi_j(\mathbf{n}) + \widetilde{ r}_j \, \forall j\leqslant d}} F(\mathbf{n}/N) = \frac{\alpha_{\mathbf{e},\widetilde{\mathbf{r}}}} {N^{h}} \int\limits_{\mathbf{x} \in \mathbb{R}^h} F(\mathbf{x}/N) \, d\mathbf{x} + O_{K,L,P}( N^{-K}),
\end{equation}
\noindent where $\alpha_{\mathbf{e},\widetilde{\mathbf{r}}}$ is as in (\ref{local factor def}).
\end{Lemma}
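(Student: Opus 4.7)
The plan is to imitate the Poisson summation strategy used in the proof of Lemma \ref{Lemma in APs}, but the argument here will be substantially cleaner because there is no factor $G(L\mathbf{n}+\mathbf{v})$ present. In particular there is no Fourier inversion over $\ba$ and no arc decomposition; the whole statement comes from a single application of Poisson summation together with the rapid decay of $\widehat{F}$.

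First, if no $\mathbf{x}_0\in\mathbb{Z}^h$ satisfies $e_j\mid \xi_j(\mathbf{x}_0)+\widetilde{r}_j$ for every $j\leqslant d$, then the left-hand side of (\ref{equation just aps}) is empty and $\alpha_{\mathbf{e},\widetilde{\mathbf{r}}}=0$, so the lemma holds trivially. Otherwise, choose such an $\mathbf{x}_0$ with $\Vert\mathbf{x}_0\Vert_\infty \leqslant e_1\cdots e_d\leqslant N^{d\gamma}$, and rewrite the constrained sum as $\sum_{\mathbf{n}\in\Gamma_{\Xi,\mathbf{e}}} F((\mathbf{x}_0+\mathbf{n})/N)$, with the lattice $\Gamma_{\Xi,\mathbf{e}}$ as introduced in the proof of Lemma \ref{Lemma in APs}. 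Applying Poisson summation (Lemma \ref{Lemma Poisson summation}) to this sum yields
\[
\frac{1}{N^h}\sum_{\substack{\mathbf{n}\in\mathbb{Z}^h \\ e_j\mid\xi_j(\mathbf{n})+\widetilde{r}_j \,\forall j}} F(\mathbf{n}/N) \;=\; \frac{1}{\vol(\mathbb{R}^h/\Gamma_{\Xi,\mathbf{e}})} \sum_{\mathbf{c}\in \Gamma_{\Xi,\mathbf{e}}^*} \widehat{F}(N\mathbf{c})\, e(\mathbf{x}_0\cdot\mathbf{c}).
\]

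The $\mathbf{c}=\mathbf{0}$ term equals $\vol(\mathbb{R}^h/\Gamma_{\Xi,\mathbf{e}})^{-1}\widehat{F}(\mathbf{0})$, and since $\vol(\mathbb{R}^h/\Gamma_{\Xi,\mathbf{e}})^{-1}=\alpha_{\mathbf{e},\widetilde{\mathbf{r}}}$ (as established in the last paragraph of the proof of Lemma \ref{Lemma in APs}) and $\widehat{F}(\mathbf{0})=\int F = N^{-h}\int F(\mathbf{x}/N)\,d\mathbf{x}$, this is precisely the claimed main term on the right-hand side of (\ref{equation just aps}). It therefore remains to show that the contribution from $\mathbf{c}\neq \mathbf{0}$ is $O_{K,L,P}(N^{-K})$ for every $K>0$.

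For this tail bound, Lemma \ref{Lemma clearing denominators in dual lattices} supplies a natural number $A=O(N^{O(\gamma)})$ with $A\Gamma_{\Xi,\mathbf{e}}^*\subseteq\mathbb{Z}^h$, which forces every nonzero $\mathbf{c}\in\Gamma_{\Xi,\mathbf{e}}^*$ to satisfy $\Vert\mathbf{c}\Vert_\infty \geqslant aA^{-1}\geqslant N^{-O(\gamma)}$, so $\Vert N\mathbf{c}\Vert_\infty \geqslant N^{1-O(\gamma)}$. Decomposing $\{\mathbf{c}\in \Gamma_{\Xi,\mathbf{e}}^*\setminus\{\mathbf{0}\}\}$ into the dyadic shells $2^k a A^{-1}\leqslant\Vert\mathbf{c}\Vert_\infty\leqslant 2^{k+1}aA^{-1}$ for $k\geqslant 0$, using the counting estimate (\ref{estimate sum over lattice points}), the rapid decay $|\widehat{F}(N\mathbf{c})|\ll_{K',P}\Vert N\mathbf{c}\Vert_\infty^{-K'}$ from Lemma \ref{Lemma by parts}, and the crude bound $\vol(\mathbb{R}^h/\Gamma_{\Xi,\mathbf{e}})^{-1}=O(1)$, one gets that the tail is bounded by $O_{K',L,P}(N^{-K'+O(K'\gamma)})$ exactly as in the minor-arc estimate (\ref{the thing that it follows from}) of Lemma \ref{Lemma in APs}. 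Since $\gamma$ is small enough in terms of $L$, picking $K'$ sufficiently large in terms of $K$ makes this $O_{K,L,P}(N^{-K})$, finishing the proof. The only mildly technical step is this dyadic counting argument, and it is essentially a direct transcription of the corresponding step in Lemma \ref{Lemma in APs}, with no new ideas required.
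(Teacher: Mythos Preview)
Your proposal is correct and follows essentially the same approach as the paper's proof: apply Poisson summation over the lattice $\Gamma_{\Xi,\mathbf{e}}$, isolate the $\mathbf{c}=\mathbf{0}$ term as the main term, and bound the tail via the same dyadic shell argument and rapid decay of $\widehat{F}$ already carried out in the minor-arc analysis of Lemma~\ref{Lemma in APs}. The paper's proof is simply a terse reference to those earlier estimates, whereas you have spelled out the details (including the trivial case $\alpha_{\mathbf{e},\widetilde{\mathbf{r}}}=0$), but the underlying argument is identical.
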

\begin{proof}
By applying Poisson summation, the left-hand side of (\ref{equation just aps}) is equal to 
\begin{equation}
\frac{N^{d-h}}{ \vol(\mathbb{R}^h/ \Gamma_{\Xi,\mathbf{e}})} \sum\limits_{\mathbf{c} \in \Gamma^*_{\Xi,\mathbf{e}}} \widehat{F}(N\mathbf{c}) e(\mathbf{x} \cdot \mathbf{c}),
\end{equation}
where $\mathbf{x}$ and $\Gamma^*_{\Xi,\mathbf{e}}$ are as in (\ref{after Poisson summation}). By applying estimates (\ref{contribution from distant lattice points}) and (\ref{the thing that it follows from}), one shows that the main term of (\ref{equation just aps}) comes from the $\mathbf{c}=\mathbf{0}$ term above. After the obvious manipulations, this concludes the lemma.
\end{proof}

\section{The linear inequalities condition}
\label{section proof of pseduorandomness}

In \cite{GT10}, the key notion of pseudorandomness is the so-called `linear forms condition' (see Definition 6.2 of of that paper). The upshot is that in order to understand the number of solutions to a particular linear equation in primes, it is enough to understand the number of solutions to certain auxiliary linear equations weighted by a sieve weight $\nu$. In this paper an analogous philosophy holds. Indeed we will show that, in order to understand the number of solutions to a particular linear inequality in primes, it is enough to understand the number of solutions to certain auxiliary linear inequalities weighted by a sieve weight $\nu$. \\

Let us proceed with the formal definition. The reader is reminded that $W_j = \prod_{p\leqslant w_j(N)} p$ (see Section \ref{section conventions}). 
\begin{Definition}[Linear inequalities condition]
\label{Definition linear inequalities condition}
Let $m,d$ be natural numbers, and let $L:\mathbb{R}^d \longrightarrow \mathbb{R}^m$ be a linear map. For each natural number $N$, let $\nu_N:\mathbb{Z}\longrightarrow \mathbb{R}$ be a function. We say that the family of functions $(\nu_N)_{N=1}^\infty$ is \emph{$(L,w)$-pseudorandom} if the following holds. For all positive constants $C$ and for all sets of parameters $P$, for all compactly supported smooth functions $F:\mathbb{R}^d\longrightarrow [0,1]$ and $G:\mathbb{R}^m\longrightarrow [0,1]$ such that $F,G\in\mathcal{C}(P)$, and for all functions $w_1,\dots w_d:\mathbb{N}\longrightarrow \mathbb{R}_{\geqslant 0}$ that each satisfy $w_j(n)\rightarrow \infty$ as $n\rightarrow \infty$ and $w_j(n)\leqslant w(n)$ for all $n$, for all $\mathbf{v}\in\mathbb{R}^m$ satisfying $\Vert \mathbf{v} \Vert_\infty \leqslant CN$, and for functions $f_1,\dots,f_d:\mathbb{Z} \longrightarrow \mathbb{R}$ such that each $f_j$ equals either $\nu_N$ or $\Lambda_{\mathbb{Z}/W_j\mathbb{Z}} $,
\begin{equation}
\label{eqn defining pseudorandomness}
T^{L,\mathbf{v}}_{F,G,N}(f_1, \dots,f_d) = T^{L,\mathbf{v}}_{F,G,N}(\Lambda_{\mathbb{Z}/W_1\mathbb{Z}},\dots,\Lambda_{\mathbb{Z}/W_d\mathbb{Z}}) + o(1)
\end{equation}
as $N\rightarrow \infty$, where the $o(1)$ term may depend on the family $(\nu_N)_{N=1}^\infty$, on $C$, $L$, $P$, and on the functions $w_1,\dots,w_d$. 
\end{Definition}

\begin{Remark}
\emph{Equation (\ref{eqn defining pseudorandomness}) might seem to be a slightly curious formulation of a pseudorandomness principle, as it does not claim that the weight $\nu_N$ behaves like the constant $1$ function but rather behaves like the local von Mangoldt function. However, referring to Remark \ref{Remark rescaling w trick}, let us reiterate the comment that we are not performing the $W$-trick in the same manner as \cite{GT10}.} 
\end{Remark}

The aim of this section is to introduce a sieve weight $\nu_{N,w}^\gamma$, and to prove that it is $(L,w)$-pseudorandom for a large class of linear maps $L$. We begin by introducing the sieve weight from \cite[Appendix D]{GT10}. 

\begin{Definition}[Smooth sieve weight]
\label{Definition Smooth sieve weight}
Let $N$ be a natural number, $\gamma$ be a positive real, and define $R:= N^\gamma$. Let $\rho \in \mathcal{C}(\emptyset)$ be the smooth $1$-supported function fixed in Section \ref{section conventions}. Define the function $\Lambda_{\rho,R,2}:\mathbb{Z}\longrightarrow \mathbb{R}_{\geqslant 0}$ by the formula
\begin{equation}
\label{definition of GOldston weight}
\Lambda_{\rho,R,2}(n):=(\log R) \Big(\sum\limits_{d\vert n}\mu(d)\rho\big(\frac{\log d}{\log R}\big)\Big)^2,
\end{equation}
\noindent for non-negative integers $n$, and then by the obvious extension to negative integers. 
\end{Definition}
 We now define the family of majorants themselves.

\begin{Definition}[Pseudorandom majorant]
\label{Definnition pseudorandom majorant}
Let $N$ be a natural number, let $\gamma$ be a positive real, and let $R:= N^\gamma$. Define the constant \[c_{\rho,2} : = \int\limits_{0}^{\infty} \vert \rho^\prime (x)\vert^2 \, dx.\] Then define the weight $\nu_{N,w}^\gamma:\mathbb{Z}\rightarrow \mathbb{R}_{\geqslant 0}$ by \[\nu_{N,w}^\gamma(n) := \frac{1}{2c_{\rho,2}}\Lambda_{\rho,R,2}(n) + \frac{1}{2}\Lambda_{\mathbb{Z}/W\mathbb{Z}}(n).\] 
\end{Definition}
\noindent Note that $\nu_{N,w}^{\gamma}$ also depends on $\rho$, but we suppress that dependence from the notation (as we fixed $\rho$ in Section \ref{section conventions}). \\

We now state our main new result on the pseudorandomness of this sieve weight.

\begin{Theorem}[Pseudorandomness of sieve weights]
\label{Theorem pseudorandomness}
Let $m,d$ be natural numbers, with $d\geqslant m+2$. Let $L:\mathbb{R}^d \longrightarrow \mathbb{R}^m$ be a surjective linear map, and suppose that $L\notin V_{\degen}^*(m,d)$ and that the coefficients of $L$ are algebraic. Assume that $\gamma$ is a positive parameter that is small enough in terms of $L$. Then $\nu_{N,w}^{\gamma}$ is $(L,w)$-pseudorandom. 
\end{Theorem}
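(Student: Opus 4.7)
The plan is to adapt the strategy from \cite[Appendix D]{GT10}, where an analogous linear-forms condition is verified for the Goldston-Yıldırım-type weight, replacing Green-Tao's equidistribution-in-arithmetic-progressions input by our Lemma \ref{Lemma in APs} (which is where the algebraicity hypothesis enters). First I would apply Lemma \ref{Lemma generating a purely irrational map} to both sides of \eqref{eqn defining pseudorandomness}, reducing matters to proving, for each $\widetilde{\mathbf{r}}\in \widetilde{R}$, the corresponding identity for $T^{L',\mathbf{v}',\Xi,\widetilde{\mathbf{r}}}_{F,G_{\widetilde{\mathbf{r}}},N}$. The new $L'$ is surjective, purely irrational, has algebraic coefficients, and $\Xi$ has finite Cauchy-Schwarz complexity (so in particular all $\xi_j$ are pairwise linearly independent), which is what we need in order for Lemma \ref{Lemma in APs} to apply.

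Next, using the bilinearity of $T$ and a one-slot-at-a-time hybrid argument, it suffices to treat the case when exactly one $f_{j_0}$ equals $\nu^\gamma_{N,w}$ and the others equal $\Lambda_{\mathbb{Z}/W_j\mathbb{Z}}$. Writing $\nu^\gamma_{N,w} = \tfrac{1}{2c_{\rho,2}}\Lambda_{\rho,R,2} + \tfrac{1}{2}\Lambda_{\mathbb{Z}/W\mathbb{Z}}$, I split into the two cases. Since $\Lambda_{\mathbb{Z}/W\mathbb{Z}}$ is periodic and $W_{j_0}\mid W$, the second case reduces to a finite $W$-periodic computation that is essentially trivial. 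For the first case, expand
\[
\Lambda_{\rho,R,2}(\xi_{j_0}(\mathbf{n})+\widetilde r_{j_0}) = \log R \sum_{d_1,d_2} \mu(d_1)\mu(d_2)\rho\!\left(\tfrac{\log d_1}{\log R}\right)\rho\!\left(\tfrac{\log d_2}{\log R}\right) 1_{[d_1,d_2]\mid \xi_{j_0}(\mathbf{n})+\widetilde r_{j_0}}
\]
and similarly expand each $\Lambda_{\mathbb{Z}/W_j\mathbb{Z}}(\xi_j(\mathbf{n})+\widetilde r_j) = \tfrac{W_j}{\varphi(W_j)}\sum_{e_j\mid W_j,\, e_j\mid \xi_j(\mathbf{n})+\widetilde r_j}\mu(e_j)$. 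The resulting expression is a multiple sum, over divisor tuples $\mathbf{e}=(e_1,\dots,e_d)$ with $e_{j_0}\le R^2 = N^{2\gamma}$ and $e_j\mid W_j$ otherwise, of inner sums of the shape handled by Lemma \ref{Lemma in APs}.

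Choosing $\gamma$ small enough and applying Lemma \ref{Lemma in APs} with $K$ large, each inner sum is $\alpha_{\mathbf{e},\widetilde{\mathbf{r}}}\, J$ plus a power-saving error which, summed against $O(N^{O(\gamma)})$ divisor tuples, remains negligible; here $J := N^{m-h}\int F(\Xi(\mathbf{y})/N)G_{\widetilde{\mathbf{r}}}(L'\mathbf{y}+\mathbf{v}')\,d\mathbf{y}$ is a common ``archimedean'' factor. What remains is to show that the weighted sum of the singular densities $\alpha_{\mathbf{e},\widetilde{\mathbf{r}}}$ converges to the same limit when $f_{j_0} = \tfrac{1}{2c_{\rho,2}}\Lambda_{\rho,R,2}$ as when $f_{j_0} = \tfrac{1}{2}\Lambda_{\mathbb{Z}/W_{j_0}\mathbb{Z}}$. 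Since $\alpha_{\mathbf{e},\widetilde{\mathbf{r}}}$ is multiplicative in $\mathbf{e}$, the singular series factors into an Euler product over primes $p$; for $p\le w_{j_0}(N)$ the two local densities agree on the nose, and for $p>w_{j_0}(N)$ one is reduced to a contour-integral evaluation of $\int_{\Re s=2} (\log R)\widetilde{\rho}(s)^2\, p^{-2s}\cdots$ that is carried out exactly as in \cite[Appendix D]{GT10}, with the constant $c_{\rho,2}=\tfrac12\int|\rho'|^2$ providing precisely the right normalisation.

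The main obstacle is the singular-series computation in the last step: one must track the local factors carefully at primes $p>w_{j_0}(N)$, where the two weights do not locally agree, and apply a Mertens/Euler-product estimate valid uniformly in the growing lower cutoff $w_{j_0}(N)$. A secondary technical issue is that the error term from Lemma \ref{Lemma in APs} must beat the $O(N^{O(\gamma)})$ number of divisor tuples, which is the reason why $\gamma$ must be taken small enough in terms of $L$ (and why the lemma's $N^{-K}$ saving for arbitrary $K$ is essential). Once these pieces are in place, reassembling the bilinear expansion completes the proof.
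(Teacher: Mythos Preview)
Your plan matches the paper's own proof: reduce via Lemma~\ref{Lemma generating a purely irrational map} to a purely irrational $L'$ with $\Xi$ of finite Cauchy--Schwarz complexity, open all divisor sums, apply Lemma~\ref{Lemma in APs} to separate off the common archimedean factor $J_{\widetilde{\mathbf r}}$, and evaluate the remaining arithmetic sum via \cite[Theorem~D.3]{GT10}.

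One correction is needed. A slot-by-slot telescoping does \emph{not} reduce to the case of a single $\nu$-slot with all other slots equal to $\Lambda_{\mathbb{Z}/W_j\mathbb{Z}}$: in the intermediate hybrids the not-yet-replaced slots still carry $\nu^\gamma_{N,w}$. The paper instead expands each $\nu^\gamma_{N,w}=\tfrac{1}{2c_{\rho,2}}\Lambda_{\rho,R,2}+\tfrac12\Lambda_{\mathbb{Z}/W\mathbb{Z}}$ from the outset and shows that for \emph{every} subset $S$ of slots carrying $c_{\rho,2}^{-1}\Lambda_{\rho,R,2}$ (the remaining slots carrying local von~Mangoldt functions, possibly with differing moduli $W$ and $W_j$) the resulting expression equals $\mathfrak S_{\widetilde{\mathbf r}}J_{\widetilde{\mathbf r}}+o(1)$ (Lemma~\ref{Lemma sieve weight asymptotics in general case}); since the coefficients sum to $1$, this yields \eqref{eqn defining pseudorandomness}. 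Your divisor-sum and Euler-product analysis then applies verbatim to arbitrary $S$. A related minor point: your ``second case'' (slot $j_0$ carrying $\Lambda_{\mathbb{Z}/W\mathbb{Z}}$ rather than $\Lambda_{\mathbb{Z}/W_{j_0}\mathbb{Z}}$) is not literally a finite periodic computation, since in general $W\ne W_{j_0}$; one needs the local-factor estimate $\beta_p=1+O_\Xi(p^{-2})$ coming from finite Cauchy--Schwarz complexity of $\Xi$ (recorded in the paper as Lemma~\ref{Lemma problem for local von Mangoldt}), but this is routine. The paper also organises the small-prime contribution by first splitting into residue classes modulo $W$ and then applying (D.4) of \cite{GT10} to the $W$-rescaled affine forms, rather than asserting that the two local densities agree on the nose for $p\le w_{j_0}(N)$.
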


\noindent Temporarily dropping the convention that $w(N) = \max(1,\log\log\log N)$, we speculate that the following general result holds.

\begin{Conjecture}[Pseudorandomness conjecture]
\label{Conjecture pseudorandomness}
Let $m,d$ be natural numbers, with $d\geqslant m+2$. Let $L:\mathbb{R}^d \longrightarrow \mathbb{R}^m$ be a surjective linear map, and suppose that $L\notin V^*_{\degen}(m,d)$. Then there is some value of $\gamma$ and some function $w:\mathbb{N} \longrightarrow \mathbb{R}_{\geqslant 0}$, satisfying $w(N)\rightarrow \infty$ as $N\rightarrow \infty$, for which $\nu_{N,w}^{\gamma}$ is $(L,w)$-pseudorandom. 
\end{Conjecture}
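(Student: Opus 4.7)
The plan is to combine the dimension reduction from Lemma \ref{Lemma generating a purely irrational map} with a sieve-theoretic evaluation based on Lemma \ref{Lemma in APs}. Applying Lemma \ref{Lemma generating a purely irrational map} to both sides of the pseudorandomness identity (\ref{eqn defining pseudorandomness}) and invoking its conclusion (6), it suffices to verify the analogous identity for $T^{L',\mathbf{v}',\Xi,\widetilde{\mathbf{r}}}_{F,G_{\widetilde{\mathbf{r}}},N}(f_1,\dots,f_d)$ for each $\widetilde{\mathbf{r}} \in \widetilde{R}$, where $L'$ is purely irrational and algebraic by part (7), and $\Xi$ has finite Cauchy--Schwarz complexity by part (8) (using $L\notin V_{\degen}^*$). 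Writing $\nu_{N,w}^\gamma = \tfrac{1}{2c_{\rho,2}}\Lambda_{\rho,R,2} + \tfrac{1}{2}\Lambda_{\mathbb{Z}/W\mathbb{Z}}$ and using multilinearity of $T$, the problem reduces further, by a telescoping argument over the positions with $f_j = \nu_N$, to showing that replacing a single $\nu_N$ by $\Lambda_{\mathbb{Z}/W_j\mathbb{Z}}$ (with the other inputs held fixed) perturbs the value of $T^{L',\mathbf{v}',\Xi,\widetilde{\mathbf{r}}}_{F,G_{\widetilde{\mathbf{r}}},N}$ by only $o(1)$.

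Next, I would expand $\Lambda_{\rho,R,2}$, $\Lambda_{\mathbb{Z}/W\mathbb{Z}}$, and $\Lambda_{\mathbb{Z}/W_j\mathbb{Z}}$ via their divisor-sum representations (for instance, $\Lambda_{\rho,R,2}(n) = \log R \sum_{d_1,d_2}\mu(d_1)\mu(d_2)\rho(\tfrac{\log d_1}{\log R})\rho(\tfrac{\log d_2}{\log R}) 1_{\mathrm{lcm}(d_1,d_2)\mid n}$) and interchange summations. The resulting inner sums are precisely of the shape treated by Lemma \ref{Lemma in APs}, with divisors $e_j$ of size at most $R^2 = N^{2\gamma}$ (from $\Lambda_{\rho,R,2}$) or of size $W^{O(1)}$; taking $\gamma$ sufficiently small in terms of $L$ keeps them within the admissible range. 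Each inner sum then becomes the local factor $\alpha_{\mathbf{e},\widetilde{\mathbf{r}}}$ from (\ref{local factor def}) multiplied by a global integral common to all terms, plus a rapidly decaying error. The aggregate error over the $O(N^{O(\gamma)})$ divisor choices is negligible by the $O_{K,L,P}(N^{-K})$ bound in Lemma \ref{Lemma in APs} for $K$ taken large enough.

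What remains is an Euler product manipulation. By the finite Cauchy--Schwarz complexity of $\Xi$, the local factor $\alpha_{\mathbf{e},\widetilde{\mathbf{r}}}$ admits a prime-by-prime factorisation $\prod_p \alpha_p$ (with only finitely many primes contributing nontrivially for given $\mathbf{e}$), and so summing over divisors factors into local sums at each prime. At position $j$, the contribution of $\tfrac{1}{2c_{\rho,2}}\Lambda_{\rho,R,2}$ should evaluate, via a standard Goldston--Yildirim-type contour integral computation in the spirit of \cite[Appendix D]{GT10}, to $\tfrac{1}{2}$ of the local Euler factor of $\Lambda_{\mathbb{Z}/W_j\mathbb{Z}}$ at position $j$, up to an error of $O((\log R)^{-1}) = o(1)$, with $c_{\rho,2}$ entering precisely as the residue that forces this normalisation. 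The $\tfrac{1}{2}\Lambda_{\mathbb{Z}/W\mathbb{Z}}$ contribution supplies the remaining $\tfrac{1}{2}$ of that Euler factor, up to a minor discrepancy at primes $w_j < p \leq w$ which is handled by a Mertens-type estimate (since $\Lambda_{\mathbb{Z}/W\mathbb{Z}}$ and $\Lambda_{\mathbb{Z}/W_j\mathbb{Z}}$ agree at all other primes). Summing the two halves reconstructs the local Euler factor of $\Lambda_{\mathbb{Z}/W_j\mathbb{Z}}$ exactly, yielding the required identity.

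The main difficulty I anticipate is carrying out the prime-by-prime sieve calculation cleanly and uniformly in the remaining positions, since the divisibility conditions here constrain the composite quantities $\xi_j(\mathbf{n}) + \widetilde{r}_j$ rather than individual coordinates of $\mathbf{n}$. The finite Cauchy--Schwarz complexity of $\Xi$ is essential for decoupling the positions at each prime, and additional care is required at the finitely many primes dividing the entries of the matrix of $\Xi$, where the natural product structure partially breaks down. A secondary point is to align the condition ``$\gamma$ small enough in terms of $L$'' so that it simultaneously meets the admissibility hypothesis of Lemma \ref{Lemma in APs} and the convergence requirements of the sieve computation; both are controlled by the diophantine quantities associated to $L$ that are analysed in the proof of Lemma \ref{Lemma in APs}.
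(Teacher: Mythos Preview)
The statement you are attempting to prove is a \emph{conjecture}, not a theorem: the paper explicitly says ``Unfortunately we have not been able to resolve Conjecture \ref{Conjecture pseudorandomness}, but we strongly believe it to be true.'' Your proposal is essentially the proof of Theorem \ref{Theorem pseudorandomness}, which is the special case of the conjecture in which $L$ has algebraic coefficients. The conjecture asks precisely for the removal of that algebraicity hypothesis, and your argument does not do this.

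Concretely, both of the key lemmas you invoke require algebraic coefficients. Lemma \ref{Lemma generating a purely irrational map} has ``$L$ with algebraic coefficients'' as a hypothesis, so your very first step (and your claim that $L'$ is algebraic ``by part (7)'') is unjustified for general $L$. More fundamentally, Lemma \ref{Lemma in APs}, which you use to evaluate the inner sums after expanding the divisor sums, also requires algebraic coefficients: its proof relies on Lemma \ref{Lemma key to minor arcs}, where the minor-arc bound comes from the diophantine approximation inequality (\ref{only point where algebraicity is used}), established via \cite[Lemma E.1]{Wa17} for algebraic $L$. For a transcendental $L$ this bound can fail, and with it the power-saving error term in Lemma \ref{Lemma in APs}. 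This is exactly the obstruction the paper identifies (see Remark \ref{Remark almost all} and the discussion following the conjecture): the method handles algebraic $L$ and a full-measure set of $L$, but not arbitrary $L$. Your sieve and Euler-product manipulations downstream are fine in spirit and indeed mirror the paper's proof of Theorem \ref{Theorem pseudorandomness after rational reductions}, but they sit on top of an input (Lemma \ref{Lemma in APs}) that is simply unavailable in the generality the conjecture demands.
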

\noindent Unfortunately we have not been able to resolve Conjecture \ref{Conjecture pseudorandomness}, but we strongly believe it to be true. If $d$ is large enough in terms of $m$ then the analytic methods of Parsell (see \cite{Pa02} and Appendix \ref{section an analytic argument}) can be used to show that $\nu_{N,w}^\gamma$ is $(L,w)$-pseudorandom without any algebraicity assumptions. But these methods seem harder to apply in the range $d\geqslant m+2$, and we have not been able to establish the appropriate mean value estimate. Resolving Conjecture \ref{Conjecture pseudorandomness} would, after a straightforward adaptation of the methods of this paper, enable one to remove the algebraicity assumption from Theorem \ref{Main theorem simpler version} and Theorem \ref{Main theorem}. 

\begin{Remark}
\label{Remark almost all}
\emph{The proof of Theorem \ref{Theorem pseudorandomness} is the only moment during the proof of the main theorems Theorem \ref{Main theorem simpler version} and Theorem \ref{Main theorem} when we use the fact that the coefficients of the original linear map $L$ are algebraic. Furthermore, we will ultimately only ever appeal to the linear inequalities condition for a certain finite collection of linear maps, which includes the original linear map $L$ itself as well as some auxiliary linear maps that are generated from applications of the Cauchy-Schwarz inequality. Since only the diophantine approximation properties of algebraic numbers are used (witness Lemma \ref{Lemma key to minor arcs} and \cite[Lemma E.1]{Wa17}), and since these properties are satisfied by almost all real numbers, one may show that Theorems \ref{Main theorem simpler version} and \ref{Main theorem} remain true for some explicit set of maps $L$ that has full Lebesgue measure.}
\end{Remark}

To demonstrate our approach to proving Theorem \ref{Theorem pseudorandomness}, we first give the argument under the simplifying additional assumption that $L$ is purely irrational (see Definition \ref{Definition rational space}). 

\begin{Lemma}
\label{Claim local calculation}
Suppose that $F$, $G$, $L$, $\mathbf{v}$ and the functions $w_1,\dots,w_d$ all satisfy the conditions in Definition \ref{Definition linear inequalities condition}. Suppose in addition that $L$ is surjective, purely irrational, and has algebraic coefficients. Then for all positive $K$ we have
\begin{equation}
\label{asymptotic local first}
T_{F,G,N}^{L,\mathbf{v}}(\Lambda_{\mathbb{Z}/ W_1\mathbb{Z}},\dots,\Lambda_{\mathbb{Z}/ W_d\mathbb{Z}}) =  J  + O_{K,L,P}(N^{-K}) 
\end{equation}
where $J$ is the singular integral 
\begin{equation}
\label{equation singular integral}
J:=\frac{1}{N^{d-m}}\int\limits_{\mathbf{x} \in \mathbb{R}^d}F(\mathbf{x}/N)G(L\mathbf{x} + \mathbf{v})\, d\mathbf{x}.
\end{equation} 
\end{Lemma}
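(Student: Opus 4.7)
The plan is to reduce to Lemma \ref{Lemma in APs} by a Möbius expansion of the local von Mangoldt functions. Recall that $\Lambda_{\mathbb{Z}/W_j\mathbb{Z}}(n) = \frac{W_j}{\varphi(W_j)}\mathbf{1}_{(n,W_j)=1}$, and by Möbius inversion $\mathbf{1}_{(n,W_j)=1} = \sum_{e_j \mid W_j,\, e_j \mid n} \mu(e_j)$. Expanding the product over $j$ and swapping sums, one obtains
\[
T_{F,G,N}^{L,\mathbf{v}}(\Lambda_{\mathbb{Z}/W_1\mathbb{Z}},\dots,\Lambda_{\mathbb{Z}/W_d\mathbb{Z}}) = \prod_{j=1}^{d}\frac{W_j}{\varphi(W_j)} \sum_{\substack{\mathbf{e} \in \mathbb{N}^d \\ e_j \mid W_j}} \Big(\prod_{j=1}^d \mu(e_j)\Big) \cdot \frac{1}{N^{d-m}} \sum_{\substack{\mathbf{n} \in \mathbb{Z}^d \\ e_j \mid n_j\,\forall j}} F(\mathbf{n}/N)G(L\mathbf{n}+\mathbf{v}).
\]

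Next, I would apply Lemma \ref{Lemma in APs} to each inner sum, taking $h = d$, $\Xi$ equal to the identity map on $\mathbb{R}^d$, and $\widetilde{\mathbf{r}} = \mathbf{0}$. The hypotheses are satisfied: $L$ is purely irrational, surjective, and algebraic by assumption; $F$ and $G$ are in $\mathcal{C}(P)$; and one must verify that $e_j < N^{\gamma}$, which is immediate since $e_j \leqslant W_j \leqslant W = \prod_{p \leqslant w(N)} p \leqslant \exp(O(w(N))) = (\log\log N)^{O(1)}$, so any fixed small $\gamma$ (chosen small enough in terms of $L$ for the lemma) works for $N$ sufficiently large. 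By Remark \ref{Remark local factor for AP calculation}, the local factor equals $\alpha_{\mathbf{e},\mathbf{0}} = \prod_j e_j^{-1}$, so each inner sum equals $\prod_j e_j^{-1} \cdot J + O_{K,L,P}(N^{-K})$.

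Summing up the main contributions, the factor multiplying $J$ becomes
\[
\prod_{j=1}^{d}\frac{W_j}{\varphi(W_j)} \sum_{\substack{\mathbf{e}\\ e_j \mid W_j}} \prod_{j=1}^d \frac{\mu(e_j)}{e_j} = \prod_{j=1}^d \Bigg(\frac{W_j}{\varphi(W_j)}\sum_{e\mid W_j}\frac{\mu(e)}{e}\Bigg) = \prod_{j=1}^d \Bigg(\frac{W_j}{\varphi(W_j)}\cdot\frac{\varphi(W_j)}{W_j}\Bigg) = 1,
\]
yielding the desired main term $J$. For the error, the total is bounded by $\prod_j \frac{W_j}{\varphi(W_j)} \cdot \prod_j \tau(W_j) \cdot O_{K,L,P}(N^{-K})$; since $\prod_j W_j/\varphi(W_j) = O(1)$ by Mertens and $\prod_j \tau(W_j) \leqslant 2^{O(dw(N))} = (\log\log N)^{O(1)}$, this remains $O_{K,L,P}(N^{-K+o(1)})$, which can be absorbed into $O_{K',L,P}(N^{-K'})$ by replacing $K$ with $K+1$ at the outset.

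There is no substantive obstacle here; the lemma is essentially a bookkeeping consequence of the Möbius expansion plus Lemma \ref{Lemma in APs}. The only step requiring any care is checking that all the parameters $e_j$ stay well below $N^{\gamma}$ so that Lemma \ref{Lemma in APs} applies uniformly, and that the combinatorial blow-up from summing over all divisor tuples $\mathbf{e}$ is dwarfed by the super-polynomial savings $N^{-K}$ provided by each application.
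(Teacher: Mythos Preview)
Your proof is correct and essentially identical to the paper's: both expand $\Lambda_{\mathbb{Z}/W_j\mathbb{Z}}$ via M\"obius, apply Lemma~\ref{Lemma in APs} with $h=d$, $\Xi=\mathrm{id}$, $\widetilde{\mathbf{r}}=\mathbf{0}$ (so $\alpha_{\mathbf{e},\mathbf{0}}=\prod_j e_j^{-1}$), and then use the identity $\frac{W_j}{\varphi(W_j)}\sum_{e\mid W_j}\mu(e)/e=1$ to collapse the main term to $J$. One minor slip: $\prod_j W_j/\varphi(W_j)$ is not $O(1)$ but rather $O((\log w)^d)$ by Mertens --- this is still negligible against $N^{-K}$, so the argument is unaffected.
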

\begin{proof}
We have the identity \begin{equation}
\label{equation local von identity}
\Lambda_{\mathbb{Z}/W_j\mathbb{Z}}(n) = \frac{W_j}{\varphi(W_j)}\sum\limits_{\substack{e_j\vert n\\ e_j\vert W_j}} \mu(e_j).
\end{equation} Then the expression $T_{F,G,N}^{L,\mathbf{v}}(\Lambda_{\mathbb{Z}/ W_1\mathbb{Z}},\dots,\Lambda_{\mathbb{Z}/ W_d\mathbb{Z}})$ is equal to
\begin{align}
\label{calculation of comparison term first}
& \Big(\prod\limits_{j=1}^d \frac{W_j}{\varphi(W_j)}\Big)\frac{1}{N^{d-m}} \sum\limits_{\substack{e_1,\dots,e_d\\e_j\vert W_j \,\forall j\leqslant d}} \Big(\prod\limits_{j=1}^{d}\mu(e_j)\Big)\sum\limits_{\substack{\mathbf{n}\in\mathbb{Z}^d\\ e_j\vert n_j \forall j \leqslant d }} F(\mathbf{n}/N) G(L\mathbf{n} + \mathbf{v})\nonumber\\
& = \Big(\prod\limits_{j=1}^d \frac{W_j}{\varphi(W_j)}\Big)\sum\limits_{\substack{e_1,\dots,e_d\\e_j\vert W_j \,\forall j\leqslant d}} \Big(\prod\limits_{j=1}^{d}\mu(e_j)\Big) (J (\prod\limits_{j=1}^d e_j)^{-1}  + O_{K,L,P}(N^{-K})),
\end{align}
\noindent by applying Lemma \ref{Lemma in APs} to the inner sum, where in the statement of that lemma we take $h = d$, the map $\Xi:\mathbb{R}^h \longrightarrow \mathbb{R}^d$ to be the identity, and $\widetilde{\mathbf{r}} = \mathbf{0}$. The local factor $\alpha_{\mathbf{e} ,\widetilde{\mathbf{r}}}$ is equal to $\prod_{j\leqslant d} e_j^{-1}$ in this instance.

Sum the error term in (\ref{calculation of comparison term first}) over all $e_j$. The bound $W_j = O(\log\log N)$ that comes from the prime number theorem controls the resulting error term (with room to spare), and the main term of (\ref{asymptotic local first}) follows from the identity
\begin{equation}
\label{identity mobius function}
\Big(\prod\limits_{j=1}^d \frac{W_j}{\varphi(W_j)}\Big) \sum\limits_{\substack{e_1,\dots,e_d\\e_j\vert W_j \,\forall j\leqslant d}} \Big(\prod\limits_{j=1}^{d}\frac{\mu(e_j)}{e_j}\Big) = 1.
\end{equation}
\end{proof}
To finish the proof of Theorem \ref{Theorem pseudorandomness} (in the case when $L$ is purely irrational, that is) it now suffices to show that
\begin{equation}
\label{sufficing asymptotic}
T_{F,G,N}^{L,\mathbf{v}}(f_1,\dots,f_d) = J + O_{C,L,P,\gamma}(\log ^{-\Omega(1)}N),
\end{equation}
\noindent where each $f_j$ is either $\nu_{N,w}^{\gamma}$ or $\Lambda_{\mathbb{Z}/W_j\mathbb{Z}}$. 
By multiplying out the left-hand side of (\ref{sufficing asymptotic}), we see that it is sufficient to prove that
\begin{equation}
\label{equation local or sieve first}
\frac{1}{N^{d-m}} \sum\limits_{\mathbf{n}\in \mathbb{Z}^d} \Big(\prod\limits_{j=1}^d \nu_j(n_j)\Big)F(\mathbf{n}/N)G(L\mathbf{n}+\mathbf{v}) = J + O_{L,P,\gamma}(\log ^{-\Omega(1)} N)
\end{equation}
\noindent where each $\nu_j$ equals either $c_{\rho,2}^{-1}\Lambda_{\rho,R,2}$ or  $\Lambda_{\mathbb{Z}/W_j\mathbb{Z}}$ (recall $R: = N^{\gamma}$).

After our analysis in Lemma \ref{Lemma in APs}, it turns out that the estimate (\ref{equation local or sieve first}) will follow almost immediately from the sieve calculation performed in \cite[Theorem D.3]{GT10}. To describe the details, it will be useful to introduce the following notation. Let \[ S: = \{ j \leqslant d: \nu_{j} = c_{\rho,2}^{-1}\Lambda_{\rho, R, 2}\} \] and \[ S^\prime:= \{ j \leqslant d: \nu_{j} = \Lambda_{\mathbb{Z}/W_j\mathbb{Z}}\}.\] We may assume that $S \neq \emptyset$, as otherwise the estimate (\ref{equation local or sieve first}) follows from the estimate (\ref{asymptotic local first}).

Each $\nu_j$ may be expressed as a divisor sum, either using Definition \ref{Definition Smooth sieve weight} or expression (\ref{equation local von identity}). Doing this, and swapping orders of summations, we have that the left-hand side of expression (\ref{equation local or sieve first}) is equal to 
\begin{align}
\label{massive expanded out sieve expression}
\Big(\prod\limits_{j \in S^\prime} \frac{W_j}{\varphi(W_j)}\Big)c_{\rho,2}^{-\vert S\vert}(\log  R)^{\vert S\vert} \sum\limits_{\substack{(e_{j,k})_{j\in S,k \in [2]} \in \mathbb{N}^{S \times [2]}\\ (e_{j})_{j\in S^\prime}\in \mathbb{N}
^{S^\prime} \\e_j\vert W_j \, \forall j \in S^\prime}}
&\Big(\prod\limits_{\substack{ j \in S \\ k \in [ 2]}} \mu(e_{j,k})\rho\Big(\frac{\log e_{j,k}}{\log R}\Big)\Big)\Big(\prod\limits_{j\in S^\prime} \mu(e_{j})\Big) \nonumber\\
& \frac{1}{N^{d-m}}\sum\limits_{\substack{\mathbf{n}\in \mathbb{Z}^d\\ e_{j}\vert n_j \forall j\leqslant d}} F(\mathbf{n}/N)G(L\mathbf{n} + \mathbf{v})
\end{align}

\noindent where $R = N^\gamma$, and if $j \in S$ we write $e_{j}$ for the least common multiple $[e_{j,1}, e_{j,2}]$. Using the compact support of the function $\rho$, when analysing the inner sum one may assume that each $e_j$ is at most $N^{2\gamma}$. \\

We apply Lemma \ref{Lemma in APs}. Therefore, provided $\gamma$ is small enough,
\begin{align}
\label{equation analysing inner sum}
&\frac{1}{N^{d-m}} \sum\limits_{\substack{\mathbf{n}\in \mathbb{Z}^d\\ e_j\vert n_j \,\forall j\leqslant d}}  F(\mathbf{n}/N)G(L\mathbf{n} +\mathbf{v} ) =  J\Big(\prod\limits_{j=1}^d e_j\Big)^{-1}  + O_{L,P,\gamma}(N^{-20}), 
\end{align} 
\noindent as in (\ref{calculation of comparison term first}). By the bounds on $W_j$ and $e_j$, the error term from (\ref{equation analysing inner sum}) may be summed over all $e_j$ and remain acceptable. We also have the identity \begin{equation}
\Big(\prod\limits_{j\in S^{\prime}}\frac{W_j}{\varphi(W_j)}\Big) \sum\limits_{\substack{(e_j)_{j\in S^\prime} \in \mathbb{N}^{S^\prime} \\e_j \vert W_j \, \forall j \in S^\prime}} \Big(\prod\limits_{j\in S^\prime}\frac{\mu(e_j)}{e_j}\Big) = 1.
\end{equation} Therefore expression (\ref{equation local or sieve first}) would follow from the asymptotic
\begin{align}
\label{equation rational from irrational in sieve calculation}
&(\log R)^{\vert S\vert}\sum\limits_{(e_{j,k})_{j\in S, k \in [2]} \in \mathbb{N}^{S \times [2]}}
&\Big(\prod\limits_{\substack{ j \in S \\ k \in [2]}} \mu(e_{j,k})\rho\Big(\frac{\log e_{j,k}}{\log R}\Big)\Big)\Big(\prod\limits_{j\in S} e_j\Big)^{-1}  & = c_{\rho,2}^{\vert S\vert} + O(\log^{-1/20} R).
\end{align}
\noindent But this is just expression D.4 of \cite{GT10}, applied to the  identity map $\Psi: \mathbb{R}^{\vert S\vert} \longrightarrow \mathbb{R}^{\vert S\vert}$. Note that the quantity $X$ in expression D.4 of \cite{GT10} is zero, as if $\psi_1,\dots,\psi_{\vert S\vert}:\mathbb{R}^{\vert S\vert} \longrightarrow \mathbb{R}$ are the linear maps given by $\psi_j(\mathbf{x}) : = x_j$ for all $j\leqslant \vert S\vert$ then there are no primes $p$ for which there exist two forms $\psi_i$ and $\psi_j$ that are linearly dependent modulo $p$. This proves (\ref{equation local or sieve first}), and hence resolves Theorem \ref{Theorem pseudorandomness} in the case when $L$ is purely irrational. \\

We now present the detailed proof of Theorem \ref{Theorem pseudorandomness} in full generality. 

\begin{proof}[Proof of Theorem \ref{Theorem pseudorandomness}]
Let $u$ be the rational dimension of $L$ (see Definition \ref{Definition rational space}). Apply Lemma \ref{Lemma generating a purely irrational map} to both the expression $T_{F,G,N}^{L,\mathbf{v}}(f_1,\dots,f_d)$ and the expression\\ $T_{F,G,N}^{L,\mathbf{v}}(\Lambda_{\mathbb{Z}/W_1\mathbb{Z}},\dots,\Lambda_{\mathbb{Z}/W_d\mathbb{Z}})$. Writing $h:=d-u$, where $u$ is the rational dimension of $L$, and renaming $m-u$ as $m$, $L^\prime$ as $L$, $\mathbf{v^\prime}$ as $\mathbf{v}$, and $G_{\widetilde{\mathbf{r}}}$ as $G$, we see that it suffices to prove the following theorem.

\begin{Theorem}
\label{Theorem pseudorandomness after rational reductions}
Let $N,d,h$ be natural numbers, and let $m$ be a non-negative integer. Suppose that $d\geqslant h\geqslant m+2$. Let $C,\gamma$ be positive parameters, and let $P$ be a set of additional parameters. Let $L:\mathbb{R}^h \longrightarrow \mathbb{R}^m$ be a surjective purely irrational linear map with algebraic coefficients, and let $\Xi :\mathbb{R}^{h} \longrightarrow \mathbb{R}^d$ be an injective linear map with integer coefficients. Assume that $\gamma$ is small enough in terms of $L$. Let $\mathbf{v} \in\mathbb{R}^m$ be a vector with $\Vert \mathbf{v} \Vert_\infty  \leqslant CN$, and let $\widetilde{\mathbf{r}} \in \mathbb{Z}^d$ be a vector with $\Vert \widetilde{\mathbf{r}}\Vert_\infty\leqslant CN$. Let $F:\mathbb{R}^d \longrightarrow [0,1]$ and $G: \mathbb{R}^m \longrightarrow [0,1]$ be in $\mathcal{C}(P)$. Let $w_1,\dots w_d:\mathbb{N}\longrightarrow \mathbb{R}_{\geqslant 0}$ be functions that each satisfy $w_j(n)\rightarrow \infty$ as $n\rightarrow \infty$ and $w_j(n)\leqslant w(n)$ for all $n$. \\

\noindent These conditions will be referred to as `the hypotheses of Theorem \ref{Theorem pseudorandomness after rational reductions}'.\\

\noindent Then, if $\Xi$ has finite Cauchy-Schwarz complexity,
\begin{equation}
\label{equation defining pseudorandomness after ratinoal reduction}
T_{F,G,N}^{L,\mathbf{v},\Xi,\widetilde{\mathbf{r}}}(f_1,\dots,f_d) = T_{F,G,N}^{L,\mathbf{v},\Xi,\widetilde{\mathbf{r}}}(\Lambda_{\mathbb{Z}/ W_1\mathbb{Z}},\dots,\Lambda_{\mathbb{Z}/ W_d\mathbb{Z}}) + O_{C,L,P,\Xi,\gamma}((\min_j w_j(N))^{-1/2})
\end{equation}
\noindent where each $f_j$ equals either $\nu_{N,w}^\gamma$ or $\Lambda_{\mathbb{Z}/W_j\mathbb{Z}} $. 
\end{Theorem}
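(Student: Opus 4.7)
The strategy is to follow the argument of the purely irrational special case (Lemma \ref{Claim local calculation} and the discussion immediately after it) and adapt it to the presence of the parametrisation $\Xi$. First I would expand each factor $\Lambda_{\mathbb{Z}/W_j\mathbb{Z}}(\xi_j(\mathbf{n})+\widetilde{r}_j)$ by the identity \eqref{equation local von identity}, and each factor $\Lambda_{\rho,R,2}(\xi_j(\mathbf{n})+\widetilde{r}_j)$ by Definition \ref{Definition Smooth sieve weight}, then multiply out and swap orders of summation. The inner sum over $\mathbf{n}\in\mathbb{Z}^h$ is exactly of the shape addressed by Lemma \ref{Lemma in APs}: the map $L$ is purely irrational and algebraic, $\Xi$ is injective and integer-valued, and the divisibility constraints are $e_j\mid \xi_j(\mathbf{n})+\widetilde{r}_j$. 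Applying that lemma (with $\gamma$ small in terms of $L$) replaces each inner sum by $\alpha_{\mathbf{e},\widetilde{\mathbf{r}}}\cdot J + O_{K,L,P}(N^{-K})$, where
\[
J := \frac{1}{N^{h-m}}\int_{\mathbf{x}\in\mathbb{R}^h} F\!\left(\frac{\Xi(\mathbf{x})+\widetilde{\mathbf{r}}}{N}\right) G(L\mathbf{x}+\mathbf{v})\, d\mathbf{x}
\]
and $\alpha_{\mathbf{e},\widetilde{\mathbf{r}}}$ is the local factor of \eqref{local factor def}.

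Next I would bound the error from summing over divisors. On the local von Mangoldt side the divisors satisfy $e_j\mid W_j = O(\log\log N)$, so the summed error is trivially negligible. On the sieve side, the support of $\rho$ forces $e_{j,k}\leqslant N^{2\gamma}$, so summing over $\mathbf{e}$ costs at most $N^{O(\gamma d)}$, which is absorbed into $N^{-K}$ when $K$ is chosen sufficiently large. Both sides of \eqref{equation defining pseudorandomness after ratinoal reduction} therefore reduce, up to $J$-independent negligible errors, to weighted divisor sums in $\alpha_{\mathbf{e},\widetilde{\mathbf{r}}}$: one weighted by products of $\mu(e_{j,k})\rho(\log e_{j,k}/\log R)$ in the $\nu^\gamma$-indices, the other by products of $(W_j/\varphi(W_j))\mu(e_j)\mathbf{1}_{e_j\mid W_j}$ in the $\Lambda_{\mathbb{Z}/W_j\mathbb{Z}}$-indices. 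The task is to show these two expressions agree, after multiplication by $J$, up to an error of size $O((\min_j w_j(N))^{-1/2})$.

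This final comparison is the sieve calculation. By Remark \ref{Remark local factor for AP calculation} the $\alpha_{\mathbf{e},\widetilde{\mathbf{r}}}$ coincide with the local quantities studied on \cite[p.~1831]{GT10}. Because $\Xi$ has finite Cauchy-Schwarz complexity by hypothesis, \cite[Theorem D.3]{GT10} (or directly the identity \cite[(D.4)]{GT10}) evaluates the $\nu^\gamma$-side as a convergent Euler product $\prod_p \beta_p$ up to $O(\log^{-\Omega(1)}R)$, where $\beta_p$ is the $p$-adic local density attached to $\Xi$ and $\widetilde{\mathbf{r}}$. The local von Mangoldt side unwinds, by the Chinese remainder theorem applied to $W_j=\prod_{p\leqslant w_j}p$, to $\prod_{p\leqslant \min_j w_j(N)}\beta_p$, and the missing tail satisfies $\prod_{p>\min_j w_j}\beta_p = 1+O((\min_j w_j(N))^{-1})$ because $\beta_p=1+O(p^{-2})$ for all but finitely many $p$ — this last estimate being precisely what finite Cauchy-Schwarz complexity of $\Xi$ guarantees, via \cite[Lemma 1.3]{GT10}. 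Multiplying through by $J$ and comparing yields the theorem.

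The main obstacle is the last step: one must verify that the Euler factor $\beta_p$ produced by each mixed configuration of $f_j\in\{\nu^\gamma_{N,w},\Lambda_{\mathbb{Z}/W_j\mathbb{Z}}\}$ is the same at leading order, so that all $2^d$ combinations in \eqref{equation defining pseudorandomness after ratinoal reduction} match after the sieve identity. The finite Cauchy-Schwarz complexity of $\Xi$ is essential here — without it one cannot ensure absolute convergence of $\prod_p\beta_p$ nor the $(\min_j w_j)^{-1/2}$ control on the tail — and indeed it is precisely the hypothesis that propagates through Lemma \ref{Lemma generating a purely irrational map}(8) from the assumption $L\notin V^*_{\degen}(m,d)$ in Theorem \ref{Theorem pseudorandomness}.
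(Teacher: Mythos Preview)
Your plan is the same as the paper's: expand both $\Lambda_{\mathbb{Z}/W_j\mathbb{Z}}$ and $\Lambda_{\rho,R,2}$ as divisor sums, apply Lemma~\ref{Lemma in APs} to replace the inner sum by $\alpha_{\mathbf{e},\widetilde{\mathbf{r}}} J_{\widetilde{\mathbf{r}}}$, then compare the resulting weighted sums over divisors using \cite[Theorem~D.3]{GT10} and \cite[Lemma~1.3]{GT10}, with finite Cauchy--Schwarz complexity supplying the $\beta_p=1+O(p^{-2})$ tail bound.

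The one point where you are less explicit than the paper is precisely the one you flag as ``the main obstacle''. In the mixed case $S\neq\emptyset$, $S'\neq\emptyset$, one cannot directly feed the hybrid divisor sum into \cite[(D.4)]{GT10}, since that identity concerns only the $\rho$-smoothed sieve weights. The paper's device is to first split $\mathbf{n}$ into residue classes modulo $W$, so that the factors $\Lambda_{\mathbb{Z}/W_j\mathbb{Z}}(\xi_j(\mathbf{n})+\widetilde{r}_j)$ for $j\in S'$ become the indicator of a set $A\subset[W]^h$ (times $W_j/\varphi(W_j)$), and only then apply \cite[(D.4)]{GT10} to the $W$-rescaled forms $\psi_j(\mathbf{m})=\xi_j(W\mathbf{m}+\mathbf{a})+\widetilde{r}_j$ for $j\in S$ alone. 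The output is $c_{\rho,2}^{|S|}\prod_p\beta_{p,\mathbf{a}}$; for $p\leqslant w$ one has $\beta_{p,\mathbf{a}}=\prod_{j\in S}\Lambda_{\mathbb{Z}/p\mathbb{Z}}(\xi_j(\mathbf{a})+\widetilde{r}_j)$, and averaging over $\mathbf{a}\in A$ (weighted by the $S'$ factors) reconstitutes the full singular series $\mathfrak{S}_{\widetilde{\mathbf{r}}}$ via the Chinese remainder theorem, while \cite[Lemma~1.3]{GT10} handles the $p>w$ and $\min_j w_j<p\leqslant w$ tails. This $W$-splitting is the missing sentence in your last paragraph; once it is inserted, your argument is complete and coincides with the paper's.
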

\begin{proof}[Proof of Theorem \ref{Theorem pseudorandomness after rational reductions}]

Let $\Xi: \mathbb{R}^h \longrightarrow \mathbb{R}^d$ have coordinate maps $(\xi_1,\dots,\xi_d)$. Let \begin{equation}
\label{equation singular series} \mathfrak{S}_{\widetilde{\mathbf{r}}}:=\prod\limits_{p}
\frac{1}{p^h} \sum\limits_{\mathbf{m} \in [p]^h} \prod\limits_{j=1}^d \Lambda_{\mathbb{Z}/p\mathbb{Z}}(\xi_j (\mathbf{m}) +\widetilde{r}_j)
\end{equation} be the \emph{singular series}, where $\widetilde{r_j}$ denotes the $j^{th}$ coordinate of $\widetilde{\mathbf{r}}$. Let 
\begin{equation}
\label{equation singular integral again}
J_{\widetilde{\mathbf{r}}}:=\frac{1}{N^{h-m}}\int\limits_{\mathbf{x} \in \mathbb{R}^h}F((\Xi(\mathbf{x}) + \widetilde{\mathbf{r}})/N)G(L\mathbf{x} + \mathbf{v})\, d\mathbf{x}
\end{equation} be the \emph{singular integral}. 

\begin{Lemma}
\label{Lemma singular series and singular integral are bounded}
Under the hypotheses of Theorem \ref{Theorem pseudorandomness after rational reductions}, if $\Xi$ has finite Cauchy-Schwarz complexity then the singular series and singular integral satisfy the bounds \[\mathfrak{S}_{\widetilde{\mathbf{r}}} \ll_{\Xi} 1\] and \[J_{\widetilde{\mathbf{r}}} \ll_{C,L,\Xi} \Rad(F)^{h-m}\Rad(G)^m\Vert G\Vert_\infty .\]
\end{Lemma}
\noindent The reader may find the definition of $\Rad(F)$ and $\Rad(G)$ in Section \ref{section smooth functions}.
\begin{proof}
Since $\Xi$ has finite Cauchy-Schwarz complexity, no two of the forms $\xi_1,\dots,\xi_d$ are parallel. Hence by \cite[Lemma 1.3]{GT10} the singular series $\mathfrak{S}_{\widetilde{\mathbf{r}}}$ converges, and the size may be bounded by a constant depending only on $\Xi$. 

The bound on $J_{\widetilde{\mathbf{r}}}$ follows  directly from Lemma \ref{Lemma general upper bound}.
\end{proof}

We continue with the following lemma, which is a more general version of Lemma \ref{Claim local calculation}.
\begin{Lemma} 
\label{Lemma problem for local von Mangoldt}
Under the hypotheses of Theorem \ref{Theorem pseudorandomness after rational reductions} we have, for every positive real $K$,
\begin{align}
\label{asymptotic local all situations}
T_{F,G,N}^{L,\mathbf{v},\Xi,\widetilde{\mathbf{r}}}(\Lambda_{\mathbb{Z}/W_1\mathbb{Z}},\dots,\Lambda_{\mathbb{Z}/W_d\mathbb{Z}}) = \Big(\frac{1}{(\max W_j)^h} \sum\limits_{\mathbf{m} \in [\max W_j]^h} \prod\limits_{j=1}^d \Lambda_{\mathbb{Z}/W_j\mathbb{Z}}(\xi_j(\mathbf{m}) + \widetilde{r}_j) \Big) J_{\widetilde{\mathbf{r}}}\nonumber \\ + O_{C,K,L,P,\Xi}(N^{-K}).
\end{align}
\noindent If $\Xi$ has finite Cauchy-Schwarz complexity, then
\begin{equation}
\label{asymptotic local}
T_{F,G,N}^{L,\mathbf{v},\Xi,\widetilde{\mathbf{r}}}(\Lambda_{\mathbb{Z}/ W_1\mathbb{Z}},\dots,\Lambda_{\mathbb{Z}/ W_d\mathbb{Z}}) = \mathfrak{S}_{\widetilde{\mathbf{r}}} J_{\widetilde{\mathbf{r}}}  + O_{C,L,P,\Xi}((\min_j w_j(N))^{-1}).
\end{equation}
\end{Lemma}

\begin{proof}
We proceed as in the proof of Lemma \ref{Claim local calculation}. Then $T_{F,G,N}^{L,\mathbf{v},\Xi,\widetilde{\mathbf{r}}}(\Lambda_{\mathbb{Z}/ W_1\mathbb{Z}},\dots,\Lambda_{\mathbb{Z}/ W_d\mathbb{Z}})$ is equal to
\begin{align}
\label{calculation of comparison term}
& \Big(\prod\limits_{j=1}^d \frac{W_j}{\varphi(W_j)}\Big)\frac{1}{N^{h-m}} \sum\limits_{\substack{e_1,\dots,e_d\\e_j\vert W_j \,\forall j\leqslant d}} \Big(\prod\limits_{j=1}^{d}\mu(e_j)\Big)\sum\limits_{\substack{\mathbf{n}\in\mathbb{Z}^h\\ e_j\vert \xi_j(\mathbf{n}) + \widetilde{r}_j \forall j \leqslant d }} F((\Xi(\mathbf{n}) + \widetilde{\mathbf{r}})/N) G(L\mathbf{n} + \mathbf{v})\nonumber\\
& = \Big(\prod\limits_{j=1}^d \frac{W_j}{\varphi(W_j)}\Big)\sum\limits_{\substack{e_1,\dots,e_d\\e_j\vert W_j \,\forall j\leqslant d}} \Big(\prod\limits_{j=1}^{d}\mu(e_j)\Big) \alpha_{\mathbf{e},\widetilde{\mathbf{r}}} J_{\widetilde{\mathbf{r}}} + O_{C,K,L,P,\Xi}(N^{-K}),
\end{align}
\noindent by applying Lemma \ref{Lemma in APs} to the inner sum, where \[ \alpha_{\mathbf{e},\widetilde{\mathbf{r}}} = \lim\limits_{M\rightarrow \infty} \frac{1}{M^h} \sum\limits_{\mathbf{n} \in [M]^h} \prod\limits_{j=1}^d 1_{e_j\vert (\xi_j (\mathbf{n}) +\widetilde{r}_j)}. \] If $m=0$, one should apply Lemma \ref{Lemma just arithmetic progressions} in place of Lemma \ref{Lemma in APs}.

By using the identity (\ref{equation local von identity}) again one obtains 
\begin{align}
\label{first part of lemma finished}
\Big(\prod\limits_{j=1}^d \frac{W_j}{\varphi(W_j)}\Big)\sum\limits_{\substack{e_1,\dots,e_d\\e_j\vert W_j \,\forall j\leqslant d}} \prod\limits_{j=1}^{d}\mu(e_j) \alpha_{\mathbf{e},\widetilde{\mathbf{r}}}&= \lim\limits_{M\rightarrow \infty}\frac{1}{M^{h}}\sum\limits_{\mathbf{n} \in [M]^h} \prod\limits_{j=1}^d \Lambda_{\mathbb{Z}/W_j\mathbb{Z}}(\xi_j (\mathbf{n}) +\widetilde{r}_j) \nonumber \\
& = \frac{1}{(\max W_j)^h} \sum\limits_{\mathbf{n} \in [\max W_j]^h}\prod\limits_{j=1}^d \Lambda_{\mathbb{Z}/W_j\mathbb{Z}}(\xi_j (\mathbf{n}) +\widetilde{r}_j).
\end{align}
\noindent This settles the first part of the lemma. 

For the second part, by the Chinese Remainder Theorem we have that (\ref{first part of lemma finished}) is equal to 
\begin{align}
\label{equation product of local factors in local calculation}
\Big(\prod\limits_{p\leqslant \min_j w_j}\frac{1}{p^h} \sum\limits_{\mathbf{m} \in [p]^h} \prod\limits_{j\leqslant d}& \Lambda_{\mathbb{Z}/p\mathbb{Z}}(\xi_j(\mathbf{m})+ \widetilde{r}_j)\Big) \nonumber \\
&\times \Big(\prod\limits_{\min_j w_j < p \leqslant \max_j w_j} \frac{1}{p^h} \sum\limits_{\mathbf{m} \in [p]^h} \prod\limits_{j\leqslant d }^* \Lambda_{\mathbb{Z}/p\mathbb{Z}}(\xi_j(\mathbf{m})+ \widetilde{r}_j)\Big), 
\end{align} 
\noindent where $\prod^*$ denotes the product over those $j\leqslant d$ for which $p\leqslant w_j$. 

Since $\Xi$ has finite Cauchy-Schwarz complexity there is no pair of forms $\xi_i$ and $\xi_j$ that are parallel. Therefore we may apply the analysis of local factors in \cite[Lemma 1.3]{GT10} to conclude that the first bracket in (\ref{equation product of local factors in local calculation}) is equal to $\mathfrak{S}_{\widetilde{\mathbf{r}}}(1 + O_{\Xi}((\min_j w_j(N))^{-1}))$, and that the second bracket is equal to $(1+ O_{\Xi}((\min_j w_j(N))^{-1})$. Combining these bounds with Lemma \ref{Lemma singular series and singular integral are bounded} gives the second part of the present lemma. 
\end{proof}
\begin{Remark}
\label{Remark asymptotic for local von mangoldt in general}
\emph{As we intimated earlier, in Remark \ref{Remark after statement of Main theorem}, one can use Lemma \ref{Lemma problem for local von Mangoldt} to establish an asymptotic expression for $T_{F,G,N}^{L,\mathbf{v}}(\Lambda_{\mathbb{Z}/W\mathbb{Z}},\dots,\Lambda_{\mathbb{Z}/W\mathbb{Z}})$ in the general case. Indeed, one applies the rational parametrisation process of Lemma \ref{Lemma generating a purely irrational map} and then the asymptotic in Lemma \ref{Lemma problem for local von Mangoldt} to obtain} \[T_{F,G,N}^{L,\mathbf{v}}(\Lambda_{\mathbb{Z}/W\mathbb{Z}},\dots,\Lambda_{\mathbb{Z}/W\mathbb{Z}}) = \sum\limits_{\widetilde{\mathbf{r}} \in \widetilde{R}} \mathfrak{S}_{\widetilde{\mathbf{r}}} J_{\widetilde{\mathbf{r}}} + o_{C,L,P}(1).\] 
\end{Remark}

Now, Theorem \ref{Theorem pseudorandomness after rational reductions} will be settled if we can show that the left-hand side of (\ref{equation defining pseudorandomness after ratinoal reduction}) enjoys the same asymptotic expression as the one present in (\ref{asymptotic local}). By multiplying out the left-hand side of (\ref{equation defining pseudorandomness after ratinoal reduction}), we see that it is sufficient to prove the following lemma.
\begin{Lemma}
\label{Lemma sieve weight asymptotics in general case} Under the hypotheses of Theorem \ref{Theorem pseudorandomness after rational reductions}, if $\Xi$ has finite Cauchy-Schwarz complexity then
\begin{equation}
\label{equation local or sieve}
\frac{1}{N^{h-m}} \sum\limits_{\mathbf{n}\in \mathbb{Z}^h} \Big(\prod\limits_{j=1}^d \nu_j(n_j)\Big)F((\Xi(\mathbf{n}) + \widetilde{\mathbf{r}})/N)G(L\mathbf{n}+\mathbf{v}) =  \mathfrak{S}_{\widetilde{\mathbf{r}}} J_{\widetilde{\mathbf{r}}} + O_{C,L,P,\Xi,\gamma}((\min_j w_j(N))^{-1/2}),
\end{equation}
\noindent where each $\nu_j$ equals either $c_{\rho,2}^{-1}\Lambda_{\rho,R,2}$ or  $\Lambda_{\mathbb{Z}/W_j\mathbb{Z}}.$ 
\end{Lemma}
\begin{proof}[Proof of Lemma]
The first half of the proof of this lemma comprises manipulations that are very similar to those that have appeared previously in this section. Indeed, as before, it will be useful to let \[ S: = \{ j \in [d]: \nu_{j} = c_{\rho,2}^{-1}\Lambda_{\rho, R, 2}\}\] and \[ S^\prime:= \{ j \leqslant d: \nu_{j} = \Lambda_{\mathbb{Z}/W_j\mathbb{Z}}\}.\] We may assume that $S \neq \emptyset$, as otherwise the estimate (\ref{equation local or sieve}) follows from Lemma \ref{Lemma problem for local von Mangoldt}.

Considering (\ref{equation local von identity}) again, and expressing each $\nu_j$ as a divisor sum, we have that the left-hand side of expression (\ref{equation local or sieve}) is equal to 
\begin{align}
\label{massive expanded out sieve expression again}
\Big(\prod\limits_{j \in S^\prime} \frac{W_j}{\varphi(W_j)}\Big)c_{\rho,2}^{-\vert S\vert }(\log  R)^{\vert S\vert} \sum\limits_{\substack{(e_{j,k})_{j \in S, k \in [2]} \in \mathbb{N}^{S \times [2]}\\ (e_{j})_{j\in S^\prime} \in \mathbb{N}^{S^\prime} \\e_j\vert W_j \, \forall j \in S^\prime}}
&\Big(\prod\limits_{\substack{ j \in S \\ k \in [2]}}\mu(e_{j,k})\rho\Big(\frac{\log e_{j,k}}{\log R}\Big)\Big)\Big(\prod\limits_{j\in S^\prime} \mu(e_{j})\Big) \nonumber\\
& \frac{1}{N^{h-m}}\sum\limits_{\substack{\mathbf{n}\in \mathbb{Z}^h\\ e_{j}\vert \xi_j(\mathbf{n}) + \widetilde{r}_j \forall j\leqslant d}} F((\Xi(\mathbf{n}) + \widetilde{\mathbf{r}})/N)G(L\mathbf{n} + \mathbf{v})
\end{align}

\noindent where if $ j \in S$ we write $e_j$ for the least common multiple $[e_{j,1}, e_{j,2}]$. Using the compact support of the function $\rho$, when analysing the inner sum one may assume that each $e_j$ is at most $N^{2\gamma}$.

We apply Lemma \ref{Lemma in APs} (or, if $m=0$, we apply Lemma \ref{Lemma just arithmetic progressions}). Therefore 
\begin{align}
\label{equation analysing inner sum again}
&\frac{1}{N^{h-m}} \sum\limits_{\substack{\mathbf{n}\in \mathbb{Z}^d\\ e_j\vert \xi_j(\mathbf{n}) + \widetilde{r}_j\,\forall j\leqslant d}}  F((\Xi(\mathbf{n}) + \widetilde{\mathbf{r}})/N)G(L\mathbf{n} +\mathbf{v} ) = \alpha_{\mathbf{e},\widetilde{\mathbf{r}}} J_{\widetilde{\mathbf{r}}} + O_{C,L,P,\Xi}(N^{-20}),
\end{align} where \begin{equation}
\label{local factor def again}
\alpha_{\mathbf{e},\widetilde{\mathbf{r}}} := \lim\limits_{M\rightarrow \infty}\frac{1}{M^{h}}\sum\limits_{\mathbf{m}\in [M]^h} \prod\limits_{j=1}^d 1_{e_j\vert (\xi_j(\mathbf{m}) + \widetilde{r}_j)}
\end{equation}

Therefore expression (\ref{equation local or sieve}) (and hence the entirety of Theorem \ref{Theorem pseudorandomness after rational reductions}) would follow from the asymptotic expression
\begin{align}
\label{equation rational from irrational in sieve calculation again}
&\Big(\prod\limits_{j \in S^\prime} \frac{W_j}{\varphi(W_j)}\Big)(\log R)^{\vert S\vert} \sum\limits_{\substack{(e_{j,k})_{j \in S, k \in [2]} \in \mathbb{N}^{S \times [2]}\\(e_{j})_{j\in S^\prime} \in \mathbb{N}^{S^\prime} \\ e_j \vert W_j \, \forall j \in S^\prime}}
&\Big(\prod\limits_{\substack{j \in S \\ k \in[2]}} \mu(e_{j,k})\rho\Big(\frac{\log e_{j,k}}{\log R}\Big)\Big)\Big(\prod\limits_{j\in S^\prime} \mu(e_{j})\Big) \alpha_{\mathbf{e},\widetilde{\mathbf{r}}}\nonumber \\
& = c_{\rho,2}^{\vert S\vert }\mathfrak{S}_{\widetilde{\mathbf{r}}} + O((\min_j w_j(N))^{-1/2}).
\end{align}
\noindent Note that this expression concerns linear forms with integer coefficients. We have removed the irrational information entirely. \\ 

Expression (\ref{equation rational from irrational in sieve calculation again}) follows from the sieve calculation \cite[Theorem D.3]{GT10}, after restricting to suitable arithmetic progressions. Indeed,  let \[ A: = \{ \mathbf{a} \in [W]^h: ((\xi_j(\mathbf{a}) + \widetilde{r}_j),W_j) = 1 \, \forall j \in S^\prime\}.\] Then the left-hand side of (\ref{equation rational from irrational in sieve calculation again}) is equal to \begin{align}
\label{split into arithmetic progressions}
&\Big(\prod\limits_{j \in S^\prime} \frac{W_j}{\varphi(W_j)}\Big)\frac{1}{W^h}\sum\limits_{\mathbf{a} \in A}(\log  R)^{\vert S\vert}\nonumber\\
&\sum\limits_{(e_{j,k})_{j \in S, k \in [2]} \in \mathbb{N}^{S \times [2]}}\Big(\prod\limits_{\substack{ j \in S \\ k \in [2]}} \mu(e_{j,k})\rho\Big(\frac{\log e_{j,k}}{\log R}\Big)\Big)\lim\limits_{M\rightarrow \infty} \frac{1}{M^h} \sum\limits_{\mathbf{m}\in [M]^h} \prod\limits_{j \in S} 1_{e_j\vert (\xi_j(W\mathbf{m} + \mathbf{a}) + \widetilde{r}_j)}.
\end{align}

The expression following the summation in $\mathbf{a}$ is amenable to the estimate (D.4) from \cite{GT10}, applied with $t = \vert S\vert$ and affine linear forms \[ \psi_j(\mathbf{m}) : = \xi_j(W\mathbf{m} + \mathbf{a}) + \widetilde{r}_j ,\qquad j \in S.\] In order to apply this estimate we note first that $t\neq 0$ (since we have previously assumed that $S \neq \emptyset$). We also note again that, by the finite Cauchy-Schwarz complexity assumption, no two of the forms $\psi_j$ are rational multiples of each other.

So, applying the estimate (D.4) from \cite{GT10} we have that the expression in (\ref{split into arithmetic progressions}) following the summation in $\mathbf{a}$ is equal to
\begin{align}
\label{application of D4}
c_{\rho,2}^{\vert S\vert}\prod\limits_{p}\beta_{p,\mathbf{a}} + O_{C,\Xi,\gamma}(e^{O(X)} \log^{-\frac{1}{20}} R),
\end{align}
where \[ \beta_{p,\mathbf{a}} = \frac{1}{p^h}\sum\limits_{\mathbf{m} \in [p]^h} \prod\limits_{j \in S} \Lambda_{\mathbb{Z}/p\mathbb{Z}} (\xi_j(W\mathbf{m} +\mathbf{a}) + \widetilde{r}_j),\] and \[ X : = \sum\limits_{p\in P_{\Xi}} p^{-1/2},\] where $P_{\Xi}$ is the set of `exceptional' primes, i.e. those primes $p$ for which there exist $i$ and $j$ for which the forms $\xi_i(W\mathbf{m} +\mathbf{a}) + \widetilde{r}_i$ and $\xi_j(W\mathbf{m} +\mathbf{a}) + \widetilde{r}_j$ are affinely related modulo $p$.
 \begin{Remark}
 \emph{The reader may have noticed that expression (\ref{application of D4}) is not exactly what was proved in estimate (D.4) of \cite{GT10}. Rather than having an error term depending on $\Xi$ and $C$, that expression has an error term depending on the linear maps $\mathbf{m} \mapsto \xi_j(W\mathbf{m} + \mathbf{a}) + \widetilde{r}_j$ which, one notes, have coefficients that depend on $W$ and that are therefore unbounded. Fortunately, the dependence of the error term on the size of the coefficients is only polynomial, and so any contribution from powers of $W$ may be absorbed into the $\log ^{-\frac{1}{20}} R$ factor. }
 
 \emph{This technical manoeuvre is also required in \cite{GT10} (in the application of Theorem D.3 that follows expression (D.24)), although it is not explicitly stated by the authors.}  
 \end{Remark}
Following on from (\ref{application of D4}) and assuming that $N$ is large enough in terms of $\Xi$, we see that any $p \in P_\Xi$ satisfies $p \leqslant w$ (as $\Xi$ has finite Cauchy-Schwarz complexity). Since $w(N) = \max(1,\log\log\log N)$, the error in (\ref{application of D4}) is therefore $O_{C,\Xi,\gamma}( \log ^{-\Omega(1)} N)$. Furthermore, by \cite[Lemma 1.3]{GT10} we have $\beta_{p,\mathbf{a}} = 1+O(p^{-2})$, and so $\prod\limits_{p>w} \beta_{p,\mathbf{a}} = 1 + O(w^{-1})$. Finally, if $p\leqslant w$ then \[\beta_{p,\mathbf{a}} = \prod\limits_{j \in S} \Lambda_{\mathbb{Z}/p\mathbb{Z}} (\xi_j(\mathbf{a}) + \widetilde{r}_j).\] Therefore expression (\ref{split into arithmetic progressions}), up to an error term of $O_{C,\Xi,\gamma}(w^{-1/2})$, is equal to \begin{align}
\label{long series of local deductions}
& c_{\rho,2}^{\vert S\vert}\Big(\prod\limits_{j\in S^\prime} \frac{W_j}{\varphi(W_j)}\Big) \frac{1}{W^h}\sum\limits_{\mathbf{a}\in A} \prod\limits_{p\leqslant w} \beta_{p,\mathbf{a}}\nonumber\\
 = &\, c_{\rho,2}^{\vert S\vert} \Big(\prod\limits_{j\in S^\prime} \frac{W_j}{\varphi(W_j)}\Big) \frac{1}{W^h} \sum\limits_{\mathbf{a}\in A} \prod\limits_{j \in S}\Lambda_{\mathbb{Z}/W\mathbb{Z}}(\xi_j(\mathbf{a}) + \widetilde{r}_j) \nonumber \\
 = & \,c_{\rho,2}^{\vert S\vert}\frac{1}{W^h} \sum\limits_{\mathbf{a} \in [W]^h} \prod\limits_{j \in S}\Lambda_{\mathbb{Z}/W\mathbb{Z}}(\xi_j(\mathbf{a}) + \widetilde{r}_j)\prod\limits_{j\in S^\prime} \Lambda_{\mathbb{Z}/W_j\mathbb{Z}}(\xi_j(\mathbf{a}) + \widetilde{r}_j)\nonumber\\
= & \,c_{\rho,2}^{\vert S\vert}\prod\limits_{p\leqslant \min_j w_j}\frac{1}{p^h} \sum\limits_{\mathbf{m} \in [p]^h} \prod\limits_{j\leqslant d} \Lambda_{\mathbb{Z}/p\mathbb{Z}}(\xi_j(\mathbf{m}) + \widetilde{r}_j) \times \prod\limits_{\min_j w_j < p \leqslant w} \widetilde{\beta_p},
\end{align}
\noindent where \[ \widetilde{\beta_p}: = \frac{1}{p^h}\sum\limits_{\mathbf{m} \in [p]^h}\prod\limits_{j \in S}\prod\limits_{j\in S^\prime}^*\Lambda_{\mathbb{Z}/p\mathbb{Z}}(\xi_j(\mathbf{m}) + \widetilde{r}_j),\] where $\prod^*$ denotes the product over all $j\in S^\prime$ for which $p\leqslant w_j$.

By invoking \cite[Lemma 1.3]{GT10} again we conclude that $\widetilde{\beta_p} = 1+O(p^{-2})$ and also that the first part of expression (\ref{long series of local deductions}) is equal to $c_{\rho,2}^{\vert S\vert}\mathfrak{S}_{\widetilde{\mathbf{r}}}(1+O_{C,\Xi}(\min_j w_j^{-1}))$. Hence, as in the conclusion of the proof of Lemma \ref{Lemma problem for local von Mangoldt}, we conclude that expression (\ref{long series of local deductions}) is equal to $c_{\rho,2}^{\vert S\vert}\mathfrak{S}_{\widetilde{\mathfrak{r}}} + O_{C,\Xi,\gamma}(\min_j w_j^{-1/2})$. This establishes expression (\ref{equation rational from irrational in sieve calculation again}), and so Lemma \ref{Lemma sieve weight asymptotics in general case} is proved. 
\end{proof}
Therefore Theorem \ref{Theorem pseudorandomness after rational reductions} is resolved. 
\end{proof}
Hence Theorem \ref{Theorem pseudorandomness} is settled as well, i.e. we conclude that the weight $\nu_{N,w}^\gamma$ is $(L,w)$-pseudorandom. 
\end{proof}

We finish this section by noting a corollary of the theorems above, which will be useful in its own right. 

\begin{Corollary}[Upper bound for linear inequalities]
\label{Corollary upper bound}
Let $N,m,d$ be natural numbers, with $d\geqslant m+2$, and let $C,\varepsilon,\gamma$ be positive reals. Let $L:\mathbb{R}^d \longrightarrow \mathbb{R}^m$ be a surjective linear map with algebraic coefficients, and suppose that $L\notin V_{\degen}^*(m,d)$ and that the coefficients of $L$ are algebraic. Let $u$ be the rational dimension of $L$. Let $w_1,\dots w_d:\mathbb{N}\longrightarrow \mathbb{R}_{\geqslant 0}$ be functions that satisfy $w_j(n)\rightarrow \infty$ as $n\rightarrow \infty$ for all $j$ and satisfy $w_j(n)\leqslant w(n)$ for all $j$ and for all $n$. If $\gamma$ is small enough in terms of $L$, then for all functions $F:\mathbb{R}^d \longrightarrow [0,1]$ supported on $[-C,C]^d$, for all functions $G:\mathbb{R}^m \longrightarrow [0,1]$ supported on $[-\varepsilon,\varepsilon]^m$, and for all $\mathbf{v}\in\mathbb{R}^m$ satisfying $\Vert \mathbf{v} \Vert_\infty \leqslant CN$,  one has \[ T_{F,G,N}^{L,\mathbf{v}}(f_1,\dots,f_d) \ll_{C,L} \Vert G\Vert_\infty \varepsilon^{m-u} + o(1)\] as $N\rightarrow \infty$, where each $f_j$ equals either $\nu_{N,w}^\gamma$ or $\Lambda_{\mathbb{Z}/W_j\mathbb{Z}} $. The $o(1)$ term may depend on $C$, $L$,  $\varepsilon$, $\gamma$, and the choice of functions $w_1,\dots,w_d$.
\end{Corollary}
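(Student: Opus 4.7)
The plan is to majorise $F, G$ by smooth functions and then apply successively the pseudorandomness of $\nu_{N,w}^\gamma$ (Theorem \ref{Theorem pseudorandomness}), the dimension reduction of Lemma \ref{Lemma generating a purely irrational map}, and the local asymptotic of Lemma \ref{Lemma problem for local von Mangoldt}, bounding the resulting main term via Lemma \ref{Lemma singular series and singular integral are bounded}.

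Since each $f_j\geqslant 0$ and $F,G\geqslant 0$, one may pass to smooth majorants without decreasing the expression. Using Lemma \ref{Lemma smooth approximations}, I choose a smooth $\tilde F:\mathbb{R}^d\to[0,1]$ with $\tilde F\geqslant 1_{[-C,C]^d}$ and $\supp \tilde F\subseteq[-2C,2C]^d$, and a smooth $h:\mathbb{R}^m\to[0,1]$ with $h\geqslant 1_{[-\varepsilon,\varepsilon]^m}$ and $\supp h\subseteq[-2\varepsilon,2\varepsilon]^m$; setting $\tilde G:=\|G\|_\infty h$ gives $\tilde F\geqslant F$ and $\tilde G\geqslant G$ pointwise, so
\[
T_{F,G,N}^{L,\mathbf{v}}(f_1,\dots,f_d)\leqslant T_{\tilde F,\tilde G,N}^{L,\mathbf{v}}(f_1,\dots,f_d).
\]
Since $\tilde F,\tilde G\in\mathcal{C}(C,\varepsilon)$, Theorem \ref{Theorem pseudorandomness} (with $\gamma$ small enough in terms of $L$) yields
\[
T_{\tilde F,\tilde G,N}^{L,\mathbf{v}}(f_1,\dots,f_d)=T_{\tilde F,\tilde G,N}^{L,\mathbf{v}}(\Lambda_{\mathbb{Z}/W_1\mathbb{Z}},\dots,\Lambda_{\mathbb{Z}/W_d\mathbb{Z}})+o(1),
\]
where the $o(1)$ depends on $C,L,\varepsilon,\gamma$ and the $w_j$.

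Next, I apply Lemma \ref{Lemma generating a purely irrational map} with $\eta\asymp\varepsilon$. Because $L\notin V^*_{\degen}(m,d)$, this produces a purely irrational surjective linear map $L':\mathbb{R}^{d-u}\to\mathbb{R}^{m-u}$ with algebraic coefficients, an injective integer map $\Xi:\mathbb{R}^{d-u}\to\mathbb{R}^d$ of finite Cauchy–Schwarz complexity (via property (8) of the lemma), a finite set $\widetilde R\subset\mathbb{Z}^d$ with $|\widetilde R|=O_L(1)$ (for $\varepsilon$ in any bounded range), vectors $\mathbf{v}'$, $\widetilde{\mathbf{r}}$, and smooth functions $\tilde G_{\widetilde{\mathbf{r}}}:\mathbb{R}^{m-u}\to[0,\|G\|_\infty]$ with $\Rad(\tilde G_{\widetilde{\mathbf{r}}})=O_L(\varepsilon)$, such that
\[
T_{\tilde F,\tilde G,N}^{L,\mathbf{v}}(\Lambda_{\mathbb{Z}/W_1\mathbb{Z}},\dots,\Lambda_{\mathbb{Z}/W_d\mathbb{Z}})=\sum_{\widetilde{\mathbf{r}}\in\widetilde R}T^{L',\mathbf{v}',\Xi,\widetilde{\mathbf{r}}}_{\tilde F,\tilde G_{\widetilde{\mathbf{r}}},N}(\Lambda_{\mathbb{Z}/W_1\mathbb{Z}},\dots,\Lambda_{\mathbb{Z}/W_d\mathbb{Z}}).
\]

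Each summand is now directly within the scope of Lemma \ref{Lemma problem for local von Mangoldt}, whose hypotheses are satisfied since $\Xi$ has finite Cauchy–Schwarz complexity and $L'$ is purely irrational with algebraic coefficients; one obtains
\[
T^{L',\mathbf{v}',\Xi,\widetilde{\mathbf{r}}}_{\tilde F,\tilde G_{\widetilde{\mathbf{r}}},N}(\Lambda_{\mathbb{Z}/W_1\mathbb{Z}},\dots,\Lambda_{\mathbb{Z}/W_d\mathbb{Z}})=\mathfrak{S}_{\widetilde{\mathbf{r}}}J_{\widetilde{\mathbf{r}}}+O_{C,L}\bigl((\min_j w_j(N))^{-1}\bigr).
\]
By Lemma \ref{Lemma singular series and singular integral are bounded}, $\mathfrak{S}_{\widetilde{\mathbf{r}}}\ll_L 1$, and
\[
J_{\widetilde{\mathbf{r}}}\ll_{C,L}\Rad(\tilde F)^{d-m}\Rad(\tilde G_{\widetilde{\mathbf{r}}})^{m-u}\|\tilde G_{\widetilde{\mathbf{r}}}\|_\infty\ll_{C,L}\varepsilon^{m-u}\|G\|_\infty.
\]
Summing over $\widetilde R$ absorbs a bounded factor and delivers the asserted bound; every error accumulated above is $o(1)$ with the allowed parameter dependence. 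The main obstacle is essentially bookkeeping — ensuring that the smoothing step preserves the $\|G\|_\infty$ factor (guaranteed by the choice $\tilde G=\|G\|_\infty h$ together with part (10)-style bounds in Lemma \ref{Lemma generating a purely irrational map} giving $\|\tilde G_{\widetilde{\mathbf{r}}}\|_\infty\le\|G\|_\infty$), and confirming that $|\widetilde R|$ is absorbed into either the implied constant or the $o(1)$ term via the lattice-counting argument in the proof of Lemma \ref{Lemma generating a purely irrational map}.
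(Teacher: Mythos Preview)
Your argument is correct and follows essentially the same route as the paper's proof: majorise by smooth cut-offs, invoke Theorem~\ref{Theorem pseudorandomness}, reduce via Lemma~\ref{Lemma generating a purely irrational map}, apply Lemma~\ref{Lemma problem for local von Mangoldt}, and bound the resulting $\mathfrak{S}_{\widetilde{\mathbf{r}}} J_{\widetilde{\mathbf{r}}}$ by Lemma~\ref{Lemma singular series and singular integral are bounded}. The paper simply packages the middle two steps as an appeal to Remark~\ref{Remark asymptotic for local von mangoldt in general}, whereas you unroll them explicitly; your handling of the $\|G\|_\infty$ factor via $\tilde G=\|G\|_\infty h$ is in fact a shade more careful than the paper's presentation.
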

\begin{proof}
Using Lemma \ref{Lemma smooth approximations}, replace both $F$ and $G$ by compactly supported smooth majorants $F_1$ and $G_1$ for which \[ 1_{[-C_1,C_1]^d} \leqslant F_1 \leqslant 1_{[-2C_1,2C_1]^d}\] and \[ 1_{[-\varepsilon,\varepsilon]^m} \leqslant G_1 \leqslant 1_{[-2\varepsilon,2\varepsilon]^m}.\]  We have $F_1 \in \mathcal{C}(C_1)$ and $G_1 \in \mathcal{C}(\varepsilon)$. Then, by Theorem \ref{Theorem pseudorandomness}, 
\begin{align*}
T_{F,G,N}^{L,\mathbf{v}}(f_1,\dots,f_d) &\ll T_{F_1,G_1,N}^{L,\mathbf{v}}(f_1,\dots,f_d) \\
& = T_{F_1,G_1,N}^{L,\mathbf{v}}(\Lambda_{\mathbb{Z}/W_1\mathbb{Z}},\dots,\Lambda_{\mathbb{Z}/W_d\mathbb{Z}}) + o(1),
\end{align*} 
where the error term may depend on $C$, $L$, $\varepsilon$, $\gamma$, and the functions $w_1,\dots,w_d$.

In Remark \ref{Remark asymptotic for local von mangoldt in general}, we noted that \[T_{F_1,G_1,N}^{L,\mathbf{v}}(\Lambda_{\mathbb{Z}/W_1\mathbb{Z}},\dots,\Lambda_{\mathbb{Z}/W_d\mathbb{Z}}) = \sum\limits_{\widetilde{\mathbf{r}} \in \widetilde{R}} \mathfrak{S}_{\widetilde{\mathbf{r}}} J_{\widetilde{\mathbf{r}}} + o(1),\] where the error term depends on the parameters mentioned above, and where $\mathfrak{S}_{\widetilde{\mathbf{r}}}$ and $J_{\widetilde{\mathbf{r}}}$ are of the form (\ref{equation singular series}) and (\ref{equation singular integral again}). The corollary then follows from the bounds in Lemma~\ref{Lemma singular series and singular integral are bounded}. 
\end{proof}

This result is to be compared with the following statement.   

\begin{Lemma}[Weak upper bound]
\label{Lemma trivial upper bound}
Let $N,m,d$ be natural numbers, with $d\geqslant m$, and let $C,\varepsilon$ be positive parameters. Let $L:\mathbb{R}^d \longrightarrow \mathbb{R}^m$ be a surjective linear map. Then, for all functions $F: \mathbb{R}^d \longrightarrow [0,1]$ supported on $[-C,C]^d$, for all functions $G: \mathbb{R}^m \longrightarrow [0,1]$ supported on $[-\varepsilon,\varepsilon]^m$, for all $\mathbf{v} \in \mathbb{R}^m$, and for all functions $f_1,\dots,f_d: \mathbb{Z} \longrightarrow \mathbb{R}$, \[ T_{F,G,N}^{L,\mathbf{v}}(f_1,\dots,f_d) \ll_{C, L,\varepsilon} \Vert G\Vert_\infty \sup\limits_{\substack{j\leqslant d \\ \vert n\vert \leqslant \Rad(F)N}} \vert f_j(n)\vert^d .\]
\end{Lemma}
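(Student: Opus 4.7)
My plan is to bound everything pointwise and then count integer points satisfying the constraint imposed by the support of $G$. First, for any $\mathbf{n} \in \mathbb{Z}^d$ that contributes nontrivially to the sum, we must have $\mathbf{n}/N \in \operatorname{supp}(F) \subseteq [-\operatorname{Rad}(F), \operatorname{Rad}(F)]^d$, hence $|n_j| \leqslant \operatorname{Rad}(F) N$ for all $j$. Therefore
\[ \Big\vert\prod_{j=1}^d f_j(n_j)\Big\vert \leqslant \sup_{\substack{j\leqslant d \\ |n| \leqslant \operatorname{Rad}(F) N}} |f_j(n)|^d, \]
and also $|F(\mathbf{n}/N)| \leqslant 1$ and $|G(L\mathbf{n}+\mathbf{v})| \leqslant \|G\|_\infty$. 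Since $\operatorname{Rad}(F) \leqslant C$, it therefore suffices to show that
\[ \#\{\mathbf{n} \in \mathbb{Z}^d : \|\mathbf{n}\|_\infty \leqslant CN \text{ and } L\mathbf{n} + \mathbf{v} \in [-\varepsilon,\varepsilon]^m\} \ll_{C,L,\varepsilon} N^{d-m}, \]
since this bound (times all of the above) gives a total of $O_{C,L,\varepsilon}(\|G\|_\infty \sup|f_j|^d)$ after dividing by $N^{d-m}$.

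To establish this counting bound I would exploit the surjectivity of $L$. Since $L$ has rank $m$, after reordering coordinates we may write $L = (L_1 \mid L_2)$, where $L_1$ is $m \times (d-m)$ and $L_2$ is an invertible $m \times m$ submatrix. Correspondingly write $\mathbf{n} = (\mathbf{n}^{(1)}, \mathbf{n}^{(2)}) \in \mathbb{Z}^{d-m} \times \mathbb{Z}^m$. For each of the $O_C(N^{d-m})$ choices of $\mathbf{n}^{(1)} \in \mathbb{Z}^{d-m}$ with $\|\mathbf{n}^{(1)}\|_\infty \leqslant CN$, the constraint $L\mathbf{n} + \mathbf{v} \in [-\varepsilon,\varepsilon]^m$ forces
\[ \mathbf{n}^{(2)} \in L_2^{-1}\big([-\varepsilon,\varepsilon]^m - \mathbf{v} - L_1 \mathbf{n}^{(1)}\big). \]
This is a translate of the fixed parallelepiped $L_2^{-1}([-\varepsilon,\varepsilon]^m)$, whose diameter is bounded by $O_{L,\varepsilon}(1)$ independently of $\mathbf{n}^{(1)}$. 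Consequently the number of integer points $\mathbf{n}^{(2)} \in \mathbb{Z}^m$ lying in this translate is $O_{L,\varepsilon}(1)$.

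Multiplying the two counts gives the required bound of $O_{C,L,\varepsilon}(N^{d-m})$ on the number of contributing lattice points. Combining this with the pointwise estimates above yields the claimed bound on $T_{F,G,N}^{L,\mathbf{v}}(f_1,\dots,f_d)$. There is no real obstacle here; the only thing one has to observe carefully is that the constant depending on $L$ comes from the operator norm of $L_2^{-1}$ (which exists since $L_2$ is a fixed invertible matrix determined by $L$), and so the bound on the diameter of the preimage parallelepiped is genuinely finite and depends only on $L$ and $\varepsilon$.
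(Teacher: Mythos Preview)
Your proof is correct and follows essentially the same approach as the paper: bound everything pointwise, then use surjectivity of $L$ to pick out an invertible $m\times m$ submatrix and observe that fixing the remaining $d-m$ coordinates leaves only $O_{L,\varepsilon}(1)$ choices for the other $m$. The only cosmetic difference is that the paper takes the invertible submatrix to consist of the first $m$ columns rather than the last $m$.
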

\noindent The bound in Lemma \ref{Lemma trivial upper bound} is weaker than the bound in Corollary \ref{Corollary upper bound}, but has the advantage of holding for all surjective maps $L$, which is a situation that will be needed later.\\ 
\begin{proof}
This is essentially identical to Lemma 3.2 of \cite{Wa17}. Indeed, one sees immediately that \[ T_{F,G,N}^{L,\mathbf{v}}(f_1,\dots,f_d) \ll \frac{1}{N^{d-m}}\Big(\sup\limits_{\substack{j\leqslant d \\ \vert n\vert \leqslant \Rad(F)N}} \vert f_j(n)\vert^d \Big) \times \sum\limits_{\substack{ \mathbf{n} \in [-CN,CN]^d \\ \Vert L\mathbf{n} + \mathbf{v}\Vert_\infty \leqslant \varepsilon}} 1.\] Since $L$ is surjective, without loss of generality we may assume that the first $m$ columns of $L$ form an invertible matrix. If the variables $n_{m+1}$ to $n_d$ are fixed, there are only $O_{\varepsilon,L}(1)$ possible choices for $n_1 ,\dots,n_m$ for which the inequality $\Vert L\mathbf{n} + \mathbf{v} \Vert_\infty \leqslant \varepsilon$ is satisfied. Summing over $n_{m+1}$ to $n_{d}$, the lemma follows. 
\end{proof}
%A warning: the letter $G$ that appears in (\ref{equation singular integral}) need not be same the $G$ as in the hypotheses of Lemma \ref{, nor be the same as $G_1$, owing to the transformation of functions that occurs when Lemma \ref{Lemma generating a purely irrational map} is applied. However, by property (3) of that Lemma \ref{Lemma generating a purely irrational map}, the support of the function is contained within $[-O_L(\varepsilon),O_L(\varepsilon)]^m$ and has a supremum that is at most the supremum of the original function, which is all that is required here.

\part{The structure of inequalities}
\label{part the structure of inequalities}

Before embarking upon this part of the argument, we remind the reader of the following basic notion from functional analysis. A linear map $L:(V, \Vert \cdot \Vert_V) \longrightarrow (W, \Vert \cdot \Vert_W)$ between two normed spaces will be called a \emph{bounded operator} if there exists a constant $C_L$ such that for all $\mathbf{v} \in V$ one has $\Vert L \mathbf{v}\Vert_{W} \leqslant C_L \Vert \mathbf{v} \Vert _V$. It is a standard fact that all linear maps between two finite dimensional normed spaces are bounded. \\

\section{An alternative formulation}
So far all of our theorems and lemmas have been phrased in terms of linear inequalities that are written in the form $T_{F,G,N}^{L,\mathbf{v}}(f_1,\dots,f_d)$. In Section \ref{section General proof of the real variable von Neumann Theorem} the auxiliary inequalities will appear in a different form, but, as is shown in Lemma \ref{Lemma different forms of inequalities} below,  these different forms are more-or-less equivalent. The statement of this lemma is unfortunately rather technical, but the proof is straightforward. The reader may wish in the first instance to consider the special case in which $l = 0$ and $\Phi$ is injective. 

\begin{Lemma}[Alternative formulation]
\label{Lemma different forms of inequalities}
Let $m,d,l$ be natural numbers, with $d\geqslant m$, and let $C,\sigma,\eta$ be positive parameters. Let $P$ be another set of parameters. Let $k$ be a non-negative integer, and suppose that $\eta$ is small enough in terms of $m$, $d$, $k$ and $l$. Let $\Phi:\mathbb{R}^{d-m+k}\longrightarrow \mathbb{R}^d$ be a linear map, and suppose that $k = \dim \ker \Phi$. Let $I:\mathbb{R}^d \longrightarrow [0,1]$ and $H:\mathbb{R}^{d-m+k +l} \longrightarrow [0,1]$ be smooth functions, where $\Rad(I) \leqslant \eta$ and $\Rad(H) \leqslant C$. Assume that the Lipschitz constant of $H$ is at most $\sigma^{-1}$ and assume further that $H,I \in \mathcal{C}(P)$. Then 

\begin{enumerate}
\item there exists a surjective linear map $L:\mathbb{R}^d \longrightarrow \mathbb{R}^m$ such that $\ker L = \im \Phi$ and $\Vert L\Vert_\infty  = O_{\Phi}(1)$. If $\Phi$ has algebraic coefficients then $L$ can be chosen to have algebraic coefficients. 
\item for any $L$ satisfying part (1), if $\Phi$ has finite Cauchy-Schwarz complexity then $L \notin V_{\degen}^*(m,d)$.
\item for any $L$ satisfying part (1), if $\eta$ is small enough in terms of $L$ and $\Phi$ then there exist smooth functions $F:\mathbb{R}^{d+l} \longrightarrow \mathbb{R}_{\geqslant 0}$ and $G:\mathbb{R}^{m} \longrightarrow \mathbb{R}_{\geqslant 0}$, with $F \in \mathcal{C}(P,\Phi)$, $G \in\mathcal{C}(L,P,\Phi)$ and $\Rad(G) \ll_L\eta $, such that for all $\mathbf{v} \in \mathbb{R}^l$, $\mathbf{z} \in \mathbb{R}^d$, and natural numbers $N$,
\begin{equation}
\label{equation different forms of inequalities}
\frac{1}{N^k} \int\limits_{\mathbf{x} \in \mathbb{R}^{d-m+k}} I(\mathbf{z} - \Phi(\mathbf{x}))H((\mathbf{v},\mathbf{x})/N) \, d\mathbf{x} =  F((\mathbf{v},\mathbf{z})/N) G(L\mathbf{z}) + E(\mathbf{z},N),\end{equation}
\noindent where $E(\mathbf{z},N)$ is an error term of size at most \[
O_{C,\Phi}(\eta \sigma^{-1} N^{-1} 1_{[0,O_{C,\Phi}(N)]}(\Vert \mathbf{z}\Vert_\infty )1_{[0,O_L(\eta)]} (\Vert L\mathbf{z} \Vert_\infty)).\] 
\end{enumerate}
\end{Lemma}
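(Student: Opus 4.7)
The plan is to handle the three parts of the lemma separately, with part (3) being the main work and forming the crux of the argument.

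For (1), I will exploit the fact that $\dim \im \Phi = (d-m+k)-k = d-m$, so $(\im \Phi)^\perp \subset \mathbb{R}^d$ has dimension $m$. Choose an orthonormal basis $\mathbf{w}_1,\dots,\mathbf{w}_m$ of $(\im \Phi)^\perp$, which can be produced by Gram-Schmidt from data depending only on $\Phi$ (preserving algebraicity when applicable), and set $(L\mathbf{x})_j := \langle \mathbf{w}_j,\mathbf{x}\rangle$. Then $\ker L = \im \Phi$, $L$ is surjective, and $\|L\|_\infty = O_\Phi(1)$. For (2), I will dualise the identity $\ker L = \im \Phi$ to obtain $\im L^* = (\im \Phi)^0 = \ker \Phi^*$; a non-zero element of $\im L^*$ with at most two non-zero coordinates is, up to scaling, of the form $\mathbf{e}_i^* - \lambda\mathbf{e}_j^*$, so its presence in $\ker \Phi^*$ would witness infinite Cauchy-Schwarz complexity of $\Phi$, contradicting the hypothesis.

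For part (3), I will decompose $\mathbb{R}^{d-m+k} = \ker \Phi \oplus (\ker \Phi)^\perp$, write $\mathbf{x} = \mathbf{x}_1 + \mathbf{x}_2$ accordingly, and change variables via $\mathbf{y} := \Phi(\mathbf{x}_2) \in \im \Phi$. Letting $\Phi_0 := \Phi|_{(\ker \Phi)^\perp}$ with Jacobian $J_\Phi := |\det \Phi_0|$, the integral becomes
\[
J_\Phi^{-1} \int_{\ker \Phi} \int_{\im \Phi} I(\mathbf{z}-\mathbf{y})\, H\bigl((\mathbf{v},\mathbf{x}_1 + \Phi_0^{-1}(\mathbf{y}))/N\bigr)\, d\mathbf{y}\, d\mathbf{x}_1.
\]
I will first observe that the support of $I(\mathbf{z}-\mathbf{y})$ forces $\|\mathbf{y}-\mathbf{z}\|_\infty \leq \eta$, and hence (writing $\pi$ for orthogonal projection onto $\im \Phi$) that $\|\pi^\perp(\mathbf{z})\|_\infty = O(\eta)$, yielding $\|L\mathbf{z}\|_\infty = O_L(\eta)$. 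The $H$-support simultaneously forces $\|\mathbf{z}\|_\infty = O_{C,\Phi}(N)$; these are the two indicator conditions in the error bound.

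The key step is then to replace $\Phi_0^{-1}(\mathbf{y})$ by $\Phi_0^{-1}(\pi(\mathbf{z}))$ inside $H$: on the joint support $\|\mathbf{y}-\pi(\mathbf{z})\|_\infty = O(\eta)$, so the Lipschitz bound on $H$ yields a pointwise error $O_\Phi(\eta\sigma^{-1}N^{-1})$. After this replacement the integral factorises as $J_\Phi^{-1} A(\mathbf{v},\mathbf{z})B(\mathbf{z})$, where $B(\mathbf{z}) := \int_{\im \Phi} I(\mathbf{z}-\mathbf{y})\, d\mathbf{y}$ and $A(\mathbf{v},\mathbf{z}) := \int_{\ker \Phi} H((\mathbf{v},\mathbf{x}_1 + \Phi_0^{-1}(\pi(\mathbf{z})))/N)\, d\mathbf{x}_1$. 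Translating $\mathbf{y} \mapsto \mathbf{y} - \pi(\mathbf{z})$ and using the isomorphism $L_0 := L|_{(\im \Phi)^\perp} : (\im \Phi)^\perp \to \mathbb{R}^m$ exhibits $B(\mathbf{z}) = G(L\mathbf{z})$ for the explicit function $G(\mathbf{w}) := \int_{\im \Phi} I(L_0^{-1}(\mathbf{w})-\mathbf{y})\,d\mathbf{y}$, which is smooth, lies in $\mathcal{C}(L,P,\Phi)$, and has $\Rad G = O_L(\eta)$. Rescaling $\mathbf{x}_1 \mapsto N\mathbf{x}_1$ identifies $A/N^k$ with $\widetilde{F}(\mathbf{v}/N,\mathbf{z}/N)$ where $\widetilde{F}(\mathbf{u},\mathbf{w}) := \int_{\ker \Phi} H(\mathbf{u},\mathbf{x}_1 + \Phi_0^{-1}(\pi(\mathbf{w})))\, d\mathbf{x}_1$, and I will define $F(\mathbf{u},\mathbf{w}) := J_\Phi^{-1}\widetilde{F}(\mathbf{u},\mathbf{w})\chi(\mathbf{w})$ with $\chi$ a smooth cutoff equal to $1$ on $\{\|\pi^\perp(\mathbf{w})\|_\infty \leq 1\}$, needed to make $F$ genuinely compactly supported since $\widetilde{F}$ has no control in the $\pi^\perp$ direction.

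I expect the main obstacle to be the simultaneous bookkeeping of the compact-support and smoothness data in this final step: the class $\mathcal{C}(P,\Phi)$ forbids $\eta$-dependence in $F$'s bounds, yet the truncation $\chi$ must be trivial on $\supp (G \circ L)$ for the factorisation to survive, which is why $\eta$ being sufficiently small in terms of $L$ and $\Phi$ enters the hypotheses. The total error is then controlled by $J_\Phi^{-1} N^{-k} \cdot O_\Phi(\eta\sigma^{-1}N^{-1}) \cdot O_C(N^k) \cdot O(\eta^{d-m}) = O_{C,\Phi}(\eta^{d-m+1}\sigma^{-1}N^{-1})$ on the indicator set, which sits inside the claimed $O_{C,\Phi}(\eta\sigma^{-1}N^{-1})$ since $d-m+1 \geq 1$ and $\eta \leq 1$.
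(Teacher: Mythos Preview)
Your proposal is correct and follows essentially the same approach as the paper's proof. Both split $\mathbb{R}^{d-m+k}$ into $\ker\Phi \oplus (\ker\Phi)^\perp$, use the support of $I$ together with the orthogonal decomposition $\mathbf{z} = \pi(\mathbf{z}) + \pi^\perp(\mathbf{z})$ to extract the indicator constraints, freeze the $(\ker\Phi)^\perp$-argument of $H$ at $\Phi_0^{-1}(\pi(\mathbf{z}))$ via the Lipschitz bound to achieve the factorisation, identify the two factors with $G(L\mathbf{z})$ and an integral over $\ker\Phi$, and finally multiply by a fixed smooth cutoff in the $(\im\Phi)^\perp$-direction to force compact support of $F$; the paper merely packages the change of variables through an explicit orthonormal map $\Psi$ rather than your $\pi,\Phi_0$ notation, and records the cruder error $O(\eta\sigma^{-1}N^{-1})$ rather than your sharper $O(\eta^{d-m+1}\sigma^{-1}N^{-1})$.
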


\begin{proof}
Part (1) of the lemma is immediate. Indeed, one has the quotient map $\pi:\mathbb{R}^d \longrightarrow \mathbb{R}^d/ \im \Phi$. Choosing an isomorphism $\iota: \mathbb{R}^d/ \im \Phi \cong \mathbb{R}^m$, we may define $L:= \iota \circ \pi$. If $\Phi$ has algebraic coefficients then choosing such an $\iota$ with algebraic coefficients gives a suitable $L$ with algebraic coefficients. 

For part (2), suppose that $\Phi$ has finite Cauchy-Schwarz complexity. If $L$ were in $V_{\degen}^*(m,d)$ then there would exist $i,j\leqslant d$ and a real number $\lambda$ for which $\mathbf{e_i} - \lambda \mathbf{e_j}$ is non zero and $\mathbf{e_i^*} - \lambda\mathbf{e_j^*} \in L^*((\mathbb{R}^{m})^*)$, which would imply $\mathbf{e_i^*} - \lambda\mathbf{e_j^*} \in (\im \Phi)^0$, which would imply that $\Phi$ has infinite Cauchy-Schwarz complexity, contradicting the hypothesis. \\

It remains to prove part (3). Let $\{\mathbf{u^{(1)}},\dots,\mathbf{u^{(k)}}\} \subset \mathbb{R}^{d-m+k}$ be an orthonormal basis for $\ker \Phi$, and extend this to an orthonormal basis $\{\mathbf{u^{(1)}},\dots,\mathbf{u^{(d-m+k)}}\}$ for $\mathbb{R}^{d-m+k}$. Then define the linear map $\Psi:\mathbb{R}^{d-m+k}\longrightarrow \mathbb{R}^{d-m+k}$ by \[\Psi(\mathbf{y}) =  \sum\limits_{j=1}^{d-m+k} y_j\mathbf{u^{(j)}}.\] By changing variables, we have that the left-hand side of (\ref{equation different forms of inequalities}) is equal to \[\frac{1}{N^k}\int\limits_{\mathbf{y}\in \mathbb{R}^{d-m+k}} I(\mathbf{z} - \Phi(\Psi(\mathbf{y})))H((\mathbf{v},\Psi(\mathbf{y}))/N)\, d\mathbf{y},\] which equals
\begin{equation}
\label{after second rescaling}
\frac{1}{N^k}\int\limits_{\mathbf{y}\in \mathbb{R}^{d-m+k}} I(\mathbf{z} - \Phi(\Psi(\mathbf{0},\mathbf{y_{k+1}^{d-m+k}})))H((\mathbf{v},(\Psi(\mathbf{y_1^k},\mathbf{0}) + \Psi(\mathbf{0},\mathbf{y_{k+1}^{d-m+k}})))/N) \, d\mathbf{y}.
\end{equation}
\noindent Recall, from Section \ref{section conventions}, that we use the notation $\mathbf{y_1^k}$ to refer to the vector $(y_1,\dots,y_k)^T \in \mathbb{R}^k$, etcetera.\\

We make some observations. Firstly, we observe that (\ref{after second rescaling}) is equal to $0$ unless $\Vert \mathbf{z} \Vert_\infty = O_{C,\Phi}(N)$. Indeed, if $\Vert z \Vert_\infty \geqslant C_1N$ then for all $y_{k+1},\dots,y_{d-m+k}$ that give a non-zero contribution to (\ref{after second rescaling}) we have \[\Vert \Phi(\Psi(\mathbf{0}, \mathbf{y_{k+1}^{d-m+k}}))\Vert_\infty \geqslant \frac{1}{2} C_1 N,\] if $\eta$ is small enough. This means that \[\Vert \Psi(\mathbf{0}, \mathbf{y_{k+1}^{d-m+k}})\Vert_\infty \gg_{\Phi} C_1 N,\] which if $C_1$ is large enough in terms of $C$ and $\Phi$ means that \[ H((\mathbf{v},(\Psi(\mathbf{y_1^k},\mathbf{0}) + \Psi(\mathbf{0},\mathbf{y_{k+1}^{d-m+k}})))/N) = 0\] for all $y_1,\dots,y_k$. [Note that $\Psi(\mathbf{y_1^k},\mathbf{0})$ and $\Psi(\mathbf{0},\mathbf{y_{k+1}^{d-m+k}})$ are orthogonal.]

Secondly, we observe that \[\Vert \mathbf{z} - \Phi(\Psi(\mathbf{0},\mathbf{y_{k+1}^{d-m+k}}))\Vert_\infty \ll \eta\] for all $y_{k+1},\dots,y_{d-m+k}$ that give a non-zero contribution to the integral (\ref{after second rescaling}). Write $\mathbf{z} = \mathbf{z_1} + \mathbf{z_2}$, where $\mathbf{z_1} \in \im \Phi$ and $\mathbf{z_2}\in (\im \Phi)^\perp$. By orthogonality, we conclude that \[\Vert \mathbf{z_1} - \Phi(\Psi(\mathbf{0},\mathbf{y_{k+1}^{d-m+k}}))\Vert_\infty \ll \eta.\] Since $(\Phi|_{(\ker \Phi)^\perp})^{-1}:\im \Phi \longrightarrow (\ker \Phi)^\perp$ is a bounded linear map, this in turn means that \[ \Vert (\Phi|_{(\ker \Phi)^\perp})^{-1}(\mathbf{z_1}) - \Psi(\mathbf{0},\mathbf{y_{k+1}^{d-m+k}}))\Vert_\infty\ll_\Phi \eta.\] Since $H$ is Lipschitz, with Lipschitz constant at most $\sigma^{-1}$, this all means that (\ref{after second rescaling}) is equal to 
\begin{align}
\label{two brackets}
&\Big(\int\limits_{\mathbf{y_{k+1}^{d-m+k}} \in \mathbb{R}^{d-m}} I(\mathbf{z} - \Phi(\Psi(\mathbf{0},\mathbf{y_{k+1}^{d-m+k}}))) \, d\mathbf{y_{k+1}^{d-m+k}}\Big)\nonumber \\
&\times \frac{1}{N^k}\Big(\int\limits_{\mathbf{y_1^k}\in \mathbb{R}^k}H((\mathbf{v},(\Psi(\mathbf{y_1^k},\mathbf{0})  + (\Phi|_{(\ker \Phi)^\perp})^{-1}(\mathbf{z_1})))/N) \,d\mathbf{y_1^k}\Big),
\end{align}
\noindent plus an error of size at most \[O_{C,\Phi}(\eta \sigma^{-1} N^{-1} 1_{[0,O_{C,\Phi}(N)]}(\Vert \mathbf{z}\Vert_\infty)1_{[0,O(\eta)]}(\dist(\mathbf{z}, \im \Phi))).\]

We proceed to analyse the terms of (\ref{two brackets}) separately. Firstly, by shifting the variables $y_{k+1},\dots,y_{d-m+k}$ we see that the first bracket of (\ref{two brackets}) is equal to 
\begin{equation}
\label{first bracket}
\int\limits_{\mathbf{y_{k+1}^{d-m+k}} \in \mathbb{R}^{d-m}} I(\mathbf{z_2} - \Phi(\Psi(\mathbf{0},\mathbf{y_{k+1}^{d-m+k}}))) \, d\mathbf{y_{k+1}^{d-m+k}}. 
\end{equation}  
\noindent Now let $L:\mathbb{R}^d \longrightarrow \mathbb{R}^{m}$ be any surjective linear map that satisfies $\ker L = \im \Phi$. Note that $L|_{(\im \Phi)^\perp}$ is an injective linear map, and thus (\ref{first bracket}) is equal to \[\int\limits_{\mathbf{y_{k+1}^{d-m+k}} \in \mathbb{R}^{d-m}}I((L|_{(\im \Phi)^\perp})^{-1}L\mathbf{z_2} -  \Phi (\Psi(\mathbf{0},\mathbf{y_{k+1}^{d-m+k}}))) \, d \mathbf{y_{k+1}^{d-m++k}}.\] Differentiating inside the integral, one sees that this expression is equal to $G(L\mathbf{z_2})$ for some smooth compactly supported function $G:\mathbb{R}^{m}\longrightarrow \mathbb{R}_{\geqslant 0}$ satisfying $G \in \mathcal{C}(L,P,\Phi)$. Moreover, $G$ is supported on $[O_L(\eta),O_L(\eta)]^m$, since $(L|_{(\im \Phi)^\perp})^{-1}L\mathbf{z_2}$ and $\Phi (\Psi(\mathbf{0},\mathbf{y_{k+1}^{d-m+k}}))$ are orthogonal. Note that $L\mathbf{z_2} = L\mathbf{z}$, so the expression is equal to $G(L\mathbf{z})$.\\

We move to the second term of (\ref{two brackets}). Choose $\iota: \im \Phi \longrightarrow \mathbb{R}^{d-m}$ to be an isomorphism with $\Vert \iota(\mathbf{x})\Vert_\infty \asymp _{\Phi} \Vert\mathbf{x}\Vert_\infty$. Then the second term of (\ref{two brackets}) is equal to $F_1((\mathbf{v},\iota(\mathbf{z_1}))/N)$ for some smooth function $F_1:\mathbb{R}^{d-m+l}\longrightarrow \mathbb{R}_{\geqslant 0}$ satisfying $F_1 \in \mathcal{C}(P,\Phi)$. Note that $F_1$ is indeed compactly supported, since $(\Phi|_{(\ker \Phi)^\perp})^{-1}(\mathbf{z_1})$ and $\Psi(\mathbf{y_1^k},\mathbf{0})$ are orthogonal vectors.\\

In summary, we have shown that (\ref{equation different forms of inequalities}) is equal to 
\begin{equation}
F_1((\mathbf{v},\iota(\mathbf{z_1}))/N) G(L\mathbf{z}),
\end{equation} plus an error of size \[O_{C,\Phi}(\eta \sigma^{-1} N^{-1} 1_{[0,O_{C,\Phi}(N)]}(\Vert \mathbf{z}\Vert_\infty)1_{[0,O(\eta)]}(\dist(\mathbf{z}, \im \Phi))).\] By the construction of $L$, this error is bounded by \[O_{C,\Phi}(\eta \sigma^{-1} N^{-1}1_{[0,O_{C,\Phi}(N)]}(\Vert \mathbf{z}\Vert_\infty) 1_{[0,O_L(\eta)]}(\Vert L\mathbf{z}\Vert_\infty)).\] The term $F_1(\mathbf{v},\iota(\mathbf{z_1})/N) G(L\mathbf{z})$ is not quite of the required form, since $F_1(\mathbf{v},\iota(\mathbf{z_1})/N)$ is not compactly supported as a function of $\mathbf{z}$. However, it may be easily massaged into this form. Indeed, from the above discussion we know that $G(L\mathbf{z}) \neq 0$ implies that $\Vert\mathbf{z_2}\Vert_\infty \leqslant C_1\eta$, for some constant $C_1$ that satisfies $C_1 = O_{L,\Phi}(1)$. Let $b:\mathbb{R}\longrightarrow [0,1]$ be a $1/2$-supported function (in the sense of Definition \ref{Defintion eta supported}), and let $B:\mathbb{R}^{d}\longrightarrow [0,1]$ be defined by $B(\mathbf{x}) = \prod_{j=1}^d b(x_j)$. Then let $F:\mathbb{R}^d \longrightarrow \mathbb{R}_{\geqslant 0}$ be defined by \[F(\mathbf{v},\mathbf{z}): = F_1(\mathbf{v},\iota (\mathbf{z_1})) B(\mathbf{z_2}).\] Then $F \in \mathcal{C}(P,\Phi)$, and if $\eta \leqslant 1/2C_1$ we have \[ F_1((\mathbf{v},\iota(\mathbf{z_1}))/N) G(L\mathbf{z}) = F((\mathbf{v},\mathbf{z})/N) G(L\mathbf{z}).\] The lemma is proved. 
\end{proof}

This reformulation allows us to deduce Corollary \ref{Corollary switching functions} below. This is a corollary of Theorem \ref{Theorem pseudorandomness} and is the result on inequalities and sieve weights that we will actually use in Section \ref{section Cauchy Schwarz argument}. In order to state this inequality, we introduce the following convention. 

\begin{Definition}[Convolution]
\label{Definition convolution}
If $f:\mathbb{Z}\longrightarrow \mathbb{R}$ has finite support, and $g:\mathbb{R}\longrightarrow [0,1]$ is a measurable function, we may define the convolution  $(f\ast g)(x) : \mathbb{R}\longrightarrow \mathbb{R}$ by \[ (f\ast g)(x): = \sum\limits_{n\in \mathbb{Z}} f(n) g(x -n).\] 
\end{Definition}

Recall from Section \ref{section conventions} that, for some positive parameter $\eta$, the function $\chi:\mathbb{R} \longrightarrow [0,1]$ denotes a fixed $\eta$-supported function.

\begin{Corollary}[Switching functions]
\label{Corollary switching functions}
Let $N,m,d$ be natural numbers, with $d\geqslant m+2$, and let $k$ be a non-negative integer. Let $C,\gamma,\eta$ be positive parameters, and let $P$ be a set of further parameters. Suppose that $\eta$ is small enough in terms of $m$, $d$, and $k$. Let $(\varphi_1,\dots,\varphi_d) = \Phi:\mathbb{R}^{d-m+k} \longrightarrow \mathbb{R}^d$ be a linear map with algebraic coefficients, and suppose that $k=\dim \ker \Phi$. Suppose that $\Phi$ has finite Cauchy-Schwarz complexity. Let $H:\mathbb{R}^{d-m+k} \longrightarrow [0,1]$ be a smooth function in $\mathcal{C}(P)$. For $j\leqslant d$, let $w_1,\dots,w_d$ be any functions with $w_j(n)\leqslant w(n)$ for all $n$ and for which $w_j(n) \rightarrow \infty$ as $n \rightarrow \infty$. For each $j\leqslant d$ let the function $f_j:\mathbb{Z}\longrightarrow \mathbb{R}_{\geqslant 0}$ be equal to either $\nu_{N,w}^\gamma$ or $\Lambda_{\mathbb{Z}/W_j\mathbb{Z}}$. Let $\mathbf{r} \in \mathbb{R}^d$ be any vector satisfying $\Vert \mathbf{r} \Vert_\infty \leqslant C N$. 

Then, if $\gamma$ is small enough in terms of $\Phi$, the expression \begin{equation}
\label{upper bounding count} \frac{1}{N^{d-m+k}} \int\limits_{\mathbf{x} \in \mathbb{R}^{d-m+k}} \Big( \prod\limits_{j=1}^{d} (f_j \ast \chi)(\varphi_j(\mathbf{x}) - r_j)\Big) H(\mathbf{x}/N) \, d\mathbf{x}
\end{equation}
\noindent is independent of the choices of the functions $f_j$, up to an error of size $o(1)$ as $N\rightarrow \infty$. This $o(1)$ term may depend on $C$, $P$, $\Phi$, $\eta$, $\gamma$, and on the functions $w_1,\dots,w_d$.
\end{Corollary}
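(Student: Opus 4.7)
\textbf{Proof plan for Corollary \ref{Corollary switching functions}.} The plan is to expand the convolutions, apply Lemma \ref{Lemma different forms of inequalities} to recognise the resulting integrals as a sum of the form $T_{F',G,N}^{L,\mathbf{v}}(f_1,\dots,f_d)$, and then invoke the pseudorandomness result of Theorem \ref{Theorem pseudorandomness}.

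First I would expand the convolution using Definition \ref{Definition convolution}, writing
\[
(f_j \ast \chi)(\varphi_j(\mathbf{x})-r_j)=\sum_{n_j \in \mathbb{Z}} f_j(n_j)\chi(\varphi_j(\mathbf{x})-r_j-n_j),
\]
and swap the order of integration and summation. Set $I(\mathbf{z}):=\prod_{j=1}^d \chi(-z_j)$, so that $I\in\mathcal{C}(\eta)$ and $\Rad(I)\leq \eta$. The expression (\ref{upper bounding count}) then becomes
\[
\frac{1}{N^{d-m+k}}\sum_{\mathbf{n}\in\mathbb{Z}^d}\Big(\prod_{j=1}^d f_j(n_j)\Big)\int_{\mathbf{x}\in\mathbb{R}^{d-m+k}} I((\mathbf{r}+\mathbf{n})-\Phi(\mathbf{x}))\,H(\mathbf{x}/N)\,d\mathbf{x}.
\]

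Next, for each fixed $\mathbf{n}$, I would apply Lemma \ref{Lemma different forms of inequalities} (with $l=0$, $\mathbf{z}:=\mathbf{r}+\mathbf{n}$, and $H$, $\Phi$ as given) to the inner integral. This yields a surjective linear map $L:\mathbb{R}^d\longrightarrow\mathbb{R}^m$ with algebraic coefficients (part (1), using that $\Phi$ has algebraic coefficients) satisfying $L\notin V^*_{\degen}(m,d)$ (part (2), using the finite Cauchy-Schwarz complexity of $\Phi$), together with smooth functions $F\in\mathcal{C}(\eta,P,\Phi)$ and $G\in\mathcal{C}(\eta,L,P,\Phi)$ so that the inner integral equals
\[
N^k F((\mathbf{r}+\mathbf{n})/N)G(L(\mathbf{r}+\mathbf{n})) + N^k E(\mathbf{r}+\mathbf{n},N),
\]
where $E$ satisfies the pointwise bound from part (3). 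Setting $F_{\mathbf{r}/N}(\mathbf{x}):=F(\mathbf{x}+\mathbf{r}/N)$ and $\mathbf{v}:=L\mathbf{r}$ (note $\Vert\mathbf{v}\Vert_\infty\ll_L CN$), the main term of the expansion becomes exactly $T_{F_{\mathbf{r}/N},G,N}^{L,\mathbf{v}}(f_1,\dots,f_d)$. Since $F_{\mathbf{r}/N}$ is a bounded translate of $F$ and inherits all smoothness bounds uniformly (its parameters lying in $\mathcal{C}(P,\Phi,C)$), the family is controlled by a fixed set of parameters not depending on $N$.

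At this point I would invoke Theorem \ref{Theorem pseudorandomness}: since $L$ has algebraic coefficients, is surjective, and lies outside $V^*_{\degen}(m,d)$, and since $\gamma$ is assumed small enough in terms of $\Phi$ (hence in terms of $L$), the weight $\nu_{N,w}^{\gamma}$ is $(L,w)$-pseudorandom. By Definition \ref{Definition linear inequalities condition},
\[
T_{F_{\mathbf{r}/N},G,N}^{L,\mathbf{v}}(f_1,\dots,f_d)=T_{F_{\mathbf{r}/N},G,N}^{L,\mathbf{v}}(\Lambda_{\mathbb{Z}/W_1\mathbb{Z}},\dots,\Lambda_{\mathbb{Z}/W_d\mathbb{Z}})+o(1),
\]
and the right-hand side is independent of the choice of the $f_j$.

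The final step is to absorb the error contribution from $E(\mathbf{r}+\mathbf{n},N)$. Using the pointwise bound from Lemma \ref{Lemma different forms of inequalities} part (3), the trivial estimates $|\nu_{N,w}^\gamma(n)|,|\Lambda_{\mathbb{Z}/W_j\mathbb{Z}}(n)|\ll \log N$, and the lattice point count $\#\{\mathbf{n}\in\mathbb{Z}^d:\Vert\mathbf{r}+\mathbf{n}\Vert_\infty\ll N,\,\Vert L(\mathbf{r}+\mathbf{n})\Vert_\infty\ll_L \eta\}\ll_L \eta^m N^{d-m}$, the total contribution of $E$ is bounded by
\[
\frac{N^k}{N^{d-m+k}}\cdot\eta^m N^{d-m}\cdot (\log N)^d\cdot O_{C,P,\Phi}(\eta N^{-1}) = O_{C,P,\Phi,\eta}((\log N)^d N^{-1})=o(1).
\]
The main obstacle is the verification that the translated functions $F_{\mathbf{r}/N}$ satisfy smoothness bounds uniform in $N$, which is needed to apply Theorem \ref{Theorem pseudorandomness} cleanly, but this is immediate from $\Vert\mathbf{r}\Vert_\infty\leq CN$ and the fact that translation preserves the defining derivative bounds of $\mathcal{C}(\cdot)$. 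Combining these estimates yields the corollary.
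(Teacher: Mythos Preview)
Your approach is essentially the same as the paper's: expand the convolution, apply Lemma~\ref{Lemma different forms of inequalities} to the inner integral, invoke Theorem~\ref{Theorem pseudorandomness} on the resulting $T^{L,\mathbf{v}}_{F,G,N}$ main term, and bound the error term separately. Your explicit discussion of the translated cutoff $F_{\mathbf{r}/N}$ is exactly the point the paper glosses over when it writes $F(\mathbf{n}/N)$ rather than $F((\mathbf{n}+\mathbf{r})/N)$; you are right that this is harmless.

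Two small remarks on the error-term step. First, the pointwise bound $|\nu_{N,w}^\gamma(n)|\ll\log N$ is not correct as stated: the sieve weight $\Lambda_{\rho,R,2}(n)=(\log R)\bigl(\sum_{d\mid n}\mu(d)\rho(\log d/\log R)\bigr)^2$ can be as large as $(\log R)\,d(n)^2$, so the right uniform bound for $n\leqslant O(N)$ is $\nu_{N,w}^\gamma(n)\ll_\epsilon N^\epsilon$. This still gives an error $\ll_\epsilon N^{d\epsilon-1}=o(1)$, so your argument survives. Second, the paper avoids this pointwise issue entirely by bounding the error sum via Corollary~\ref{Corollary upper bound}, which gives an $O(1)$ bound on the normalised weighted count directly and hence $o(N^{-1})$ after the extra factor of $N^{-1}$; this is cleaner but amounts to the same thing.
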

\begin{proof}
Expanding out the definition of $f_j \ast \chi$, one observes that the left-hand side of (\ref{upper bounding count}) is equal to 
\begin{equation}
\label{equation bounding alternative count}\frac{1}{N^{d-m+k}}\sum\limits_{n_1,\dots,n_d \in \mathbb{Z}} \Big(\prod\limits_{j=1}^d f_j(n_j) \Big)\int\limits_{\mathbf{x} \in \mathbb{R}^{d-m+k}} \Big(\prod\limits_{j=1}^d \chi(\varphi_j(\mathbf{x}) - n_j - r_j) \Big) H(\mathbf{x}/N)\, d\mathbf{x}. 
\end{equation} By applying Lemma \ref{Lemma different forms of inequalities} to the inner integral, we get a surjective linear map $L:\mathbb{R}^d \longrightarrow \mathbb{R}^m$ with algebraic coefficients, and smooth functions $F:\mathbb{R}^d \longrightarrow \mathbb{R}_{\geqslant 0}$ and $G:\mathbb{R}^m \longrightarrow \mathbb{R}_{\geqslant 0}$ supported on $[-O_{P,\Phi}(1),O_{P,\Phi}(1)]^d$ and $[-O_{\Phi}(\eta),O_{\Phi}(\eta)]^m$ respectively and with $F,G \in \mathcal{C}(P,\Phi,\eta)$, such that (\ref{equation bounding alternative count}) is equal to 
\begin{equation}
\label{main term and error term}\frac{1}{N^{d-m}} \sum\limits_{n_1,\dots,n_d \in \mathbb{Z}} \Big(\prod\limits_{j=1}^d f_j(n_j) \Big)F(\mathbf{n}/N)G(L\mathbf{n} + L\mathbf{r})
\end{equation} plus an error of size 

\begin{equation}
\label{plus an error of size}
O_{P,\Phi}\Big(\frac{1}{N^{d-m+1}} \sum\limits_{\substack{n_1,\dots,n_d \ll_{C,\Phi} N\\ \Vert L\mathbf{n} + L\mathbf{r}\Vert_\infty = O_{C,\Phi}(\eta)}} \prod\limits_{j=1}^d  f_j(n_j)\Big).
\end{equation} Furthermore, $L \notin V_{\degen}^*(m,d)$. 

Now apply Theorem \ref{Theorem pseudorandomness} to the main term (\ref{main term and error term}). As written this theorem applies to functions $F$ and $G$ that take values in $[0,1]$, but by the obvious rescaling we may nonetheless apply the theorem to the present functions $F$ and $G$. This shows immediately that (\ref{main term and error term}) is independent of the particular choices of $f_1,\dots,f_d$, up to an error of size $o(1)$. The $o(1)$ term has the appropriate dependencies.

For the error term (\ref{plus an error of size}), we apply the upper bound in Corollary \ref{Corollary upper bound}. This shows that (\ref{plus an error of size}) is $o(N^{-1})$, so may be absorbed into the $o(1)$ term above. Corollary \ref{Corollary switching functions} is proved. 
\end{proof}
An upper bound in this setting will also be convenient. 
\begin{Corollary}
\label{Corollary more upper bounds}
Under the same hypotheses as Corollary \ref{Corollary switching functions}, \begin{equation}
\frac{1}{N^{d-m+k}} \int\limits_{\mathbf{x} \in \mathbb{R}^{d-m+k}} \Big( \prod\limits_{j=1}^{d} (f_j \ast \chi)(\varphi_j(\mathbf{x}) + r_j)\Big) H(\mathbf{x}/N) \, d\mathbf{x} \ll 1,
\end{equation}
\noindent where the implied constant may depend on $C$, $P$, $\Phi$, $\eta$, $\gamma$, and on the functions $w_1,\dots,w_d$. 
\end{Corollary}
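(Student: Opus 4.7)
The plan is to reduce this to Corollary~\ref{Corollary upper bound} via the same reformulation already used in the proof of Corollary~\ref{Corollary switching functions}. First, by replacing $\mathbf{r}$ with $-\mathbf{r}$ (which preserves the bound $\Vert \mathbf{r}\Vert_\infty \leqslant CN$), it suffices to bound the expression with $\varphi_j(\mathbf{x}) - r_j$ in place of $\varphi_j(\mathbf{x}) + r_j$. Corollary~\ref{Corollary switching functions} then tells us that the expression differs by $o(1)$ from the same expression with every $f_j$ replaced by $\Lambda_{\mathbb{Z}/W_j\mathbb{Z}}$. So it is enough to establish the $O(1)$ bound for this single specific choice.

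For that choice, I would expand the definition of convolution and swap sum and integral, obtaining
\[
\frac{1}{N^{d-m+k}}\sum_{n_1,\dots,n_d \in \mathbb{Z}} \Big(\prod_{j=1}^d \Lambda_{\mathbb{Z}/W_j\mathbb{Z}}(n_j)\Big) \int_{\mathbf{x}\in\mathbb{R}^{d-m+k}} H(\mathbf{x}/N) \prod_{j=1}^d \chi(\varphi_j(\mathbf{x}) - n_j - r_j)\, d\mathbf{x}.
\]
Exactly as in the proof of Corollary~\ref{Corollary switching functions}, I would apply Lemma~\ref{Lemma different forms of inequalities} to the inner integral. This produces a surjective linear map $L:\mathbb{R}^d \longrightarrow \mathbb{R}^m$ with algebraic coefficients, with $L\notin V_{\degen}^*(m,d)$, together with smooth functions $F \in \mathcal{C}(P,\Phi,\eta)$ supported on a box of size $O_{P,\Phi}(1)$ and $G \in \mathcal{C}(L,P,\Phi,\eta)$ supported on $[-O_\Phi(\eta),O_\Phi(\eta)]^m$, such that the whole expression equals
\[
T_{F,G,N}^{L,L\mathbf{r}}(\Lambda_{\mathbb{Z}/W_1\mathbb{Z}},\dots,\Lambda_{\mathbb{Z}/W_d\mathbb{Z}}) + \text{error},
\]
where the error term arises from the boundary contribution in Lemma~\ref{Lemma different forms of inequalities} and is controlled (as in the proof of Corollary~\ref{Corollary switching functions}) by the same boundary sum, which is itself $o(1)$ by a further application of Corollary~\ref{Corollary upper bound}.

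Finally, I would apply Corollary~\ref{Corollary upper bound} directly to the main term $T_{F,G,N}^{L,L\mathbf{r}}(\Lambda_{\mathbb{Z}/W_1\mathbb{Z}},\dots,\Lambda_{\mathbb{Z}/W_d\mathbb{Z}})$. Since $\Vert G\Vert_\infty \ll_{P,\Phi} 1$ and $G$ is supported on $[-O_\Phi(\eta),O_\Phi(\eta)]^m$, this gives a bound of $O_{P,\Phi,\eta}(\eta^{m-u}) + o(1) = O(1)$, with all allowed dependencies. Since $\eta$, $P$, $\Phi$, $C$, $\gamma$, and the functions $w_j$ are permitted in the implied constant, the corollary follows. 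The argument is essentially bookkeeping: the only ``step'' is noting that the existing infrastructure (Lemma~\ref{Lemma different forms of inequalities}, Corollary~\ref{Corollary switching functions}, Corollary~\ref{Corollary upper bound}) combines to give the claim with no new idea required.
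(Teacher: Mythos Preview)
Your proof is correct and follows essentially the same route as the paper: expand the convolution, apply Lemma~\ref{Lemma different forms of inequalities} to reach the $T_{F,G,N}^{L,L\mathbf{r}}$ form, and then invoke Corollary~\ref{Corollary upper bound}. The only difference is that you first pass through Corollary~\ref{Corollary switching functions} to reduce to $f_j = \Lambda_{\mathbb{Z}/W_j\mathbb{Z}}$, whereas the paper skips this and applies Corollary~\ref{Corollary upper bound} directly to the general $f_j$ (which that corollary already accommodates); this extra step is harmless but unnecessary.
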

\begin{proof}
Proceed as in the previous proof to get to expression (\ref{main term and error term}). Then apply the upper bound in Corollary \ref{Corollary upper bound}.
\end{proof}
\section{Variation in parameters}
\label{section structure of Q}
This section will be devoted to proving Lemma \ref{Lemma approximation of Q} below. This technical lemma shows that the number of solutions to certain inequalities, weighted by the local von Mangoldt function, is a quantity that behaves well when the underlying parameters are perturbed. The slightly esoteric notation, in which we introduce a dimension $d$ only to consider $\mathbf{x} \in \mathbb{R}^{d-1}$, is designed to correspond to the moment in Section \ref{section Cauchy Schwarz argument} in which this lemma will be applied.
\begin{Lemma}
\label{Lemma approximation of Q}
Let $d,l,N,s$ be natural numbers, with $d\geqslant 2$, and let $C,\eta$ be positive parameters. Let $(\varphi_1,\dots,\varphi_l) = \Phi:\mathbb{R}^{d-1} \longrightarrow \mathbb{R}^l$ and $(\psi_1,\dots,\psi_l) = \Psi: \mathbb{R}^{s+2} \longrightarrow \mathbb{R}^l$ be linear maps with algebraic coefficients. Let $P$ be a set of parameters, and let $b\in\mathcal{C}(C,P,\eta,\Phi,\Psi)$ be an arbitrary smooth function. Let $w^*:\mathbb{N} \longrightarrow \mathbb{R}$ be a function such that $w^*(n) \rightarrow \infty$ as $n\rightarrow \infty$ and $w^*(n) \leqslant w(n)$ for all $n$. Let $\mathbf{a} \in \mathbb{R}^l$ be a vector satisfying $\Vert \mathbf{a}\Vert_\infty \leqslant CN$. For $\mathbf{y} \in \mathbb{R}^{s+1}$, define 
\begin{equation}
\label{equation definition of Qy}
Q_{\mathbf{a},N}(\mathbf{y}) : = \frac{1}{N^{d-1}} \int\limits_{\mathbf{x} \in \mathbb{R}^{d-1}} \Big (\prod\limits_{j\leqslant l} (\Lambda_{\mathbb{Z}/W^*\mathbb{Z}} \ast \chi) ( \varphi_j(\mathbf{x}) +  \Psi(\mathbf{y}) + a_j ) \Big) b((\mathbf{x},\mathbf{y})/N) \, d\mathbf{x},
\end{equation} where $a_j$ is the $j^{th}$ coordinate of $\mathbf{a}$. Then, if $\eta$ is sufficiently small in terms of $\Phi$ and $\Psi$, there is a function $f_{1}:\mathbb{Z}^{l} \longrightarrow \mathbb{C}$, satisfying $\Vert f_1\Vert_\infty \ll (\log \log W^*)^{O(1)}$, such that  \[Q_{\mathbf{a},N}(\mathbf{y}) = b_{\mathbf{a},N}(\mathbf{y}/N) \sum\limits_{\Vert \mathbf{k}\Vert_{\infty}\leqslant (\log\log W^*)^{O(1)}} f_1(\mathbf{k})e\Big(\frac{\mathbf{k} \cdot (\Psi(\mathbf{y}) + \mathbf{a})}{W^*}\Big) +o_{C,P,\eta,\Phi,\Psi}(1).\] Here $b_{\mathbf{a},N} \in \mathcal{C}(C,P,\eta,\Phi,\Psi)$, though it may also depend on $\mathbf{a}$ and $N$. 
\end{Lemma}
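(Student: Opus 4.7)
The plan is to Fourier-expand the function $F_{W^*}(x) := (\Lambda_{\mathbb{Z}/W^*\mathbb{Z}} \ast \chi)(x)$ and substitute the expansion into the definition of $Q_{\mathbf{a},N}(\mathbf{y})$. First I would verify that $F_{W^*}$ is $W^*$-periodic on $\mathbb{R}$: translating $x \mapsto x+W^*$ and reindexing the $n$-summation via $n \mapsto n+W^*$ uses only the $W^*$-periodicity of $\Lambda_{\mathbb{Z}/W^*\mathbb{Z}}$. Grouping the $n$-summation into residue classes modulo $W^*$ and applying Poisson summation to each inner sum $\sum_{m \in \mathbb{Z}} \chi(x-a-mW^*)$ then yields
\[
F_{W^*}(x) = \frac{1}{\varphi(W^*)} \sum_{k \in \mathbb{Z}} \widehat{\chi}(k/W^*)\, c_{W^*}(k)\, e(kx/W^*),
\]
where $c_{W^*}(k) = \sum_{(a,W^*)=1,\, a \bmod W^*} e(-ka/W^*)$ is the Ramanujan sum.

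Substituting this identity into each of the $l$ factors in the integrand of $Q_{\mathbf{a},N}(\mathbf{y})$, interchanging the sums with the integral, and rescaling $\mathbf{x}=N\mathbf{u}$, one arrives at
\[
Q_{\mathbf{a},N}(\mathbf{y}) = \sum_{\mathbf{k} \in \mathbb{Z}^l} C(\mathbf{k})\, e\bigl(\mathbf{k}\cdot(\Psi(\mathbf{y})+\mathbf{a})/W^*\bigr)\, I(\mathbf{k},\mathbf{y},N),
\]
where $C(\mathbf{k}) := \varphi(W^*)^{-l} \prod_{j=1}^l \widehat{\chi}(k_j/W^*)\, c_{W^*}(k_j)$ and $I(\mathbf{k},\mathbf{y},N) := \int_{\mathbb{R}^{d-1}} b(\mathbf{u},\mathbf{y}/N)\, e(N\Phi^T\mathbf{k}\cdot\mathbf{u}/W^*)\, d\mathbf{u}$. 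This decouples the $\mathbf{k}$-dependence (in $C$ and the exponential) from the $\mathbf{y}$-dependence (in $I$ and the exponential).

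Next, isolate the resonant lattice $V := \ker\Phi^T \cap \mathbb{Z}^l$. For $\mathbf{k} \in V$ the integral $I(\mathbf{k},\mathbf{y},N)$ reduces to $\int b(\mathbf{u},\mathbf{y}/N)\,d\mathbf{u}$, a function of $\mathbf{y}/N$ alone that inherits smoothness and class membership from $b$; this serves as $b_{\mathbf{a},N}(\mathbf{y}/N)$. For $\mathbf{k} \notin V$, integration by parts in $\mathbf{u}$ gives $|I(\mathbf{k},\mathbf{y},N)| \ll_M (N\|\Phi^T\mathbf{k}\|_\infty/W^*)^{-M}$ for any $M$, and a Liouville-type lower bound for the vector $\Phi^T\mathbf{k}$ with algebraic coordinates yields $\|\Phi^T\mathbf{k}\|_\infty \gg_\Phi \|\mathbf{k}\|_\infty^{-O_\Phi(1)}$, so that combined with $N/W^* \to \infty$ the total contribution of all such $\mathbf{k}$ with $\|\mathbf{k}\|_\infty \leq (\log\log W^*)^{O(1)}$ is $o(1)$, even when summed trivially against $|C(\mathbf{k})|\leq 1$.

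Finally, define $f_1(\mathbf{k}) := C(\mathbf{k})$ for $\mathbf{k} \in V$ with $\|\mathbf{k}\|_\infty \leq (\log\log W^*)^{O(1)}$ and $f_1(\mathbf{k}) := 0$ otherwise. The bound $\|f_1\|_\infty \ll 1$ is immediate from $|\widehat{\chi}(\xi)| \leq \|\chi\|_1 \ll \eta$ and $|c_{W^*}(k)| \leq \varphi(W^*)$. The main technical obstacle is controlling the remaining lattice tail $\sum_{\mathbf{k} \in V,\, \|\mathbf{k}\|_\infty > (\log\log W^*)^{O(1)}} C(\mathbf{k}) e\bigl(\mathbf{k}\cdot(\Psi(\mathbf{y})+\mathbf{a})/W^*\bigr)$, and showing that it contributes only $o(1)$; here one exploits the number-theoretic identity $|c_{W^*}(k)|/\varphi(W^*) = \prod_{p \mid W^*,\, p \nmid k}(p-1)^{-1}$, which forces $|C(\mathbf{k})|$ to be exponentially small in $w^*$ unless each coordinate $k_j$ is either zero or is divisible by essentially all primes of $W^*$ (in which case $|k_j|$ is comparable to $W^*$ and the Fourier decay of $\widehat{\chi}$ at scale $W^*/\eta$ takes over). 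Balancing these two decay mechanisms against the lattice-point count in $V$ is the delicate part and dictates the precise form of the truncation scale.
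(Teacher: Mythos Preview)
Your approach is genuinely different from the paper's, and more direct in spirit. The paper does \emph{not} Fourier-expand $F_{W^*}$ at all: instead it expands the convolution as a sum over $\mathbf{n}\in\mathbb{Z}^l$, applies the ``alternative formulation'' Lemma~\ref{Lemma different forms of inequalities} to the $\mathbf{x}$-integral, runs the rational-dimension reduction of Lemma~\ref{Lemma generating a purely irrational map}, and then invokes Lemma~\ref{Lemma problem for local von Mangoldt} to obtain an asymptotic $Q_{\mathbf{a},N}(\mathbf{y})=\mathfrak{S}_{\mathbf{a},N}(\mathbf{y})\,I_{\mathbf{a},N}(\mathbf{y})+o(1)$. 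The singular integral $I$ is massaged into the smooth factor $b_{\mathbf{a},N}(\mathbf{y}/N)$, while the combination $\mathfrak{S}\cdot b_2(\Theta L(\widetilde{\mathbf{r}}-\Psi(\mathbf{y})-\mathbf{a}))$ is shown, via the preparatory Lemma~\ref{Lemma functions with small support}, to be a Lipschitz function of $\Psi(\mathbf{y})+\mathbf{a}$ on $\mathbb{R}^l/W^*\mathbb{Z}^l$ with Lipschitz constant $(\log\log W^*)^{O(1)}$. Only then is a Fourier approximation invoked, and it is the Fej\'er-type approximation of a Lipschitz function, not a raw truncation of a known series.

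Your non-resonant analysis (integration by parts plus a Liouville lower bound on $\|\Phi^T\mathbf{k}\|$) is fine. The gap is precisely where you flag it: the resonant tail
\[
\sum_{\substack{\mathbf{k}\in V\\ \|\mathbf{k}\|_\infty>(\log\log W^*)^{O(1)}}} C(\mathbf{k})\,e\!\bigl(\mathbf{k}\cdot(\Psi(\mathbf{y})+\mathbf{a})/W^*\bigr).
\]
Your proposed mechanism---Ramanujan-sum smallness unless each nonzero $k_j$ is divisible by ``essentially all'' primes of $W^*$---does not control the $\ell^1$ norm of this tail. Concretely, for each divisor $d\mid W^*$ the vectors $\mathbf{k}\in V$ whose nonzero coordinates are multiples of $W^*/d$ give $|c_{W^*}(k_j)|/\varphi(W^*)=1/\varphi(d)$, and there are roughly $(d/\eta)^{\operatorname{rank}V}$ such $\mathbf{k}$ in the range where $\widehat{\chi}$ has not yet decayed. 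Summing over $d\mid W^*$ produces a contribution of order $\tau(W^*)^{O(1)}=2^{O(\pi(w^*))}$, far larger than any power of $\log\log W^*$. So the raw Fourier truncation at level $(\log\log W^*)^{O(1)}$ cannot be justified by absolute-value bounds alone; one must exploit cancellation, which amounts to knowing that the resonant function $H(\mathbf{z})=\sum_{\mathbf{k}\in V}C(\mathbf{k})e(\mathbf{k}\cdot\mathbf{z}/W^*)$ is Lipschitz with controlled constant and then using a Fej\'er-type kernel. Establishing that Lipschitz bound is exactly what the paper's $\mathfrak{S}\cdot I$ decomposition and Lemma~\ref{Lemma functions with small support} accomplish, and it does not follow from the Fourier-side data you have assembled.
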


None of the methods required to prove this lemma will be particularly deep, but the technical manoeuvres will be a little intricate. In particular, we will need to apply the approximation in Lemma \ref{Lemma different forms of inequalities} multiple times within the same argument. \\

The proof of Lemma \ref{Lemma approximation of Q} will require the preliminary result below, namely Lemma \ref{Lemma functions with small support}. To state this lemma, we define a metric on $\mathbb{R}^d/K\mathbb{Z}^d$ by the formula \[ \Vert \mathbf{x}\Vert _{\mathbb{R}^d/K\mathbb{Z}^d} := \min\limits_{\mathbf{n} \in K\mathbb{Z}^d } \Vert \mathbf{x} - \mathbf{n}\Vert_\infty.\] Lipschitz constants of functions $\mathfrak{F}:\mathbb{R}^d/K\mathbb{Z}^d \longrightarrow \mathbb{R}$ will be considered with respect to this metric. 

\begin{Lemma}
\label{Lemma functions with small support}
Let $d,m,K$ be natural numbers, and let $\eta,\sigma$ be positive parameters. Let $S:\mathbb{R}^d \longrightarrow \mathbb{R}^m$ be a surjective linear map with integer coefficients, and let $G:\mathbb{R}^m \longrightarrow [0,1]$ be a Lipschitz function supported on $[-\eta,\eta]^m$, with Lipschitz constant at most $\sigma^{-1}$. Let $\mathfrak{F}:S\mathbb{Z}^d \longrightarrow \mathbb{R}$ be any function for which \[\mathfrak{F}(S\mathbf{x}) = \mathfrak{F}(S\mathbf{x} + S\mathbf{n})\] for all $\mathbf{x} \in \mathbb{Z}^d$ and all $\mathbf{n} \in K\mathbb{Z}^d$.

For each $\mathbf{a} \in \mathbb{R}^d$, define $\widetilde{\mathbf{a}} \in \mathbb{Z}^d$ to be some vector with integer coordinates for which \[\Vert S(\widetilde{\mathbf{a}} - \mathbf{a})\Vert_\infty = \min \limits_{\mathbf{n} \in \mathbb{Z}^d} \Vert S(\mathbf{n} - \mathbf{a})\Vert_\infty.\] Then, provided $\eta$ is small enough in terms of $S$, the function 
\begin{equation}
\label{function that will be lipschitz}
\mathbf{a} \mapsto \mathfrak{F}(S\widetilde{\mathbf{a}})G(S(\widetilde{\mathbf{a}} - \mathbf{a}))
\end{equation}
\begin{itemize}
\item depends only on the value of $\mathbf{a}$ modulo $K\mathbb{Z}^d$;
\item is Lipschitz when viewed as a function on $\mathbb{R}^d/K\mathbb{Z}^d$, with Lipschitz constant at most \[O_S(\Vert \mathfrak{F}\Vert_\infty(\eta^{-1} + \sigma^{-1})).\]
\end{itemize}
\end{Lemma}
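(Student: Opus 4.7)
The plan is to split the task into three parts: checking that the function in (\ref{function that will be lipschitz}) is unambiguously defined, verifying that it is periodic modulo $K\mathbb{Z}^d$, and finally bounding its Lipschitz constant. The first step is the crucial preparatory observation. Since $S$ is surjective with integer coefficients, $S\mathbb{Z}^d$ is a finite-index sublattice of $\mathbb{Z}^m$, so it has a positive minimum non-zero $\ell^\infty$ norm $\delta_S>0$ depending only on $S$. I would take ``$\eta$ small enough in terms of $S$'' to mean $\eta<\delta_S/2$. Under this assumption the quantity (\ref{function that will be lipschitz}) is insensitive to the choice of minimiser $\widetilde{\mathbf{a}}$: either $\Vert S(\widetilde{\mathbf{a}}-\mathbf{a})\Vert_\infty>\eta$, in which case $G(S(\widetilde{\mathbf{a}}-\mathbf{a}))=0$, or else $\Vert S(\widetilde{\mathbf{a}}-\mathbf{a})\Vert_\infty\leqslant\eta$, in which case any two minimisers $\widetilde{\mathbf{a}}^{(1)},\widetilde{\mathbf{a}}^{(2)}\in\mathbb{Z}^d$ would satisfy $\Vert S(\widetilde{\mathbf{a}}^{(1)}-\widetilde{\mathbf{a}}^{(2)})\Vert_\infty\leqslant 2\eta<\delta_S$ by the triangle inequality, forcing $S\widetilde{\mathbf{a}}^{(1)}=S\widetilde{\mathbf{a}}^{(2)}$. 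As $\mathfrak{F}$ is defined on $S\mathbb{Z}^d$ and $G(S(\widetilde{\mathbf{a}}-\mathbf{a}))=G(S\widetilde{\mathbf{a}}-S\mathbf{a})$ only involves $S\widetilde{\mathbf{a}}$, the value in (\ref{function that will be lipschitz}) is unambiguous.

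Periodicity then follows almost immediately: if $\mathbf{a}'=\mathbf{a}+K\mathbf{m}$ for $\mathbf{m}\in\mathbb{Z}^d$, then $\widetilde{\mathbf{a}}+K\mathbf{m}\in\mathbb{Z}^d$ is a minimiser for $\mathbf{a}'$ because $S((\widetilde{\mathbf{a}}+K\mathbf{m})-\mathbf{a}')=S(\widetilde{\mathbf{a}}-\mathbf{a})$. By the just-established well-definedness we may use this choice, and then $G$ evaluates identically while the hypothesis $\mathfrak{F}(S\mathbf{x}+SK\mathbf{m})=\mathfrak{F}(S\mathbf{x})$ yields $\mathfrak{F}(S\widetilde{\mathbf{a}}')=\mathfrak{F}(S\widetilde{\mathbf{a}})$. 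Hence (\ref{function that will be lipschitz}) descends to a function on $\mathbb{R}^d/K\mathbb{Z}^d$.

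For the Lipschitz bound, after choosing representatives so that $\Vert\mathbf{a}_1-\mathbf{a}_2\Vert_\infty=\Vert\mathbf{a}_1-\mathbf{a}_2\Vert_{\mathbb{R}^d/K\mathbb{Z}^d}$, I would split into two regimes. Let $c_S$ be a small positive constant depending on $S$ to be fixed. If $\Vert\mathbf{a}_1-\mathbf{a}_2\Vert_\infty\geqslant c_S\eta$, then using $0\leqslant G\leqslant 1$ the trivial bound $2\Vert\mathfrak{F}\Vert_\infty$ on the difference yields a Lipschitz contribution of $O_S(\Vert\mathfrak{F}\Vert_\infty\eta^{-1})$. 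Otherwise $\Vert\mathbf{a}_1-\mathbf{a}_2\Vert_\infty<c_S\eta$, and we may assume without loss of generality that $G(S(\widetilde{\mathbf{a}}_1-\mathbf{a}_1))\neq 0$, so $\Vert S(\widetilde{\mathbf{a}}_1-\mathbf{a}_1)\Vert_\infty\leqslant\eta$. The triangle inequality and boundedness of $S$ as a linear map then give $\Vert S(\widetilde{\mathbf{a}}_1-\mathbf{a}_2)\Vert_\infty\leqslant\eta+O_S(c_S\eta)$, which is less than $\delta_S/2$ provided $c_S$ is chosen suitably small in terms of $S$. Hence $\widetilde{\mathbf{a}}_2$ also satisfies $\Vert S(\widetilde{\mathbf{a}}_2-\mathbf{a}_2)\Vert_\infty<\delta_S/2$ (by minimality), and the same uniqueness argument as in the first paragraph forces $S\widetilde{\mathbf{a}}_1=S\widetilde{\mathbf{a}}_2$. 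Consequently $\mathfrak{F}(S\widetilde{\mathbf{a}}_1)=\mathfrak{F}(S\widetilde{\mathbf{a}}_2)$, and the difference reduces to $|\mathfrak{F}(S\widetilde{\mathbf{a}}_1)|\,|G(S(\widetilde{\mathbf{a}}_1-\mathbf{a}_1))-G(S(\widetilde{\mathbf{a}}_2-\mathbf{a}_2))|$; applying the Lipschitz hypothesis on $G$ together with $S\widetilde{\mathbf{a}}_1=S\widetilde{\mathbf{a}}_2$ bounds this by $\sigma^{-1}\Vert\mathfrak{F}\Vert_\infty\Vert S(\mathbf{a}_2-\mathbf{a}_1)\Vert_\infty\ll_S\sigma^{-1}\Vert\mathfrak{F}\Vert_\infty\Vert\mathbf{a}_1-\mathbf{a}_2\Vert_\infty$. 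Combining the two regimes gives a Lipschitz constant of $O_S(\Vert\mathfrak{F}\Vert_\infty(\eta^{-1}+\sigma^{-1}))$, as required.

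The main subtlety I anticipate is the book-keeping in the second regime, specifically the step that promotes ``$G$ is non-zero at $\mathbf{a}_1$'' to ``$\Vert S(\widetilde{\mathbf{a}}_2-\mathbf{a}_2)\Vert_\infty$ is also small''. The argument rests entirely on elementary lattice geometry, namely the positivity of $\delta_S$, and once this is in place the remainder is routine.
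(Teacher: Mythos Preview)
Your argument is correct and follows essentially the same approach as the paper: both proofs hinge on the observation that $S\mathbb{Z}^d$ has a positive minimum nonzero $\ell^\infty$-norm $\delta_S$, and both establish the Lipschitz bound by splitting according to whether $\Vert\mathbf{a}_1-\mathbf{a}_2\Vert$ is small or large, using in the small regime that $S\widetilde{\mathbf{a}}_1=S\widetilde{\mathbf{a}}_2$ whenever at least one $G$-value is nonzero. The only cosmetic difference is organisational: the paper splits the small regime into the sub-cases $S\widetilde{\mathbf{a}}_0=S\widetilde{\mathbf{a}}_1$ versus $S\widetilde{\mathbf{a}}_0\neq S\widetilde{\mathbf{a}}_1$, whereas you handle the latter via your ``without loss of generality'' reduction. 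One trivial arithmetic point: with $\eta<\delta_S/2$ you cannot guarantee $\eta+O_S(c_S\eta)<\delta_S/2$ by shrinking $c_S$ alone (since $\eta$ may be arbitrarily close to $\delta_S/2$); just take $\eta<\delta_S/4$ instead, which the hypothesis ``$\eta$ small enough in terms of $S$'' certainly permits.
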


\begin{Remark}
\emph{The expression $\min \limits_{\mathbf{n} \in \mathbb{Z}^d} \Vert S(\mathbf{n} - \mathbf{a})\Vert_\infty$ is well-defined, since $S\mathbb{Z}^d$ is a lattice.}
\end{Remark}

\begin{proof}
To prove the first part of the lemma, let $\mathbf{a} \in \mathbb{R}^d$ and first suppose that there is a unique vector $\mathbf{x} \in S \mathbb{Z}^d$ for which \[ \Vert \mathbf{x} - S\mathbf{a}\Vert_\infty = \min \limits_{\mathbf{n} \in \mathbb{Z}^d} \Vert S\mathbf{n} - S\mathbf{a}\Vert_\infty.\] In this case, by the uniqueness of $\mathbf{x}$, we have $\mathbf{x} = S \widetilde{\mathbf{a}}$. By translation, we know that \[S(\widetilde{\mathbf{a} + \mathbf{n}}) - S \mathbf{n} = \mathbf{x}\] for all $\mathbf{n} \in \mathbb{Z}^d$, and hence \[ S(\widetilde{\mathbf{a} + \mathbf{n}}) - S \mathbf{n} = S\widetilde{\mathbf{a}} \] for all $\mathbf{n} \in \mathbb{Z}^d$. Hence \[ G(S(\widetilde{\mathbf{a}} - \mathbf{a})) = G(S(\widetilde{\mathbf{a} + \mathbf{n}} - (\mathbf{a} + \mathbf{n}))),\] and so the function \[ \mathbf{a} \mapsto G(S(\widetilde{\mathbf{a}} - \mathbf{a}))\] depends only on the value of $\mathbf{a}$ modulo $\mathbb{Z}^d$. Furthermore, if $\mathbf{n} \in K\mathbb{Z}^d$, \[ \mathfrak{F}(S(\widetilde{\mathbf{a} + \mathbf{n}})) = \mathfrak{F}(S\widetilde{\mathbf{a}} + S\mathbf{n}) =  \mathfrak{F}(S\widetilde{\mathbf{a}}),\] by the invariance properties of $\mathfrak{F}$. Hence the function (\ref{function that will be lipschitz}) only depends on the value of $\mathbf{a}$ modulo $K\mathbb{Z}^d$. 

Now suppose that there were two  distinct vectors $\mathbf{x_1}$, $\mathbf{x_2} \in S \mathbb{Z}^d$ for which \[ \Vert \mathbf{x_i} - S\mathbf{a}\Vert_\infty = \min \limits_{\mathbf{n} \in \mathbb{Z}^d} \Vert S\mathbf{n} - S\mathbf{a}\Vert_\infty\] for $i=1,2$. Then in fact $G(S(\widetilde{\mathbf{a}} - \mathbf{a})) = 0$. Indeed, if this were not the case then we would have $\Vert \mathbf{x_1} - \mathbf{x_2}\Vert_\infty \leqslant O(\eta)$, which is impossible if $\eta$ is small enough, since $\mathbf{x_1}$ and $\mathbf{x_2}$ are two distinct elements of $\mathbb{Z}^m$. By translation, we may also conclude that $G(S(\widetilde{\mathbf{a} + \mathbf{n}} - (\mathbf{a} + \mathbf{n}))) = 0$ for all $\mathbf{n} \in \mathbb{Z}^d$. So again, the function (\ref{function that will be lipschitz}) depends only on the value of $\mathbf{a}$ modulo $K\mathbb{Z}^d$.\\

Regarding the second part of the lemma, the idea of the proof is similar to the above. Indeed, the only aspect of the function (\ref{function that will be lipschitz}) that could lead to a large Lipschitz constant is of course the term $S\widetilde{\mathbf{a}}$, which could, one fears, jump sharply for small changes in $\mathbf{a}$. However, when such jumps occur, the function $G(S(\widetilde{\mathbf{a}} - \mathbf{a}))$ is always equal to zero.

Let us proceed with the full proof. Indeed, let $\mathbf{a_0}$,$\mathbf{a_1} \in \mathbb{R}^d$ and suppose first that \[\Vert \mathbf{a_0} - \mathbf{a_1}\Vert_{\mathbb{R}^d/K\mathbb{Z}^d} \leqslant \eta.\] By choosing suitable coset representatives, without loss of generality we may assume that \[\Vert \mathbf{a_0} - \mathbf{a_1}\Vert_{\mathbb{R}^d/K\mathbb{Z}^d} = \Vert \mathbf{a_0} - \mathbf{a_1}\Vert_{\infty}.\] 

Then either $S\widetilde{\mathbf{a_0}} =S\widetilde{\mathbf{a_1}}$ or $S\widetilde{\mathbf{a_0}} \neq S\widetilde{\mathbf{a_1}}$. If $S\widetilde{\mathbf{a_0}} =S\widetilde{\mathbf{a_1}}$ then 
\begin{align}
\vert G(S(\widetilde{\mathbf{a_0}} - \mathbf{a_0})) - G(S(\widetilde{\mathbf{a_1}} - \mathbf{a_1}))\vert &\leqslant \sigma^{-1} \Vert S \mathbf{a_0} - S\mathbf{a_1}\Vert_\infty\nonumber \\
& \ll_S \sigma^{-1} \Vert \mathbf{a_0} - \mathbf{a_1}\Vert_\infty \nonumber \\
& \ll_S \sigma^{-1} \Vert \mathbf{a_0} - \mathbf{a_1}\Vert_{\mathbb{R}^d /K\mathbb{Z}^d}.
\end{align}
\noindent Therefore 
\begin{align*}
\vert \mathfrak{F}(S\widetilde{\mathbf{a_0}}) G(S(\widetilde{\mathbf{a_0}} - \mathbf{a_0})) - \mathfrak{F}(S\widetilde{\mathbf{a_1}})G(S(\widetilde{\mathbf{a_1}} - \mathbf{a_1}))\vert &= \vert \mathfrak{F}(S\widetilde{\mathbf{a_0}})\vert \vert G(S(\widetilde{\mathbf{a_0}} - \mathbf{a_0})) - G(S(\widetilde{\mathbf{a_1}} - \mathbf{a_1}))\vert \nonumber \\
&\ll_S \Vert \mathfrak{F}\Vert_\infty \sigma^{-1}\Vert \mathbf{a_0} - \mathbf{a_1}\Vert_{\mathbb{R}^d /K\mathbb{Z}^d}.
\end{align*} That resolves the lemma in this case. 

 If on the other hand $S\widetilde{\mathbf{a_0}} \neq S \widetilde{\mathbf{a_1}}$, we may conclude that both 
 \begin{equation}
\label{claim equations}
 \Vert S \widetilde{\mathbf{a_0}} - S\mathbf{a_0}\Vert_\infty \geqslant 10\eta
\end{equation} and 
\begin{equation}
\label{claim equations 2}
 \Vert S \widetilde{\mathbf{a_1}} - S\mathbf{a_1}\Vert_\infty \geqslant 10\eta.
\end{equation} Indeed, if $\Vert S \widetilde{\mathbf{a_0}} - S\mathbf{a_0}\Vert_\infty \leqslant 10\eta$, say, then \[\Vert S\widetilde{\mathbf{a_0}}-S\mathbf{a_1}\Vert_\infty \leqslant 10\eta + \Vert S\mathbf{a_0} - S\mathbf{a_1} \Vert_\infty \ll_S \eta.\] If $\eta$ is small enough, this implies that $S\widetilde{\mathbf{a_0}}$ must be the unique element of $S\mathbb{Z}^d$ for which \[ \Vert  S\widetilde{\mathbf{a_0}} - S\mathbf{a_1}\Vert_\infty = \min \limits_{\mathbf{n} \in \mathbb{Z}^d} \Vert S\mathbf{n} - S\mathbf{a_1}\Vert_\infty,\]and hence that $S\widetilde{\mathbf{a_0}} = S\widetilde{\mathbf{a_1}}$, contradicting the assumption. 

If $\eta$ is small enough, expressions (\ref{claim equations}) and (\ref{claim equations 2}) imply that 
 \begin{align}
 \label{lipschitz align two}
G(S\widetilde{\mathbf{a_0}} - S\mathbf{a_0}) =G(S\widetilde{\mathbf{a_1}} - S\mathbf{a_1}) = 0,
 \end{align}
 \noindent and so \[\vert \mathfrak{F}(S\widetilde{\mathbf{a_0}}) G(S(\widetilde{\mathbf{a_0}} - \mathbf{a_0})) - \mathfrak{F}(S\widetilde{\mathbf{a_1}})G(S(\widetilde{\mathbf{a_1}} - \mathbf{a_1}))\vert = 0.\] That resolves the lemma in this case.\\
 
The only remaining case to consider is when \[ \Vert \mathbf{a_0} - \mathbf{a_1}\Vert_{\mathbb{R}^d/ K\mathbb{Z}^d} \geqslant \eta.\] In this case we bound the Lipschitz constant very crudely, as $O( \eta^{-1}\Vert \mathfrak{F}\Vert_\infty\Vert G\Vert_\infty)$, which is $O(\Vert \mathfrak{F}\Vert_\infty \eta^{-1})$, since $\Vert G\Vert_\infty \leqslant 1$. This settles the lemma.
\end{proof}
We are now ready to prove Lemma \ref{Lemma approximation of Q}.
\begin{proof}[Proof of Lemma \ref{Lemma approximation of Q}]
For this proof we make the following conventions. Any implied constant may depend on $C$, $\Phi$ and $\Psi$, and we will use the notation $b$, $b_1$, $b_2$ etc. to denote a function in $\mathcal{C}(C,P,\eta,\Phi, \Psi)$, that may change from line to line. 

The first part of the proof will involve establishing an asymptotic formula for $Q_{\mathbf{a},N}(\mathbf{y})$, namely the expression $Q_{\mathbf{a},N}(\mathbf{y}) = \mathfrak{S}_{\mathbf{a},N}(\mathbf{y}) I_{\mathbf{a},N}(\mathbf{y}) + o_{P,\eta}(1)$ in (\ref{Q asymp formula}) below. Indeed, expanding out the definition of $\Lambda_{\mathbb{Z}/W^*\mathbb{Z}}\ast \chi$ (see Definition \ref{Definition convolution}) we have
\begin{align}
\label{expanding out Q z h} Q_{\mathbf{a},N}(\mathbf{y}) = \frac{1}{N^{d-1}}\sum\limits_{\mathbf{n} \in \mathbb{Z}^l} \Big(\prod\limits_{j\leqslant l} \Lambda_{\mathbb{Z}/W^*\mathbb{Z}}(n_j)\Big)\int\limits_{\mathbf{x} \in \mathbb{R}^{d-1}}  \boldsymbol{\chi}(\mathbf{n} - \Phi(\mathbf{x}) - \Psi(\mathbf{y}) - \mathbf{a}) 
b((\mathbf{x},\mathbf{y})/N)\, d\mathbf{x},
\end{align}
\noindent where $\bc: \mathbb{R}^{l} \longrightarrow [0,1]$ is defined by $\bc(\mathbf{z}): = \prod_{j\leqslant l} \chi(z_j)$. Let $k := \dim \im \Phi$, and note that $k\leqslant d-1$. 

The inner integral of (\ref{expanding out Q z h}) may be analysed using Lemma \ref{Lemma different forms of inequalities}. The following table indicates which objects in (\ref{expanding out Q z h}) play which role in Lemma \ref{Lemma different forms of inequalities}.
\begin{center}
\begin{tabular}{c|c}
Notation of Lemma \ref{Lemma different forms of inequalities} & Objects in (\ref{expanding out Q z h})\\
\hline
$\mathbf{z}$ & $\mathbf{n} - \Psi(\mathbf{y}) - \mathbf{a}$ \\
$\mathbf{v}$ & $\mathbf{y}$\\
$\Phi$ & $\Phi$\\
$H$ & $b$\\
$I$ & $\bc$
\end{tabular}
\end{center}  So, applying Lemma \ref{Lemma different forms of inequalities}, one sees that (\ref{expanding out Q z h}) is equal to
\begin{align}
\label{equation writing Q as an inequality}
Q_{\mathbf{a},N}(\mathbf{y}) = \frac{1}{N^{d-1}}\sum\limits_{\mathbf{n} \in \mathbb{Z}^l} \Big(\prod\limits_{j\leqslant l} \Lambda_{\mathbb{Z}/W^*\mathbb{Z}}(n_j)\Big)b_1((\mathbf{y},\mathbf{n})/N) b_2(L(\mathbf{n} - \Psi(\mathbf{y}) - \mathbf{a})) + E.
\end{align} Here, $L:\mathbb{R}^{l} \longrightarrow \mathbb{R}^{l-k}$ is a surjective linear map with algebraic coefficients, that depends only on $\Phi$, $\Rad(b_2) = O(\eta)$, and the error term $E$ may be bounded above by 
\begin{equation}
\label{easy error term}
\ll_P \Big\vert \frac{1}{N^{d}} \sum^* \Big(\prod\limits_{j\leqslant l} \Lambda_{\mathbb{Z}/W^*\mathbb{Z}}(n_j)\Big) \Big\vert,
\end{equation} 
where the summation $\sum^*$ denotes summation over the set \[\{\mathbf{n} \in \mathbb{Z}^l: \Vert \mathbf{n} \Vert_\infty \ll_P N, \, \Vert L\mathbf{n} - L(\Psi(\mathbf{y})) - L\mathbf{a}\Vert_\infty \ll_P \eta\}.\] The error term $E$ is easy to bound. Indeed, by Lemma \ref{Lemma trivial upper bound}, expression (\ref{easy error term}) may be bounded by $O_P(N^{k-d} (\log \log W^*)^d)$. Since $k\leqslant d-1$, this is an $o_P(1)$ error. \\

It remains to analyse the main term in (\ref{equation writing Q as an inequality}), which we will do with the help of Lemma \ref{Lemma generating a purely irrational map}. The reader is invited to consult Section \ref{section linear algebra and dimension reduction} for the statement of this result, and for the definitions of rational map, rational dimension, etcetera. 

Now, let $u$ be the rational dimension of $L$, and let $\Theta: \mathbb{R}^{l - k} \longrightarrow \mathbb{R}^u$ be a rational map for $L$ with algebraic coefficients. Then, there exists an injective linear map $(\xi_1,\dots,\xi_l) = \Xi:\mathbb{R}^{l - u} \longrightarrow \mathbb{R}^l$ with integer coefficients, satisfying $\Xi \mathbb{Z}^{l-u} = \mathbb{Z}^l \cap \ker \Theta L$, and a vector $\widetilde{\mathbf{r}}(\mathbf{a},\mathbf{y})\in \mathbb{Z}^{l}$, such that the main term of (\ref{equation writing Q as an inequality}) is equal to
\begin{align}
\label{in structure of Q rational manip}
\frac{1}{N^{d-1}} \sum\limits_{\mathbf{n} \in \mathbb{Z}^{l - u}} &\Big(\prod\limits_{j=1}^{l}  \Lambda_{\mathbb{Z}/W^*\mathbb{Z}}(\xi_j(\mathbf{n}) + \widetilde{r}(\mathbf{a},\mathbf{y})_j)\Big)\nonumber \\
 &b_1((\mathbf{y},\Xi(\mathbf{n}) + \widetilde{\mathbf{r}}(\mathbf{a},\mathbf{y}))/N) b_2(L(\Xi(\mathbf{n}) + \widetilde{\mathbf{r}}(\mathbf{a},\mathbf{y}) -\Psi(\mathbf{y}) - \mathbf{a})),
\end{align} 
\noindent where $\widetilde{r}(\mathbf{a},\mathbf{y})_j$ is the $j^{th}$ coordinate of $\mathbf{\widetilde{r}}(\mathbf{a},\mathbf{y})$. Note how we've appealed to part (11) of Lemma \ref{Lemma generating a purely irrational map} for the particular form of the argument of $b_2$. Note also how, since $\eta$ is sufficiently small, we have been able to apply part (10) of the lemma to establish that $\widetilde{R}$ consists of a single element $\mathbf{\widetilde{r}}(\mathbf{a},\mathbf{y})$.

 Moreover, from part (10) of the lemma again, we have that $\widetilde{\mathbf{r}}(\mathbf{a},\mathbf{y})$ is an element of $\mathbb{Z}^{l}$ for which \[\Vert \Theta L(\widetilde{\mathbf{r}}(\mathbf{a},\mathbf{y}) - \Psi(\mathbf{y}) - \mathbf{a})\Vert_{\infty} = \min \limits_{\mathbf{m} \in \mathbb{Z}^l} \Vert \Theta L(\mathbf{m} - \Psi(\mathbf{y}) - \mathbf{a})\Vert_\infty.\] From part (9) of Lemma \ref{Lemma generating a purely irrational map}, letting $\{ \mathbf{e_1},\dots,\mathbf{e_{l-u}} \}$ be the standard basis vectors of $\mathbb{R}^{l-u}$, we have a set \[ \mathcal{B} = \{\mathbf{x_i}: i\leqslant u\} \cup \{ \Xi(\mathbf{e_j}): j\leqslant l - u \}\] which is a lattice basis for $\mathbb{Z}^{l}$ and for which $\{ \Theta L\mathbf{x_i}:i\leqslant u\}$ is a lattice basis for $\Theta L \mathbb{Z}^l$. Letting $U = \spn( \{\mathbf{x_i}: i\leqslant u\})$, we have that $\widetilde{\mathbf{r}}(\mathbf{a},\mathbf{y})\in U$.

By applying the first part of Lemma \ref{Lemma problem for local von Mangoldt} to expression (\ref{in structure of Q rational manip}), one immediately derives
\begin{equation}
\label{Q asymp formula}
Q_{\mathbf{a},N}(\mathbf{y}) =  \mathfrak{S}_{\mathbf{a},N}(\mathbf{y}) I_{\mathbf{a},N}(\mathbf{y}) + o_{P,\eta}(1),
\end{equation} where 
\begin{equation}
\label{singular series which depends on z and h}\mathfrak{S}_{\mathbf{a},N}(\mathbf{y}):=\frac{1}{(W^*)^{l-u}} \sum\limits_{\mathbf{m} \in [W^*]^{l -u}} \prod\limits_{j=1}^{l} \Lambda_{\mathbb{Z}/W^*\mathbb{Z}}(\xi_j (\mathbf{m}) +\widetilde{r}(\mathbf{a},\mathbf{y})_j)
\end{equation} and $I_{\mathbf{a},N}(\mathbf{y})$ is equal to
\begin{equation}
\label{singular integral which depends on z and h}
\frac{1}{N^{d-1}}\int\limits_{\mathbf{x} \in \mathbb{R}^{l -u}}b_1((\mathbf{y},\Xi(\mathbf{x}) + \widetilde{\mathbf{r}}(\mathbf{a},\mathbf{y}))/N)b_2(L \Xi(\mathbf{x}) + L\widetilde{\mathbf{r}}(\mathbf{a},\mathbf{y}) - L(\Psi(\mathbf{y})) -L\mathbf{a})\, d\mathbf{x}. 
\end{equation}
\noindent Note that \begin{equation}
\label{kernel inequalities}\dim \ker L\Xi \leqslant \dim \ker L = k \leqslant d-1.
\end{equation}

The remainder of the proof of Lemma \ref{Lemma approximation of Q} will consist of analysing expressions (\ref{singular series which depends on z and h}) and (\ref{singular integral which depends on z and h}) for $\mathfrak{S}_{\mathbf{a},N}(\mathbf{y})$ and $I_{\mathbf{a},N}(\mathbf{y})$. \\

We begin with $I_{\mathbf{a},N}(\mathbf{y})$, aiming for expression (\ref{approximation of singular integral}). Letting $V = \im \Xi$, we have that $\mathbb{R}^l = U \oplus V$. For any vector $\mathbf{v} \in \mathbb{R}^{l}$ let $\mathbf{v}|_U$ and $\mathbf{v}|_V$ be the components in $U$ and $V$ respectively. Then we have that 
\begin{equation}
\label{equation U approx}
\Vert \widetilde{\mathbf{r}}(\mathbf{a},\mathbf{y}) - \Psi(\mathbf{y})|_U - \mathbf{a}|_U\Vert_\infty = O(1),
\end{equation} since \[\Vert\Theta L(\widetilde{\mathbf{r}}(\mathbf{a},\mathbf{y}) - \Psi(\mathbf{y}) - \mathbf{a})\Vert_\infty = O(1).\] By the bound on the Lipschitz constant of $b_1$, we may replace $b_1((\mathbf{y},\Xi(\mathbf{x}) + \widetilde{\mathbf{r}}(\mathbf{a},\mathbf{y}))/N)$ with $b_1((\mathbf{y},\Xi(\mathbf{x}) + \Psi(\mathbf{y})|_U +\mathbf{a}|_U)/N)$ in (\ref{singular integral which depends on z and h}), up to an error of $O_{P,\eta}(N^{-1})$. Also, note that \[ \frac{1}{N^{d-1}}\int\limits_{\substack{\mathbf{x} \in \mathbb{R}^{l -u}\\ \Vert\mathbf{x}\Vert_\infty \ll N}}b_2(L \Xi(\mathbf{x}) + L\widetilde{\mathbf{r}}(\mathbf{a},\mathbf{y}) - L(\Psi(\mathbf{y})) -L\mathbf{a})\, d\mathbf{x} = O_{P,\eta}(N^{\dim \ker L\Xi -d+1}),\] by Lemma \ref{Lemma general upper bound}. This is $O_{P,\eta}(1)$, since $\dim\ker L\Xi\leqslant d-1$ by (\ref{kernel inequalities}). Therefore we may replace (\ref{singular integral which depends on z and h}) by the expression \begin{equation}
\label{altered version of I}
\frac{1}{N^{d-1}}\int\limits_{\mathbf{x} \in \mathbb{R}^{l-u}}b_1((\mathbf{y},\Xi(\mathbf{x}) + \Psi(\mathbf{y})|_U + \mathbf{a}|_U)/N)b_2(L \Xi(\mathbf{x}) + L\widetilde{\mathbf{r}}(\mathbf{a},\mathbf{y}) - L(\Psi(\mathbf{y})) -L\mathbf{a})\, d\mathbf{x},
\end{equation}
\noindent plus an error of size $o_{P,\eta}(1)$. \\

The expression (\ref{altered version of I}) is in a form that is amenable to Lemma \ref{Lemma different forms of inequalities}. The following table indicates which objects from our present discussion play which role in the notation of Lemma \ref{Lemma different forms of inequalities}. 

\begin{center}
\begin{tabular}{ c|c } 
Notation of Lemma \ref{Lemma different forms of inequalities}  & Objects related to (\ref{altered version of I}) \\
\hline
 $\mathbf{z}$ &  $L\widetilde{\mathbf{r}}(\mathbf{a},\mathbf{y}) - L(\Psi(\mathbf{y})) - L\mathbf{a}$ \\
 $\mathbf{v}$ & $\mathbf{y}$ \\
 $\Phi$ & $-L\Xi$\\
 $L$ & $\Theta$\\
 
  $ (\mathbf{v},\mathbf{x}) \mapsto H(\mathbf{v}, \mathbf{x})$ & $(\mathbf{y},\mathbf{x}) \mapsto b_1(\mathbf{y}, \Xi(\mathbf{x}) + \frac{\Psi(\mathbf{y})|_U}{N} + \frac{\mathbf{a}|_U}{N})$\\
  $I$ & $b_2$ 
\end{tabular}
\end{center}
\noindent This is a valid application of Lemma \ref{Lemma different forms of inequalities}, since $\ker \Theta = \im L\Xi$ and the final two functions in the right-hand column are compactly supported smooth functions of their arguments (as $\Xi$ is injective, $\Xi(\mathbf{x}) \in V$, and $V$ is an algebraic complement to $U$). Recalling that $\Theta$ has algebraic coefficients, by the third part of Lemma \ref{Lemma different forms of inequalities} we may therefore replace (\ref{altered version of I}) by an expression of the form 
\begin{equation}
\label{getting closer}
b_1((\mathbf{y}, L(\widetilde{\mathbf{r}}(\mathbf{a},\mathbf{y}) - \Psi(\mathbf{y}) - \mathbf{a}))/N)b_2(\Theta L(\widetilde{\mathbf{r}}(\mathbf{a},\mathbf{y}) -\Psi(\mathbf{y}) - \mathbf{a})) + o_{P,\eta}(1)  
\end{equation}
\noindent where $\Rad(b_2) = O(\eta)$. 

The argument of the function $b_1$ above doesn't depend smoothly on $\mathbf{y}$, but this may be easily rectified. Indeed, by (\ref{equation U approx}) and the fact that $b_1$ is Lipschitz and $b_2$ is bounded, (\ref{getting closer}) is equal to \[ b_1((\mathbf{y}, -L(\Psi(\mathbf{y})|_V + \mathbf{a}|_V))/N)b_2(\Theta L(\widetilde{\mathbf{r}}(\mathbf{a},\mathbf{y}) -\Psi(\mathbf{y}) - \mathbf{a})) + o_{P,\eta}(1),\] i.e. is equal to \begin{equation}
\label{approximation of singular integral}
b_{\mathbf{a},N,1}(\mathbf{y}/N) b_2(\Theta L(\widetilde{\mathbf{r}}(\mathbf{a},\mathbf{y}) -\Psi(\mathbf{y}) - \mathbf{a})) + o_{P,\eta}(1),
\end{equation} where $\Rad(b_2) = O(\eta)$. \\

In summary then, since $\mathfrak{S}_{\mathbf{a},N}(\mathbf{y}) \ll (\log\log W^*)^{O(1)}$ we have shown that 
\begin{equation}
\label{important staging post}
Q_{\mathbf{a},N}(\mathbf{y}) = \mathfrak{S}_{\mathbf{a},N}(\mathbf{y}) b_{\mathbf{a},N,1}(\mathbf{y}/N) b_2(\Theta L(\widetilde{\mathbf{r}}(\mathbf{a},\mathbf{y}) -\Psi(\mathbf{y}) - \mathbf{a})) + o_{P,\eta}(1).\\
\end{equation} 
The function 
\begin{equation}
\label{function too that will be lipschitz}
(\Psi(\mathbf{y}) + \mathbf{a}) \mapsto \mathfrak{S}_{\mathbf{a},N}(\mathbf{y})b_2(\Theta L(\widetilde{\mathbf{r}}(\mathbf{a},\mathbf{y}) -\Psi(\mathbf{y}) - \mathbf{a}))
\end{equation} is of the form considered in Lemma \ref{Lemma functions with small support} in expression (\ref{function that will be lipschitz}). Indeed, one first notes that (\ref{function too that will be lipschitz}) is a well-defined mapping, since $\widetilde{\mathbf{r}}(\mathbf{a},\mathbf{y})$ is determined only by $\Psi(\mathbf{y}) + \mathbf{a}$ and $\mathfrak{S}_{\mathbf{a},N}(\mathbf{y})$ depends on $\mathbf{a}$ and $\mathbf{y}$ only through the value of $\widetilde{\mathbf{r}}(\mathbf{a},\mathbf{y})$ (see (\ref{singular series which depends on z and h})). Then, one takes the map $S$ from Lemma \ref{Lemma functions with small support} to be the map $\Theta L:\mathbb{R}^l \longrightarrow \mathbb{R}^u$ here, ones takes $K$ from that lemma to be $W^*$ here, and one takes the map $G$ from that lemma to be $b_2$ here, and one takes the map $\mathfrak{F}: \Theta L \mathbb{Z}^l \longrightarrow \mathbb{R}$ from that lemma to be \[\mathfrak{F}(\mathbf{x}) =  \frac{1}{(W^*)^{l-u}} \sum\limits_{\mathbf{m} \in [W^*] ^{l-u}} \prod\limits_{j=1}^l \Lambda_{\mathbb{Z}/W^* \mathbb{Z}} (\xi_j(\mathbf{m}) + (\Theta L|_U)^{-1}(\mathbf{x})_j)\] here. The definition of $\mathfrak{F}$ is valid since $\Theta L |_U: U \longrightarrow \mathbb{R}^u$ is indeed a bijection, and by part (9) of Lemma \ref{Lemma generating a purely irrational map} we have $(\Theta L|_U)^{-1}(\Theta L (\mathbb{Z}^l)) = \mathbb{Z}^l \cap U$.  Consulting expression (\ref{singular series which depends on z and h}) for $\mathfrak{S}_{\mathbf{a},N}(\mathbf{y})$, one sees that \[ \mathfrak{F}(\Theta L \widetilde{\mathbf{r}}(\mathbf{a},\mathbf{y})) = \mathfrak{S}_{\mathbf{a},N}(\mathbf{y})\] and so (\ref{function too that will be lipschitz}) is indeed of the form (\ref{function that will be lipschitz}) as we have claimed. The only hypothesis of Lemma \ref{Lemma functions with small support} that we haven't already verified is the invariance of $\mathfrak{F}$ under translation by elements of $\Theta L(W^*\mathbb{Z}^l)$, but this is immediate from the definition of $\mathfrak{F}$, since $(\Theta L |_U)^{-1}: \mathbb{R}^u \longrightarrow U$ is linear and $\Lambda_{\mathbb{Z}/W^* \mathbb{Z}}$ is $W^*$-periodic. Therefore, by applying Lemma \ref{Lemma functions with small support}, we conclude that the function (\ref{function too that will be lipschitz}) is Lipschitz on $\mathbb{R}^l/W^*\mathbb{Z}^l$, with Lipschitz constant $O_{P,\eta}((\log\log W^*)^{O(1)}).$\\

The proof of Lemma \ref{Lemma approximation of Q} is nearly complete, since Lipschitz functions enjoy good approximation by short exponential sums. Indeed, by Lemma A.9 of \cite{GT08a}, for all $X>2$ there exists a function $f_1:\mathbb{Z}^{l} \longrightarrow \mathbb{C}$ such that $\Vert f_1\Vert_\infty \ll (\log \log W^*)^{O(1)}$ and \[\mathfrak{S}_{\mathbf{a},N}(\mathbf{y})b_2(\Theta L(\widetilde{\mathbf{r}}(\mathbf{a},\mathbf{y}) -\Psi(\mathbf{y}) - \mathbf{a}))\] equals \[ \sum\limits_{\Vert \mathbf{k}\Vert_{\infty}\leqslant X} f_1(\mathbf{k}) e\Big(\frac{\mathbf{k} \cdot (\Psi(\mathbf{y}) + \mathbf{a})}{W^*}\Big) + O_{P,\eta}( (\log\log W^*)^{O(1)} (\log X)/X).\] Then, picking $X$ to be a suitably large power of $\log\log W^*$, Lemma \ref{Lemma approximation of Q} follows. 
\end{proof}

\part{The main argument}
\label{part gen von neu}
Having completed all the preparatory material, the main thrust of the proof can begin in earnest. 
\section{Controlling by Gowers norms}
\label{section Controlling by Gowers norms}
In this section we state a type of result that has become known as a `generalised von Neumann theorem', which uses Gowers norms to bound the number of solutions to a diophantine inequality. For readers familiar with \cite{GT10}, the procedure is routine. We will then show that this result implies the main theorem (Theorem \ref{Main theorem}).

%The technicalities of the proof will be greatly simplified by the introduction of smooth cut-offs. 

%\begin{Definition}
%\label{Definition discrete solution count}
%Let $m,d,N \geqslant 1$. Let $L$ be any $m$-by-$d$ matrix. Let $F:\mathbb{R}^d\rightarrow [0,1]$ and $G:\mathbb{R}^m\rightarrow [0,1]$ be smooth compactly supported functions, and let $\mathbf{v} \in\mathbb{R}^m$. Then, for $f_1,\cdots,f_{d}:[N]\longrightarrow \mathbb{R}$, we define
%\begin{equation}
%\label{definiton of the solution count form}
%T^{L,\mathbf{v}}_{F,G}(f_1,\cdots,f_{d}) := \frac{1}{N^{d-m}}\sum\limits_{\mathbf{n}\in \mathbb{Z}^d}\prod\limits_{i=1}^{d}f_i(n_i)F(\mathbf{n}/N)G(L\mathbf{n}+\mathbf{v}).
%\end{equation}
%\end{Definition}
%\noindent The key result, which will take most of the paper to prove, is that the multilinear form $T_{F,G}^{L,\mathbf{v}}(f_1,\cdots,f_d)$ may be controlled by a certain Gowers norm. In order to state this result (which is Theorem \ref{Theorem generalised von neumann} below), we need to introduce the relevant notion of pseudorandomness.

\begin{Theorem}[Generalised von Neumann Theorem]
\label{Theorem generalised von neumann}
Let $N,m,d$ be natural numbers, satisfying $d\geqslant m+2$, and let $C,\gamma,\varepsilon$ be positive parameters. Let $L:\mathbb{R}^d \longrightarrow \mathbb{R}^m$ be a surjective linear map with algebraic coefficients, and assume that $L\notin V_{\degen}^*(m,d)$ and that $\gamma$ is small enough (depending on $L$). Let $\mathbf{v}\in\mathbb{R}^{m}$ satisfy $\Vert \mathbf{v}\Vert_\infty \leqslant CN$. Let $F:\mathbb{R}^d\longrightarrow [0,1]$ and $G:\mathbb{R}^{m}\longrightarrow [0,1]$ be functions with Lipschitz constants at most $\sigma^{-1}$, and suppose that $F$ is supported on $[-1,1]^d$ and $G$ is supported on $[-\varepsilon,\varepsilon]^m$. Let $f_1,\dots,f_d:[N]\longrightarrow \mathbb{R}$ be arbitrary functions, satisfying $\vert f_j(n)\vert\leqslant \nu_{N,w}^\gamma(n)$ for all $j\leqslant d$ and for all $n\leqslant N$.

Then there exists an $s = O(1)$ such that, if \[\min_{j\leqslant d} \Vert f_j \Vert_{U^{s+1}[N]} =o(1)\] as $N\rightarrow \infty$, then \[\vert T^{L,\mathbf{v}}_{F,G,N}(f_1,\dots,f_d)\vert = o(1)\] as $N \rightarrow \infty$. The second $o(1)$ term may also depend on $C$, $L$, $\gamma$, $\varepsilon$, $\sigma$, and the rate of decay of the first $o(1)$ term.
\end{Theorem}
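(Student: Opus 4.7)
The plan is to follow the general strategy of \cite[Appendix C]{GT10}, adapted to the present setting of linear inequalities weighted by the pseudorandom majorant $\nu_{N,w}^\gamma$. First I would apply Lemma \ref{Lemma generating a purely irrational map} to express $T^{L,\mathbf{v}}_{F,G,N}(f_1,\dots,f_d)$ as a finite sum, over $\widetilde{\mathbf{r}}\in\widetilde{R}$, of terms $T^{L',\mathbf{v}',\Xi,\widetilde{\mathbf{r}}}_{F,G_{\widetilde{\mathbf{r}}},N}(f_1,\dots,f_d)$, where $L'$ is purely irrational and, crucially, the auxiliary map $\Xi$ has finite Cauchy--Schwarz complexity by part (8) of that lemma (since we assumed $L\notin V^*_{\degen}(m,d)$). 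Next, using Lemma \ref{Lemma approximating Lipschitz functions by smooth boxes} on both $F$ and $G_{\widetilde{\mathbf{r}}}$ with a small parameter $\delta$, I may replace each Lipschitz cutoff by an $O(\delta^{-O(1)})$-size sum of products of smooth one-dimensional cutoffs with support of size $O(\delta)$, at an acceptable error of $O(\delta\sigma^{-1})$. It therefore suffices to prove the bound when $F,G$ are smooth, supported on small boxes, and lie in $\mathcal{C}(\delta)$.

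The next step is to transfer the discrete sum to a continuous integral. Using an $\eta$-supported smooth kernel $\chi$, the paper's transfer lemmas (foreshadowed in the introduction and carried out in its Section on the real-variable transfer) yield the approximation
\[
T^{L',\mathbf{v}',\Xi,\widetilde{\mathbf{r}}}_{F,G,N}(f_1,\dots,f_d) = \frac{1}{N^{h-m}}\int_{\mathbf{x}\in\mathbb{R}^h}\!\Big(\prod_{j=1}^d (f_j\ast\chi)(\xi_j(\mathbf{x})+\widetilde{r}_j)\Big) F\!\Big(\tfrac{\Xi(\mathbf{x})+\widetilde{\mathbf{r}}}{N}\Big)G(L'\mathbf{x}+\mathbf{v}')\,d\mathbf{x}+o(1),
\]
where $h=d-u$. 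The error here is controlled using $|f_j|\le \nu_{N,w}^\gamma$ together with the upper bound of Corollary \ref{Corollary more upper bounds} applied to the integrals arising from smoothing. I would then parametrize $\ker L'$ by a linear map $\Psi:\mathbb{R}^{h-m+k}\to\mathbb{R}^h$ and change variables, converting the integral into one of the form $\int \prod_{j=1}^d(f_j\ast\chi)(\psi_j(\mathbf{y})+c_j)\,H(\mathbf{y}/N)\,d\mathbf{y}/N^{h-m+k}$ for some smooth compactly supported envelope $H$. Because $\Xi$ has finite Cauchy--Schwarz complexity and $L'$ is purely irrational, a direct linear-algebraic check shows that the composite system $\Psi$ of $d$ forms also has finite Cauchy--Schwarz complexity.

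The heart of the argument is then the Gowers--Cauchy--Schwarz inequality. I would apply Lemma \ref{Lemma normal form algorithm} to extend $\Psi$ to a system $\Psi'$ on a slightly larger space that is in $s$-normal form for some $s=O(1)$, the extra variables being integrated against additional smooth cutoffs (justified by another application of Corollary \ref{Corollary more upper bounds} to absorb the new factors). With $\Psi'$ in normal form, the Cauchy--Schwarz inequality is applied $s+1$ times in the pattern of \cite[Appendix C]{GT10}, choosing the distinguished direction to correspond to the index $j_0$ achieving $\min_j\|f_j\|_{U^{s+1}[N]}$. This reduces the magnitude of the integral to $\|f_{j_0}\ast\chi\|_{U^{s+1}(\mathbb{R})}$ times $O(1)$ auxiliary factors, each an integral of the form appearing in Corollary \ref{Corollary switching functions} weighted by the sieve majorant. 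A standard comparison then shows $\|f_{j_0}\ast\chi\|_{U^{s+1}(\mathbb{R})}\ll \|f_{j_0}\|_{U^{s+1}[N]}+o(1)$, and the conclusion follows on choosing $\delta,\eta$ to go to zero sufficiently slowly.

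The main obstacle will be controlling the auxiliary factors produced by the Cauchy--Schwarz steps: each such factor is an integral of a product of copies of $\nu_{N,w}^\gamma\ast\chi$ against a smooth cutoff, evaluated on the linear forms $\Psi'(\mathbf{y}+\bo\cdot\mathbf{h})$ for $\bo\in\{0,1\}^{s+1}$. Because each resulting auxiliary system inherits finite Cauchy--Schwarz complexity (this is the reason we needed part (8) of Lemma \ref{Lemma generating a purely irrational map} and the normal-form hypothesis), the linear inequalities condition encoded in Corollary \ref{Corollary switching functions} — i.e.\ the pseudorandomness of $\nu_{N,w}^\gamma$ proved in Theorem \ref{Theorem pseudorandomness} — guarantees that each such factor is $O(1)$, which is exactly where the algebraicity hypothesis on $L$ enters. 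This precise step is the analogue of the ``linear forms condition'' in \cite{GT10} and is the only place where the comparison with products of local von Mangoldt functions cannot be replaced by a softer, direct sieve-theoretic bound.
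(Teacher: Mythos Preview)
Your outline captures the broad architecture (transfer to $\mathbb{R}$, parametrise the kernel, put into normal form, apply weighted Gowers--Cauchy--Schwarz), but there is a genuine gap at the heart of the Cauchy--Schwarz step, and one unnecessary detour at the start.

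First, the detour: the initial application of Lemma~\ref{Lemma generating a purely irrational map} is not needed. The paper works directly with $L$: Proposition~\ref{Proposition separating out the kernel} parametrises $\ker L$ by an injective $\Psi:\mathbb{R}^{d-m}\to\mathbb{R}^d$, and the finite Cauchy--Schwarz complexity of $\Psi$ follows immediately from $L\notin V^*_{\degen}(m,d)$. The dimension reduction to a purely irrational $L'$ is used elsewhere (in the pseudorandomness verification and in computing the main term), not here.

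Second, and more seriously: your claim that the Cauchy--Schwarz step ``reduces the magnitude of the integral to $\|f_{j_0}\ast\chi\|_{U^{s+1}(\mathbb{R})}$ times $O(1)$ auxiliary factors'' is not correct, and this is precisely the point at which the argument diverges from \cite[Appendix~C]{GT10}. After the weighted Gowers--Cauchy--Schwarz inequality (Proposition~\ref{Proposition weighted gen von Neu}) and H\"older, one must bound two things: the auxiliary terms $\|\theta_{B,\mathbf{y}}\|_{\square^B}$ for $B\subsetneq[s+1]$, which are indeed $O(1)$ as you say, and the main term $\int_\mathbf{y}\|G_{[s+1],\mathbf{y}}\|_{\square^{[s+1]}(\theta_\mathbf{y};\mu)}^{2^{s+1}}\,d\mathbf{y}$. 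Expanding this main term gives an expression of the form
\[
\int_{z,\mathbf{h}} P_{\mathbf{a},N}(z,\mathbf{h})\prod_{\boldsymbol{\omega}\in\{0,1\}^{s+1}}g_1\Big(z+\sum_k\omega_kh_k\Big)\,dz\,d\mathbf{h},
\]
where $P_{\mathbf{a},N}(z,\mathbf{h})$ is an integral over the remaining variables of a product of copies of $\nu^\gamma_{N,w}\ast\chi$ evaluated at linear forms depending on $(z,\mathbf{h})$. This weight $P$ is \emph{not} constant in $(z,\mathbf{h})$ and cannot be pulled outside the integral; the expression is therefore not comparable to $\|g_1\|_{U^{s+1}(\mathbb{R})}$ by any soft argument. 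In \cite{GT10} one replaces $\nu$ by $1$ at this stage via the linear forms condition, but here (see Remark~\ref{Remark rescaling w trick}) the $W$-trick has not been performed and this replacement is unavailable.

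The paper's resolution is the genuinely new ingredient: first replace $P_{\mathbf{a},N}$ by the analogous quantity $Q_{\mathbf{a},N}$ built from $\Lambda_{\mathbb{Z}/W^*\mathbb{Z}}\ast\chi$ at a \emph{second} local scale $W^*$ (Lemma~\ref{Lemma comparing P and Q}, using Corollary~\ref{Corollary switching functions} on the doubled system of forms), and then invoke the structural Lemma~\ref{Lemma approximation of Q} to approximate $Q_{\mathbf{a},N}(z,\mathbf{h})$ by a short Fourier series $\sum_{\|\mathbf{k}\|_\infty\le(\log\log W^*)^{O(1)}}f_1(\mathbf{k})e(\mathbf{k}\cdot\mathbf{c}(z,\mathbf{h})/W^*)$ times a smooth cutoff. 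Only then can the linear phases be absorbed into the Gowers inner product (since $\|g\cdot e(\lambda\,\cdot)\|_{U^{s+1}}=\|g\|_{U^{s+1}}$), and the $(\log\log W^*)^{O(1)}$ loss is beaten by choosing $w^*$ to grow slowly enough relative to the decay of $\|g_1\|_{U^{s+1}}$. Your proposal omits this entire mechanism; without it, the argument does not close.
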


\begin{proof}[Proof of Theorem \ref{Main theorem} assuming Theorem \ref{Theorem generalised von neumann}] 

%This follows from Lemma \ref{Lemma Corollary of tool from Green-Tao} and the usual telescoping argument, combined with the construction of a suitably pseudorandom weight $\nu_N$. 
%By consider the expressions $T_{F,G,N}^{L,\mathbf{v}}(\Lambda^\prime,\dots,\Lambda^\prime)$ and $T_{F,G,N}^{L,\mathbf{v}}(\Lambda_{\mathbb{Z}/ W\mathbb{Z}},\dots,\Lambda_{\mathbb{Z}/ W\mathbb{Z}})$ from the statement of Theorem \ref{Main theorem}. By rescaling, we may assume that $\Rad(F) = 1$. 

Assume the hypotheses of Theorem \ref{Main theorem}. By telescoping we have that \[T_{F,G,N}^{L,\mathbf{v}}(\Lambda^\prime,\dots,\Lambda^\prime) - T_{F,G,N}^{L,\mathbf{v}}(\Lambda_{\mathbb{Z}/ W\mathbb{Z}}^+,\dots,\Lambda_{\mathbb{Z}/ W\mathbb{Z}}^+)\] is equal to
\begin{align}
\label{telescoping expression}
&T_{F,G,N}^{L,\mathbf{v}}(\Lambda^\prime - \Lambda_{\mathbb{Z}/W\mathbb{Z}}^+,\Lambda_{\mathbb{Z}/W\mathbb{Z}}^+,\dots,\Lambda_{\mathbb{Z}/W\mathbb{Z}}^+) +\nonumber \\ & T_{F,G,N}^{L,\mathbf{v}}(\Lambda^\prime, \Lambda^\prime - \Lambda_{\mathbb{Z}/W\mathbb{Z}}^+,\Lambda_{\mathbb{Z}/W\mathbb{Z}}^+,\dots,\Lambda_{\mathbb{Z}/W\mathbb{Z}}^+) + \dots + T_{F,G,N}^{L,\mathbf{v}}(\Lambda^\prime, \dots, \Lambda^\prime,\Lambda^\prime - \Lambda_{\mathbb{Z}/W\mathbb{Z}}^+).
\end{align}
\noindent Since $F$ is supported on $[-1,1]^d$, we may restrict the functions $\Lambda^\prime$ and $\Lambda^+_{\mathbb{Z}/W\mathbb{Z}}$ to $[N]$ without altering the size of expression (\ref{telescoping expression}).

By the construction of the sieve weight $\nu_{N,w}^\gamma$ we have \[\vert \Lambda^\prime(n) + \Lambda_{\mathbb{Z}/W\mathbb{Z}}^+(n)\vert \ll_{\gamma}  \nu_{N,w}^\gamma(n)\] for all $n \leqslant N$. Therefore, after rescaling, we may apply Theorem \ref{Theorem generalised von neumann} in this setting.

Recall that, by Lemma \ref{Lemma Corollary of tool from Green-Tao},  \[\Vert\Lambda^\prime  - \Lambda_{\mathbb{Z}/W\mathbb{Z}}^+  \Vert_{U^{s+1}[N]} = \Vert\Lambda^\prime  - \Lambda_{\mathbb{Z}/W\mathbb{Z}}\Vert_{U^{s+1}[N]} = o(1)\] as $N\rightarrow \infty$, for all $s\leqslant d-2$. So, applying Theorem \ref{Theorem generalised von neumann} to each term of (\ref{telescoping expression}) separately, we derive \[\vert T_{F,G,N}^{L,\mathbf{v}}(\Lambda^\prime,\dots,\Lambda^\prime) - T_{F,G,N}^{L,\mathbf{v}}(\Lambda_{\mathbb{Z}/ W\mathbb{Z}}^+,\dots,\Lambda_{\mathbb{Z}/ W\mathbb{Z}}^+)\vert =o_{C,L,\gamma,\varepsilon,\sigma}(1)\] as $N\rightarrow \infty$. By fixing a suitably small value of $\gamma$, we conclude Theorem \ref{Main theorem}.
\end{proof}

\section{Transferring from $\mathbb{Z}$ to $\mathbb{R}$}
\label{section transfer}
In this section we begin the proof of Theorem \ref{Theorem generalised von neumann}. Following the programme set out in \cite{Wa17}, our first step will be to transfer the problem from the setting of functions on $\mathbb{Z}$ to functions on $\mathbb{R}$. 

\begin{Definition}
\label{Definition really continuous solution count}
Let $N,m,d$ be natural numbers. Let $L:\mathbb{R}^{d}\longrightarrow \mathbb{R}^m$ be a linear map, let $\mathbf{v} \in \mathbb{R}^m$, and let $F:\mathbb{R}^{d}\rightarrow [0,1]$ and $G:\mathbb{R}^m\rightarrow [0,1]$ be compactly supported measurable functions. Then, for all bounded measurable functions $g_1,\dots,g_{d}:\mathbb{R}\longrightarrow  \mathbb{R}$ we define
\begin{equation}
\label{definiton of the continuous solution count form}
\widetilde{T}^{L,\mathbf{v}}_{F,G,N}(g_1,\dots,g_{d}) := \frac{1}{N^{d-m}}\int\limits_{\mathbf{x}\in \mathbb{R}^d}\Big(\prod\limits_{j=1}^{d}g_j(x_j)\Big)F(\mathbf{x}/N)G(L\mathbf{x}+ \mathbf{v})\, d\mathbf{x}.
\end{equation}
\end{Definition}

We now state the key lemma. For the definition of $f \ast \chi$, where $\chi$ is the function we determined in Section \ref{section conventions}, the reader may consult Definition \ref{Definition convolution}. 
\begin{Lemma}[Transfer]
\label{Lemma transfer equation}
Let $N,m,d$ be natural numbers, with $d\geqslant m+2$, and let $C$, $\varepsilon$, $\gamma$, $\eta$, $\sigma$ be positive constants.  Let $L:\mathbb{R}^d \longrightarrow \mathbb{R}^m$ be a surjective linear map, and let $\mathbf{v} \in \mathbb{R}^m$ be a vector satisfying $\Vert \mathbf{v}\Vert_\infty \leqslant CN$. Let $F:\mathbb{R}^d\longrightarrow [0,1]$ and $G:\mathbb{R}^{m}\longrightarrow [0,1]$ be compactly supported Lipschitz functions, with Lipschitz constants at most $\sigma^{-1}$. Suppose that $F$ is supported on $[-1,1]^d$, and $G$ is supported on $[-\varepsilon,\varepsilon]^m$. Then there exists some positive real number $C_{\chi}$, satisfying $C_\chi \asymp 1$, such that the following holds. Let $f_1,\dots,f_d:[N]\longrightarrow \mathbb{R}$ be arbitrary functions that satisfy $\vert f_j(n)\vert \leqslant \nu_{N,w}^\gamma(n)$ for all $j\leqslant d$ and for all $n\leqslant N$.  Assume that $\eta \leqslant \min (1,\varepsilon)$ and that $\gamma$ is small enough depending on $L$. Then
\begin{equation}
\label{eqn transfer equation}
T_{F,G,N}^{L,\mathbf{v}}(f_1,\dots,f_d) = \frac{1}{C_{\chi}\eta^{d}}\widetilde{T}_{F,G,N}^{L,\mathbf{v}}(f_1\ast \chi,\dots,f_d\ast \chi) + O(\eta \sigma^{-1}) + o(1)
\end{equation}
\noindent as $N\rightarrow \infty$. The implied constant in the $O(\eta \sigma^{-1})$ term may depend on $C$, $L$, and $\varepsilon$, and the $o(1)$ term may depend on all these parameters together with $\gamma$ and $\sigma$.  
\end{Lemma}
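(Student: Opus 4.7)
The plan is to expand $\widetilde{T}_{F,G,N}^{L,\mathbf{v}}(f_1\ast\chi,\dots,f_d\ast\chi)$ by writing out each convolution explicitly and swapping sum with integral, which is legal since each $f_j$ is supported on $[N]$. After the change of variables $\mathbf{x}=\mathbf{n}+\mathbf{y}$ one obtains
$$\widetilde{T}_{F,G,N}^{L,\mathbf{v}}(f_1\ast\chi,\dots,f_d\ast\chi)=\frac{1}{N^{d-m}}\sum_{\mathbf{n}\in\mathbb{Z}^d}\Big(\prod_{j=1}^d f_j(n_j)\Big)\int_{\mathbf{y}\in\mathbb{R}^d}\Big(\prod_{j=1}^d\chi(y_j)\Big)F\Big(\frac{\mathbf{n}+\mathbf{y}}{N}\Big)G(L\mathbf{n}+L\mathbf{y}+\mathbf{v})\,d\mathbf{y}.$$
Since $\chi$ is supported on $[-\eta,\eta]$, the integration is effectively over $\Vert\mathbf{y}\Vert_\infty\leqslant\eta$. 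The Lipschitz hypotheses on $F$ and $G$ then allow me to replace $F((\mathbf{n}+\mathbf{y})/N)$ by $F(\mathbf{n}/N)$ and $G(L\mathbf{n}+L\mathbf{y}+\mathbf{v})$ by $G(L\mathbf{n}+\mathbf{v})$, incurring pointwise errors of sizes $O(\sigma^{-1}\eta/N)$ and $O_L(\sigma^{-1}\eta)$ respectively.

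After these replacements the integrand factors, and $\int_{\mathbf{y}\in\mathbb{R}^d}\prod_j\chi(y_j)\,d\mathbf{y}=\eta^d\big(\int_{\mathbb{R}}\rho\big)^d$, where $\rho$ is the fixed $1$-supported function chosen in Section \ref{section conventions}. Setting $C_\chi:=\big(\int_{\mathbb{R}}\rho\big)^d$ (so $C_\chi\asymp 1$), the main term equals $C_\chi\eta^d\cdot T_{F,G,N}^{L,\mathbf{v}}(f_1,\dots,f_d)$, and after dividing through by $C_\chi\eta^d$ one arrives at the identity in (\ref{eqn transfer equation}) modulo an error term arising from the Lipschitz replacements.

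That error is at most a constant (depending on $L$) times
$$\sigma^{-1}\eta^{d+1}\cdot\frac{1}{N^{d-m}}\sum_{\mathbf{n}\in\mathcal{E}(\eta)}\prod_{j=1}^d\nu_{N,w}^\gamma(n_j),$$
where $\mathcal{E}(\eta):=\{\mathbf{n}\in\mathbb{Z}^d:\Vert\mathbf{n}\Vert_\infty\leqslant N+O(\eta),\ \Vert L\mathbf{n}+\mathbf{v}\Vert_\infty\leqslant\varepsilon+O_L(\eta)\}$ is the enlarged support region generated by letting $\mathbf{y}$ range over $[-\eta,\eta]^d$. I would dominate this sum by $T^{L,\mathbf{v}}_{F_1,G_1,N}(\nu_{N,w}^\gamma,\dots,\nu_{N,w}^\gamma)$ for smooth majorants $F_1,G_1$ produced via Lemma \ref{Lemma smooth approximations}, supported on $[-2,2]^d$ and $[-2\varepsilon,2\varepsilon]^m$ respectively, and invoke Corollary \ref{Corollary upper bound} (valid because $\gamma$ is small enough in terms of $L$) to bound it by $O_{C,L,\varepsilon}(1)+o(1)$. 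Dividing by $C_\chi\eta^d$ then yields the $O(\eta\sigma^{-1})+o(1)$ bound stated in the lemma.

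The only nontrivial input is this last step: the $\nu_{N,w}^\gamma$-weighted count over a truncated linear inequality is controlled via Corollary \ref{Corollary upper bound}, which in turn rests on Theorem \ref{Theorem pseudorandomness}. This is where the algebraicity of $L$ and the smallness of $\gamma$ implicitly enter; without such a pseudorandomness input one would have no way to control the moments of $\nu_{N,w}^\gamma$ restricted to the relevant shape. Everything else in the argument is direct computation and bookkeeping.
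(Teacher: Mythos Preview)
Your argument is correct and follows essentially the same approach as the paper. The only cosmetic difference is that the paper introduces the auxiliary function $H(\mathbf{x})=\frac{1}{C_\chi\eta^d}\int\boldsymbol{\chi}(\mathbf{y})G(\mathbf{x}+L\mathbf{y})\,d\mathbf{y}$ and bounds $\Vert G-H\Vert_\infty=O(\eta\sigma^{-1})$, whereas you perform the Lipschitz replacement of $G(L\mathbf{n}+L\mathbf{y}+\mathbf{v})$ by $G(L\mathbf{n}+\mathbf{v})$ directly; both routes lead to the same error expression and the same appeal to Corollary~\ref{Corollary upper bound}.
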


\begin{proof}
The proof is very similar to the proof of Lemma 5.4 in \cite{Wa17}, although we do have to insert various estimates that are only proved in this paper.  

Indeed, let $\boldsymbol{\chi}:\mathbb{R}^d \longrightarrow [0,1]$ denote the function $\mathbf{x}\mapsto \prod\limits_{j=1}^d \chi(x_j)$. We choose \[ C_{\chi} : = \frac{1}{\eta^d}\int\limits_{\mathbf{x} \in \mathbb{R}^d} \bc(\mathbf{x}) \, d\mathbf{x} .\] Since $\chi$ is $\eta$-supported, $C_{\chi} \asymp 1$ . Then, expanding the definition of the convolutions $f_i \ast \chi$, \[\frac{1}{C_{\chi}\eta^{d}}\widetilde{T}_{F,G,N}^{L, \mathbf{v},}(f_1\ast \chi,\dots,f_d\ast \chi)\] equals
\begin{align}
\label{equation transferring zero}
&\frac{1}{N^{d-m}}\sum\limits_{\mathbf{n}\in \mathbb{Z}^d}\Big(\prod\limits_{j=1}^d f_j(n_j)\Big) \frac{1}{C_{\chi}\eta^d} \int\limits_{\mathbf{y}\in \mathbb{R}^d} F(\mathbf{y}/N) G(L\mathbf{y} + \mathbf{v}) \boldsymbol{\chi}(\mathbf{y}  - \mathbf{n}) \, d\mathbf{y}. 
\end{align}
This is equal to 
\begin{equation}
\label{extra equation to be referenced in transfer argument}
\frac{1}{N^{d-m}}\sum\limits_{\substack{\mathbf{n}\in \mathbb{Z}^d \\ \Vert \mathbf{n} \Vert_\infty \leqslant 2 N}} \Big(\prod\limits_{j=1}^d f_j(n_j)\Big) \frac{1}{C_{\chi}\eta^d}\int\limits_{\mathbf{y}\in \mathbb{R}^d}  (F(\mathbf{n}/N) + O(\eta\sigma^{-1} N^{-1}))G(L\mathbf{y}+\mathbf{v}) \boldsymbol{\chi}(\mathbf{y} - \mathbf{n}) \, d\mathbf{y}.
\end{equation}
Indeed, the inner integrand is only non-zero when  $\Vert \mathbf{y} - \mathbf{n}\Vert_\infty\leqslant\eta$, and $F$ has Lipschitz constant $O(\sigma^{-1})$.\\

Continuing, expression (\ref{extra equation to be referenced in transfer argument}) is equal to 
\begin{equation}
\label{equation transferring}
\frac{1}{N^{d-m}} \sum\limits_{\substack{\mathbf{n}\in\mathbb{Z}^d \\ \Vert \mathbf{n}\Vert_\infty \leqslant 2N}}\Big(\prod\limits_{j=1}^d f_j(n_j)\Big) F(\mathbf{n}/N) H(L\mathbf{n} + \mathbf{v}) + E
\end{equation}
\noindent where \[H(\mathbf{x}) = \frac{1}{C_{\chi}\eta^d}\int\limits_{\mathbf{y}\in\mathbb{R}^d}  \boldsymbol{\chi}(\mathbf{y})G(\mathbf{x} + L\mathbf{y})\ \, d\mathbf{y}\] and $E$ is a certain error, which may be bounded above by a constant times 
\begin{equation}
\label{transfer error bounded above}
\frac{\eta}{\sigma N} \frac{1}{N^{d-m}} \sum\limits_{\substack{\mathbf{n}\in \mathbb{Z}^d \\ \Vert \mathbf{n} \Vert_\infty \leqslant 2 N}}\Big(\prod\limits_{j=1}^d \nu_{N,w}^\gamma (n_j)\Big)H(L\mathbf{n} + \mathbf{v}).\\
\end{equation}  

Let us deal with the first term of (\ref{equation transferring}), in which we wish to replace $H$ with $G$. We therefore consider
\begin{equation*}
\Big\vert\frac{1}{N^{d-m}} \sum\limits_{\mathbf{n}\in\mathbb{Z}^d}\Big(\prod\limits_{j=1}^d f_j(n_j)\Big) F(\mathbf{n}/N) (G(L\mathbf{n} + \mathbf{v}) - H(L\mathbf{n} + \mathbf{v})) \Big\vert,
\end{equation*}
which is
\begin{equation}
\label{equation transferring main term differencing}
\leqslant \frac{1}{N^{d-m}} \sum\limits_{\mathbf{n}\in\mathbb{Z}^d}\Big(\prod\limits_{j=1}^d \nu_{N,w}^\gamma(n_j)\Big)F(\mathbf{n}/N)  \vert G-H\vert(L\mathbf{n} + \mathbf{v}).
\end{equation} Observe that $\Vert G - H\Vert_\infty = O(\eta \sigma^{-1})$. Indeed, 
\begin{align*}
&G(\mathbf{x}) - \frac{1}{C_{\chi}\eta^d}\int\limits_{\mathbf{y}\in\mathbb{R}^d} G(\mathbf{x}+ L\mathbf{y}) \chi(\mathbf{y}) \, d\mathbf{y} \\
&= G(\mathbf{x}) - \frac{1}{C_{\chi}\eta^d} \int\limits_{\mathbf{y} \in\mathbb{R}^d} (G(\mathbf{x}) + O(\eta \sigma^{-1}))\boldsymbol{\chi}(\mathbf{y}) \, d\mathbf{y}\\
& = O(\eta\sigma^{-1}),\nonumber
\end{align*}
\noindent by the definition of $C_{\chi}$ and the Lipschitz property of $G$. The function $\vert G - H\vert$ is compactly supported, with $\operatorname{Rad}(\vert G - H\vert ) \ll \varepsilon + \eta \ll \varepsilon$.

Of course $\vert G - H\vert$ needn't be smooth, but we may nonetheless apply Corollary \ref{Corollary upper bound}, concluding that expression (\ref{equation transferring main term differencing}) is at most \[ O_{C,L,\varepsilon}( \eta \sigma^{-1} ) + o_{C,L,\gamma,\varepsilon, \sigma}(1).\] 

Turning to the error $E$ from (\ref{equation transferring}), we've already remarked that it may be bounded above by expression (\ref{transfer error bounded above}). Applying Corollary \ref{Corollary upper bound} again, expression (\ref{transfer error bounded above}) is $o(1)$ (with the appropriate dependencies on $C$, $L$, etc.).  

The lemma then follows. 
\end{proof}
We will need to show that the operation of replacing $f$ by $f\ast \chi$ is compatible with Gowers norms. 

Firstly, if $g:[-N,N]\longrightarrow \mathbb{R}$ is a bounded measurable function, we define the Gowers norm over the reals $\Vert g\Vert_{U^{d}(\mathbb{R},N)}$ by 
\begin{equation}
\label{real gowers norm}
\Vert g \Vert_{U^{d}(\mathbb{R},N)}^{2^{d}}:= \frac{1}{(2N)^{d+1}}\int\limits_{(x,\mathbf{h}) \in\mathbb{R}^{d+1}} \prod\limits_{\boldsymbol{\omega}\in \{0,1\}^d}\mathscr{C}^{\vert \boldsymbol{\omega} \vert} g(x+\sum\limits_{i=1}^d h_i\omega_i) \, dx\,d \mathbf{h}.
\end{equation} More detail about this quantity may be found in Appendix A of \cite{Wa17}. 

Secondly, we note that $\Vert f\Vert_{U^{d}[N]}$ and $\Vert f \ast \chi\Vert_{U^{d}(\mathbb{R},2N)}$ may be related. 
\begin{Lemma}[Relating different Gowers norms]
\label{Lemma linking different Gowers norms}
Let $s$ be a natural number, and assume that $\eta$ is a positive parameter that is small enough in terms of $s$.  Let $N$ be a natural number, and let $f:[N]\longrightarrow \mathbb{R}$ be an arbitrary function. Then we have \begin{equation}
\label{linking different gowers norms}
\Vert f\ast \chi\Vert_{U^{s+1}(\mathbb{R},2N)}\ll \eta^{\frac{s+2}{2^{s+1}}} \Vert f\Vert_{U^{s+1}[N]}. 
\end{equation}
\end{Lemma}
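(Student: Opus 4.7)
The plan is to expand the left-hand side directly, using $(f \ast \chi)(x+\mathbf{h}\cdot\bo) = \sum_{n_{\bo}\in\mathbb{Z}} f(n_\bo)\chi(x+\mathbf{h}\cdot\bo - n_\bo)$ and interchanging the sum and the integral. Since $f$ and $\chi$ are real-valued, the complex conjugations in \eqref{real gowers norm} drop out, and I obtain
\[
\Vert f\ast\chi\Vert_{U^{s+1}(\mathbb{R},2N)}^{2^{s+1}} = \frac{1}{(4N)^{s+2}}\sum_{\mathbf{n}\in\mathbb{Z}^{\{0,1\}^{s+1}}}\Bigl(\prod_{\bo} f(n_\bo)\Bigr)\, I(\mathbf{n}),
\]
where $I(\mathbf{n}) := \int_{\mathbb{R}^{s+2}} \prod_\bo \chi(x+\mathbf{h}\cdot\bo - n_\bo)\,dx\,d\mathbf{h}$.

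The main step is to evaluate $I(\mathbf{n})$. The factors indexed by the ``basis'' $\bo\in\{\mathbf{0},\mathbf{e_1},\ldots,\mathbf{e_{s+1}}\}$ restrict $(x,\mathbf{h})$ to a box of diameter $O(\eta)$ centred at $(n_\mathbf{0},\,n_{\mathbf{e_1}}-n_\mathbf{0},\ldots,n_{\mathbf{e_{s+1}}}-n_\mathbf{0})$. For any other $\bo$, the factor $\chi(x+\mathbf{h}\cdot\bo - n_\bo)$ then forces $n_\bo$ to lie within $O(\eta)$ of the integer $n_\mathbf{0} + \sum_i \omega_i(n_{\mathbf{e_i}}-n_\mathbf{0})$; since $\eta$ is small enough in terms of $s$ and the $n_\bo$ are integers, this rounds to the exact equality $n_\bo = n_\mathbf{0} + \sum_i \omega_i(n_{\mathbf{e_i}}-n_\mathbf{0})$. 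On the support cut out by these equalities, translating $x$ and $\mathbf{h}$ by the basis values reduces $I(\mathbf{n})$ to the single baseline integral $\int \prod_\bo \chi(x+\mathbf{h}\cdot\bo)\,dx\,d\mathbf{h}$; rescaling by $\eta$ shows that this equals $c_\rho\,\eta^{s+2}$ for some constant $c_\rho = O(1)$ depending only on $\rho$ and $s$.

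Parametrising by $a := n_\mathbf{0}$ and $h_i := n_{\mathbf{e_i}} - n_\mathbf{0}$, the collapsed sum becomes a standard ``parallelepiped'' sum, so
\[
\Vert f\ast\chi\Vert_{U^{s+1}(\mathbb{R},2N)}^{2^{s+1}} \asymp \frac{\eta^{s+2}}{N^{s+2}} \sum_{a\in\mathbb{Z},\,\mathbf{h}\in\mathbb{Z}^{s+1}} \prod_\bo f(a+\mathbf{h}\cdot\bo).
\]
Since $f$ is supported on $[N]$, comparing with \eqref{equation GN over ZNZ} (applied with $N'$ any integer comparable to $N$ and large enough in terms of $s$, so that summing integers versus residues agrees) identifies the right-hand side as $\asymp \eta^{s+2}\Vert f\Vert_{U^{s+1}[N]}^{2^{s+1}}$. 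Taking $2^{s+1}$-th roots yields \eqref{linking different gowers norms}.

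The only genuine technicality is the rounding argument in the second paragraph: making the threshold on $\eta$ explicit in terms of $s$ so that the non-basis $n_\bo$'s are exactly determined by the basis ones, and checking that the resulting baseline integral factors cleanly. Everything else is a direct computation with definitions.
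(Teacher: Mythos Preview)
Your argument is correct and is the natural direct proof: expand the convolution, use the small $\eta$-support of $\chi$ to force the integer parallelepiped relations $n_{\bo} = n_{\mathbf{0}} + \sum_i \omega_i(n_{\mathbf{e_i}} - n_{\mathbf{0}})$, and then identify the collapsed sum with the discrete $U^{s+1}[N]$ norm. The paper does not reprove this lemma at all but simply cites \cite[Lemma~5.5]{Wa17}; your write-up is essentially what that citation unpacks to.
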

\begin{proof}
This is Lemma 5.5 of \cite{Wa17}. 
\end{proof}
\section{Parametrising the kernel}
\label{section General proof of the real variable von Neumann Theorem}
In this section we will convert the expression $T_{F,G,N}^{L,\mathbf{v}}(f_1,\dots,f_d)$ into an expression that is tailored to the subsequent manipulations. We begin with a lemma that is very similar to Proposition 8.2 of \cite{Wa17}.

\begin{Lemma}[Separating out the kernel]
\label{Proposition separating out the kernel}
Let $N,m,d$ be natural numbers, with $d\geqslant m+2$, and let $C,\varepsilon,\sigma$ be positive constants. Let $L:\mathbb{R}^d \longrightarrow \mathbb{R}^m$ be a surjective linear map with algebraic coefficients, and assume further that $L \notin V_{\degen}^*(m,d)$. Let $F:\mathbb{R}^d\longrightarrow [0,1]$ be a Lipschitz function supported on $[-1,1]^d$, with Lipschitz constant at most $\sigma^{-1}$, and let $G:\mathbb{R}^{m}\longrightarrow [0,1]$ be any function supported on $[-\varepsilon,\varepsilon]^m$. Then there exists an injective linear map $(\psi_1,\dots,\psi_d) = \Psi:\mathbb{R}^{d-m}\longrightarrow \mathbb{R}^d$ with algebraic coefficients (depending only on $L$), and a Lipschitz function $F_1:\mathbb{R}^{d-m}\longrightarrow [0,1]$ with Lipschitz constant $O_{L}(\sigma^{-1})$ and with $\Rad(F_1) = O_{C,L,\varepsilon}(1)$, such that, if $g_1,\dots,g_d:\mathbb{R}\longrightarrow \mathbb{R}$ are arbitrary bounded measurable functions, 
\begin{equation}
\vert\widetilde{T}_{F,G,N}^{L,\mathbf{v}}(g_1,\dots,g_d)\vert\ll_{L,\varepsilon} \Big\vert\frac{1}{N^{d-m}}\int\limits_{\mathbf{x} \in \mathbb{R}^{d-m}} F_1(\mathbf{x}/N)\Big(\prod\limits_{j=1}^d g_j(\psi_j(\mathbf{x}) + a_j)\Big) \, d\mathbf{x}\Big\vert ,
\end{equation}
\noindent where, for each $j$, $a_j$ is some real number that satisfies $a_j = O_{C,L,\varepsilon}(N)$.

Furthermore, $\Psi$ has finite Cauchy-Schwarz complexity (see Definition \ref{Definition finite complexity}).
\end{Lemma}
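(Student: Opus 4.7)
The plan is to reduce the integral $\widetilde{T}$ to an integral over $\ker L$ by decomposing the variable of integration into a kernel part and a complementary part, and then to pigeonhole in the complementary part (whose range is small thanks to the support of $G$). First I would pick a particular solution $\mathbf{a}\in\mathbb{R}^d$ to $L\mathbf{a}=-\mathbf{v}$; this exists by surjectivity of $L$ and satisfies $\Vert\mathbf{a}\Vert_\infty=O_{C,L}(N)$. Then $G(L\mathbf{x}+\mathbf{v})=G(L(\mathbf{x}-\mathbf{a}))$.

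Next I would fix a direct sum decomposition $\mathbb{R}^d=\ker L\oplus V$, where $V$ is spanned by $m$ algebraic vectors chosen using only $L$. Let $\Psi:\mathbb{R}^{d-m}\longrightarrow \ker L$ be an algebraic linear isomorphism (depending only on $L$), and let $\iota:\mathbb{R}^m\longrightarrow V$ be an algebraic linear isomorphism. Every $\mathbf{x}\in\mathbb{R}^d$ can be written uniquely as $\mathbf{x}=\Psi(\mathbf{y})+\iota(\mathbf{z})+\mathbf{a}$, and under this change of variables (with Jacobian $\asymp_L 1$) one has $L\mathbf{x}+\mathbf{v}=L\iota(\mathbf{z})$, so
\begin{equation*}
\widetilde{T}_{F,G,N}^{L,\mathbf{v}}(g_1,\dots,g_d)\;\asymp_L\;\frac{1}{N^{d-m}}\int_{\mathbf{z}\in\mathbb{R}^m} G(L\iota(\mathbf{z}))\,I(\mathbf{z})\,d\mathbf{z},
\end{equation*}
where
\begin{equation*}
I(\mathbf{z}):=\int_{\mathbf{y}\in\mathbb{R}^{d-m}}\!F\!\left(\frac{\Psi(\mathbf{y})+\iota(\mathbf{z})+\mathbf{a}}{N}\right)\prod_{j=1}^d g_j(\psi_j(\mathbf{y})+(\iota(\mathbf{z})+\mathbf{a})_j)\,d\mathbf{y}.
\end{equation*}
Since $L\iota:\mathbb{R}^m\longrightarrow\mathbb{R}^m$ is a linear isomorphism depending only on $L$, the set $\{\mathbf{z}:\Vert L\iota(\mathbf{z})\Vert_\infty\leqslant\varepsilon\}$ has Lebesgue measure $O_{L,\varepsilon}(1)$, and so by the triangle inequality
\begin{equation*}
\vert\widetilde{T}_{F,G,N}^{L,\mathbf{v}}(g_1,\dots,g_d)\vert\;\ll_{L,\varepsilon}\;\frac{1}{N^{d-m}}\sup_{\Vert L\iota(\mathbf{z})\Vert_\infty\leqslant\varepsilon}\vert I(\mathbf{z})\vert.
\end{equation*}
Choose $\mathbf{z}_0$ with $\Vert L\iota(\mathbf{z}_0)\Vert_\infty\leqslant\varepsilon$ and $\vert I(\mathbf{z}_0)\vert\geqslant\tfrac12\sup\vert I(\mathbf{z})\vert$, set $\mathbf{b}:=\iota(\mathbf{z}_0)+\mathbf{a}$ (so $\Vert\mathbf{b}\Vert_\infty=O_{C,L,\varepsilon}(N)$), take $a_j:=b_j$, and define $F_1(\mathbf{y}/N):=F((\Psi(\mathbf{y})+\mathbf{b})/N)$. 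This produces the required expression with the stated constants: $F_1$ takes values in $[0,1]$; injectivity of $\Psi$ together with the support of $F$ in $[-1,1]^d$ and the bound on $\mathbf{b}$ forces $\operatorname{Rad}(F_1)=O_{C,L,\varepsilon}(1)$; and the Lipschitz constant of $F_1$ is at most $\sigma^{-1}\Vert\Psi\Vert_{\op}=O_L(\sigma^{-1})$.

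The only remaining point is the finite Cauchy--Schwarz complexity of $\Psi$. If $\Psi\in V_{\degen}(d-m,d)$ then there exist indices $i,j$ and $\lambda\in\mathbb{R}$ with $\mathbf{e}_i-\lambda\mathbf{e}_j\neq 0$ and $\mathbf{e}_i^\ast-\lambda\mathbf{e}_j^\ast\in\ker\Psi^\ast=(\im\Psi)^0=(\ker L)^0=\im L^\ast$, which forces $L\in V^\ast_{\degen}(m,d)$, contradicting the hypothesis; this is precisely the duality argument used already in Part (8) of Lemma \ref{Lemma generating a purely irrational map}. The argument is essentially routine linear algebra combined with a pigeonhole over the small-volume transverse directions; the only mild subtlety is ensuring that $\Psi$ may be chosen with algebraic coefficients depending only on $L$, which follows by picking an algebraic basis for $\ker L$ (possible since $L$ itself has algebraic coefficients). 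There is no substantive obstacle.
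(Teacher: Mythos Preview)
Your proof is correct and follows essentially the same approach as the paper: decompose $\mathbb{R}^d$ as $\ker L$ plus a complement, change variables, and pigeonhole over the small transverse region where $G$ is supported. The only cosmetic differences are that the paper uses an orthonormal algebraic basis for $\ker L$ extended to $\mathbb{R}^d$ (so the Jacobian is exactly $1$) and does not pre-shift by a particular solution $\mathbf{a}$ of $L\mathbf{a}=-\mathbf{v}$, instead absorbing $\mathbf{v}$ directly into the argument of $G$; your duality argument for finite Cauchy--Schwarz complexity is identical to the paper's.
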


\begin{proof}[Proof of Lemma \ref{Proposition separating out the kernel}]

For ease of notation, let 
\[ \beta: = \widetilde{T}_{F,G,N}^{L,\mathbf{v}}(g_1,\dots,g_d).\]
Noting that $\ker L$ is a vector space of dimension $d-m$, define $\{\mathbf{u^{(1)}},\dots,\mathbf{u^{(d-m)}}\} \subset \mathbb{R}^d$ to be an orthonormal basis for $\ker L$ consisting of vectors with algebraic coordinates. Then the map $(\psi_1,\dots,\psi_d) = \Psi:\mathbb{R}^{d-m} \longrightarrow \mathbb{R}^d$, defined by
\begin{equation}
\label{parametrise the kernel}
\Psi(\mathbf{x}) := \sum\limits_{i=1}^{d-m}x_i \mathbf{u}^{\mathbf{(i)}},
\end{equation}
\noindent is an injective map that parametrises $\ker L$. Furthermore $\Psi$ has finite Cauchy-Schwarz complexity, since otherwise there would exist $i \neq j\leqslant d$ and a real number $\lambda$ such that $\mathbf{e_i}^* - \lambda \mathbf{e_j}^* \in (\im \Psi)^0$, i.e. $\mathbf{e_i}^* - \lambda \mathbf{e_j}^* \in (\ker L)^{0}$. This implies that $\mathbf{e_i}^* - \lambda \mathbf{e_j}^* \in L((\mathbb{R}^m)^*)$, which, by definition, implies that $L \in V_{\degen}^*(m,d)$, contradicting our hypotheses. 

%With $V_{\degen}$ as in \cite[Definition 2.5]{Wa17}, we have $\operatorname{dist}(\Psi, V_{\degen})\gg_L(1)$. This is proved rigorously in \cite[Proposition B.4]{Wa17}. Hence, by \cite[Proposition 2.7]{Wa17}, there is some $s\leqslant d-2$ such that $\Psi$ has $\Omega(1)$-Cauchy-Schwarz complexity $s$ (in the sense of \cite[Definition 2.6]{Wa17}).

Now, extend the orthonormal basis $\{\mathbf{u^{(1)}},\dots,\mathbf{u^{(d-m)}}\}$ for $\ker L$ to an orthonormal basis $\{\mathbf{u^{(1)}},\dots,\mathbf{u^{(d)}}\}$ for $\mathbb{R}^d$. By implementing a change of basis, we may rewrite 
\begin{equation}
\label{separating out the kernel}
\beta=\frac{1}{N^{d-m}} \int\limits_{\mathbf{x}\in \mathbb{R}^d} F(\sum\limits_{i=1}^{d}x_i\mathbf{u^{(i)}}/N)G(L(\sum\limits_{i=1}^{d}x_i\mathbf{u^{(i)}}) + \mathbf{v})\Big(\prod\limits_{j=1}^{d} g_j(\psi_j(\mathbf{x})+\sum\limits_{i=d-m+1}^{d} x_i u^{(i)}_j)\Big) \, d\mathbf{x},
\end{equation}
where $u^{(i)}_j$ is the $j^{th}$ coordinate of $\mathbf{u}^{(\mathbf{i})}$.

We wish to remove the presence of the variables $x_{d-m+1},\dots,x_{d}$. To set this up, note that, by the choice of the vectors $\mathbf{u^{(i)}}$, $$G(L(\sum\limits_{i=1}^{d}x_i\mathbf{u^{(i)}}) + \mathbf{v}) = G(L(\sum\limits_{i=d-m+1}^{d}x_i\mathbf{u^{(i)}}) + \mathbf{v}). $$ The vector $\sum_{i=d-m+1}^{d}x_i\mathbf{u^{(i)}}$ is in $(\ker L)^{\perp}$ and so, since $L|_{(\ker L)^{\perp}}$ is a bounded invertible operator, $G(L(\sum_{i=d-m+1}^{d}x_i\mathbf{u^{(i)}}) + \mathbf{v})$ is equal to zero unless $(x_{d-m+1},\dots,x_d)^T\in D$, for some domain $D \subseteq \mathbb{R}^{m}$ of diameter $O_{L}(\varepsilon)$ and satisfying $\sup_{\mathbf{x} \in D} \Vert \mathbf{x}\Vert_\infty = O_{C,L}(N + \varepsilon)$. 

We can use this observation to bound the right-hand side of (\ref{separating out the kernel}). Indeed, we have 
\begin{align}
\label{equation bound beta by a sup}
\beta \ll \operatorname{vol} D \times \sup\limits_{\mathbf{x_{d-m+1}^d}\in D}\frac{1}{N^{d-m}} \Big\vert \int\limits_{\mathbf{x_{1}^{d-m}} \in \mathbb{R}^{d-m}} F(\sum\limits_{i=1}^{d}x_i\mathbf{u^{(i)}}/N)G(L(\sum\limits_{i=d-m+1}^{d}x_i\mathbf{u^{(i)}}) + \mathbf{v}) \nonumber\\
\prod\limits_{j=1}^{d} g_j(\psi_j(\mathbf{x_{1}^{d-m}})+\sum\limits_{i=d-m+1}^{d} x_iu^{(i)}_j) \, d \mathbf{x_1^{d-m}} \Big\vert.
\end{align} See Section \ref{section conventions} for explanation of $\mathbf{x_1^{d-m}}$ notation. So there exists some fixed vector\\ $(x_{d-m+1},\dots,x_d)^T$ in $D$ such that \begin{align}
\label{equation bound beta second}
\beta \ll_{L,\varepsilon}\frac{1}{N^{d-m}} \Big\vert \int\limits_{\mathbf{x_{1}^{d-m}} \in \mathbb{R}^{d-m}} F(\sum\limits_{i=1}^{d}x_i\mathbf{u^{(i)}}/N)G(L(\sum\limits_{i=d-m+1}^{d}x_i\mathbf{u^{(i)}}) + \mathbf{v}) \nonumber\\
\prod\limits_{j=1}^{d} g_j(\psi_j(\mathbf{x_{1}^{d-m}})+\sum\limits_{i=d-m+1}^{d} x_iu^{(i)}_j) \, d \mathbf{x_1^{d-m}} \Big\vert.
\end{align}  

Define the function $F_1:\mathbb{R}^{d-m} \longrightarrow [0,1]$ by \[ F_1(\mathbf{x_1^{d-m}}) : = F(\Psi(\mathbf{x_1^{d-m}}) + (\sum\limits_{i=d-m+1}^{d} x_i\mathbf{u^{(i)}}/N))\] and for each $j$ at most $d$ a shift \[a_j :=\sum\limits_{i=d-m+1}^{d} x_i u^{(i)}_j.\] Then
\begin{equation}
\label{just before we put everything into normal form}
\beta \ll _{L,\varepsilon}\Big\vert\frac{1}{N^{d-m}}\int\limits_{\mathbf{x} \in \mathbb{R}^{d-m}}F_1(\mathbf{x}/N) \Big(\prod\limits_{j=1}^d g_j(\psi_j(\mathbf{x})+a_j)\Big) \, d\mathbf{x}\Big\vert,
\end{equation} 
\noindent and $F_1$ and $a_j$ satisfy the conclusions of the proposition.
\end{proof}

The next proposition is essentially identical to an argument that appears in \cite{Wa17} at the end of Section 8 of that paper. Unfortunately that argument is not in an easily citable form, and so we have found it necessary to state and prove the precise version that we need here. For readers unfamiliar with the notion of normal form, we included a brief summary in Section \ref{section normal form}.

\begin{Lemma}[Parametrising by normal form]
\label{Proposition parametrising by normal form}
Following on from above, there exists a $d^\prime = O(1)$, a linear map $(\psi_1^\prime,\dots,\psi_d^\prime) : = \Psi^\prime:\mathbb{R}^{d^\prime} \longrightarrow \mathbb{R}^d$ with algebraic coefficients that is in $s$-normal form for some $s = O(1)$, and a Lipschitz function $F_2:\mathbb{R}^{d^\prime} \longrightarrow [0,1]$ with Lipschitz constant $O_L(\sigma^{-1})$ and with $\Rad(F_2) = O_{C,L,\varepsilon}(1)$ such that 
\begin{equation}
\label{before normal form}
\Big\vert\frac{1}{N^{d-m}}\int\limits_{\mathbf{x} \in \mathbb{R}^{d-m}} F_1(\mathbf{x}/N)\prod\limits_{j=1}^d g_j(\psi_j(\mathbf{x}) + a_j) \, d\mathbf{x}\Big\vert
\end{equation}
\noindent is bounded above by a constant times
\begin{equation}
\label{just after introducing R primed}
\ll \Big\vert\frac{1}{N^{d^\prime}}\int\limits_{\mathbf{x} \in \mathbb{R}^{d^\prime}}F_2(\mathbf{x}/N)\prod\limits_{j=1}^d g_j(\psi_j^\prime(\mathbf{x}) + a_j) \, d\mathbf{x}\Big\vert.
\end{equation}

\end{Lemma}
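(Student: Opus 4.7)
The plan is to invoke Lemma \ref{Lemma normal form algorithm} on the map $\Psi$ supplied by Lemma \ref{Proposition separating out the kernel}. The hypothesis of finite Cauchy-Schwarz complexity is precisely the last conclusion of Lemma \ref{Proposition separating out the kernel}, so the application is legitimate. It produces a natural number $d^\prime = O(1)$, vectors $\mathbf{f_1}, \ldots, \mathbf{f_{d^\prime-(d-m)}} \in \mathbb{R}^{d-m}$ with $\Vert \mathbf{f_k}\Vert_\infty = O_L(1)$, an integer $s = O(1)$, and a linear map $\Psi^\prime:\mathbb{R}^{d^\prime}\longrightarrow \mathbb{R}^d$ in $s$-normal form of the shape $\Psi^\prime(\mathbf{u}, \mathbf{x}) = \Psi(\mathbf{u} + \sum_k x_k \mathbf{f_k})$. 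Since $\Psi$ has algebraic coefficients, by inspection of the normal-form algorithm the vectors $\mathbf{f_k}$ may be chosen to lie in the algebraic closure of the field generated by the coefficients of $\Psi$, hence $\Psi^\prime$ inherits algebraic coefficients.

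To actually manufacture the extra $\mathbf{x}$-variables in the integral, I would use a standard bump-integration trick. Fix a smooth non-negative function $\eta \in \mathcal{C}(\emptyset)$ with compact support on $\mathbb{R}^{d^\prime - (d-m)}$ and $\int \eta = 1$, and multiply the right-hand side of (\ref{before normal form}) by $1 = N^{-(d^\prime-(d-m))}\int \eta(\mathbf{x}/N)\,d\mathbf{x}$. Applying Fubini and then the translation substitution $\mathbf{u} \mapsto \mathbf{u} - \sum_k x_k \mathbf{f_k}$ inside the $\mathbf{u}$-integral (which has unit Jacobian), the linear forms $\psi_j(\mathbf{u})$ become $\psi_j(\mathbf{u} - \sum_k x_k \mathbf{f_k}) = \psi_j^\prime(\mathbf{u}, -\mathbf{x})$. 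After the cosmetic substitution $\mathbf{x} \mapsto -\mathbf{x}$, the expression (\ref{before normal form}) becomes
\begin{equation*}
\frac{1}{N^{d^\prime}}\left\vert \int\limits_{\mathbb{R}^{d^\prime}} F_2((\mathbf{u},\mathbf{x})/N) \prod_{j=1}^d g_j(\psi_j^\prime(\mathbf{u},\mathbf{x}) + a_j)\, d\mathbf{u}\, d\mathbf{x}\right\vert,
\end{equation*}
where $F_2(\mathbf{u},\mathbf{x}) := \eta(-\mathbf{x})F_1(\mathbf{u} + \sum_k x_k \mathbf{f_k})$. This is exactly the shape (\ref{just after introducing R primed}).

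The remaining verifications are routine. That $F_2$ takes values in $[0,1]$ is immediate after normalising $\eta$; the support bound $\Rad(F_2) = O_{C,L,\varepsilon}(1)$ follows from $\Rad(F_1) = O_{C,L,\varepsilon}(1)$, the compactness of $\supp \eta$, and $\Vert \mathbf{f_k}\Vert_\infty = O_L(1)$. The Lipschitz constant of $F_2$ is controlled by the product rule, using that the affine map $(\mathbf{u},\mathbf{x}) \mapsto \mathbf{u} + \sum_k x_k \mathbf{f_k}$ has Lipschitz norm $O_L(1)$, that $F_1$ has Lipschitz constant $O_L(\sigma^{-1})$, and that $\eta$ is $O(1)$-Lipschitz; together these give $O_L(\sigma^{-1})$, as claimed. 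I do not foresee a substantive obstacle — the hard work has been subcontracted to Lemma \ref{Lemma normal form algorithm}, and everything else is translation-invariance of Lebesgue measure.
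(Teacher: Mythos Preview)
Your proposal is correct and follows essentially the same approach as the paper: apply Lemma \ref{Lemma normal form algorithm} to obtain $\Psi'$, then introduce the extra variables by integrating against a non-negative bump function and using translation invariance, exploiting the fact that the inner $\mathbf{u}$-integral is independent of the new variables to push the absolute value through. The paper's function $P(\mathbf{y}) = \prod_i \rho(y_i)$ plays exactly the role of your $\eta$, and the definition $F_2(\mathbf{y},\mathbf{x}) = F_1(\mathbf{x} + \sum y_i\mathbf{f_i})P(\mathbf{y})$ matches yours up to the cosmetic sign flip and variable ordering.
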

\begin{proof}
We apply Lemma \ref{Lemma normal form algorithm} to $\Psi$. Therefore, there is a natural number $k = O(1)$ such that, for \emph{any} real numbers $y_1,\dots,y_{k}$, (\ref{before normal form}) is equal to 
\begin{equation}
\label{Fixed dummey variables}
\Big\vert \frac{1}{N^{d-m}}\int\limits_{\mathbf{x} \in \mathbb{R}^{d-m}}F_1((\mathbf{x} + \sum\limits_{i=1}^{k} y_i \mathbf{f_i})/N)\prod\limits_{j=1}^d g_j(\psi_j^\prime(\mathbf{y},\mathbf{x}) + a_j) \, d\mathbf{x}\Big\vert,
\end{equation}
\noindent where 
\begin{itemize}
\item $\mathbf{f_1},\cdots,\mathbf{f_{k}} \in \mathbb{R}^{d-m}$ are some vectors that satisfy $\Vert\mathbf{f_i}\Vert_\infty = O_{\Psi}(1)$ for each $i$ at most $k$;
\item for each $j$ at most $d$, $\psi^\prime_j:\mathbb{R}^{k}\times\mathbb{R}^{d-m} \longrightarrow \mathbb{R}$ is linear, and $(\psi_1^\prime,\cdots,\psi^\prime_d): = \Psi^\prime: \mathbb{R}^{k} \times \mathbb{R}^{ d-m} \longrightarrow \mathbb{R}^d$ is defined by \[ \Psi^\prime(\mathbf{y},\mathbf{x}): = \psi(\mathbf{x} + \sum\limits_{i=1}^{k} y_i\mathbf{f_i});\]
\item $\Psi^\prime$ is in $s$-normal form, for some $s=O(1)$.
\end{itemize}
\noindent We remark that the right-hand side of expression (\ref{Fixed dummey variables}) is independent of $\mathbf{y}$, as it was obtained by applying the change of variables $\mathbf{x} \mapsto \mathbf{x} + \sum_{i=1}^{k} y_i\mathbf{f_i} $.\\

Now, with $\rho$ as fixed in Section \ref{section conventions}, let $P:\mathbb{R}^{k}\longrightarrow [0,1]$ be defined by \[P(\mathbf{y}) := \prod\limits_{i=1}^{k} \rho(y_i).\] Integrating over $\mathbf{y}$, we have that (\ref{Fixed dummey variables}) is at most a constant times
\begin{align}
 &
\frac{1}{N^{d-m+k}} \int\limits_{\mathbf{y} \in \mathbb{R}^{k}} P(\mathbf{y}/N) \Big\vert\int\limits_{\mathbf{x}\in\mathbb{R}^{d-m}} F_1((\mathbf{x} + \sum\limits_{i=1}^{k} y_i \mathbf{f_i})/N)\prod\limits_{j=1}^{d} g_j(\psi_j^\prime(\mathbf{y},\mathbf{x}) + a_j) \, d\mathbf{x}\Big\vert \, d\mathbf{y} \nonumber \\
\label{equation after itnegrating over w}
 &\ll \Big\vert\frac{1}{N^{d-m+k}} \int\limits_{\substack{\mathbf{x} \in \mathbb{R}^{d-m}\\ \mathbf{y} \in \mathbb{R}^{k}}} F_2((\mathbf{y},\mathbf{x})/N) \prod\limits_{j=1}^{d} g_j(\psi_j^\prime(\mathbf{y},\mathbf{x}) + a_j) \, d\mathbf{x} \, d\mathbf{y}\Big\vert,
\end{align}
where the function $F_2:\mathbb{R}^{d-m+k} \longrightarrow [0,1]$ is defined by \[F_2(\mathbf{y},\mathbf{x}):= F_1(\mathbf{x} + \sum\limits_{i=1}^{k} y_i \mathbf{f_i}) P(\mathbf{y}). \] Notice in (\ref{equation after itnegrating over w}) that we were able to move the absolute value signs outside the integral, as $P$ is positive and the integral over $\mathbf{x}$ is independent of $\mathbf{y}$ (so in particular has constant sign).

Letting $d^\prime: = d-m+k$, the lemma is proved.
\end{proof}

\section{Gowers-Cauchy-Schwarz argument}
\label{section Cauchy Schwarz argument}
This section will be devoted to proving the following theorem, which lies at the heart of the proof of our main results.  
\begin{Theorem}[Gowers-Cauchy-Schwarz argument]
\label{Theorem Cauchy}
Let $N,t,d,s$ be natural numbers, and let $\gamma, \eta,\sigma,C$ be positive constants. Let $a_1,\dots,a_d$ be fixed real numbers that satisfy $\vert a_j\vert \leqslant CN$ for all $j$. Let $(\psi_1,\dots,\psi_t) = \Psi:\mathbb{R}^{d} \longrightarrow \mathbb{R}^t$ be a linear map with algebraic coefficients, which is in $s$-normal form. Let $F:\mathbb{R}^d \longrightarrow [0,1]$ be a Lipschitz function supported on $[-1,1]^d$ and with Lipschitz constant at most $\sigma^{-1}$. Let $g_1,\dots,g_t:[-2N,2N]\longrightarrow \mathbb{R}$ be any bounded measurable functions that satisfy $\vert g_j(x)\vert \leqslant (\nu_{N,w}^\gamma\ast \chi)(x+a_j)$ for all $x$. Suppose that \[\min\limits_{j\leqslant d}\Vert g_j \Vert _{U^{s+1}(\mathbb{R},2N)} =o(1)\] as $N \rightarrow \infty$. Then if $\eta$ and $\gamma$ are small enough in terms of $\Psi$ and the dimensions $t$, $d$, and $s$, 
\begin{equation}
\label{equation abstract Gen von Neu}
\frac{1}{N^{d}} \int\limits_{\mathbf{x}\in \mathbb{R}^d} \Big(\prod\limits_{j=1}^t g_j (\psi_j(\mathbf{x}))\Big)  F(\mathbf{x}/N)\, d\mathbf{x}=  o(1)
\end{equation}
as $N\rightarrow \infty$, where the error term can depend on $C$, $\sigma$, $\eta$, $\gamma$, $\Psi$, and the first $o(1)$ term.
\end{Theorem}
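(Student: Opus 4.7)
The plan is to run the standard Gowers–Cauchy–Schwarz (GCS) argument in the $s$-normal form setting, analogous to \cite[Appendix C]{GT10} and to the corresponding argument of \cite{Wa17}, but with the pseudorandomness of $\nu_{N,w}^{\gamma}$ replaced by the linear inequalities condition proved in Part \ref{part pseudorandomness}. More precisely, let $i\leqslant t$ be an index at which $\Vert g_i\Vert_{U^{s+1}(\mathbb{R},2N)}$ attains its minimum. Since $\Psi$ is in $s$-normal form, there is a set $J_i = \{\mathbf{e}_{j_1},\dots,\mathbf{e}_{j_{s+1}}\}$ of basis vectors, of cardinality at most $s+1$, such that $\psi_i$ is non-zero on each element of $J_i$ while each $\psi_{i^\prime}$ with $i^\prime\neq i$ vanishes on at least one element $\mathbf{e}_{j_{k(i^\prime)}}\in J_i$. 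Writing $\mathbf{x} = (x_{j_1},\dots,x_{j_{s+1}},\mathbf{y})$ and introducing shifts $h_1,\dots,h_{s+1}$, one applies the Cauchy–Schwarz inequality to the integral in (\ref{equation abstract Gen von Neu}) once in each coordinate direction $\mathbf{e}_{j_k}$. After $s+1$ such applications, the $g_j$ with $j\neq i$ collapse (in pairs, via the vanishing witnessed by $J_i$) into absolute values, which we majorise by $(\nu_{N,w}^{\gamma}\ast \chi)(\psi_j(\,\cdot\,) + a_j)$; meanwhile the $g_i$-factor becomes the full $(s+1)$-dimensional cube $\prod_{\bo\in\{0,1\}^{s+1}} g_i(\psi_i(\mathbf{x}+\sum_k \omega_k h_k\mathbf{e}_{j_k}))$, which, after the obvious manipulations, is exactly the integrand of $\Vert g_i\Vert_{U^{s+1}(\mathbb{R},2N)}^{2^{s+1}}$.

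Concretely, after these Cauchy–Schwarz steps the bound on the absolute value of (\ref{equation abstract Gen von Neu}) reduces to showing that an expression of the shape
\begin{equation*}
\frac{1}{N^{d + s+1}}\int\!\!\int F_{\star}\!\!\left(\tfrac{(\mathbf{x},\mathbf{h})}{N}\right)\Bigl(\prod_{\bo\in\{0,1\}^{s+1}} g_i(\psi_i(\mathbf{x}+\bo\cdot\mathbf{h}\,\mathbf{e}))\Bigr)\Bigl(\prod_{\alpha} (\nu_{N,w}^{\gamma}\ast\chi)(\varphi_{\alpha}(\mathbf{x},\mathbf{h})+b_{\alpha})\Bigr)\,d\mathbf{x}\,d\mathbf{h}
\end{equation*}
is $o(1)$, where $F_{\star}$ is a Lipschitz cut-off inherited from $F$, the $\varphi_{\alpha}$ are linear forms produced by the GCS doubling, and the $b_{\alpha}$ are shifts bounded by $O(N)$. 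One first verifies that the combined system of linear forms $(\psi_i(\mathbf{x}+\bo\cdot\mathbf{h}\,\mathbf{e}))_{\bo}$ together with $(\varphi_{\alpha})_{\alpha}$ has finite Cauchy–Schwarz complexity; this is a direct consequence of the normal form property exactly as in \cite[Appendix C]{GT10}, and uses only $s$, $d$, $t$ and the algebraic coefficients of $\Psi$.

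The main obstacle is the presence of the sieve weight product $\prod_{\alpha}(\nu_{N,w}^{\gamma}\ast\chi)(\varphi_{\alpha}(\mathbf{x},\mathbf{h})+b_{\alpha})$, which must be replaced by a bounded quantity in order to close the argument by the $U^{s+1}$-Gowers norm. Here one appeals to Corollary \ref{Corollary switching functions}, applied to the augmented linear map built from all the $\varphi_{\alpha}$, to replace each $(\nu_{N,w}^{\gamma}\ast\chi)$-factor by $(\Lambda_{\mathbb{Z}/W^{*}\mathbb{Z}}\ast\chi)$ up to an $o(1)$ error, where $w^{*}\leqslant w$ is an auxiliary slowly-growing scale. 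The resulting weight is handled by Lemma \ref{Lemma approximation of Q}: it expands the $W^{*}$-periodic factor (viewed, after integrating out $\mathbf{x}$, as a function of $\mathbf{h}$) as a short Fourier series $\sum_{\Vert\mathbf{k}\Vert_\infty\leqslant(\log\log W^{*})^{O(1)}} f_{1}(\mathbf{k})\,e(\mathbf{k}\cdot(\cdot)/W^{*})$. This trades the pseudorandom product for $(\log\log W^{*})^{O(1)}$ many characters $e(\beta\cdot\mathbf{h})$ of fixed modulus dividing $W^{*}$.

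With this replacement in place, for each fixed character $e(\beta\cdot\mathbf{h})$ the integral becomes, after rescaling and using a standard direct-insertion-of-phases argument (a version of \cite[Lemma B.7]{GT10}) the modulation invariance of the Gowers norm bound
\begin{equation*}
\Bigl|\int \prod_{\bo\in\{0,1\}^{s+1}} g_i(\psi_i(\mathbf{x}+\bo\cdot\mathbf{h}\,\mathbf{e}))\,e(\beta\cdot\mathbf{h})\,d\mathbf{x}\,d\mathbf{h}\Bigr|\ll N^{d+s+1}\,\Vert g_i\Vert_{U^{s+1}(\mathbb{R},2N)}^{2^{s+1}}.
\end{equation*}
Since the number of characters is at most $(\log\log W^{*})^{O(1)}$ while $\Vert g_i\Vert_{U^{s+1}(\mathbb{R},2N)} = o(1)$ by hypothesis, provided $w^{*}$ is chosen to grow sufficiently slowly (so that $(\log\log W^{*})^{O(1)}$ is absorbed by the decay of the Gowers norm), the whole quantity is $o(1)$ and the theorem follows. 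The hard part is thus the coupling between the pseudorandomness replacement and the short-Fourier expansion: this is exactly why the two local scales $W,W^{*}$ were introduced, and the verification that $\gamma,\eta$ may be chosen jointly small enough to make every application of Corollary \ref{Corollary switching functions} and Lemma \ref{Lemma approximation of Q} valid is the most delicate bookkeeping step.
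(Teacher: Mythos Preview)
Your outline follows the same overall strategy as the paper (weighted Gowers--Cauchy--Schwarz in $s$-normal form, replace the $\nu$-weights by $\Lambda_{\mathbb{Z}/W^*\mathbb{Z}}$-weights, Fourier-expand via Lemma~\ref{Lemma approximation of Q}, then bound by the Gowers norm with $w^*$ growing slowly), but one step as written would fail.

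The replacement of the $(\nu_{N,w}^\gamma\ast\chi)$-factors by $(\Lambda_{\mathbb{Z}/W^*\mathbb{Z}}\ast\chi)$ cannot be done by a direct appeal to Corollary~\ref{Corollary switching functions}: that corollary applies only to integrals whose integrand consists solely of factors $(f_j\ast\chi)(\varphi_j(\cdot)+r_j)$ with $f_j\in\{\nu_{N,w}^\gamma,\Lambda_{\mathbb{Z}/W_j\mathbb{Z}}\}$ against a \emph{smooth} cut-off $H\in\mathcal{C}(P)$. In your expression the cube $\prod_{\bo}g_i(\cdots)$ is still present and is neither of that form nor smooth, so it cannot be absorbed into $H$. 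The paper handles this by a further Cauchy--Schwarz step (Lemma~\ref{Lemma comparing P and Q}): integrating out the non-$(z,\mathbf{h})$ variables defines functions $P_{\mathbf{a},N}(z,\mathbf{h})$ and $Q_{\mathbf{a},N}(z,\mathbf{h})$ built from $\nu$ and $\Lambda_{\mathbb{Z}/W^*\mathbb{Z}}$ respectively, and one bounds $\int |P-Q|\cdot|g_i\text{-cube}|$ by $\bigl(\int(P-Q)^2\cdot(\nu\ast\chi)\text{-cube}\bigr)^{1/2}\bigl(\int(\nu\ast\chi)\text{-cube}\bigr)^{1/2}$. Expanding $(P-Q)^2$ eliminates all $g_i$'s, and \emph{only then} does Corollary~\ref{Corollary switching functions} apply to each of the three resulting pure $\nu$/$\Lambda$ terms, yielding the cancellation. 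This step also requires verifying that the doubled system (two independent copies of the $\varphi_\alpha$'s, together with the cube forms) has finite Cauchy--Schwarz complexity, which the paper imports from \cite[pp.~1826--1827]{GT10}.

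Two further points you should address. First, the weighted Gowers--Cauchy--Schwarz (Proposition~\ref{Proposition weighted gen von Neu}) uses product measures on the $s+1$ distinguished coordinates, so one must first decompose $F$ into $O(\delta^{-d})$ smooth product-boxes via Lemma~\ref{Lemma approximating Lipschitz functions by smooth boxes}; your ``$F_\star$ a Lipschitz cut-off inherited from $F$'' skips this. Second, Lemma~\ref{Lemma approximation of Q} leaves a smooth factor $b_{\mathbf{a},N}((z,\mathbf{h})/N)$ in front of the short Fourier sum, which must itself be Fourier-approximated (via \cite[Lemma~B.4]{Wa17}) before the phases can be spread across the $g_{\bo}$'s and the modulation-invariance argument applied.
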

\noindent For the definition of $\Vert g_j\Vert_{U^{s+1}(\mathbb{R},2N)}$, the reader may consult expression (\ref{real gowers norm}).\\

Theorem \ref{Theorem Cauchy} is closely analogous to \cite[Proposition $7.1^{\prime\prime}$]{GT10}, and the first half of our proof will follow the proof of that proposition closely (and in particular will contain no new ideas). However, new technicalities will become apparent as the argument progresses. In particular it will become important to understand the structure of a function that we will come to denote by $Q_{\mathbf{a},N}(z,\mathbf{h})$, and this will not be easy, in that we will have to appeal to the highly technical Lemma \ref{Lemma approximation of Q}. This observation and the subsequent analysis constitute the main new elements of the proof of Theorem \ref{Theorem Cauchy}. 
\begin{proof}
We begin by replacing $F$ with a cut-off function that will be easier to work with during the subsequent manipulations. Indeed, let us pick a positive parameter $\delta \in (0,1]$. By Lemma \ref{Lemma approximating Lipschitz functions by smooth boxes} there is some parameter $k = O(\delta^{-d})$ and some smooth functions $F_1,\dots,F_k:\mathbb{R}^d \longrightarrow [0,1]$ such that \[\Vert F - \sum\limits_{i=1}^k F_i \Vert_\infty = O(\delta \sigma^{-1})\] and each $F_i$ is of the form \[ F_i(\mathbf{x}) = c_{i,F} \prod\limits_{j=1}^d b_j^i(x_j/N),\] where $\vert c_{i,F}\vert \leqslant 1$ and the functions $b_j^i:\mathbb{R} \longrightarrow [0,1]$ are smooth, supported on $[-2,2]$, and satisfy $b_{j}^i \in \mathcal{C}(\delta)$.

Therefore, we may write the left-hand side of (\ref{equation abstract Gen von Neu}) as the sum of $O(\delta^{-d})$ expressions of the form
\begin{equation}
\label{stone weierstrass reduction}
c_{i,F}\frac{1}{N^{d}} \int\limits_{\mathbf{x}\in\mathbb{R}^d} \prod\limits_{l=1}^t g_l (\psi_l(\mathbf{x})) \prod\limits_{j=1}^d b_j^i(x_j/N) \, d\mathbf{x},
\end{equation}
plus an error of size at most
\begin{equation}
\label{error from stone weierstrass}
\delta \sigma^{-1}\frac{1}{N^{d}} \int\limits_{\substack{\mathbf{x}\in\mathbb{R}^d \\ \Vert \mathbf{x}\Vert_\infty \ll N} } \prod\limits_{l=1}^t (\nu_{N,w}^\gamma\ast\chi) (\psi_l(\mathbf{x}) + a_l).
\end{equation} 
\noindent Since $\Psi$ is in $s$-normal form, for some finite $s$, it follows that $\Psi$ has finite Cauchy-Schwarz complexity (see Definition \ref{Definition finite complexity}). Therefore, by Corollary \ref{Corollary more upper bounds}, expression (\ref{error from stone weierstrass}) has size $O_{C,\eta,\gamma}(\delta \sigma^{-1})$.  \\

We now arrange our notation for the rest of the proof, in part to mimic the notation that is used in the proof of \cite[Proposition $7.1^{\prime\prime}$]{GT10}. This will hopefully increase the readability for those who are familiar with \cite{GT10}. Indeed, without loss of generality we may assume that \[\min_{j\leqslant d}(\Vert g_j\Vert_{U^{s+1}[N]}) = \Vert g_1\Vert_{U^{s+1}[N]}.\] Since $\Psi$ is in $s$-normal form there is a set $J \subset \{\mathbf{e_1},\dots,\mathbf{e_d}\}$ of standard basis vectors with $\vert J\vert \leqslant s+1$ and for which $\prod_{j\in J} \psi_i(\mathbf{e_j})$ vanishes for $i \neq 1$ and is nonzero for $i=1$. By the nested property of Gowers norms we may assume that $\vert J\vert = s+1$, and by reordering the variables we can assume without loss of generality that $\prod_{j=1}^{s+1} \psi_i(\mathbf{e_j})$ vanishes for $i\neq 1$ and is nonzero for $i=1$. It will be useful to rename the first $s+1$ variables $\mathbf{x}$ and the remainder as $\mathbf{y}$. If $d = s+1$ then the variable $\mathbf{y}$ is trivial. Note that the coefficients $\psi_1(\mathbf{e_j})$ are non-zero for all $j \in [s+1]$, so, by rescaling the variables $\mathbf{x}$, we may assume that \[ \psi_1(\mathbf{x},\mathbf{y}) = x_1+ \dots + x_{s+1} + \psi_1(\mathbf{0},\mathbf{y}).\] 

For $i \leqslant t$, let $\Omega(i)$ denote\footnote{This is the notation used in \cite{GT10}. In this paper it will never risk being confused with the meaning of $\Omega$ in asymptotic notation.} the set \[ \Omega(i): = \{j\in [s+1]: \psi_i(\mathbf{e_j}) \neq 0\}.\] Note that $\Omega(1) = [s+1]$ and $\Omega(i)\neq [s+1]$ for $i=2,\dots,t$. \\

Now, for any set $B\subseteq [s+1]$ and vector $\mathbf{x}\in \mathbb{R}^{s+1}$, we define the vector $\mathbf{x_B}$ to be the restriction of $\mathbf{x}$ to the coordinates in $B$. Then, for any set $B\subseteq [s+1]$ and vector $\mathbf{y} \in \mathbb{R}^{d-s-1}$, we define \[ G_{B,\mathbf{y}}(\mathbf{x_B}): = \prod\limits_{i\in [t] : \Omega(i) = B} g_i(\psi_i(\mathbf{x_B},\mathbf{y})),\] where we have abused notation slightly in viewing $\psi_i$ only as a function of those variables $x_j$ on which it depends. \\

We also use $b:\mathbb{R}^a \longrightarrow \mathbb{R}$ (for some implied dimension parameter $a$) to denote a smooth function in $\mathcal{C}(C,\delta,\eta,\gamma,\Psi)$. The exact function may change from line to line.

With this notation, by picking $\delta$ to be a suitably slowly decaying function of $N$ we see that Theorem \ref{Theorem Cauchy} would follow from the upper bound 
\begin{align}
\label{equation first omega product}
\frac{1}{N^{d-s-1}} \int\limits_{\mathbf{y}\in \mathbb{R}^{d-s-1}}\frac{1}{N^{s+1}}\int\limits_{\mathbf{x}\in \mathbb{R}^{s+1}}\prod\limits_{B\subseteq [s+1]} G_{B,\mathbf{y}}(\mathbf{x_B}) \prod\limits_{j=1}^{s+1} b_j(x_j/N) \prod\limits_{k=s+2}^{d} b_k(y_{k-s-1}/N) \, d\mathbf{x} \, d\mathbf{y} \nonumber \\=o_{C,\delta, \eta,\gamma,\Psi}(1).
\end{align}  Our entire task is now to establish (\ref{equation first omega product}). From this point onwards, we will allow any error term or implied constant to depend on $C$, $\delta$, $\eta$, $\gamma$, and $\Psi$, without notating so explicitly. \\

We proceed by considering the following version of \cite[Corollary B.4]{GT10}. 

\begin{Proposition}[The weighted generalised von Neumann theorem]
\label{Proposition weighted gen von Neu}
Let $A$ be a finite set, and let $(\mu_\alpha)_{\alpha\in A}$ be a finite collection of compactly supported Borel probability measures on $\mathbb{R}$. For every $B\subseteq A$, let $\mu_B$ denote the product measure $\bigotimes\limits_{\alpha\in B} \mu_\alpha$ on $\mathbb{R}^B$, and let $f_B: \mathbb{R}^B\longrightarrow \mathbb{C}$ and $\theta_B:\mathbb{R}^B\longrightarrow \mathbb{R}_{\geqslant 0}$ be integrable functions such that $\vert f_B(\mathbf{x_B})\vert\leqslant \theta _B(\mathbf{x_B})$ for all $\mathbf{x_B} \in \mathbb{R}^B$. Then 
\begin{equation}
\label{equation weighted generalised von Neumann theorem}
\Big\vert \int\limits_{\mathbf{x_A} \in \mathbb{R}^A} \Big(\prod\limits_{B\subseteq A} f_B (\mathbf{x_B}) \Big)\, d\mu_A(\mathbf{x_A}) \Big\vert \leqslant \Vert f_A\Vert _{\square^A(\theta;\mu_A)} \prod\limits_{B\subsetneq A} \Vert \theta_B\Vert _{\square^B(\theta;\mu_B)} ^{2^{\vert B\vert - \vert A\vert}},
\end{equation}
where for any $B\subseteq A$ and $h_B:\mathbb{R}^B\longrightarrow \mathbb{C}$ we define $\Vert h_B\Vert _{\square^B(\theta;\mu_B)}$ to be the unique nonnegative real number satisfying 
\begin{align}
\label{equation definition of weighted gowers norm}
&\Vert h_B\Vert _{\square^B(\theta;\mu_B)}^{2^{\vert B\vert }}: = \nonumber \\
 &\int\limits_{\mathbf{x_B^{(0)}},\mathbf{x_B^{(1)}}\in \mathbb{R}^B} \Big (\prod\limits_{\boldsymbol{\omega_B} \in \{0,1\}^B} \mathscr{C}^{\vert \boldsymbol{\omega_B}\vert } h_B(\mathbf{x_B^{(\boldsymbol{\omega_B})}}) \Big ) \prod\limits_{C\subsetneq B} \prod\limits_{\boldsymbol{\omega_C} \in \{0,1\}^C} \theta_C(\mathbf{x_C^{(\boldsymbol{\omega_C})}}) \, d\mu_B(\mathbf{x_B^{(0)}}) \, d\mu_B(\mathbf{x_B^{(1)}}).
\end{align}
\noindent Here, as before, we use $\mathbf{x_C}$ to denote the restriction of $\mathbf{x_B}$ to $\mathbb{R}^C$. 
\end{Proposition}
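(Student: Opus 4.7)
My plan is to proceed by induction on $\vert A \vert$, applying the Cauchy--Schwarz inequality once for each element of $A$. The base case $\vert A\vert = 0$ is trivial, since both sides reduce to $\vert f_{\emptyset}\vert$. For the inductive step, pick some $\alpha \in A$ and set $A' = A \setminus \{\alpha\}$. Group the product $\prod_{B \subseteq A} f_B(\mathbf{x_B})$ into two families: the factors $f_{B}(\mathbf{x_B})$ with $B \subseteq A'$, which do not depend on $x_\alpha$, and the factors $f_{B' \cup \{\alpha\}}(\mathbf{x_{B'}}, x_\alpha)$ with $B' \subseteq A'$, which do. Integrate out $x_\alpha$ first, and then apply Cauchy--Schwarz in the remaining $\vert A'\vert$ variables against the weight $\prod_{B' \subseteq A'} \theta_{B'}(\mathbf{x_{B'}})$, using the pointwise domination $\vert f_{B'}\vert \leqslant \theta_{B'}$ on the factors that do not depend on $x_\alpha$.

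After squaring, the Cauchy--Schwarz step duplicates the variable $x_\alpha$ into a pair $(x_\alpha^{(0)}, x_\alpha^{(1)})$, replaces each factor $f_{B' \cup \{\alpha\}}$ by the pair $f_{B'\cup\{\alpha\}}(\mathbf{x_{B'}}, x_\alpha^{(0)}) \overline{f_{B'\cup\{\alpha\}}}(\mathbf{x_{B'}}, x_\alpha^{(1)})$, and multiplies by the leftover weight. The resulting $\vert A'\vert$-dimensional integral has essentially the same shape as the original, but with $A$ replaced by $A'$ and with each factor now being a tensor product over $\bo_\alpha \in \{0,1\}$. Iterating this step over all $\alpha \in A$, one duplicates every coordinate, so that after $\vert A\vert$ applications of Cauchy--Schwarz the only $f$-factor surviving on the right is a single copy of the quantity
\[
\prod_{\boldsymbol{\omega_A} \in \{0,1\}^A} \mathscr{C}^{\vert \boldsymbol{\omega_A}\vert} f_A(\mathbf{x_A^{(\boldsymbol{\omega_A})}}),
\]
while every proper subset $B \subsetneq A$ contributes weight factors of the form appearing in (\ref{equation definition of weighted gowers norm}).

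Reading off the exponent $2^{-\vert A\vert}$ that accumulates from the $\vert A\vert$ square-root operations, and then comparing to the definition of $\Vert \cdot \Vert_{\square^A(\theta;\mu_A)}$, produces the $\Vert f_A\Vert_{\square^A(\theta;\mu_A)}$ factor. The weight contributions from intermediate subsets $B \subsetneq A$ are exactly what is needed to form a power $\Vert \theta_B\Vert_{\square^B(\theta;\mu_B)}^{2^{\vert B\vert - \vert A\vert}}$, where the exponent comes from the number of copies of $\theta_B$ generated by doubling the coordinates in $A \setminus B$. This bookkeeping is the main technical obstacle: one must check at each Cauchy--Schwarz step that a proper-subset weight which appears duplicated is correctly packaged as a contribution to the box-norm of $\theta_B$ over its natural index set, rather than being left hanging as an extraneous integral. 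The nested structure of the definition in (\ref{equation definition of weighted gowers norm})---namely that $\Vert \theta_B\Vert_{\square^B(\theta;\mu_B)}$ itself uses all the $\theta_C$ for $C \subsetneq B$---is precisely designed so that the accounting closes up.

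In practice I would carry out the induction carefully, labeling the coordinates of $A$ in an arbitrary order $\alpha_1, \ldots, \alpha_{\vert A\vert}$, and at the $k$-th step tracking two separate products: one over the factors that are still being estimated (originating from $f_A$ and from the $f_B$ with $B \supseteq \{\alpha_1,\ldots,\alpha_k\}$), and one over the subset weights that have already been absorbed. This is essentially the argument used in \cite[Appendix B]{GT10} to prove their Corollary B.4, to which the statement of Proposition~\ref{Proposition weighted gen von Neu} reduces by choosing $A$, $\mu_A$, and the weights $\theta_B$ appropriately; thus it should be possible simply to cite that appendix, with the mild translation that their discrete averages are here replaced by integration against the product measure $\mu_A$.
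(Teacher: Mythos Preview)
Your proposal is correct and takes essentially the same approach as the paper: the paper's proof is simply the one-line remark that the argument is identical to that of \cite[Corollary B.4]{GT10} with summations replaced by integrals, which is exactly the conclusion you reach after sketching the iterated Cauchy--Schwarz bookkeeping. Your additional description of the inductive mechanism is accurate and is precisely what the cited appendix does.
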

\begin{proof}
The proof is identical to the proof of \cite[Corollary B.4]{GT10}, replacing all summations with integrals, and is a consequence of the Gowers-Cauchy-Schwarz inequality. 
\end{proof}

We now apply this proposition to the left-hand side of (\ref{equation first omega product}) above. Observe that we have the pointwise bounds $\vert G_{B,\mathbf{y}}(\mathbf{x_B})\vert \ll \theta_{B,\mathbf{y}} (\mathbf{x_B})$, where \[ \theta_{B,\mathbf{y}}(\mathbf{x_B}): = \prod\limits_{i\in [t]: \Omega(i) = B} (\nu_{N,w}^\gamma\ast \chi) (\psi_i(\mathbf{x_B},\mathbf{y}) + a_i).\] Therefore, applying Proposition \ref{Proposition weighted gen von Neu} by taking $A$ to be the set $[s+1]$, each $\mu_\alpha$ to be proportional to $(1/N)b_j(x_j/N) dx_j$, and $\theta_B$ to be the function $\theta_{B,\mathbf{y}}$, we establish that the left-hand side of (\ref{equation first omega product}) is
\begin{align}
\label{after Gowers cauchy Schwarz}
\ll \frac{1}{N^{d-s-1}} \int\limits_{\mathbf{y} \in \mathbb{R}^{d-s-1}} \Vert G_{[s+1],\mathbf{y}} \Vert _{\square^{[s+1]}(\theta_{\mathbf{y}};\mu_{[s+1]})}\prod\limits_{B\subsetneq [s+1]} \Vert \theta_{B,\mathbf{y}} \Vert _{\square^B (\theta_{\mathbf{y}}; \mu_B)}^{2^{\vert B\vert - s- 1}}\nonumber \\ \prod\limits_{k=s+2}^{d} b_k(y_{k-s-1}/N) \, d\mathbf{y}. 
\end{align}
Observe that \[G_{[s+1],\mathbf{y}}(\mathbf{x_{[s+1]}}) = g_1(x_1+\dots+x_{s+1} + \psi_1 (0,\mathbf{y})),\] and so all the functions $g_j$ other than $g_1$ have  been eliminated. Experienced readers will note that, so far, we have been following \cite[Appendix C]{GT10} almost verbatim.\\

After applying H\"{o}lder's inequality to (\ref{after Gowers cauchy Schwarz}), we see that to establish (\ref{equation first omega product}) it suffices to prove
\begin{equation}
\label{very tricky thing involving inequalities}
\frac{1}{N^{d-s-1}}\int\limits_{\mathbf{y} \in\mathbb{R}^{d-s-1}}\Vert G_{[s+1],\mathbf{y}}\Vert^{2^{s+1}}_{\square^{[s+1]}(\theta_{\mathbf{y}}; \mu_{[s+1]})}\prod\limits_{k=1}^{d-s-1} b_k(y_{k}/N) \, d\mathbf{y}=o(1),
\end{equation}
and, for all $B\subsetneq [s+1]$,
\begin{equation}
\label{slightly less tricky thing involving inequalities}
\frac{1}{N^{d-s-1}}\int\limits_{\mathbf{y}\in\mathbb{R}^{d-s-1}} \Vert \theta_{B,\mathbf{y}}\Vert^{2^{\vert B\vert}}_{\square^B(\theta_{\mathbf{y}}; \mu_B)}\prod\limits_{k=1}^{d-s-1} b_k(y_{k}/N) \, d\mathbf{y} \ll 1.
\end{equation}
\noindent These two expressions correspond respectively to expressions (C.10) and (C.11) of \cite{GT10}. \\

Establishing (\ref{slightly less tricky thing involving inequalities}) is straightforward. Indeed, we expand the left-hand side, yielding (up to a multiplicative constant factor) the expression 
\begin{align}
\label{equation expanded out nu term}
\frac{1}{N^{d-s-1}} \int\limits_{\mathbf{y}\in \mathbb{R}^{d-s-1}} \frac{1}{N^{2\vert B\vert }} \int\limits_{\mathbf{x_{B}^{(0)}},\mathbf{x_{B}^{(1)}} \in \mathbb{R}^B} \prod\limits_{C\subseteq B} \prod\limits_{\boldsymbol{\omega_C} \in \{0,1\}^C} \prod\limits_{i\in [t]: \Omega(i) = C} (\nu_{N,w} \ast \chi)(\psi_i(\mathbf{x_C^{(\boldsymbol{\omega_C})}}, \mathbf{y})+ a_i) \nonumber \\
 \prod\limits_{j\in B} b_j(x_j^{(0)}/N)b_j(x_j^{(1)}/N)\prod\limits_{k=1}^{d-s-1} b_{k+s+1}(y_{k}/N) \, d\mathbf{x_{B}^{(0)}} \,d\mathbf{x_{B}^{(1)}} \, d\mathbf{y}.
\end{align} 

As noted in \cite[p. 1824]{GT10}, the system of forms given by \[ (\mathbf{y},\mathbf{x_B^{(0)}},\mathbf{x_B^{(1)}}) \mapsto \psi_i(\mathbf{x_C^{(\boldsymbol{\omega_C})}},\mathbf{y}),\] for each $C\subseteq B$, $\bo_\mathbf{C} \in \{0,1\}^C$ and $i \in [t]$ such that $\Omega(i) = C$, has finite Cauchy-Schwarz complexity (since $\Psi$ does). We may therefore apply the upper bound in Corollary \ref{Corollary more upper bounds} to expression (\ref{equation expanded out nu term}), and this immediately yields (\ref{slightly less tricky thing involving inequalities}).  \\

 It remains to prove (\ref{very tricky thing involving inequalities}), which will be a much more major undertaking. We introduce some space-saving notation, namely for any subset $B\subseteq [s+1]$ we define the indexing set \[I_B: = \{ (C,\boldsymbol{\omega_C},i): C\subsetneq B, \, \boldsymbol{\omega_C}\in \{0,1\}^C,\, \Omega(i) = C\}.\] If a product is taken over triples $\mathfrak{t} \in I_B$, we interpret $C$, $\bo_C$ and $i$ as coming from the triple $\mathfrak{t} = (C,\bo_C,i)$. For notational expedience we will also identify the space $\mathbb{R}^{I_B}$ with the space $\mathbb{R}^{\vert I_B\vert}$. 
 
 With this notation, the left-hand side of (\ref{very tricky thing involving inequalities}) expands as 
 \begin{align}
 \label{expanding tricky}
 &\frac{1}{N^{d+s+1}}\int\limits_{\substack{\mathbf{y}\in\mathbb{R}^{d-s-1} \\\mathbf{x_{[s+1]}^{(0)}},\mathbf{x_{[s+1]}^{(1)}} \in\mathbb{R}^{s+1}}} \Big(\prod\limits _{\boldsymbol{\omega}\in \{0,1\}^{s+1}} g_1\Big(\sum\limits_{j=1}^{s+1}x_j^{(\omega_j)}+\psi_1(\mathbf{0},\mathbf{y})\Big)\Big) \nonumber \\ 
 &\Big(\prod\limits_{\mathfrak{t} \in I_{[s+1]}} (\nu_{N,w}^\gamma \ast \chi)(\psi_i(\mathbf{x_C^{(\boldsymbol{\omega_C})}}, \mathbf{y}) + a_i)\Big) b((\mathbf{x_{[s+1]}^{(0)}},\mathbf{x_{[s+1]}^{(1)}},\mathbf{y})/N) \,d\mathbf{x_{[s+1]}^{(0)}}\,d\mathbf{x_{[s+1]}^{(1)}} \, d\mathbf{y}.
 \end{align}

We make the substitution $\mathbf{h}: = \mathbf{x_{[s+1]}^{(1)}} - \mathbf{x_{[s+1]}^{(0)}}$ and $z: = x_1^{(0)} + \dots + x_{s+1}^{(0)} + \psi_1(\mathbf{0},\mathbf{y})$. Given $\mathbf{h}$, $z$, $\mathbf{x_{[s]}^{(0)}}$ and $\mathbf{y}$ one can recover $\mathbf{x_{[s+1]}^{(0)}}$, $\mathbf{x_{[s+1]}^{(1)}}$ and $\mathbf{y}$, so the change of variables is invertible. Therefore we may bound (\ref{expanding tricky}) above by a constant (the Jacobian of the change of variables) times 
\begin{equation}
\label{expression with P}
\Big\vert\frac{1}{N^{s+2}}\int\limits_{(z,\mathbf{h}) \in \mathbb{R}^{s+2}} P_{\mathbf{a},N}(z,\mathbf{h}) \Big(\prod\limits_{\boldsymbol{\omega}\in\{0,1\}^{s+1}}g_1(z + \sum\limits_{j=1}^{s+1}\omega_j h_j)\Big) \, dz\, d\mathbf{h}\Big\vert
\end{equation}
where $P_{\mathbf{a},N}(z,\mathbf{h})$ is equal to 
\begin{equation}
\label{equation defining P} \frac{1}{N^{d-1}}\int\limits_{\mathbf{(x_{[s]}^{(0)}},\mathbf{y})\in \mathbb{R}^{d-1}} \Big(\prod\limits_{\mathfrak{t} \in I_{[s+1]}} (\nu_{N,w}^\gamma \ast \chi) (\varphi_{\mathfrak{t}}(z,\mathbf{h},\mathbf{x_{[s]}^{(0)}},\mathbf{y}) + a_i)\Big) b((z,\mathbf{h},\mathbf{x_{[s]}^{(0)}},\mathbf{y})/N) \, d\mathbf{x_{[s]}^{(0)}} \, d\mathbf{y}
\end{equation} for some linear functions $\varphi_{\mathfrak{t}} : \mathbb{R}^{d+s+1}\longrightarrow \mathbb{R}$.

To be precise, if $\mathfrak{t} = (C,\boldsymbol{\omega},i)$ then the expression $\varphi_{\mathfrak{t}}(z,\mathbf{h},\mathbf{x_{[s]}^{(0)}},\mathbf{y})$ is equal to \[ \sum\limits_{k=1}^{s} (\psi_i(\mathbf{e_k}) - \psi_i(\mathbf{e_{s+1}}))x_k^{(0)} -\psi_i(\mathbf{e_{s+1}}) \psi_1(0,\mathbf{y}) + \psi_i(0,\mathbf{y}) + c(z,\mathbf{h})_{\mathfrak{t}},\] where \[ c(z,\mathbf{h})_{\mathfrak{t}} = \psi_i(\mathbf{e_{s+1}})z + \sum\limits_{k=1}^{s+1} \psi_i(\mathbf{e_k})\omega_k  h_k.\] This expression is analogous to expression (C.14) of \cite{GT10}. We let $\mathbf{c}(z,\mathbf{h}) \in \mathbb{R}^{I_{[s+1]}}$ denote the vector $(c(z,\mathbf{h})_{\mathfrak{t}})_{\mathfrak{t} \in I_{[s+1]}}$. Most fortunately, the exact structure of the linear maps $\varphi_{\mathfrak{t}}$, save for the fact that they form a system with finite Cauchy-Schwarz complexity, will be unimportant. \\

Following the philosophy of \cite{GT08} and \cite{GT10}, our next manoeuvre will be to replace $P_{\mathbf{a},N}(z,\mathbf{h})$ with a simpler function. To that end, let $w^*: \mathbb{N} \longrightarrow \mathbb{R}_{\geqslant 0}$ be a function for which $w^*(N) \rightarrow \infty$ as $N\rightarrow \infty$ and $w^*(n) \leqslant w(n)$ for all $n$. Recall from Section \ref{section conventions} that $W^* = W^*(N) = \prod_{p\leqslant w^*(N)} p$. 

\begin{Lemma}[Comparing $P_{\mathbf{a},N}(z,\mathbf{h})$ and $Q_{\mathbf{a},N}(z,\mathbf{h})$]
\label{Lemma comparing P and Q}
Define $Q_{\mathbf{a},N}(z,\mathbf{h})$ to be equal to

\begin{equation} \label{equation defining Q z,h}
\frac{1}{N^{d-1}}\int\limits_{\mathbf{(x_{[s]}^{(0)}},\mathbf{y})\in \mathbb{R}^{d-1}} \Big(\prod\limits_{\mathfrak{t} \in I_{[s+1]}} (\Lambda_{\mathbb{Z}/W^*\mathbb{Z}} \ast \chi) (\varphi_{\mathfrak{t}}(z,\mathbf{h},\mathbf{x_{[s]}^{(0)}},\mathbf{y}) + a_i)\Big) b((z,\mathbf{h},\mathbf{x_{[s]}^{(0)}},\mathbf{y})/N) \, d\mathbf{x_{[s]}^{(0)}} \, d\mathbf{y},
\end{equation} where $b((z,\mathbf{h},\mathbf{x_{[s]}^{(0)}},\mathbf{y})/N)$ here denotes the same function as is present in (\ref{equation defining P}). Then expression (\ref{expression with P}) is equal to 
\begin{equation}
\frac{1}{N^{s+2}}\int\limits_{(z,\mathbf{h}) \in \mathbb{R}^{s+2}} Q_{\mathbf{a},N}(z,\mathbf{h}) \Big(\prod\limits_{\boldsymbol{\omega}\in\{0,1\}^{s+1}}g_1(z + \sum\limits_{j=1}^{s+1}\omega_j h_j)\Big) \, dz\, d\mathbf{h} + o(1),
\end{equation}
\noindent where the $o(1)$ may depend on the function $w^*$. 
\end{Lemma}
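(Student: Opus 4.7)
The plan is to show that the difference
\[
D := \frac{1}{N^{s+2}}\int_{(z,\mathbf{h})\in\mathbb{R}^{s+2}}\bigl(P_{\mathbf{a},N}-Q_{\mathbf{a},N}\bigr)(z,\mathbf{h})\prod_{\boldsymbol{\omega}\in\{0,1\}^{s+1}}g_1\!\Big(z+\sum_{j=1}^{s+1}\omega_j h_j\Big)\,dz\,d\mathbf{h}
\]
satisfies $|D|=o(1)$, which is enough to identify expression (\ref{expression with P}) with the $Q$-integral up to $o(1)$ as claimed. The first step is to exploit the pointwise bound $|g_1(x)|\le(\nu_{N,w}^\gamma\ast\chi)(x+a_1)$ and to apply Cauchy--Schwarz in $(z,\mathbf{h})$ against the nonnegative weight $\mathbf{w}(z,\mathbf{h}):=\prod_{\boldsymbol{\omega}\in\{0,1\}^{s+1}}(\nu_{N,w}^\gamma\ast\chi)(z+\boldsymbol{\omega}\cdot\mathbf{h}+a_1)$, which gives
\[
|D|^{2}\ll\Big(\frac{1}{N^{s+2}}\int\bigl(P_{\mathbf{a},N}-Q_{\mathbf{a},N}\bigr)^{2}\mathbf{w}\,dz\,d\mathbf{h}\Big)\cdot\Big(\frac{1}{N^{s+2}}\int\mathbf{w}\,dz\,d\mathbf{h}\Big).
\]
The second factor is $O(1)$ by Corollary \ref{Corollary more upper bounds} applied to the system of forms $(z,\mathbf{h})\mapsto z+\boldsymbol{\omega}\cdot\mathbf{h}$ (which has integer coefficients and finite Cauchy--Schwarz complexity).

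The substance of the argument is to show that the first factor is $o(1)$. Expanding $(P_{\mathbf{a},N}-Q_{\mathbf{a},N})^{2}=P^{2}-2PQ+Q^{2}$ and unfolding (\ref{equation defining P}) and (\ref{equation defining Q z,h}) produces three integrals over the doubled variable space $(z,\mathbf{h},\mathbf{x}^{(0)}_{[s]},\mathbf{y},\mathbf{x}^{(0)\prime}_{[s]},\mathbf{y}')$, each of the shape
\[
\frac{1}{N^{2(d-1)+s+2}}\int\prod_{\mathfrak{t}\in I_{[s+1]}}(f_\mathfrak{t}\ast\chi)(\varphi_\mathfrak{t}(z,\mathbf{h},\mathbf{x}^{(0)},\mathbf{y})+a_i)\prod_{\mathfrak{t}\in I_{[s+1]}}(f'_\mathfrak{t}\ast\chi)(\varphi_\mathfrak{t}(z,\mathbf{h},\mathbf{x}^{(0)\prime},\mathbf{y}')+a_i)\,\mathbf{w}\,b\,b',
\]
with $f_\mathfrak{t},f'_\mathfrak{t}\in\{\nu_{N,w}^\gamma,\Lambda_{\mathbb{Z}/W^*\mathbb{Z}}\}$ and $b,b'$ two copies of the smooth cut-off. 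The three integrals differ only in the choice of $f_\mathfrak{t}$ and $f'_\mathfrak{t}$, so one would apply Corollary \ref{Corollary switching functions} (with $w_j=w^*$ throughout) to switch all $\mathfrak{t}$-slots in either block from $\nu_{N,w}^\gamma$ to $\Lambda_{\mathbb{Z}/W^*\mathbb{Z}}$. This shows that the three integrals coincide up to $o(1)$, so $\int(P-Q)^{2}\mathbf{w}\,dz\,d\mathbf{h}=o(1)$.

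The main obstacle is verifying the hypothesis of Corollary \ref{Corollary switching functions}, namely the finite Cauchy--Schwarz complexity of the doubled system of linear forms (the algebraicity of coefficients is inherited from $\Psi$). Within each $\mathfrak{t}$-block, two distinct $\varphi_{\mathfrak{t}},\varphi_{\mathfrak{t}'}$ with the same $i$ differ in the $h_k$-coefficients $\psi_i(\mathbf{e_k})\omega_k$, while two with different $i,i'$ can only be parallel if $\psi_i$ is proportional to $\psi_{i'}$, forbidden by the finite Cauchy--Schwarz complexity of $\Psi$. A form $\varphi_\mathfrak{t}$ from the unprimed $\mathfrak{t}$-block and a form $\varphi_{\mathfrak{t}'}$ from the primed block involve disjoint sets of inner variables, so they could be parallel only if each were trivial in the other block's inner variables; but every $\varphi_\mathfrak{t}$ has nontrivial $(\mathbf{x}^{(0)}_{[s]},\mathbf{y})$ dependence, since a trivial dependence would force $\psi_i$ to be a scalar multiple of $\psi_1$, again forbidden because $\mathfrak{t}\in I_{[s+1]}$ forces $i\neq 1$. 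The forms $z+\boldsymbol{\omega}\cdot\mathbf{h}$ coming from the weight $\mathbf{w}$ are pairwise non-parallel and, having no $(\mathbf{x}^{(0)}_{[s]},\mathbf{y})$ or $(\mathbf{x}^{(0)\prime}_{[s]},\mathbf{y}')$ dependence, cannot be parallel to any $\varphi_\mathfrak{t}$. With finite Cauchy--Schwarz complexity confirmed, Corollary \ref{Corollary switching functions} delivers $\int(P-Q)^{2}\mathbf{w}=o(1)$, and the Cauchy--Schwarz bound above closes the argument.
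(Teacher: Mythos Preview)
Your proof is correct and follows essentially the same route as the paper's: bound $|g_1|$ by $(\nu_{N,w}^\gamma\ast\chi)(\cdot+a_1)$, apply Cauchy--Schwarz against the weight $\mathbf{w}$, bound $\int\mathbf{w}$ by Corollary~\ref{Corollary more upper bounds}, expand $(P-Q)^2$, and invoke Corollary~\ref{Corollary switching functions} on the doubled system. The only substantive difference is that the paper defers the finite Cauchy--Schwarz complexity verification of the doubled system (together with the forms $z+\boldsymbol{\omega}\cdot\mathbf{h}$) to \cite[pp.~1826--1827]{GT10}, whereas you sketch it directly; your sketch is correct, with the key observation that every $\varphi_{\mathfrak{t}}$ has nontrivial dependence on the inner variables $(\mathbf{x}^{(0)}_{[s]},\mathbf{y})$ because its vanishing would force $\psi_i=\psi_i(\mathbf{e_{s+1}})\psi_1$.
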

\begin{proof}
Considering the upper bound $\vert g_1(x)\vert \leqslant (\nu_{N,w}^\gamma \ast \chi) (x + a_1)$, it suffices to show that \[ \frac{1}{N^{s+2}} \int\limits_{(z,\mathbf{h}) \in \mathbb{R}^{s+2}} \vert P_{\mathbf{a},N}(z,\mathbf{h}) - Q_{\mathbf{a},N}(z,\mathbf{h})\vert \Big(\prod\limits_{\bo \in \{0,1\}^{s+1}} (\nu_{N,w}^\gamma \ast \chi)(z + \sum\limits_{j=1}^{s+1} \omega_jh_j + a_1)\Big) \, dz \, d\mathbf{h}\] is $o(1)$. By Cauchy-Schwarz, it then suffices to show that both
\begin{equation}
\label{expression without P and Q}
\frac{1}{N^{s+2}} \int\limits_{\substack{(z,\mathbf{h}) \in \mathbb{R}^{s+2} \\ \Vert (z,\mathbf{h})\Vert_\infty \ll N}}\Big(\prod\limits_{\bo \in \{0,1\}^{s+1}} (\nu_{N,w}^\gamma \ast \chi)(z + \sum\limits_{j=1}^{s+1} \omega_jh_j + a_1) \Big)\, dz \, d\mathbf{h} \ll 1
\end{equation}
\noindent and
\begin{equation}
\label{expression with P and Q}
\frac{1}{N^{s+2}}\int\limits_{(z,\mathbf{h}) \in \mathbb{R}^{s+2}} (P_{\mathbf{a},N}(z,\mathbf{h}) - Q_{\mathbf{a},N}(z,\mathbf{h}))^2 \Big(\prod\limits_{\boldsymbol{\omega}\in\{0,1\}^{s+1}}(\nu_{N,w}^\gamma \ast \chi)(z + \sum\limits_{j=1}^{s+1}\omega_j h_j + a_1) \Big)\, dz\, d\mathbf{h} = o(1).
\end{equation}

The bound (\ref{expression without P and Q}) is immediate from Corollary \ref{Corollary more upper bounds}. To prove (\ref{expression with P and Q}), expanding out the square we must consider three expressions. One of them is \begin{equation}
\label{one of them is}
\frac{1}{N^{s+2}}\int\limits_{(z,\mathbf{h}) \in \mathbb{R}^{s+2}} P_{\mathbf{a},N}(z,\mathbf{h})^2 \Big(\prod\limits_{\boldsymbol{\omega}\in\{0,1\}^{s+1}}(\nu_{N,w}^\gamma\ast \chi)(z + \sum\limits_{j=1}^{s+1}\omega_j h_j + a_1)\Big) \, dz\, d\mathbf{h}.
\end{equation} When multiplied out, (\ref{one of them is}) is equal to the large expression
\begin{align}
\frac{1}{N^{2d +s}}\int\limits_{\substack{(z,\mathbf{h}) \in \mathbb{R}^{s+2}\\\mathbf{(x_{[s]}^{(0)}},\mathbf{y})\in \mathbb{R}^{d-1}\\ \mathbf{(\widetilde{x}_{[s]}^{(0)}},\mathbf{\widetilde{y}})\in \mathbb{R}^{d-1}}}\Big(\prod\limits_{\mathfrak{t} \in I_{[s+1]}} (\nu_{N,w}^\gamma \ast \chi) (\varphi_{\mathfrak{t}}(z,\mathbf{h},\mathbf{x_{[s]}^{(0)}},\mathbf{y}) + a_i)(\nu_{N,w}^\gamma \ast \chi) (\varphi_{\mathfrak{t}}(z,\mathbf{h},\mathbf{\widetilde{x}_{[s]}^{(0)}},\mathbf{\widetilde{y}}) + a_i) \Big)\nonumber \\\Big(\prod\limits_{\boldsymbol{\omega}\in\{0,1\}^{s+1}}(\nu_{N,w}^\gamma \ast \chi)(z + \sum\limits_{j=1}^{s+1}\omega_j h_j + a_1)\Big)b((z,\mathbf{h},\mathbf{x_{[s]}^{(0)}},\mathbf{y})/N)
b((z,\mathbf{h},\mathbf{\widetilde{x}_{[s]}^{(0)}},\mathbf{\widetilde{y}})/N)\nonumber \\ d\mathbf{x_{[s]}^{(0)}}\, d\mathbf{\widetilde{x}_{[s]}^{(0)}}  \, d\mathbf{y} \, d\mathbf{\widetilde{y}} \, dz \, d\mathbf{h}.
\end{align}

\noindent By applying Corollary \ref{Corollary switching functions} to the above expression, we may replace the functions $\nu_{N,w}^\gamma \ast \chi$ with $\Lambda_{\mathbb{Z}/W^*\mathbb{Z}} \ast \chi$, up to an $o(1)$ error.

It is worth noting why the application of Corollary \ref{Corollary switching functions} is valid. Indeed, the underlying set of linear forms is given by (for each $\mathfrak{t} \in I_{[s+1]}$) \[ (z,\mathbf{h},\mathbf{x_{[s]}^{(0)}},\mathbf{y}, \mathbf{\widetilde{x}_{[s]}^{(0)}},\mathbf{\widetilde{y}}) \mapsto (\varphi_{\mathfrak{t}}(z,\mathbf{h},\mathbf{x_{[s]}^{(0)}},\mathbf{y}),\varphi_{\mathfrak{t}}(z,\mathbf{h},\mathbf{\widetilde{x}_{[s]}^{(0)}},\mathbf{\widetilde{y}})).\] We need this linear map to have algebraic coefficients and to have finite Cauchy-Schwarz complexity. Algebraicity follows by the assumptions in the statement of Theorem \ref{Theorem Cauchy}. Establishing finite Cauchy-Schwarz complexity is rather involved, but fortunately this has already been done by Green and Tao, on pages 1826 and 1827 of \cite{GT10}, in the analysis of expression (C.14).

Replacing (\ref{one of them is}) with one of the other two terms that arises from expanding out the square in (\ref{expression with P and Q}), and performing the same estimation, the lemma follows. 
\end{proof}

Let us take stock. As a reminder, we are trying to establish that (\ref{very tricky thing involving inequalities}) holds. Lemma \ref{Lemma comparing P and Q} above reduces matters to choosing some function $w^*$ that tends to infinity for which the bound
\begin{equation}
\label{nearly nearly there}
\frac{1}{N^{s+2}}\Big\vert\int\limits_{(z,\mathbf{h})\in \mathbb{R}^{s+2}} Q_{\mathbf{a},N}(z,\mathbf{h})\prod\limits_{\boldsymbol{\omega}\in\{0,1\}^{s+1}}g_1(z + \sum\limits_{k=1}^{s+1}\omega_k h_k + a_1) dz \, d\mathbf{h}\Big\vert =o(1)
\end{equation} holds. If $Q_{\mathbf{a},N}(z,\mathbf{h})$ were identically equal to $1$, then expression (\ref{nearly nearly there}) would be of the order of $\Vert g_1\Vert_{U^{s+1}(\mathbb{R},2N)}$, and hence be $o(1)$ by the hypotheses of Theorem \ref{Theorem Cauchy}. Of course $Q_{\mathbf{a},N}(z,\mathbf{h})$ is not identically equal to $1$, but we do observe that $Q_{\mathbf{a},N}(z,\mathbf{h})$ is a function of the form considered in Lemma \ref{Lemma approximation of Q}. Indeed, consulting the definition of $Q_{\mathbf{a},N}(z,\mathbf{h})$ in (\ref{equation defining Q z,h}), the following table shows which objects in Lemma \ref{Lemma approximation of Q} correspond to which objects concerning the definition of $Q_{\mathbf{a},N}(z,\mathbf{h})$. 
\begin{center}
\begin{tabular}{c|c}
Lemma \ref{Lemma approximation of Q} & (\ref{equation defining Q z,h})\\
\hline
$\mathbf{a}$ & $\mathbf{a}$ \\
$ \mathbf{x}$ & $(\mathbf{x_{[s]}^{(0)}}, \mathbf{y})$ \\
$ \mathbf{y}$ & $ (z, \mathbf{h})$ \\
$\varphi_j(\mathbf{x})$ & $\varphi_{\mathfrak{t}}(0, \mathbf{0}, \mathbf{x_{[s]}^{(0)}}, \mathbf{y})$ \\
$\Psi(\mathbf{y})$ & $ \mathbf{c}(z,\mathbf{h})$
\end{tabular}
\end{center} 

From Lemma \ref{Lemma approximation of Q}, we therefore know that there exists some function $f_1:\mathbb{Z}^{\vert I_{[s+1]}\vert } \longrightarrow \mathbb{C}$ satisfying $\Vert f_1\Vert_\infty \ll (\log \log W^*)^{O(1)}$ for which \[ Q_{\mathbf{a},N}(z,\mathbf{h}) =b_{\mathbf{a},N}((z,\mathbf{h})/N) \sum\limits_{\Vert \mathbf{k}\Vert_\infty \leqslant (\log \log W^*)^{O(1)}} f_1(\mathbf{k}) e\Big(\frac{\mathbf{k} \cdot(\mathbf{c}(z,\mathbf{h}) + \mathbf{a})}{W^*}\Big) + o(1).\] 

Therefore, one gets an upper bound for the left-hand side of (\ref{nearly nearly there}), namely
\begin{align}
\label{equation short exponential sum imput}
&\frac{(\log \log W^*)^{O(1)} }{N^{s+2}} \times \nonumber \\ &\sup\limits_{\mathbf{k} \in \mathbb{Z}^{\vert I_{[s+1]}\vert}}\Big\vert  \int\limits_{(z,\mathbf{h}) \in \mathbb{R}^{s+2}} e\Big(\frac{\mathbf{k} \cdot \mathbf{c}(z,\mathbf{h})}{W^*}\Big)b_{\mathbf{a},N}((z,\mathbf{h})/N)\prod\limits_{\boldsymbol{\omega}\in \{0,1\}^{s+1}} g_1(z+ \sum\limits_{j=1}^{s+1} \omega_j h_j + a_1) \, dz\, d\mathbf{h}\Big\vert 
\end{align} plus an error of size
\begin{equation}
\label{plus an error of size 1}
o(1) \times \frac{1}{N^{s+2}} \int\limits_{\substack{(z,\mathbf{h}) \in \mathbb{R}^{s+2} \\ \Vert (z,\mathbf{h})\Vert_\infty \ll N}} \prod\limits_{\bo \in \{0,1\}^{s+1}} (\nu_{N,w}^\gamma \ast \chi) (z + \sum\limits_{k=1}^{s+1} \omega_kh_k + a_1) \, dz \, d\mathbf{h}.
 \end{equation} By Corollary \ref{Corollary more upper bounds}, the size of term (\ref{plus an error of size 1}) is $o(1)$. To analyse (\ref{equation short exponential sum imput}) we apply Lemma B.4 of \cite{Wa17}. Since the function $b_{\mathbf{a},N}$ is Lipschitz this means that for all $Y>2$ there exists a complex valued function $f_{\mathbf{a},N,2}$ such that $\Vert f_{\mathbf{a},N,2}\Vert_\infty \ll 1$ and for all $(z,\mathbf{h})$ one has \[ b_{\mathbf{a},N}((z,\mathbf{h})/N) =  \int\limits_{\Vert\mathbf{y}\Vert_\infty \leqslant Y} f_{\mathbf{a},N,2}(\mathbf{y}) e\Big(\frac{\mathbf{y} \cdot (z,\mathbf{h})}{N}\Big) \, d\mathbf{x} + O((\log Y) / Y).\] Choosing $Y$ to be a suitably large power of $\log\log W^*$, (\ref{equation short exponential sum imput}) may be bounded above by \begin{align}
 \label{equation short exponential sum imput second}
 &\frac{(\log\log W^*)^{O(1)} }{N^{s+2}} \times \nonumber \\ &\sup\limits_{\mathbf{k},\mathbf{y}}\Big\vert  \int\limits_{(z,\mathbf{h}) \in \mathbb{R}^{s+2}} e\Big(\frac{\mathbf{k} \cdot \mathbf{c}(z,\mathbf{h})}{W^*}\Big)e\Big(\frac{\mathbf{y} \cdot (z,\mathbf{h})}{N}\Big)\prod\limits_{\boldsymbol{\omega}\in \{0,1\}^{s+1}} g_1(z+ \sum\limits_{j=1}^{s+1} \omega_j h_j + a_1) \, dz\, d\mathbf{h}\Big\vert 
 \end{align}
 \noindent plus an error of size 
 \begin{equation}
 \label{plus a second error of size}
 o(1) \times \frac{1}{N^{s+2}} \int\limits_{\substack{(z,\mathbf{h}) \in \mathbb{R}^{s+2} \\ \Vert (z,\mathbf{h})\Vert_\infty \ll N}} \prod\limits_{\bo \in \{0,1\}^{s+1}} (\nu_{N,w}^\gamma \ast \chi) (z + \sum\limits_{k=1}^{s+1} \omega_kh_k + a_1) \, dz \, d\mathbf{h}.
  \end{equation}
  \noindent Using Corollary \ref{Corollary more upper bounds} as above, expression (\ref{plus a second error of size}) is $o(1)$. \\

   The term (\ref{equation short exponential sum imput second}) may be analysed using the standard methods. Indeed, by shifting the variable $z$ (and noting that $\mathbf{c}(z,\mathbf{h})$ is a linear function of $z$ and $\mathbf{h}$) we may assume that $a_1 = 0$. Then, by spreading the exponential functions across the different instances of $g_1$, we see it suffices to show that
 \begin{equation}
 \label{the above expression}
\frac{(\log \log W^*)^{O(1)}}{N^{s+2}}  \Big\vert \int\limits_{(z,\mathbf{h}) \in \mathbb{R}^{s+2}} \prod\limits_{\boldsymbol{\omega}\in \{0,1\}^{s+1}} g_{\bo}(z+ \sum\limits_{k=1}^{s+1} \omega_k h_k ) \, dz\, d\mathbf{h}\Big\vert = o(1), 
\end{equation} where each function $g_{\bo}$ is of the form \[ g_{\bo}(x): = g_1(x)e(\lambda_{\bo} x),\] for some $\lambda_{\bo} \in \mathbb{R}$. 
 
The argument is nearly complete. Considering expression (\ref{real gowers norm}), for each $\bo$ we observe that \[\Vert g_{\bo}\Vert _{U^{s+1}(\mathbb{R},2N)} = \Vert g_1 \Vert_{U^{s+1}(\mathbb{R},2N)}.\] So, by the Gowers-Cauchy-Schwarz inequality (recorded in this setting as Proposition A.4 of \cite{Wa17}), the left-hand side of expression (\ref{the above expression}) is \[O((\log \log W^*)^{O(1)}\Vert g_1 \Vert_{U^{s+1}(\mathbb{R},2N)}^{O(1)}).\] If $w^*$ grows slowly enough, this expression is $o(1)$.\\

We have therefore established the upper bound (\ref{nearly nearly there}), and so, by our long sequence of deductions, Theorem \ref{Theorem Cauchy} is finally proved. 
\end{proof}

\section{Combining the lemmas}
\label{section combining the lemmas}

With all the previous lemmas in hand, we may finally prove Theorem \ref{Theorem generalised von neumann} (and hence prove Theorem \ref{Main theorem}).

\begin{proof}[Proof of Theorem \ref{Theorem generalised von neumann}]
Assume the hypotheses of the theorem, fixing a suitably small value of $\gamma$. 

By applying Proposition \ref{Proposition separating out the kernel} and Proposition \ref{Proposition parametrising by normal form}, we conclude that there is some $s = O(1)$ and some $d^\prime = O(1)$ for which $\vert \widetilde{T}_{F,G,N}^{L,\mathbf{v}}(f_1\ast \chi,\dots,f_d\ast \chi)\vert$ is 
\begin{equation}
\label{final equation of all time}
 \ll_{L,\varepsilon} \Big\vert\frac{1}{N^{d^\prime}}\int\limits_{\mathbf{x} \in \mathbb{R}^{d^\prime}}F_2(\mathbf{x}/N)\prod\limits_{j=1}^d (f_j \ast \chi)(\psi_j^\prime(\mathbf{x}) + a_j) \, d\mathbf{x},\Big\vert,
 \end{equation} where $(\psi_1^\prime,\dots,\psi_d^\prime) = \Psi^\prime:\mathbb{R}^{d^\prime}\longrightarrow \mathbb{R}^d$ is in $s$-normal form, $F_2: \mathbb{R}^{d^\prime} \longrightarrow [0,1]$ has Lipschitz constant $O_L(\sigma^{-1})$ and $\Rad(F_2) = O_{C,L,\varepsilon}(1)$, and each $a_j$ satisfies $\vert a_j\vert = O_{C,L,\varepsilon}(N)$. Taking this value of $s$ in the hypotheses of Theorem \ref{Theorem generalised von neumann}, without loss of generality we may assume that 
 \begin{equation}
\label{gowers norm decay}
\Vert f_1\Vert_{U^{s+1}[N]} = o(1)
\end{equation} as $N\rightarrow \infty$. 
 
Then we may apply Theorem \ref{Theorem Cauchy} to expression (\ref{final equation of all time}). Indeed, by rescaling the variable $\mathbf{x}$ we may assume that $F_2$ is supported on $[-1,1]^{d^\prime}$. For each $j\in [d]$ we set \[ g_j : = f_j \ast \chi.\] Provided $\eta$ is small enough, by combining (\ref{gowers norm decay}) and Lemma \ref{Lemma linking different Gowers norms} we deduce that \[\Vert g_1 \Vert_{U^{s+1}(\mathbb{R},2N)} = o_{\eta}(1)\] as $N \rightarrow \infty$.  So Theorem \ref{Theorem Cauchy} may indeed be applied, which yields \begin{equation}
\label{final}
\vert\widetilde{T}_{F,G,N}^{L,\mathbf{v}}(f_1\ast \chi,\dots,f_d\ast \chi)\vert = o_{C,L,\gamma,\varepsilon,\eta,\sigma} (1)
\end{equation} as $N\rightarrow \infty$.
 
 But then, combining the estimate (\ref{final}) with Lemma \ref{Lemma transfer equation}, one derives the bound
\begin{equation}
\label{complicated right hand side}
\vert T_{F,G,N}^{L,\mathbf{v}}(f_1,\dots,f_d)\vert = O_{C,L,\gamma,\varepsilon,\sigma}(\eta) + o_{C,L,\gamma,\varepsilon,\eta,\sigma}(1).
\end{equation} Choosing $\eta = \eta(N)$ to be a function tending to zero suitably slowly with $N$, we conclude that \[ \vert T_{F,G,N}^{L,\mathbf{v}}(f_1,\dots,f_d)\vert = o_{C,L,\gamma,\varepsilon,\sigma}(1).\] This is the conclusion of Theorem \ref{Theorem generalised von neumann}, and we are done. 
\end{proof}

\noindent From the work in Section \ref{section Controlling by Gowers norms}, this means that Theorem \ref{Main theorem}, the main result of this paper, is finally settled. \qed

\part{Final deductions}
\label{part final deductions}
\section{Removing Lipschitz cut-offs}
\label{section removing sharp cut-offs}
In this section we assume Theorem \ref{Main theorem}, and deduce Theorem \ref{Main theorem simpler version}. This deduction will be a routine matter of removing Lipschitz cut-offs. 

\begin{Lemma}
\label{Lemma upper bound for short intervals}
Assume the hypotheses of Theorem \ref{Main theorem simpler version}. Let $\delta$ be a real number in the range $0<\delta<1/2$ and let $I \subset [0,1]$ be an interval of length $\delta $. Then 
\[ \frac{1}{N^{d-m}} \sum\limits_{i=1}^d\sum\limits_{\substack{\mathbf{n}\in [N]^d \\n_i \in N\cdot I }} \Big(\prod\limits_{j=1}^d \Lambda^\prime(n_j)\Big) 1_{[-\varepsilon,\varepsilon]^m}(L\mathbf{n} + \mathbf{v}) \ll_{L} \delta\varepsilon^m + o_{C,L,\delta,\varepsilon}(1).\]
\end{Lemma}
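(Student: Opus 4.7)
By the triangle inequality it suffices to prove, for each fixed $i\in[d]$, that
\[ \frac{1}{N^{d-m}}\sum_{\substack{\mathbf{n}\in[N]^d\\ n_i\in N\cdot I}}\Big(\prod_{j=1}^d\Lambda'(n_j)\Big)1_{[-\varepsilon,\varepsilon]^m}(L\mathbf{n}+\mathbf{v})\ll_L\delta\varepsilon^m+o_{C,L,\delta,\varepsilon}(1); \]
summing then absorbs the dimensional constant $d$. The plan is standard: smooth both sharp indicators to Lipschitz majorants, invoke Theorem \ref{Main theorem} to transfer to the local von Mangoldt model, and bound the resulting singular integral by a direct volume estimate. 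Using Lemma \ref{Lemma smooth approximations} I construct smooth majorants $G_+\geq 1_{[-\varepsilon,\varepsilon]^m}$ supported on $[-2\varepsilon,2\varepsilon]^m$ with Lipschitz constant $O(\varepsilon^{-1})$, and $F_+^{(i)}\geq 1_{[0,1]^d}(\mathbf{x})\cdot 1_I(x_i)$ supported on $[-1,1]^d$ with its $i$-th coordinate confined to an interval of length $O(\delta)$ and Lipschitz constant $O(\delta^{-1})$ (using $\delta<1/2$ to keep the support inside $[-1,1]$). The inner sum is then majorised by $T_{F_+^{(i)},G_+,N}^{L,\mathbf{v}}(\Lambda',\dots,\Lambda')$, which by Theorem \ref{Main theorem} equals
\[ T_{F_+^{(i)},G_+,N}^{L,\mathbf{v}}(\Lambda_{\mathbb{Z}/W\mathbb{Z}}^+,\dots,\Lambda_{\mathbb{Z}/W\mathbb{Z}}^+)+o_{C,L,\varepsilon,\delta}(1). \]

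Since $L$ is purely irrational its rational dimension is $u=0$, so Lemma \ref{Lemma generating a purely irrational map} reduces trivially (with $\Xi=\mathrm{id}$, $\widetilde{R}=\{\mathbf{0}\}$), and the second part of Lemma \ref{Lemma problem for local von Mangoldt} (the finite Cauchy--Schwarz hypothesis being satisfied by the identity map) yields the asymptotic $\mathfrak{S}_{\mathbf{0}}\,J_{\mathbf{0}}+o_{C,L,\varepsilon,\delta}(1)$. A direct Chinese Remainder Theorem computation gives
\[ \mathfrak{S}_{\mathbf{0}}=\prod_p\frac{1}{p^d}\sum_{\mathbf{m}\in[p]^d}\prod_{j=1}^d\Lambda_{\mathbb{Z}/p\mathbb{Z}}(m_j)=\prod_p\Big(\frac{p-1}{p}\cdot\frac{p}{p-1}\Big)^d=1, \]
reducing the problem to the bound
\[ J_{\mathbf{0}}=N^{m-d}\int_{\mathbb{R}^d}F_+^{(i)}(\mathbf{x}/N)\,G_+(L\mathbf{x}+\mathbf{v})\,d\mathbf{x}\ll_L\delta\varepsilon^m. \]

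This last estimate is pure measure theory. Fix $x_i$ in a set of measure $O(\delta N)$; the hypothesis $L\notin V^*_{\degen}(m,d)$ implies that $L$ with any one column removed still has rank $m$ (otherwise, by rank-nullity on $L^T$, the row-space of $L$ would contain a non-zero vector supported on a single coordinate, contradicting the non-degeneracy condition). Using an invertible $m$-by-$m$ subminor of $L|_{[d]\setminus\{i\}}$ to parametrise the fibre, the constraint $\|L\mathbf{x}+\mathbf{v}\|_\infty\leq 2\varepsilon$ cuts out a set of measure $O_L(\varepsilon^m N^{d-1-m})$ in the remaining $d-1$ coordinates. Multiplying gives $J_{\mathbf{0}}\ll_L\delta\varepsilon^m$. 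The argument is routine and no substantive obstacle arises; the only book-keeping is verifying that all error dependencies coalesce into the allowed $o_{C,L,\delta,\varepsilon}(1)$, which follows by choosing $\gamma$ small enough (fixed in terms of $L$) and tracking the dependencies in Theorem \ref{Main theorem} and Lemma \ref{Lemma problem for local von Mangoldt}.
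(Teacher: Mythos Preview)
Your argument is correct and follows the same strategy as the paper: majorise by smooth cut-offs, transfer to the local von Mangoldt model, and bound the resulting singular integral by a direct volume estimate. The only substantive difference is in the transfer step: the paper uses the pointwise bound $\Lambda'\ll_\gamma\nu_{N,w}^\gamma$ together with Theorem \ref{Theorem pseudorandomness}, whereas you invoke Theorem \ref{Main theorem} directly. Both results are available at this point of the paper and both deliver the same integral, which the paper then bounds by citing Lemma \ref{Lemma crude upper bound lemma} (exactly the volume argument you reproduce inline, including the observation that $L\notin V^*_{\degen}(m,d)$ forces $L$ to remain surjective after deleting any one column). One cosmetic wrinkle: a smooth majorant of $1_{[0,1]^d}(\mathbf{x})\cdot 1_I(x_i)$ cannot literally be supported inside $[-1,1]^d$, since in the coordinates $j\neq i$ the support must extend past $1$; to match the support hypothesis of Theorem \ref{Main theorem} you should rescale (e.g.\ apply it with $2N$ in place of $N$). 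The paper sidesteps this by taking $F$ supported on $[-1,2]^d$, which the more flexible hypotheses of Theorem \ref{Theorem pseudorandomness} permit.
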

\noindent The reader will note that this lemma is a slight refinement of Corollary \ref{Corollary upper bound}.
\begin{proof}
Fix some $i\leqslant d$. Let $F:\mathbb{R}^d \longrightarrow [0,1]$ be a smooth function in $\mathcal{C}(\delta)$, supported on $\{\mathbf{x} \in [-1,2]^d: x_i \in I + [-\delta,\delta]\}$, that majorises the indicator function of the set $\{ \mathbf{x} \in [0,1]^d: x_i \in I\}$. Let $G:\mathbb{R}^m \longrightarrow [0,1]$ be some smooth function in $\mathcal{C}(\varepsilon)$, supported on $[-2\varepsilon,2\varepsilon]^m$, that majorises $1_{[-\varepsilon,\varepsilon]^m}$. Let $\gamma$ be small enough in terms of $L$. Then, by Theorem \ref{Theorem pseudorandomness} and Lemma \ref{Lemma problem for local von Mangoldt},
\begin{align}
\label{start of final home straight}
\frac{1}{N^{d-m}} \sum\limits_{\substack{\mathbf{n}\in [N]^d \\n_i \in N \cdot I }} \Big(\prod\limits_{j=1}^d \Lambda^\prime(n_j)\Big) 1_{[-\varepsilon,\varepsilon]^m}(L\mathbf{n} + \mathbf{v})& \ll_\gamma \frac{1}{N^{d-m}} \sum\limits_{\substack{\mathbf{n}\in [N]^d \\n_i \in N \cdot I}} \Big(\prod\limits_{j=1}^d \nu_{N,w}^{\gamma}(n_j)\Big) 1_{[-\varepsilon,\varepsilon]^m}(L\mathbf{n} + \mathbf{v})\nonumber \\
& \leqslant T_{F,G,N}^{L,\mathbf{v}}(\nu_{N,w}^\gamma,\dots,\nu_{N,w}^\gamma) \nonumber \\
& = T_{F,G,N}^{L,\mathbf{v}}(\Lambda_{\mathbb{Z}/W\mathbb{Z}},\dots,\Lambda_{\mathbb{Z}/W\mathbb{Z}}) + o_{C,L,\gamma,\delta,\varepsilon}(1)\nonumber \\
& = \int\limits_{\mathbf{x} \in \mathbb{R}^d}F(\mathbf{x}/N) G(L\mathbf{x} + \mathbf{v}) \, d\mathbf{x} + o_{C,L,\gamma,\delta,\varepsilon}(1) .
\end{align}

Since $L \notin V_{\degen}^\ast(m,d)$, for all $d$ of the coordinate subspaces $U \leqslant \mathbb{R}^d$ of dimension $d-1$ the map $L|_U: U \longrightarrow \mathbb{R}^m$ is surjective. We may therefore apply Lemma \ref{Lemma crude upper bound lemma}, and conclude that expression (\ref{start of final home straight}) is $O_L(\delta \varepsilon^m) + o_{C,L,\delta,\varepsilon,\gamma}(1)$. The lemma is proved, after having fixed a suitable $\gamma$. 
\end{proof}

\begin{Lemma}
\label{Lemma logged form of simple theorem}
Under the hypotheses of Theorem \ref{Main theorem simpler version},
\[\frac{1}{N^{d-m}}\sum\limits_{\mathbf{n} \in [N]^d} \Big(\prod\limits_{j=1}^d \Lambda^\prime(n_j)\Big) 1_{[-\varepsilon,\varepsilon]^m}(L\mathbf{n} + \mathbf{v}) = \frac{1}{N^{d-m}}\int\limits_{\mathbf{x} \in [0,N]^d} 1_{[-\varepsilon,\varepsilon]^m}(L\mathbf{x} + \mathbf{v}) \, d\mathbf{x} + o_{C,L,\varepsilon}(1).\]
\end{Lemma}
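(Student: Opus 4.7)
The plan is to sandwich the sharp indicators $1_{[0,1]^d}$ and $1_{[-\varepsilon,\varepsilon]^m}$ by smooth Lipschitz majorants and minorants, apply Theorem \ref{Main theorem} to pass from $\Lambda^\prime$ to the local von Mangoldt function, and then invoke Lemma \ref{Claim local calculation} (which applies since $L$ is purely irrational) to extract the singular integral, finally closing the sandwich by estimating the boundary contributions.

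First I would fix a parameter $\delta \in (0, 1/2)$ and use Lemma \ref{Lemma smooth approximations} to build smooth functions $f^{\pm\delta} \in \mathcal{C}(\delta)$ sandwiching $1_{[0,1]}$ and, after rescaling by $2\varepsilon$, smooth functions $g^{\pm\delta} \in \mathcal{C}(\delta\varepsilon)$ sandwiching $1_{[-\varepsilon,\varepsilon]}$. Taking tensor products $F^{\pm\delta}(\mathbf{x}) := \prod_j f^{\pm\delta}(x_j)$ and $G^{\pm\delta}(\mathbf{y}) := \prod_i g^{\pm\delta}(y_i)$ gives Lipschitz sandwiches of $1_{[0,1]^d}$ and $1_{[-\varepsilon,\varepsilon]^m}$ with Lipschitz constants $O_\varepsilon(\delta^{-1})$, with $F^{-\delta}$ supported on $[0,1]^d$ and $G^{-\delta}$ supported on $[-\varepsilon,\varepsilon]^m$. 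By positivity of $\Lambda^\prime$, the left-hand side of the lemma lies between $T^{L,\mathbf{v}}_{F^{-\delta},G^{-\delta},N}(\Lambda^\prime,\dots,\Lambda^\prime)$ and $T^{L,\mathbf{v}}_{F^{+\delta},G^{+\delta},N}(\Lambda^\prime,\dots,\Lambda^\prime)$. Their difference is supported on the union of the $\delta$-boundary strip of $[0,N]^d$, controlled by Lemma \ref{Lemma upper bound for short intervals} to give $O_L(\delta\varepsilon^m) + o_{C,L,\delta,\varepsilon}(1)$, and the $\delta\varepsilon$-boundary shell of $[-\varepsilon,\varepsilon]^m$, which is a union of $2m$ thin slabs and is handled by an analogue of Lemma \ref{Lemma upper bound for short intervals} in the $G$-direction (proved by smooth majorants combined with Theorem \ref{Theorem pseudorandomness} and Lemma \ref{Lemma problem for local von Mangoldt}), yielding a further $O_L(\delta\varepsilon^m) + o_{C,L,\delta,\varepsilon}(1)$.

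Next I would apply Theorem \ref{Main theorem} to $T^{L,\mathbf{v}}_{F^{-\delta},G^{-\delta},N}(\Lambda^\prime,\dots,\Lambda^\prime)$, replacing $\Lambda^\prime$ with $\Lambda^+_{\mathbb{Z}/W\mathbb{Z}}$ at a cost of $o_{C,L,\varepsilon,\delta}(1)$. Since $F^{-\delta}$ is supported on $[0,1]^d$, $\Lambda^+_{\mathbb{Z}/W\mathbb{Z}}$ may be swapped for $\Lambda_{\mathbb{Z}/W\mathbb{Z}}$ by Remark \ref{Remark after statement of Main theorem}. Because $L$ is purely irrational, Lemma \ref{Claim local calculation} then yields
\[
T^{L,\mathbf{v}}_{F^{-\delta},G^{-\delta},N}(\Lambda_{\mathbb{Z}/W\mathbb{Z}},\dots,\Lambda_{\mathbb{Z}/W\mathbb{Z}}) = J_{F^{-\delta},G^{-\delta}} + o_{C,L,\varepsilon,\delta}(1),
\]
where $J_{F^{-\delta},G^{-\delta}} := N^{-(d-m)}\int F^{-\delta}(\mathbf{x}/N) G^{-\delta}(L\mathbf{x}+\mathbf{v})\,d\mathbf{x}$. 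A straightforward slab-volume estimate (via Lemma \ref{Lemma general upper bound}) shows $|J_{F^{-\delta},G^{-\delta}} - J_0| = O_L(\delta\varepsilon^m)$, where $J_0$ denotes the right-hand side of the lemma. Combining everything, the left-hand side equals $J_0 + O_L(\delta\varepsilon^m) + o_{C,L,\varepsilon,\delta}(1)$ for every admissible $\delta$; choosing $\delta = \delta(N) \to 0$ slowly enough that the $o_{C,L,\varepsilon,\delta}(1)$ term is itself $o(1)$ completes the proof.

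The main technical point is the $G$-shell boundary estimate: one must show that the $\Lambda^\prime$-weighted count restricted to $L\mathbf{n}+\mathbf{v}$ lying in the thin shell around $\partial[-\varepsilon,\varepsilon]^m$ is $O_L(\delta\varepsilon^m) + o(1)$ with suitable uniformity in $\delta$. This is not exactly Lemma \ref{Lemma upper bound for short intervals} or Corollary \ref{Corollary upper bound}, but rather a natural analogue in the output coordinates: each of the $2m$ boundary slabs, once majorised by a smooth function supported on a thin box in $\mathbb{R}^m$, is controlled by running the argument of Lemma \ref{Lemma upper bound for short intervals} in the $G$-coordinate rather than the $F$-coordinate, the volume input ultimately coming from Lemma \ref{Lemma general upper bound}.
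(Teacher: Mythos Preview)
Your proposal is correct and follows the same sandwich-and-smooth strategy as the paper's proof. The organisation differs slightly: the paper first restricts to $[\delta N,(1-\delta)N]^d$, sandwiches with Lipschitz $F^{\pm\delta}$ (both supported inside $[0,1]^d$) and $G^{\pm\delta}$, applies Theorem~\ref{Main theorem} to \emph{both} sides of the sandwich, and then compares the resulting integrals using Lemma~\ref{Lemma moving error terms}; the $F$-boundary is handled separately at the end via Lemma~\ref{Lemma upper bound for short intervals}. You instead bound the sandwich difference $T_{F^{+\delta},G^{+\delta}}-T_{F^{-\delta},G^{-\delta}}$ directly at the $\Lambda'$-sum level, which obliges you to prove a $G$-shell analogue of Lemma~\ref{Lemma upper bound for short intervals}. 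That analogue is indeed obtainable by the route you sketch ($\Lambda'\ll_\gamma \nu_{N,w}^\gamma$, pseudorandomness, then the integral), but note that Lemma~\ref{Lemma general upper bound} and Corollary~\ref{Corollary upper bound} as stated give only $\Rad(G)^m$, not the volume of $\operatorname{supp}G$; you need the trivial refinement of Lemma~\ref{Lemma general upper bound} to anisotropic boxes to extract the $\delta$ saving. The paper avoids this small extra step by pushing the $G$-boundary work to the integral side, where Lemma~\ref{Lemma moving error terms} already packages the required estimate.
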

\begin{proof}
Let $\delta$ be a positive parameter in the range $(0,1/2)$, to be chosen later. Let us first consider \[ \frac{1}{N^{d-m}}\sum\limits_{\substack{\mathbf{n} \in  \mathbb{Z}^d \\ \mathbf{n} \in [\delta N,(1-\delta)N]^d}} \Big(\prod\limits_{j=1}^d \Lambda^\prime(n_j)\Big) 1_{[-\varepsilon,\varepsilon]^m}(L\mathbf{n} + \mathbf{v}) .\] Let $F^{\pm \delta}:\mathbb{R}^d \longrightarrow [0,1]$ be two Lipschitz functions satisfying \[1_{[3\delta/2 ,1-3\delta/2 ]^d}\leqslant F^{-\delta}\leqslant 1_{[\delta,1-\delta]^d}\leqslant F^{+\delta}\leqslant 1_{[\delta/2 ,1-\delta/2 ]^d},\] with Lipschitz constants depending only on $\delta$. Let $G^{\pm \delta}: \mathbb{R}^{m} \longrightarrow [0,1]$ be two Lipschitz functions satisfying \[1_{[-\varepsilon(1-\delta) ,\varepsilon(1 -\delta) ]^m}\leqslant G^{-\delta}\leqslant 1_{[-\varepsilon,\varepsilon]^m}\leqslant G^{+\delta}\leqslant 1_{[-\varepsilon(1 +\delta) ,\varepsilon(1 +\delta)]^m},\] with Lipschitz constants\footnote{The existence of such functions is immediate by interpolating linearly, or by appealing to the results of Section \ref{section smooth functions}.} depending only on $\delta$. Then we have
\begin{align}
\label{sandwiching expression}
\sum\limits_{\mathbf{n} \in \mathbb{Z}^d}\Big(\prod\limits_{j=1}^d \Lambda(n_j)\Big) F^{-\delta}(\mathbf{n}/N)  G^{-\delta}(L\mathbf{n} + \mathbf{v}) \leqslant \sum\limits_{\substack{\mathbf{n} \in  \mathbb{Z}^d \\ \mathbf{n} \in [\delta N,(1-\delta)N]^d}} \Big(\prod\limits_{j=1}^d \Lambda^\prime(n_j)\Big) 1_{[-\varepsilon,\varepsilon]^m}(L\mathbf{n} + \mathbf{v}) \nonumber\\
\leqslant \sum\limits_{\mathbf{n} \in \mathbb{Z}^d} \Big(\prod\limits_{j=1}^d \Lambda(n_j)\Big)F^{+\delta}(\mathbf{n}/N)  G^{+\delta}(L\mathbf{n} + \mathbf{v}).
\end{align}

By Theorem \ref{Main theorem}, the lower bound in (\ref{sandwiching expression}) is equal to \[ \sum\limits_{\mathbf{n} \in \mathbb{Z}^d} \Big(\prod\limits_{j=1}^d \Lambda_{\mathbb{Z}/W\mathbb{Z}}(n_j) \Big) F^{-\delta}(\mathbf{n}/N) G^{-\delta}(L\mathbf{n} + \mathbf{v}) + o_{C,L,\delta,\varepsilon}(N^{d-m}),\] since we may replace $\Lambda_{\mathbb{Z}/W\mathbb{Z}}^+$ with $\Lambda_{\mathbb{Z}/W\mathbb{Z}}$ as $F$ is supported on $[0,1]^d$. By Lemma \ref{Lemma problem for local von Mangoldt}, and the properties of the support of $F^{-\delta}$ and $G^{-\delta}$, this is at least 
\begin{equation}
\label{this is at most}
\int\limits_{\mathbf{x} \in [3\delta N/2,N( 1-3\delta/2)]^d} 1_{[-\varepsilon (1 -\delta),\varepsilon(1 +\delta) ]^m}(L\mathbf{x} + \mathbf{v}) \, d\mathbf{x} + o_{C,L,\delta,\varepsilon}(N^{d-m}).
\end{equation} Note that the singular series $\mathfrak{S}$ is equal to $1$ in this instance, since $L$ is purely irrational. By Lemma \ref{Lemma moving error terms}, expression (\ref{this is at most}) is at least \[ \int\limits_{\mathbf{x} \in [0,N]^d} 1_{[-\varepsilon,\varepsilon]^m} (L\mathbf{x} + \mathbf{v}) \, d\mathbf{x} - O(\delta\varepsilon^m N^{d-m})  + o_{C,L,\delta,\varepsilon}(N^{d-m}).\] 

By performing an analogous manipulation with the upper bound, we may conclude that \[\sum\limits_{\substack{\mathbf{n} \in  \mathbb{Z}^d \\ \mathbf{n} \in [\delta N,(1-\delta)N]^d}} \Big(\prod\limits_{j=1}^d \Lambda^\prime(n_j)\Big) 1_{[-\varepsilon,\varepsilon]^m}(L\mathbf{n} + \mathbf{v})\] is equal to
\begin{equation}
\label{expression msot well suited to removing log weighting} \int\limits_{\mathbf{x} \in [0,N]^d} 1_{[-\varepsilon,\varepsilon]^m}(L\mathbf{x} + \mathbf{v}) \, d\mathbf{x} + O(\delta\varepsilon^m N^{d-m}) + o_{C,L,\delta,\varepsilon}(N^{d-m}).
\end{equation}

Therefore, by Lemma \ref{Lemma upper bound for short intervals}, we have that  \[\frac{1}{N^{d-m}}\sum\limits_{\mathbf{n} \in [N]^d} \Big(\prod\limits_{j=1}^d \Lambda^\prime(n_j)\Big) 1_{[-\varepsilon,\varepsilon]^m}(L\mathbf{n} + \mathbf{v})  \] is equal to \[\frac{1}{N^{d-m}} \int\limits_{\mathbf{x} \in [0,N]^d} 1_{[-\varepsilon,\varepsilon]^m}(L\mathbf{x} + \mathbf{v}) \, d\mathbf{x} +O(\delta\varepsilon^m)  + o_{C,L,\delta,\varepsilon}(1).\] Letting $\delta$ be a function of $N$, tending to zero suitably slowly as $N$ tends to infinity, the lemma follows. 
\end{proof}

To establish Theorem \ref{Main theorem simpler version} as given, i.e. to establish Lemma \ref{Lemma logged form of simple theorem} without the log weighting, is standard. To spell it out, Lemma \ref{Lemma logged form of simple theorem} implies that, for any $\delta$ in the range $0<\delta < 1/2$, 
\begin{align}
\label{upper bound removing logs}
\sum\limits_{\mathbf{p} \in [\delta N,N]^d }1_{[-\varepsilon,\varepsilon]^m}(L\mathbf{p} + \mathbf{v}) \leqslant \frac{1}{(\log(\delta) + \log N)^d}\sum\limits_{\substack{\mathbf{n} \in  \mathbb{Z}^d \\ \mathbf{n} \in [\delta N,N]^d}} \Big(\prod\limits_{j=1}^d \Lambda^\prime(n_j)\Big) 1_{[-\varepsilon,\varepsilon]^m}(L\mathbf{n} + \mathbf{v}) \nonumber \\
 \leqslant \frac{(1 + o_\delta(1))}{( \log N)^d}\Big(\int\limits_{\mathbf{x} \in [0,N]^d} 1_{[-\varepsilon,\varepsilon]^m}(L\mathbf{x} + \mathbf{v})  \, d\mathbf{x}+ o_{C,L,\varepsilon}(N^{d-m})\Big).
\end{align} But also, from expression (\ref{expression msot well suited to removing log weighting})
\begin{align}
\label{lower bound removing logs}
\sum\limits_{\mathbf{p} \in [\delta N,N]^d} 1_{[-\varepsilon,\varepsilon]^m}(L\mathbf{p} + \mathbf{v}) \geqslant \frac{1}{(\log N)^d} \sum\limits_{\mathbf{n} \in [\delta N , (1-\delta)N]^d} \Big(\prod\limits_{j=1}^d \Lambda^\prime(n_j)\Big) 1_{[-\varepsilon,\varepsilon]^m}(L\mathbf{n} + \mathbf{v}) \nonumber \\
\geqslant \frac{1}{(\log N)^d} \Big(\int\limits_{\mathbf{x} \in [0,N]^d} 1_{[-\varepsilon,\varepsilon]^m}(L\mathbf{x} + \mathbf{v}) \, d\mathbf{x} + O(\delta \varepsilon^m N^{d-m}) + o_{C,L,\delta,\varepsilon}(N^{d-m})\Big).
\end{align}

By Lemma \ref{Lemma general upper bound}, \[\int\limits_{\mathbf{x} \in [0,N]^d} 1_{[-\varepsilon,\varepsilon]^m}(L\mathbf{x} + \mathbf{v}) \, d\mathbf{x} = O_{L,\varepsilon}(N^{d-m}).\] Hence, choosing $\delta$ to be a function of $N$ tending to zero suitably slowly, combining bounds (\ref{lower bound removing logs}) and (\ref{upper bound removing logs}) establishes Theorem \ref{Main theorem simpler version}. \qed

\part{Appendices}
\appendix
\section{Estimating integrals}
\label{section Easy calculations}
In this appendix we include the lemmas that help us estimate the `global factor' from Theorem \ref{Main theorem simpler version}, namely \[ \int\limits_{\mathbf{x} \in [0,N]^d} 1_{[-\varepsilon,\varepsilon]^m}(L\mathbf{x} + \mathbf{v}) \, d\mathbf{x}.\]

\begin{Lemma}[Upper bound]
\label{Lemma general upper bound}
Let $h$ be a natural number, let $m$ be a non-negative integer, and let $C,K$ be positive constants. Let $L:\mathbb{R}^h \longrightarrow \mathbb{R}^m$ be a surjective linear map. Let $F:\mathbb{R}^h \longrightarrow [0,1]$ and $G:\mathbb{R}^m \longrightarrow [0,1]$ be any compactly supported measurable functions, and assume that $F$ is supported on a box of the form $\mathbf{x^{(0)}} + [-C,C]^h$ and $G$ is supported on a box of the form $\mathbf{y^{(0)}} + [-K,K]^m$. Then 
\begin{equation}
\label{general integeral upper bound}
\int\limits_{\mathbf{x} \in \mathbb{R}^h} F(\mathbf{x}) G(L\mathbf{x}) \, d\mathbf{x} \ll_L C^{h-m} K^m \Vert F\Vert_\infty \Vert G\Vert_\infty.
\end{equation}
\end{Lemma}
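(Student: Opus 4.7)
The plan is to use surjectivity of $L$ to perform a change of variables that decouples an $m$-dimensional slice (where $G$ lives) from a complementary $(h-m)$-dimensional slice (where only $F$ contributes). Since $L:\mathbb{R}^h\longrightarrow\mathbb{R}^m$ is surjective, some $m$-by-$m$ submatrix of $L$ is invertible. After reordering the standard basis of $\mathbb{R}^h$ (which affects neither side of the inequality), I may assume that the submatrix $M$ of $L$ consisting of its first $m$ columns is invertible, with $|\det M|\gg_L 1$.

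Write $\mathbf{x}=(\mathbf{u},\mathbf{w})$ with $\mathbf{u}\in\mathbb{R}^m$ and $\mathbf{w}\in\mathbb{R}^{h-m}$, and let $M'$ denote the remaining $m$-by-$(h-m)$ block, so that $L\mathbf{x}=M\mathbf{u}+M'\mathbf{w}$. For fixed $\mathbf{w}$, change variables from $\mathbf{u}$ to $\mathbf{y}:=M\mathbf{u}+M'\mathbf{w}=L\mathbf{x}$; the Jacobian $|\det M|^{-1}$ is bounded by a constant depending only on $L$. Under the support hypotheses on $F$ and $G$, the variable $\mathbf{w}$ ranges over a box of side $2C$ in $\mathbb{R}^{h-m}$ and, once $\mathbf{w}$ is fixed, the variable $\mathbf{y}$ ranges over a box of side $2K$ in $\mathbb{R}^m$.

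Putting this together and bounding $F$ and $G$ by their sup-norms gives
\[
\int_{\mathbf{x}\in\mathbb{R}^h} F(\mathbf{x})G(L\mathbf{x})\,d\mathbf{x} \;\leqslant\; |\det M|^{-1}\,\Vert F\Vert_\infty \Vert G\Vert_\infty \int_{\mathbf{w}\in\mathbf{w_0}+[-C,C]^{h-m}}\int_{\mathbf{y}\in\mathbf{y_0}+[-K,K]^m} 1 \,d\mathbf{y}\,d\mathbf{w},
\]
which evaluates to $\ll_L C^{h-m}K^m\Vert F\Vert_\infty\Vert G\Vert_\infty$, as required. There is no serious obstacle here: the only content is the choice of an invertible $m$-by-$m$ submatrix, which is exactly what surjectivity of $L$ supplies, and this is also the sole source of the $L$-dependence in the implied constant.
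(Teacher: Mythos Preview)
Your proof is correct and follows essentially the same idea as the paper's: decompose $\mathbb{R}^h$ into a direction on which $L$ is invertible (yielding the $K^m$ factor from the support of $G$) and a complementary $(h-m)$-dimensional direction (yielding the $C^{h-m}$ factor from the support of $F$). The only cosmetic difference is that the paper uses the orthogonal splitting $\ker L \oplus (\ker L)^\perp$, whereas you pick an invertible coordinate submatrix; both are equivalent routes to the same volume estimate.
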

\begin{proof}
Split $\mathbb{R}^h$ as a direct sum $(\ker L) \oplus (\ker L)^\perp$. Observe that $L|_{(\ker L)^\perp}$ is an injective linear map, so has bounded inverse. Hence the integrand in (\ref{general integeral upper bound}) is zero unless $\mathbf{x} |_{(\ker L)^\perp}$ is contained within a region which has volume $O_{L}(K^m)$. The integrand is also zero unless $\mathbf{x}|_{(\ker L)}$ is contained within a region which has volume $O_{L}(C^{h-m})$. Together, these observations combine to give the required bound. 
\end{proof}
\begin{Lemma}
\label{Lemma singular integral}
Let $N$, $m$, $d$ be natural numbers, with $d\geqslant m+1$, and let $\varepsilon$ be a positive parameter. Let $L: \mathbb{R}^d \longrightarrow \mathbb{R}^m$ be a surjective purely irrational linear map. Let $\mathbf{v} \in \mathbb{R}^m$ be any vector. Then there exists a parameter $C_{L,\mathbf{v}/N}$ satisfying $\vert C_{L,\mathbf{v}/N}\vert = O_{L}(1)$ such that 

\begin{equation}
\label{asymptotic for singular integral}
\int\limits_{\mathbf{x} \in [0,N]^d} 1_{[-\varepsilon,\varepsilon]^m}(L\mathbf{x} + \mathbf{v}) \, d\mathbf{x} = C_{L,\mathbf{v}/N}\varepsilon^mN^{d-m} + O_{L}(\varepsilon^{m+1}N^{d-m-1}).
\end{equation}
\noindent Furthermore, if $\Vert \mathbf{v}\Vert_\infty = o(N)$ then there exists a constant $C_L$, independent of $\mathbf{v}$ and $N$, for which \[C_{L,\mathbf{v}/N} = C_L + o_L(1).\]
\end{Lemma}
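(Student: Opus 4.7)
The plan is to evaluate the integral by an explicit Fubini-type decomposition. First, I would rescale via $\mathbf{y} = \mathbf{x}/N$, setting $\delta := \varepsilon/N$ and $\mathbf{w} := \mathbf{v}/N$, so that the integral becomes $N^d \int_{[0,1]^d} 1_{[-\delta,\delta]^m}(L\mathbf{y} + \mathbf{w}) \, d\mathbf{y}$. Since $L$ has rank $m$, after reordering coordinates one may write $L = (B \mid A)$ with $B$ an invertible $m \times m$ block. Decomposing $\mathbf{y} = (\mathbf{y}'', \mathbf{y}')$ with $\mathbf{y}'' \in \mathbb{R}^m$ and $\mathbf{y}' \in \mathbb{R}^{d-m}$ gives $L\mathbf{y} = B\mathbf{y}'' + A\mathbf{y}'$.

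Second, for each fixed $\mathbf{y}'$ I would evaluate the inner integral over $\mathbf{y}'' \in [0,1]^m$. Substituting $\mathbf{z} = B\mathbf{y}''$, this equals $|\det B|^{-1}$ times the Lebesgue measure of $B([0,1]^m) \cap (-A\mathbf{y}' - \mathbf{w} + [-\delta,\delta]^m)$. Define the polytope
\[
\Omega_{\mathbf{w}} := \{\mathbf{y}' \in [0,1]^{d-m} : -A\mathbf{y}' - \mathbf{w} \in B([0,1]^m)\}.
\]
Writing $\mathbf{r}_i^T$ for the $i$-th row of $B^{-1}$, the axis-parallel box of radius $\delta$ around $-A\mathbf{y}' - \mathbf{w}$ lies entirely inside the parallelepiped $B([0,1]^m)$ iff each $\mathbf{r}_i^T (-A\mathbf{y}' - \mathbf{w})$ lies at distance at least $\delta \|\mathbf{r}_i\|_1$ inside the interval $[0,1]$, in which case the inner integral equals exactly $(2\delta)^m/|\det B|$; it is entirely outside iff some $\mathbf{r}_i^T (-A\mathbf{y}' - \mathbf{w})$ lies at distance more than $\delta\|\mathbf{r}_i\|_1$ outside $[0,1]$, in which case the inner integral vanishes; otherwise it is trapped between these two values. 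The resulting ``boundary layer'' in $\mathbf{y}'$-space is contained in the union over $i \leqslant m$ of the two slabs $\{\mathbf{y}' : \mathbf{r}_i^T A \mathbf{y}' + \mathbf{r}_i^T \mathbf{w} \in [-\delta\|\mathbf{r}_i\|_1, \delta\|\mathbf{r}_i\|_1] \cup [-1 - \delta\|\mathbf{r}_i\|_1, -1 + \delta\|\mathbf{r}_i\|_1]\}$.

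The main technical step, and the point where the purely irrational hypothesis enters, is bounding the measure of this boundary layer. The crucial observation is that $\mathbf{r}_i^T A = \mathbf{0}$ would force $\mathbf{r}_i^T L$ to equal a standard basis vector of $\mathbb{R}^d$, so $L^T \mathbf{r}_i \in \mathbb{Z}^d$ with $\mathbf{r}_i \neq \mathbf{0}$ (because $B^{-1}$ is invertible), contradicting the purely irrational hypothesis on $L$. Hence each $\mathbf{r}_i^T A$ is a non-zero vector in $\mathbb{R}^{d-m}$ of norm bounded below by a constant depending only on $L$, so each of the above slabs, when intersected with $[0,1]^{d-m}$, has $(d-m)$-dimensional Lebesgue measure $O_L(\delta)$. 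Taking the union over the $2m$ slabs, the boundary layer has total measure $O_L(\delta)$, contributing at most $O_L(\delta^{m+1})$ to the integral.

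Combining these estimates yields
\[
\int_{[0,1]^d} 1_{[-\delta,\delta]^m}(L\mathbf{y}+\mathbf{w})\, d\mathbf{y} = \frac{(2\delta)^m}{|\det B|}\vol(\Omega_\mathbf{w}) + O_L(\delta^{m+1}),
\]
and multiplying by $N^d$ delivers the stated asymptotic with $C_{L, \mathbf{v}/N} := 2^m|\det B|^{-1}\vol(\Omega_{\mathbf{v}/N})$. The bound $|C_{L,\mathbf{v}/N}| = O_L(1)$ is immediate from $\vol(\Omega_\mathbf{w}) \leqslant 1$. For the furthermore clause, the half-spaces defining the polytope $\Omega_\mathbf{w}$ translate linearly with $\mathbf{w}$, so $\mathbf{w} \mapsto \vol(\Omega_\mathbf{w})$ is Lipschitz with constant $O_L(1)$. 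Hence if $\|\mathbf{v}\|_\infty = o(N)$ then $\mathbf{w} = \mathbf{v}/N \to \mathbf{0}$, whence $C_{L, \mathbf{v}/N} = C_L + o_L(1)$ with $C_L := 2^m|\det B|^{-1}\vol(\Omega_\mathbf{0})$, and this agrees with the explicit expression used in Example \ref{Example surd primes}.
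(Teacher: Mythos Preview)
Your proof is correct and essentially the same as the paper's. Both arguments select an invertible $m\times m$ block of $L$, integrate out the corresponding $m$ coordinates to produce a volume-of-intersection formula, and bound the boundary layer by observing that pure irrationality forces the linear functionals $\mathbf{r}_i^T A$ (equivalently the paper's $\pi_i\circ\Phi$) to be non-zero; the only cosmetic differences are that the paper applies $B^{-1}$ first so as to intersect a small parallelepiped with the unit cube rather than a small box with $B([0,1]^m)$, and that for the furthermore clause the paper repeats the slab-measure estimate whereas you phrase it as Lipschitz continuity of $\mathbf{w}\mapsto\vol(\Omega_{\mathbf{w}})$.
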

\begin{proof}
Since $L$ has rank $m$, without loss of generality we may assume that the first $m$ columns of $L$ form an invertible submatrix $M$. So \begin{align}
\int\limits_{\mathbf{x} \in [0,N]^d} 1_{[-\varepsilon,\varepsilon]^m}(L\mathbf{x} + \mathbf{v}) \, d\mathbf{x} &= N^d \int\limits_{\mathbf{x} \in [0,1]^d} 1_{ M^{-1}([-\frac{\varepsilon}{N},\frac{\varepsilon}{N}]^m)}(M^{-1}L\mathbf{x} + M^{-1} \mathbf{v}/N) \, d\mathbf{x}. \nonumber 
\end{align} The first $m$ columns of $M^{-1} L$ form the identity matrix, and so this expression is equal to
\begin{equation}
\label{second expression of appendix lemma}
N^d \int\limits_{\mathbf{x_{m+1}^d} \in [0,1]^{d-m}} \vol((M^{-1}([-\frac{\varepsilon}{N},\frac{\varepsilon}{N}]^m) - \sum\limits_{j=m+1}^d x_j \mathbf{a^{(j)}} - M^{-1} \mathbf{v}/N )\cap [0,1]^m) \, d\mathbf{x_{m+1}^d},
\end{equation}
\noindent where $\mathbf{a^{(j)}} \in \mathbb{R}^m$ is the $j^{th}$ column of the matrix $M^{-1} L$.

If the vector \[\sum\limits_{j=m+1}^d x_j \mathbf{a^{(j)}} - M^{-1} \mathbf{v}/N\] lies in $[0,1]^m$, then unless it lies close to the boundary of $[0,1]^m$ we have \[\vol((M^{-1}([-\frac{\varepsilon}{N},\frac{\varepsilon}{N}]^m) - \sum\limits_{j=m+1}^d x_j \mathbf{a^{(j)}} - M^{-1} \mathbf{v}/N )\cap [0,1]^m) = N^{-m} \varepsilon^m \vol(M^{-1}([-1,1]^m)).\] More precisely, letting $C_1$ be a constant that is sufficiently large in terms of $L$, we have that expression (\ref{second expression of appendix lemma}) is equal to  
\begin{equation}
\label{main term of integral} N^{d-m}\varepsilon^m \vol (M^{-1}([-1,1]^m))\int\limits_{\substack{\mathbf{x_{m+1}^d} \in [0,1]^{d-m} \\ -\sum\limits_{j=m+1}^d x_j \mathbf{a^{(j)}} - M^{-1}\mathbf{v}/N \in [0,1]^m}} 1 \, d\mathbf{x_{m+1}^d},
\end{equation}
plus an error term of size at most 
\begin{equation}
\label{error term of integral}
\ll_L N^{d-m}\varepsilon^m  \int\limits^* 1 \, d\mathbf{x_{m+1}^d},\
\end{equation} where $\int^*$ indicates integration over those $\mathbf{x_{m+1}^d} \in [0,1]^{d-m}$ for which \[\dist(-\sum\limits_{j=m+1}^d x_j \mathbf{a^{(j)}} - M^{-1}\mathbf{v}/N, \partial ([0,1]^m)) \leqslant C_1 \varepsilon/N.\] We remind the reader that $\partial$ refers to the topological boundary. 

Define \[ C_{L,\mathbf{v}/N} : =  \vol (M^{-1}([-1,1]^m))\int\limits_{\substack{\mathbf{x_{m+1}^d} \in [0,1]^{d-m} \\ -\sum\limits_{j=m+1}^d x_j \mathbf{a^{(j)}} - M^{-1}\mathbf{v}/N \in [0,1]^m}} 1 \, d\mathbf{x_{m+1}^d}\] and 
\begin{equation}
\label{CL}
C_L =  \vol (M^{-1}([-1,1]^m))\int\limits_{\substack{\mathbf{x_{m+1}^d} \in [0,1]^{d-m} \\ -\sum\limits_{j=m+1}^d x_j \mathbf{a^{(j)}} \in [0,1]^m}} 1 \, d\mathbf{x_{m+1}^d}.
\end{equation}

Then certainly $\vert C_{L,\mathbf{v}/N}\vert = O_{L}(1)$. To prove the first part of the lemma it then suffices to control the error term (\ref{error term of integral}). Let $\Phi:\mathbb{R}^{d-m} \longrightarrow \mathbb{R}^m$ denote the map \[\mathbf{x_{m+1}^{d}} \mapsto \sum\limits_{j=m+1}^d x_j \mathbf{a^{(j)}}.\] For all $i\leqslant m$, let $\pi_i:\mathbb{R}^{m} \longrightarrow \mathbb{R}$ denote projection onto the $i^{th}$ coordinate. Then the size of the error term (\ref{error term of integral}) is \[ \ll_L N^{d-m} \varepsilon^m \sup\limits_{\substack{i\leqslant m \\ I \subset \mathbb{R}: \vert I\vert \leqslant 2C_1 \varepsilon/N}}\Big\vert\int\limits_{\mathbf{x_{m+1}^{d}} \in [0,1]^{d-m}} 1_I(\pi_i(\Phi(\mathbf{x_{m+1}^{d}})) \, d \mathbf{x_{m+1}^{d}}\Big\vert,\] where the supremum is over intervals $I$.  

Since $L$ is purely irrational (and so $M^{-1} L$ is also purely irrational), for all $i\leqslant m$ the linear map $\pi_i \circ \Phi:\mathbb{R}^{d-m} \longrightarrow \mathbb{R}$ is non-zero.  From this we conclude that (\ref{error term of integral}) has size at most \[ \ll_{L} N^{d-m-1} \varepsilon^{m+1},\] from which the first part of the lemma follows. \\

For the second part, assume that $\Vert \mathbf{v}\Vert_\infty = o(N)$. Then note that
\begin{equation}
\label{error term from second integral}
\vert C_{L,\mathbf{v}/N} - C_L\vert \ll_L \int^{\ast\ast} 1 \, d\mathbf{x_{m+1}^d},
\end{equation} where $\int^{**}$ indicates integration over those $\mathbf{x_{m+1}^d} \in [0,1]^{d-m}$ for which \[ \dist(- \sum\limits_{j=m+1}^d x_j \mathbf{a^{(j)}}, \partial([0,1]^m)) = o_{L}(1).\] One can estimate (\ref{error term from second integral}) by exactly the same procedure as was used to estimate (\ref{error term of integral}), and thereby conclude that (\ref{error term from second integral}) is $o_{L}(1)$. This settles the second part of the lemma.
\end{proof}
The next lemma concerns the global factor when one of the variables is restricted to a short interval. 
\begin{Lemma}
\label{Lemma crude upper bound lemma}
Let $N,m,d$ be natural numbers, with $d\geqslant m+1$, and let $\varepsilon,\delta$ be positive parameters. Let $L:\mathbb{R}^{d} \longrightarrow \mathbb{R}^m$ be a surjective linear map, and assume that for all $d$ of the  coordinate subspaces\footnote{A coordinate subspace of $\mathbb{R}^d$ is a subspace generated by a subset of the standard basis vectors.} $U \leqslant \mathbb{R}^d$ of dimension $d-1$, the map $L|_U: U \longrightarrow \mathbb{R}^m$ is also surjective. Let $\mathbf{v} \in \mathbb{R}^m$ be any vector, and let $I \subset \mathbb{R}$ be an interval of length $\delta N$. Fix a coordinate $j \in [d]$. Then \[ \Big\vert\int\limits_{\substack{\mathbf{x} \in [-N,N]^d \\ x_j \in I}} 1_{[-\varepsilon,\varepsilon]^m} (L\mathbf{x} + \mathbf{v}) \, d\mathbf{x}\Big\vert \ll_{L} \delta  \varepsilon^m N^{d-m}.\]
\end{Lemma}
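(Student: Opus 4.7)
The plan is to apply Fubini in the distinguished coordinate $x_j$ and then estimate each $(d-1)$-dimensional slice by the already-established Lemma \ref{Lemma general upper bound}.

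To set up the notation, let $\mathbf{a}^{(j)} \in \mathbb{R}^m$ denote the $j$-th column of (the matrix of) $L$, and let $U_j \leqslant \mathbb{R}^d$ denote the coordinate hyperplane consisting of vectors whose $j$-th coordinate vanishes, which we identify with $\mathbb{R}^{d-1}$ via the obvious projection. By hypothesis $L|_{U_j}$ is a surjective linear map from $\mathbb{R}^{d-1}$ to $\mathbb{R}^m$.

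By Fubini's theorem, the integral in question equals
\[
\int\limits_{x_j \in I} \Big( \int\limits_{\mathbf{x}' \in [-N,N]^{d-1}} 1_{[-\varepsilon,\varepsilon]^m}\bigl(L|_{U_j}(\mathbf{x}') + x_j \mathbf{a}^{(j)} + \mathbf{v}\bigr) \, d\mathbf{x}' \Big) \, dx_j.
\]
For each fixed $x_j$, the inner integral fits the framework of Lemma \ref{Lemma general upper bound} with $h = d-1$, the map $L|_{U_j}$ (which is surjective), $F = 1_{[-N,N]^{d-1}}$ (so one may take $C = N$ in that lemma), and $G(\mathbf{y}) := 1_{[-\varepsilon,\varepsilon]^m}(\mathbf{y} + x_j\mathbf{a}^{(j)} + \mathbf{v})$, which is supported on a translate of $[-\varepsilon,\varepsilon]^m$ (so $K = \varepsilon$). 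Applying that lemma yields the bound $O_{L|_{U_j}}(\varepsilon^m N^{d-1-m})$ for the inner integral, uniformly in $x_j$.

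Since there are only $d$ coordinate hyperplanes $U_j$, the implied constant may be absorbed into a single constant depending only on $L$. Integrating over $x_j \in I$, an interval of length $\delta N$, then produces the desired upper bound $O_L(\delta N \cdot \varepsilon^m N^{d-1-m}) = O_L(\delta \varepsilon^m N^{d-m})$. The main (and only) observation needed is the surjectivity assumption on each $L|_{U_j}$, which is precisely what enables Lemma \ref{Lemma general upper bound} to give the expected saving of $\varepsilon^m$ in the $(d-1)$-dimensional slice; I do not anticipate any serious obstacle.
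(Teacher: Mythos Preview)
Your proof is correct and follows essentially the same approach as the paper: both use Fubini and the surjectivity of $L|_{U_j}$ to obtain the $\varepsilon^m$ saving on the slice orthogonal to the $j$-th coordinate. The only cosmetic difference is that the paper singles out $m$ invertible columns of $L|_{U_j}$ and does the slice estimate by hand, whereas you invoke Lemma~\ref{Lemma general upper bound} as a black box; the latter is arguably tidier.
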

\begin{proof}
By the assumptions of the surjectivity of the restrictions of $L$, without loss of generality we may assume that $j = d$ and that the first $m$ columns of $L$ form an invertible matrix $M$. Then, by integrating over $x_{m+1},\dots,x_{d}$, one has 
 \[ \int\limits_{\substack{\mathbf{x} \in [-N,N]^d \\ x_j \in I}} 1_{[-\varepsilon,\varepsilon]^m} (L\mathbf{x} + \mathbf{v}) \, d\mathbf{x} \ll \delta N^{d-m} \sup\limits_{\mathbf{a} \in \mathbb{R}^m} \int\limits_{\mathbf{x} \in \mathbb{R}^m} 1_{[-\varepsilon,\varepsilon]^m}(M\mathbf{x} + \mathbf{a}) \, d\mathbf{x} \ll_L \delta  \varepsilon^m N^{d-m},\]since $M$ is invertible. 
\end{proof}
The final lemma of this section details what occurs when one permutes the parameters in the global factor. 
\begin{Lemma}
\label{Lemma moving error terms}
Let $N$, $m$, $d$ be natural numbers, with $d\geqslant m+1$, and let $\delta,\varepsilon$ be positive parameters. Assume further that $\delta < 1/2$. Let $L: \mathbb{R}^d \longrightarrow \mathbb{R}^m$ be a surjective purely irrational linear map, and assume that for all $d$ of the  coordinate subspaces $U \leqslant \mathbb{R}^d$ of dimension $d-1$, the map $L|_U: U \longrightarrow \mathbb{R}^m$ is also surjective. Let $\mathbf{v} \in \mathbb{R}^m$ be any vector. Then 
\begin{align}
\label{asymptotic for singular integral the second}
&\int\limits_{\mathbf{x} \in [\pm\delta N,(1\mp \delta) N]^d} 1_{[-\varepsilon(1\pm\delta),\varepsilon(1\mp\delta)]^m}(L\mathbf{x} + \mathbf{v}) \, d\mathbf{x} \nonumber \\
& = \int\limits_{\mathbf{x} \in [0,N]^d} 1_{[-\varepsilon,\varepsilon]^m}(L\mathbf{x} + \mathbf{v}) \, d\mathbf{x} +O_{L}(\delta \varepsilon^mN^{d-m}) 
+ O_{L}(\varepsilon^{m+1}N^{d-m-1}).
\end{align}

\end{Lemma}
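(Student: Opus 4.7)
The plan is to interpolate between the two integrals through the auxiliary quantity
\[A' := \int\limits_{\mathbf{x} \in [0,N]^d} 1_{[-\varepsilon(1\pm\delta),\varepsilon(1\mp\delta)]^m}(L\mathbf{x}+\mathbf{v})\,d\mathbf{x},\]
writing the difference of the two sides of the asserted identity as $(A - A') + (A' - B)$, where $A$ denotes the left-hand integral and $B$ denotes the first integral on the right-hand side. The contribution from altering the spatial domain is captured by $|A - A'|$, while $|A' - B|$ captures the contribution from altering the target box; each will be bounded separately using previously established lemmas.

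For $|A - A'|$, I would observe that the symmetric difference $[0, N]^d \triangle [\pm\delta N, (1\mp\delta)N]^d$ is contained in a union of $2d$ axis-parallel slabs, each of the form $\{\mathbf{x} \in [-N, 2N]^d : x_j \in I_j\}$ with $I_j \subset [-N, 2N]$ an interval of length $\delta N$. Since $\delta < 1/2$, the target box $[-\varepsilon(1\pm\delta), \varepsilon(1\mp\delta)]^m$ is contained in $[-2\varepsilon, 2\varepsilon]^m$, so the integrand is dominated by $1_{[-2\varepsilon, 2\varepsilon]^m}(L\mathbf{x}+\mathbf{v})$. The hypothesis that $L|_U$ is surjective for every coordinate hyperplane $U$ is exactly what Lemma \ref{Lemma crude upper bound lemma} requires, so applying that lemma to each of the $2d$ slabs (after rescaling by a constant factor so the ambient box fits inside $[-N', N']^d$ for $N' \asymp N$) and summing yields $|A - A'| = O_L(\delta\varepsilon^m N^{d-m})$.

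For $|A' - B|$, I would rewrite the target box $[-\varepsilon(1\pm\delta), \varepsilon(1\mp\delta)]^m$ as the translate of $[-\varepsilon, \varepsilon]^m$ by $\mp\delta\varepsilon\mathbf{1}$, so that $A' = \int\limits_{\mathbf{x} \in [0,N]^d} 1_{[-\varepsilon,\varepsilon]^m}(L\mathbf{x}+\mathbf{v}') \, d\mathbf{x}$ with $\mathbf{v}' := \mathbf{v} \pm \delta\varepsilon\mathbf{1}$. Applying Lemma \ref{Lemma singular integral} to both $A'$ (with vector $\mathbf{v}'$) and $B$ (with vector $\mathbf{v}$) gives
\[A' - B = (C_{L, \mathbf{v}'/N} - C_{L, \mathbf{v}/N})\, \varepsilon^m N^{d-m} + O_L(\varepsilon^{m+1}N^{d-m-1}).\]
The main obstacle, and the one step not immediately reducible to a cited lemma, is controlling this difference of constants: the explicit formula (\ref{CL}) expresses $C_{L, \mathbf{w}/N}$ as the volume of a polytope in $[0,1]^{d-m}$ whose defining half-space inequalities depend linearly on $M^{-1}\mathbf{w}/N$ with $O_L(1)$-bounded coefficients, so a standard boundary-flux argument on this polytope yields $|C_{L,\mathbf{v}'/N} - C_{L,\mathbf{v}/N}| = O_L(\delta\varepsilon/N)$. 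Consequently $|A' - B| = O_L(\delta\varepsilon^{m+1}N^{d-m-1}) + O_L(\varepsilon^{m+1}N^{d-m-1})$, which is absorbed into $O_L(\varepsilon^{m+1}N^{d-m-1})$, and combining with the bound on $|A - A'|$ gives the claim.
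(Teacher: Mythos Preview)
Your proposal is correct and follows essentially the same route as the paper. Both arguments first use Lemma~\ref{Lemma crude upper bound lemma} on the $2d$ boundary slabs to replace the domain $[\pm\delta N,(1\mp\delta)N]^d$ by $[0,N]^d$, and then compare the two target boxes via the computation underlying Lemma~\ref{Lemma singular integral}.

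The one point of difference is your observation that $[-\varepsilon(1\pm\delta),\varepsilon(1\mp\delta)]$ is a \emph{pure translate} of $[-\varepsilon,\varepsilon]$ by $\mp\delta\varepsilon$, so that $A'$ is literally the right-hand integral $B$ with $\mathbf{v}$ replaced by $\mathbf{v}'=\mathbf{v}\pm\delta\varepsilon\mathbf{1}$. This lets you invoke Lemma~\ref{Lemma singular integral} as a black box twice, reducing the comparison to bounding $|C_{L,\mathbf{v}'/N}-C_{L,\mathbf{v}/N}|$. The paper instead re-runs the change-of-variables computation from the proof of Lemma~\ref{Lemma singular integral} directly on the perturbed box, arriving at a main term of the form $(1+O(\delta))C_{L,\mathbf{v}/N}\varepsilon^m N^{d-m}$ plus a boundary error of width $O_L(\delta+\varepsilon/N)$. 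Your ``boundary-flux'' step is exactly the estimate used in the paper (and in the second half of Lemma~\ref{Lemma singular integral}, expression~(\ref{error term from second integral})): the symmetric difference of the two polytopes sits inside a neighbourhood of $\partial([0,1]^m)$ of width $O_L(\delta\varepsilon/N)$, and pure irrationality forces each coordinate projection $\pi_i\circ\Phi$ to be nonzero, giving the $O_L(\delta\varepsilon/N)$ bound. So the two arguments are really the same computation organised slightly differently; your translate observation is a small tidying that avoids re-opening the proof of Lemma~\ref{Lemma singular integral}.
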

The proof of this lemma is very similar to the proof of Lemma \ref{Lemma singular integral}. We merely sketch the relevant changes. 
\begin{proof}
By Lemma \ref{Lemma crude upper bound lemma} one may replace the left-hand side of (\ref{asymptotic for singular integral the second}) by \[ \int\limits_{ \mathbf{x} \in [0,N]^d} 1_{[-\varepsilon( 1\pm \delta),\varepsilon(1\mp\delta)]^m}(L\mathbf{x} + \mathbf{v}) \, d\mathbf{x}.\] Let $C_1$ be a suitably large constant (depending on $L$). Following the procedure in the proof of Lemma \ref{Lemma singular integral}, and with the same notation for $M$ and $\mathbf{a^{(j)}}$, one establishes that the left-hand side of (\ref{asymptotic for singular integral the second}) is equal to 
\begin{equation}
\label{second main}
N^{d-m} \varepsilon^m(1+ O(\delta)) \vol (M^{-1}([-1,1]^m)) \int\limits_{ \substack{\mathbf{x_{m+1}^d} \in [0,1]^{d-m} \\ -\sum\limits_{j={m+1}}^d x_j \mathbf{a^{(j)}} - M^{-1} \mathbf{v}/N \in [0,1]^m}} 1 \, d\mathbf{x_{m+1}^d},
\end{equation} plus an error of size at most 
\begin{equation}
\label{second error}
\ll_L N^{d-m} \varepsilon^m \int^\ast 1 \, d\mathbf{x_{m+1}^d},
\end{equation}
\noindent where $\int^*$ indicates integration over those $\mathbf{x_{m+1}^d} \in [-1,2]^{d-m}$ for which \[\dist (-\sum\limits_{j=m+1}^d x_j \mathbf{a^{(j)}} - M^{-1} \mathbf{v}/N,\partial([0,1]^m)) \leqslant C_1 (\delta + \varepsilon/N).\]

The error (\ref{second error}) may be bounded above by $O_{L}((\delta + \varepsilon/N) \varepsilon^m N^{d-m}))$, by the same method as we used to bound (\ref{error term of integral}). 

The main term (\ref{second main}), from the work in Lemma \ref{Lemma singular integral}, is equal to \[ (1+ O(\delta)) \Big(\int\limits_{\mathbf{x} \in [0,N]^d} 1_{[-\varepsilon,\varepsilon]^m}(L\mathbf{x} + \mathbf{v}) \, d\mathbf{x} + O_{L}(\varepsilon^{m+1} N^{d-m-1})\Big).\] Bounding this integral using Lemma \ref{Lemma general upper bound}, the present lemma follows. 
\end{proof}

\begin{Remark}
\emph{In the proofs above, we used, in a critical way, the fact that the convex domains $[-\varepsilon,\varepsilon]^m$ and $[0,N]^d$ are axis-parallel boxes.}
\end{Remark}

\section{An analytic argument}
\label{section an analytic argument}

We take the opportunity to record a rather more direct argument which yields an asymptotic formula for expressions of the form\[ \sum\limits_{\mathbf{n} \in [N]^d } \Big( \prod\limits_{j=1}^d \Lambda^\prime(n_j)\Big) 1_{[-\varepsilon,\varepsilon]^m}(L\mathbf{x}+\mathbf{v})\] in the case when $L:\mathbb{R}^{d}\longrightarrow \mathbb{R}^m$ is a linear map with $d$ at least $2m+1$ (and certain irrationality conditions hold). This method is a simple elaboration on Parsell's ideas \cite{Pa02}, and can handle more general coefficients than Theorem \ref{Main theorem simpler version} (although it requires more variables, of course). We suspect that this result has been obvious to the experts for fifteen years or more, but we feel that it should appear explicitly in the literature. 
\begin{Theorem}
\label{Theorem Parsell generalised}
Let $N,m,d$ be natural numbers, with $d\geqslant 2m+1$, and let $\varepsilon$ be a positive parameter. Let $L:\mathbb{R}^d \longrightarrow \mathbb{R}^m$ be a surjective linear map. Assume further that, when written as matrix with respect to the standard bases, all the $m$-by-$m$ sub-matrices of $L$ have non-zero determinant. Assume also that there does not exist a vector $\ba \in \mathbb{R}^m\setminus \{\mathbf{0}\}$ such that $L^T \ba \in \mathbb{Z}^d$ (i.e. in the language of Definition \ref{Definition rational space} assume that $L$ is purely irrational). Then, for all vectors $\mathbf{v} \in \mathbb{R}^m$, 
\begin{equation}
\label{Parsell general asymptotic}
\sum\limits_{\mathbf{n} \in [N]^d } \Big( \prod\limits_{j=1}^d \Lambda^\prime(n_j)\Big) 1_{[-\varepsilon,\varepsilon]^m}(L\mathbf{x} + \mathbf{v}) = \int\limits_{\mathbf{x} \in [0,N]^d} 1_{[-\varepsilon,\varepsilon]^m}(L\mathbf{x} + \mathbf{v}) \, d\mathbf{x} + o_{L,\varepsilon}(N^{d-m}).
\end{equation}
\end{Theorem}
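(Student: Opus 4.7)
The plan is to follow the Davenport--Heilbronn method, using Freeman's enhancement in the manner pioneered by Parsell for $m=1$ and adapted to systems of $m \geq 2$ simultaneous inequalities. First I would use Lemma \ref{Lemma smooth approximations} to sandwich $1_{[-\varepsilon,\varepsilon]^m}$ between smooth majorants and minorants $G^{\pm \delta} \in \mathcal{C}(\delta,\varepsilon)$ agreeing with it off an $O(\delta)$-neighbourhood of the boundary; by Lemma \ref{Lemma crude upper bound lemma} (whose hypotheses hold because every $m$-by-$m$ sub-matrix of $L$ is non-singular) it suffices to prove the analogue of (\ref{Parsell general asymptotic}) for $G^{\pm \delta}$ and then send $\delta \to 0$ slowly. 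Denoting by $\lambda^{(j)} \in \mathbb{R}^m$ the $j$-th column of $L$ and writing $S(\theta) := \sum_{n \leq N} \Lambda'(n) e(\theta n)$, Fourier inversion recasts the smoothed sum as
\begin{equation*}
\int_{\ba \in \mathbb{R}^m} \widehat{G^{\pm \delta}}(\ba)\, e(\ba \cdot \mathbf{v}) \prod_{j=1}^{d} S(\ba \cdot \lambda^{(j)})\, d\ba.
\end{equation*}

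Next I would partition $\mathbb{R}^m$ into a trivial arc $\mathfrak{t} = \{\Vert \ba\Vert_\infty > N^\eta\}$, a minor arc $\mathfrak{m} = \{N^{-1+\eta} \leq \Vert \ba \Vert_\infty \leq N^\eta\}$, and a major arc $\mathfrak{M} = \{\Vert \ba \Vert_\infty < N^{-1+\eta}\}$, with $\eta$ a small positive parameter. On $\mathfrak{t}$ the rapid decay $|\widehat{G^{\pm \delta}}(\ba)| \ll_{K,\delta,\varepsilon} \Vert \ba \Vert_\infty^{-K}$ from Lemma \ref{Lemma by parts}, combined with the trivial bound $|S(\theta)| \leq N\log N$, gives a negligible contribution for $K$ large. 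On $\mathfrak{M}$ the Siegel--Walfisz theorem (or just the prime number theorem together with partial summation) yields $S(\theta) = \int_0^N e(\theta x)\,dx + o(N)$ uniformly for $|\theta| \leq N^{-1+\eta}$; plugging this into the integrand and integrating back shows the major-arc contribution equals $\int_{\mathbf{x} \in [0,N]^d} G^{\pm\delta}(L\mathbf{x}+\mathbf{v})\,d\mathbf{x}$ up to an $o(N^{d-m})$ error, and no non-archimedean local factors appear because $L$ is purely irrational.

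The main obstacle, as always in this circle of ideas, is the minor arc $\mathfrak{m}$. The strategy is to combine a mean-value bound with a pointwise supremum bound. For the mean value, since $d \geq 2m+1$ and every $m$-by-$m$ sub-matrix of $L$ is invertible, I can select two disjoint sets of $m$ columns $\lambda^{(j)}$ each of which spans $\mathbb{R}^m$; changing variables in each block of $m$ forms on $\ba$ (the Jacobians being $O_L(1)$) and applying Parseval (\emph{cf.} the standard mean value estimate $\int_0^1 |S(\theta)|^2\,d\theta \ll N\log N$) gives
\begin{equation*}
\int_{\mathfrak{m}} \prod_{j=1}^{2m} |S(\ba \cdot \lambda^{(j)})|\, d\ba \ll_L (N\log N)^m \cdot (N^{\eta})^{m}\big/ N^{m} = N^{m\eta} (\log N)^m
\end{equation*}
(after absorbing a Cauchy--Schwarz factor from the range of $\ba$). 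The remaining $d-2m \geq 1$ factors of $S$ are estimated pointwise: for each $\ba \in \mathfrak{m}$, since $L^T\ba \notin \mathbb{Z}^d$ and the coefficients of $L$ are only irrational (not necessarily algebraic), a Dirichlet-style argument combined with the invertibility of some $m$-by-$m$ minor ensures that at least one of the $\ba \cdot \lambda^{(j)}$ fails to admit a very good rational approximation; Vinogradov's bound then yields $|S(\ba \cdot \lambda^{(j_0)})| = o_L(N)$ uniformly on $\mathfrak{m}$. This is precisely the content of Freeman's enhancement --- one does not need a power-saving bound, only an $o(N)$ bound, in order to convert a lower bound into an asymptotic. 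Together with Lemma \ref{Lemma by parts} giving $|\widehat{G^{\pm \delta}}(\ba)| \ll_{\delta,\varepsilon} 1$ on $\mathfrak{m}$, this minor arc contribution is $o_{L,\delta,\varepsilon}(N^{d-m})$.

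Combining the three arc estimates and letting $\delta = \delta(N) \to 0$ slowly enough yields (\ref{Parsell general asymptotic}). The hard part is unquestionably the pointwise bound on $\mathfrak{m}$: without the algebraicity of the coefficients one loses the quantitative Diophantine estimate (\ref{only point where algebraicity is used}) of \cite{Wa17}, so the resulting $o_{L,\varepsilon}$ error in (\ref{Parsell general asymptotic}) will be ineffective in general and its rate of decay will depend on the irrationality measures of the coefficients of $L$.
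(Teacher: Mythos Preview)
Your overall strategy---Davenport--Heilbronn plus Freeman's enhancement---is exactly what the paper does, but the execution has a genuine gap that breaks the argument.

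The central problem is your choice of $N^\eta$ for the trivial/minor boundary. Freeman's pointwise bound (which you correctly identify as the key input) is a \emph{compactness} argument: it shows $\sup_{\ba}\min_j |S(\ba\cdot\lambda^{(j)})| = o_L(N)$ only over regions $\{\Vert\ba\Vert_\infty\leqslant A\}$ with $A$ \emph{fixed}. On your minor arc $\{\Vert\ba\Vert_\infty\leqslant N^\eta\}$ this uniform $o(N)$ is simply not available for general (non-algebraic) $L$. The paper therefore takes the boundary to be a slowly growing function $T(N)\to\infty$, chosen by a diagonal argument so that the Freeman bound still holds on $\{\Vert\ba\Vert_\infty\leqslant T(N)\}$.

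But then your trivial-arc estimate collapses: with only the trivial bound $|S|\leqslant N\log N$ and rapid decay of $\widehat{G^{\pm\delta}}$, the trivial-arc contribution is $\ll_K (N\log N)^d T(N)^{m-K}$, and no fixed $K$ makes this $o(N^{d-m})$ when $T(N)$ grows slower than every power of $N$. The paper handles the trivial arc not with the trivial bound but with a mean-value estimate (Lemma~\ref{Lemma mean value}): after covering the annulus $\{n\leqslant\Vert\ba\Vert_\infty\leqslant n+1\}$ by $O(n^{m-1})$ unit boxes one gets $\int_{\text{annulus}}|\prod_j f|\,d\ba\ll_L n^{m-1}N^{d-m}$, which sums against $n^{-m-1}$ from $\widehat{G}$ to give $T(N)^{-1}N^{d-m}$.

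There is also an arithmetic slip in your minor-arc mean value: after Cauchy--Schwarz on two disjoint $m$-blocks and the change of variables, Parseval gives $\int_{\Vert\ba\Vert\leqslant R}\prod_{j=1}^{2m}|S(\ba\cdot\lambda^{(j)})|\,d\ba\ll_L (RN\log N)^m$, not $R^m(\log N)^m$; you are missing a factor of $N^m$. Even with the correct bound and a slowly growing $T(N)$, this leaves a stray $(\log N)^m$ that is awkward to absorb. The paper sidesteps this by using the sharper moment bound $\int_0^1|f(\alpha)|^u\,d\alpha\ll N^{u-1}$ for $u>2$, together with H\"older over all $\binom{d}{m}$ column sets (Lemma~\ref{Lemma mean value}), yielding $\int|\prod_j f|^{1-\eta}\,d\ba\ll_L N^{d(1-\eta)-m}$ with no logarithms. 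On the minor arc one then writes $\int|\prod f|\leqslant(\sup|\prod f|)^{\eta}\int|\prod f|^{1-\eta}$, which combines cleanly with Freeman's $o(N^d)$ to give $o(N^{d-m})T(N)^{O(1)}$.
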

\begin{Remark}
\emph{The asymptotic size of the main term may easily be established using Lemma \ref{Lemma singular integral}.}
\end{Remark}
\begin{proof}[Sketch proof]
We sketch the argument, referring heavily to estimates from \cite{Fr02} and \cite{Pa02}. Define $$f(\theta) = \sum\limits_{n\leqslant N} \Lambda^\prime(n) e(\theta n).$$ Let $T(N)$ be a function that tends to infinity as $N$ tends to infinity, to be defined later, and let $\delta = \delta(N)$ be a function (depending on the function $T$) that tends to zero suitably slowly as $N$ tends to infinity.  

By Lemma \ref{Lemma smooth approximations}, there exists smooth functions $G_{\pm}:\mathbb{R}^m \longrightarrow [0,1]$ for which $G_{\pm} \in \mathcal{C}(\delta)$ and \[ 1_{[-\varepsilon(1 - \delta),\varepsilon(1-\delta)]} \leqslant G_{-} \leqslant 1_{[-\varepsilon,\varepsilon]}\leqslant G_{+} \leqslant 1_{[-\varepsilon(1 + \delta),\varepsilon(1+\delta)]}.\] By Fourier inversion we see that 
\begin{align}
\int\limits_{\ba \in\mathbb{R}^m} \left(\prod\limits_{j=1}^d f(\sum\limits_{i=1}^m \alpha_i \lambda_{ij})\right) \Big(\prod\limits_{i=1}^m \widehat{G_-}(\alpha_i)\Big) e(\ba \cdot \mathbf{v})\, d\ba \leqslant  \sum\limits_{\mathbf{n} \in [N]^d } \Big( \prod\limits_{j=1}^d \Lambda^\prime(n_j)\Big) 1_{[-\varepsilon,\varepsilon]^m}(L\mathbf{x})\nonumber\\  \leqslant  \int\limits_{\ba \in\mathbb{R}^m} \left(\prod\limits_{i=1}^d f(\sum\limits_{i=1}^m \alpha_i \lambda_{ij})\right) \Big(\prod\limits_{i=1}^m \widehat{G_+}(\alpha_i)\Big) e(\ba \cdot \mathbf{v}) \, d\ba,
\end{align}
\noindent where $L = (\lambda_{ij})_{i\leqslant m, \,j\leqslant d}$. We estimate the integrals by splitting the range of integration in three regions.
\begin{equation*}
\begin{cases}
\mathfrak{t}:= \{\ba:\Vert \ba\Vert_\infty \geqslant T(N)\} &\text{   trivial arc   }\\
\mathfrak{m}:= \{\ba:\Vert \ba\Vert_\infty \in [N^{-1}\log ^B N, T(N))\} &\text{   minor arc   }\\
\mathfrak{M}:= \{\ba:\Vert \ba\Vert_\infty\leqslant N^{-1} \log ^B N\} &\text{   major arc   }\end{cases}
\end{equation*}
\noindent for some large constant $B$. See Section \ref{section Inequalities in arithmetic progressions} for another instance of this technique. 

Much of the estimation relies on the following tight mean value bound.

\begin{Lemma}
\label{Lemma mean value}
Let $U\subset \mathbb{R}^m$ be a domain. Let $d>2m$ and let $l$ be a positive real number satisfying \[ \left(\begin{matrix} d \\ m \end{matrix} \right)l >  2\left(\begin{matrix} d-1 \\ m-1 \end{matrix} \right).\] Then
\begin{align}
\label{mean value}
\int\limits_{\ba \in U} \Big\vert\prod\limits_{j=1}^d f(\sum\limits_{i=1}^m \alpha_i \lambda_{ij}) \Big\vert^{l} \, d\ba \ll_{L} (\diam U + 1)^{O(1)}N^{dl-m}
\end{align}
\end{Lemma}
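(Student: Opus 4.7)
The plan is to follow the standard Hölder/change-of-variables strategy from the Davenport--Heilbronn method, reducing the multi-dimensional mean value integral on the left-hand side to a product of one-dimensional mean value integrals for $f$. First, one breaks $U$ into translates of a fixed unit box, which accounts for the $(\diam U+1)^{O(1)}$ factor; it therefore suffices to assume that $U$ has diameter bounded by a constant.

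Next, I would enumerate the $\binom{d}{m}$ subsets $S\subset [d]$ of size $m$, and write $\psi_j(\ba):=\sum_{i=1}^m \alpha_i \lambda_{ij}$. Since each index $j\in[d]$ lies in exactly $\binom{d-1}{m-1}$ such subsets, setting $s:=l/\binom{d-1}{m-1}$ gives the identity
\[
\prod_{j=1}^d |f(\psi_j(\ba))|^l = \prod_S \prod_{j\in S} |f(\psi_j(\ba))|^s.
\]
Applying Hölder's inequality with $\binom{d}{m}$ factors each raised to the $\binom{d}{m}$-th power reduces matters to bounding, for a single subset $S$, the quantity $\int_U \prod_{j\in S} |f(\psi_j(\ba))|^{s\binom{d}{m}}\, d\ba$. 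The hypothesis that every $m$-by-$m$ submatrix of $L$ is non-singular is now crucial: for each $S$, the linear map $\ba \mapsto (\psi_j(\ba))_{j\in S}$ is an invertible change of variables with Jacobian $O_L(1)$, and carries $U$ into a region of diameter $O_L(\diam U + 1)$. This separates the integral into a product of $m$ one-dimensional integrals $\int_{I_j}|f(\beta)|^p\, d\beta$, where $p:=s\binom{d}{m}=ld/m$ and each $I_j$ is an interval of length $O_L(\diam U + 1)$.

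The final step is a one-dimensional mean value estimate. Observe that the hypothesis $\binom{d}{m}l > 2\binom{d-1}{m-1}$ is exactly the condition $p>2$, so one can write
\[
\int_{I_j}|f(\beta)|^p\, d\beta \leq \|f\|_\infty^{p-2}\int_{I_j}|f(\beta)|^2\, d\beta \ll N^{p-2}\cdot (|I_j|+1)\,N,
\]
using the trivial bound $\|f\|_\infty \ll N$ and the standard $L^2$ mean value $\int_0^1|f|^2 \ll N(\log N)^{O(1)}$ (broken into unit intervals to handle non-periodicity). Assembling the contributions from all $\binom{d}{m}$ subsets yields
\[
\int_U \prod_{j=1}^d |f(\psi_j(\ba))|^l\, d\ba \ll_L (\diam U + 1)^{O(1)} N^{m(p-1)} = (\diam U + 1)^{O(1)} N^{dl-m},
\]
which is the desired bound. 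The main obstacle is really one of bookkeeping rather than ideas: one must verify that the combinatorial factors in Hölder's inequality balance correctly (each index $j$ receiving total exponent $l$) and that the one-dimensional bound tolerates the non-integer exponent $p$ and non-periodic interval $I_j$. The strict inequality in the hypothesis provides enough slack to absorb the logarithmic losses arising from the $L^2$ estimate for $f$ into the implied constant, which is the only subtlety.
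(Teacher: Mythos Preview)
Your approach is the same as the paper's: the combinatorial identity over $m$-subsets $S\subset[d]$, H\"older with $\binom{d}{m}$ factors, and the invertible change of variables coming from the nondegeneracy of the $m\times m$ minors of $L$. The discrepancy is only at the final one-dimensional step, where your claim that ``the strict inequality provides enough slack to absorb the logarithmic losses'' is not right. The condition $p=dl/m>2$ is what allows you to peel off $|f|^{p-2}$ and bound it in $L^\infty$; it contributes no spare power of $N$ to cancel a $\log N$. Since $f$ carries the $\log p$ weight, Parseval gives $\int_0^1|f|^2=\sum_{p\le N}(\log p)^2\asymp N\log N$, so your interpolation yields only $\int_0^1|f|^p\ll N^{p-1}\log N$, and after taking the product over the $m$ coordinates you obtain $N^{dl-m}(\log N)^m$ rather than $N^{dl-m}$.

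The paper avoids this by citing the sharper one-dimensional estimate $\int_0^1|f(\alpha)|^u\,d\alpha\ll N^{u-1}$ for $u>2$ (Lemma~3 of \cite{Pa02}), which has no logarithmic loss; with that input in place of your $L^\infty$--$L^2$ bound, the rest of your argument goes through verbatim. Your weaker bound $N^{dl-m}(\log N)^{O(1)}$ would in fact still suffice for the trivial-arc and minor-arc applications downstream, so the gap is only in matching the lemma exactly as stated rather than anything fatal to the overall method.
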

\begin{proof}
We write the left-hand side of (\ref{mean value}) as
\begin{align*}
\int\limits_{\ba \in U}\prod\limits_{\substack{S \subset [d] \\ \vert S \vert = m}} \Big\vert\prod\limits_{j\in S} f(\sum\limits_{i=1}^m \alpha_i \lambda_{ij})\Big\vert ^{l/\left(\begin{smallmatrix} d-1  \\ m-1\end{smallmatrix} \right)} \, d\ba 
\end{align*} 
which is
\begin{equation}
\label{expression after holder}
\ll \prod\limits_{\substack{S \subset [d] \\ \vert S \vert = m}} \Big(\int\limits_{\ba \in U} \Big\vert\prod\limits_{j\in S} f(\sum\limits_{i=1}^m \alpha_i \lambda_{ij})\Big\vert ^{\left(\begin{smallmatrix} d  \\ m\end{smallmatrix} \right)l/\left(\begin{smallmatrix} d-1  \\ m-1\end{smallmatrix} \right)} \, d\ba \Big)^{1/\left(\begin{smallmatrix} d  \\ m\end{smallmatrix} \right)}
\end{equation}
\noindent by H\"{o}lder's inequality. Note that $dl/m = \left(\begin{smallmatrix} d  \\ m\end{smallmatrix} \right)l/\left(\begin{smallmatrix} d-1  \\ m-1\end{smallmatrix} \right) > 2$. 
 
Now recall the bound from \cite[Lemma 3]{Pa02}, namely \begin{equation*}
\label{tight mean value}
\int\limits_{0}^{1} \vert f(\alpha)\vert^u  \, d\alpha \ll N^{u-1} \, \qquad \text{if } u > 2.
\end{equation*} Note that for each fixed $S$ the $m$-by-$m$ submatrix of $L$ given by $(\lambda_{i,j})_{i\leqslant m, \,j\in S}$ is invertible. Therefore, by applying an invertible change of variables and splitting $U$ into boxes,  (\ref{expression after holder}) is \[ \ll_{L} (\diam U + 1)^{O(1)} N^{ (dl/m - 1)m} = (\diam U + 1)^{O(1)} N^{dl-m}.\] This implies the lemma. 
\end{proof}
\textbf{Trivial arc:} The estimation on the trivial arc proceeds very similarly to page 8 of \cite{Pa02}. Indeed, by the bound in Lemma \ref{Lemma by parts} we have
\begin{align*}
\Big\vert\int\limits_{\ba \in\mathfrak{t}} \Big(\prod\limits_{j=1}^d f(\sum\limits_{i=1}^m \alpha_i \lambda_{ij})\Big)\Big(\prod\limits_{i=1}^m \widehat{G_{\pm}}(\alpha_i)\Big) e(\ba \cdot \mathbf{v})  \, d\ba \Big\vert  & \ll_\delta
\int\limits_{\ba \in\mathfrak{t}} \Big\vert\prod\limits_{j=1}^d f(\sum\limits_{i=1}^m \alpha_i \lambda_{ij})\Big\vert\Vert \ba\Vert_\infty ^{-m-1} \, d\ba \\
\end{align*}
\noindent which is
\begin{align*}
&\ll_\delta \sum\limits_{n\geqslant T(N)} n^{-m-1}  \int\limits_{\ba: n\leqslant \Vert \ba\Vert_\infty \leqslant n+1} \Big\vert\prod\limits_{j=1}^d f(\sum\limits_{i=1}^m \alpha_i \lambda_{ij})\Big\vert \, d\ba \\
& \ll_\delta \sum\limits_{n\geqslant T(N)} n^{-2} N^{d-m} \\
&\ll_\delta T(N)^{-1} N^{d-m} \\
&= o_L(N^{d-m})
\end{align*} by Lemma \ref{Lemma mean value}, provided $\delta(N)$ decays slowly enough. \\

\textbf{Minor arc:} The following is the natural higher-dimensional version of the argument in \cite{Pa02}.

\begin{Lemma}
\label{parsell minor arc estimate} For any positive $A$ and $B$, 
\begin{equation}
\label{parsell minor arc estimate equation}
\sup\limits_{\frac{\log ^B N}{N}\leqslant \Vert \ba\Vert_\infty\leqslant A} \Big\vert\prod\limits_{j=1}^d f(\sum\limits_{i=1}^m \alpha_i \lambda_{ij})\Big\vert = o_{A,B,L}(N^{d}).
\end{equation}
\end{Lemma}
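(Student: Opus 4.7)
The plan is to prove this by contradiction, following the natural higher-dimensional generalisation of Parsell's argument in \cite{Pa02}. Suppose for contradiction that the supremum does not tend to $o(N^d)$: then there exists $\delta>0$, a sequence $N\to\infty$ and vectors $\ba=\ba^{(N)}$ in the minor arc range with $\vert\prod_{j=1}^d f(\theta_j)\vert \geqslant \delta N^d$, where $\theta_j = \sum_{i=1}^m \alpha_i \lambda_{ij}$. Since each $\vert f(\theta_j)\vert \leqslant f(0)\ll N$, this forces $\vert f(\theta_j)\vert \geqslant (\delta/C)N$ for every $j\in [d]$ and some constant $C=C(L)$. The first main step is to invoke the Vinogradov--Vaughan exponential sum estimate over primes: there exist $Q_0 = Q_0(\delta)$ and $N_0 = N_0(\delta)$ such that for all $N\geqslant N_0$ and every $\theta\in\mathbb{R}$ with $\vert f(\theta)\vert \geqslant (\delta/C)N$, there exist coprime integers $a,q$ with $1\leqslant q\leqslant Q_0$ and $\vert\theta - a/q\vert \leqslant Q_0/N$. (This is standard, e.g. from the Dirichlet box principle combined with Vaughan's bound; see \cite{Pa02} for the one-dimensional version.)

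Applying this to each $\theta_j$ yields coprime integers $(a_j,q_j)$ with $q_j\leqslant Q_0$ and $\vert\theta_j - a_j/q_j\vert \leqslant Q_0/N$. Next, I will exploit the hypothesis that every $m$-by-$m$ submatrix of $L$ is invertible. Fixing the submatrix $M=(\lambda_{ij})_{i,j\leqslant m}$, we have $\ba = M^{-1}(\theta_1,\ldots,\theta_m)^T$, so $\ba$ lies within $O_L(Q_0/N)$ of the rational vector $\mathbf{p}/Q := M^{-1}(a_1/q_1,\ldots,a_m/q_m)^T$, where $\mathbf{p}\in\mathbb{Z}^m$ and $Q\leqslant C_L Q_0^m$ for some $C_L$ depending only on $L$. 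Now consider $j\in\{m+1,\ldots,d\}$: on the one hand $\theta_j = L_j^T\ba$ is within $O_L(Q_0/N)$ of the rational number $L_j^T\mathbf{p}/Q$, and on the other hand it is within $Q_0/N$ of $a_j/q_j$; both rationals have denominator bounded by $C_L Q_0^{m+1}$, so if they differed they would differ by at least $1/(C_L Q_0^{m+1})^2$, which exceeds $O_L(Q_0/N)$ once $N$ is large enough (depending on $\delta$ and $L$). Consequently $L_j^T\mathbf{p}/Q = a_j/q_j$ for every $j=m+1,\ldots,d$, and hence $L^T\mathbf{p}/Q\in\mathbb{Q}^d$.

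Clearing denominators, pick $Q^* = Q\cdot\mathrm{lcm}(q_{m+1},\ldots,q_d)$ and set $\mathbf{p}^* = (Q^*/Q)\mathbf{p}\in\mathbb{Z}^m$; then $L^T\mathbf{p}^* \in \mathbb{Z}^d$. At this point the purely irrational hypothesis on $L$ (no $\ba\in\mathbb{R}^m\setminus\{\mathbf{0}\}$ with $L^T\ba\in\mathbb{Z}^d$) implies $\mathbf{p}^* = \mathbf{0}$, so $\mathbf{p} = \mathbf{0}$, and therefore $\Vert\ba\Vert_\infty \leqslant C'_L Q_0/N$ for a constant $C'_L$. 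Since $\log^B N\to\infty$, for all $N$ sufficiently large (in terms of $\delta$, $B$, $L$) we have $C'_L Q_0 < \log^B N$, contradicting the lower bound $\Vert\ba\Vert_\infty\geqslant \log^B N/N$. This contradicts our initial assumption and completes the proof.

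The main technical obstacle is the first step, namely the Vinogradov--Vaughan bound that converts the pointwise lower bound $\vert f(\theta)\vert\geqslant \delta N$ into a rational approximation with absolutely bounded denominator and separation $O(1/N)$; everything thereafter is linear-algebraic bookkeeping combined with the purely irrational assumption. The upper bound $\Vert\ba\Vert_\infty\leqslant A$ is used implicitly to keep the relevant rationals $a_j/q_j$ in a bounded range so that the approximation quality $Q_0(\delta)$ is uniform. It is worth emphasising that the argument succeeds for any $B>0$ (not merely $B$ large), because the constants $Q_0$ and $C'_L$ are absolute with respect to $N$, whereas $\log^B N$ grows without bound.
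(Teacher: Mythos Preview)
Your argument contains a genuine gap. You write that $\ba$ lies within $O_L(Q_0/N)$ of ``the rational vector $\mathbf{p}/Q := M^{-1}(a_1/q_1,\ldots,a_m/q_m)^T$'' with $\mathbf{p}\in\mathbb{Z}^m$ and $Q\leqslant C_L Q_0^m$, and you later treat $L_j^T\mathbf{p}/Q$ as a rational number with bounded denominator. But the matrix $L$ in Theorem \ref{Theorem Parsell generalised} has arbitrary real entries (only the pure-irrationality and sub-determinant conditions are imposed), so $M^{-1}$ applied to a rational vector need not be rational, and $L_j^T$ of that vector need not be rational either. Your separation argument ``both rationals have denominator bounded by $C_LQ_0^{m+1}$, so if they differed they would differ by at least $1/(C_LQ_0^{m+1})^2$'' therefore has no force.

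The paper repairs this with a compactness step you omitted. Since the integers $a_j^{(N)},q_j^{(N)}$ range over a finite set (bounded in terms of $A,\delta,L$), one passes to a subsequence on which they are constant, say $a_j,q_j$. By compactness of $\{\Vert\ba\Vert_\infty\leqslant A\}$ one passes to a further subsequence so that $\ba^{(N)}\to\ba$. In the limit the approximation $q_j(L^T\ba)_j = a_j + O(N^{-1})$ becomes the exact equality $q_j(L^T\ba)_j = a_j$, whence $L^T\bb\in\mathbb{Z}^d$ for the \emph{real} vector $\bb=(\prod_k q_k)\ba$. The pure-irrationality hypothesis (which rules out any nonzero $\bb\in\mathbb{R}^m$, not just integer ones) then forces all $a_j=0$, and one concludes $\Vert\ba^{(N)}\Vert_\infty\ll_\delta N^{-1}$, contradicting the minor-arc lower bound. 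Your argument can be salvaged along these lines, but not as written: the key point is that one never needs $M^{-1}\gamma$ to be rational, only that the limiting identity holds for a real vector to which the hypothesis applies.
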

\begin{proof}
Note that by the prime number theorem one has the trivial bound $\vert f(\alpha)\vert \ll N$. Assuming for contradiction that the lemma is false, there exists some $A$, some $B$, and some positive $\varepsilon$ such that, for infinitely many $N$, there exists a vector $\ba^{(N)} \in \mathbb{R}^m$ satisfying $\frac{\log ^B N}{N}\leqslant \Vert \ba^{(N)}\Vert_\infty\leqslant A$ and \[ \Big\vert f(\sum\limits_{i=1}^m \alpha_i^{(N)} \lambda_{ij})\Big\vert \geqslant \varepsilon N \qquad \forall j\in [d].\] Then by \cite[Lemma 1]{Pa02} it follows that for each such $N$, and for all $j\in [d]$, there exist integers $q_j^{(N)}$ and $a_j^{(N)}$ such that $1\leqslant q_j^{(N)}\ll_\varepsilon 1$ and \[ q_j^{(N)}(\sum\limits_{i=1}^m \alpha_i^{(N)} \lambda_{ij}) = a_j^{(N)} + \theta_j^{(N)}, \qquad \forall j\in [d],\] where $\vert \theta_j^{(N)}\vert \ll_\varepsilon N^{-1}$. We observe that, if $N$ is large enough, we have the bound $a_j^{(N)} \ll_{A,L,\varepsilon} 1$. Since $A$ and $\varepsilon$ are fixed we may (by taking a subsequence of $N$) assume that both $q_j^{(N)}$ and $a_j^{(N)}$ are independent of $N$. We call these integers $q_j$ and $a_j$ respectively. 

Now, suppose that $a_j = 0$ for all $j$. Then \[ \Big\vert\sum\limits_{i=1}^m \alpha_i^{(N)} \lambda_{ij}\Big\vert \ll_\varepsilon N^{-1}, \qquad \forall j\in[ d].\] Since the map $L^T:\mathbb{R}^m \longrightarrow \mathbb{R}^d$ is injective, there exists an inverse linear map \[M: (\im L^T, \Vert \cdot\Vert_\infty)\longrightarrow (\mathbb{R}^m,\Vert \cdot\Vert_\infty),\] which must necessarily be bounded. Hence $\Vert \ba^{(N)}\Vert_\infty \ll_\varepsilon N^{-1}$, which is a contradiction for large enough $N$, since $\Vert \ba^{(N)}\Vert_\infty\gg (\log^BN) / N$. Therefore there exists some $j$ for which $a_j\neq 0$. 

Finally, the sequence $\alpha_i^{(N)}$ is contained in a compact domain, and so it must have a convergent subsequence with limit $\alpha_i$, say. Taking this limit, we observe that \[ q_j \sum\limits_{i=1}^m \alpha_i \lambda_{ij} = a_j\] and so \[ \sum\limits_{i=1}^m (\alpha_i \prod\limits_{k=1}^d q_k) \lambda_{ij} = a_j \prod\limits_{\substack{k\leqslant d \\ k\neq j}} q_k.\]  Hence there exists a vector $\bb \in \mathbb{R}^m \setminus \{\mathbf{0} \}$ such that $L^T \bb \in \mathbb{Z}^d$, contradicting the assumptions of Theorem \ref{Theorem Parsell generalised}. This proves the lemma. 
\end{proof}

In the usual fashion, one may use Lemma \ref{parsell minor arc estimate} to deduce that there is some slowly growing function $T(N)$ such that $T(N)\rightarrow \infty$ as $N\rightarrow \infty$ such that \[\sup\limits_{\frac{\log ^B N}{N}\leqslant \Vert \ba\Vert_\infty\leqslant T(N)} \Big\vert\prod\limits_{j=1}^d f(\sum\limits_{i=1}^m \alpha_i \lambda_{ij})\Big\vert = o_{B,L}(N^{d}),\] the details being given in Section 3 of \cite{Pa02}. Defining the minor arc $\mathfrak{m}$ using this function $T(N)$, we have exactly \[\sup\limits_{\ba \in \mathfrak{m}} \Big\vert\prod\limits_{j=1}^d f(\sum\limits_{i=1}^m \alpha_i \lambda_{ij})\Big\vert = o_{B,L}(N^{d}).\] Therefore, picking some positive  parameter $\eta$ that is small enough such that taking $l=1-\eta$ satisfies the hypotheses of Lemma \ref{Lemma mean value},
\begin{align}
\Big\vert\int\limits_{\ba \in\mathfrak{m}} \Big(\prod\limits_{j=1}^d f(\sum\limits_{i=1}^m \alpha_i \lambda_{ij})\Big)\Big(\prod\limits_{i=1}^m \widehat{G_{\pm}}(\alpha_i)\Big) e(\ba \cdot \mathbf{v})  \, d\ba \Big\vert \nonumber
\end{align}
\noindent is
\begin{align}
 & \ll \sup\limits_{\ba \in \mathfrak{m}} \Big\vert\prod\limits_{j=1}^d f(\sum\limits_{i=1}^m \alpha_i \lambda_{ij})\Big\vert^\eta \int\limits_{\ba \in \mathfrak{m}} \Big\vert \prod\limits_{j=1}^{d} f(\sum\limits_{i=1}^m \alpha_i \lambda_{ij}) \Big\vert^{1-\eta} \, d\ba \nonumber \\
  &\ll o_{B,L}(N^{\eta d}) T(N)^{O(1)} N^{d(1-\eta) - m} \nonumber \\
  &\ll o_{B,L}(N^{d-m})
\end{align}
\noindent if $T(N)$ grows slowly enough. \\

\textbf{Major arc:} The analysis of the contribution from the major arc is routine, given the lemmas we established in Appendix \ref{section Easy calculations}. Let $c$ be a small positive constant whose exact value may change between each line. By the estimate (7) from \cite{Pa02} one has, for $\ba \in \mathfrak{M}$, \[ f(\sum\limits_{i=1}^m \alpha_i \lambda_{ij}) = v(\sum\limits_{i=1}^m \alpha_i \lambda_{ij}) + O(N \exp( - c\sqrt{\log N})),\] where \[v(\beta) = \int\limits_{0}^N e(\beta x) \, dx.\] Since the measure of $\mathfrak{M}$ is $O((\log ^{mB} N) N^{-m})$, we have 
\begin{align*}
&\int\limits_{\ba \in\mathfrak{M}} \Big(\prod\limits_{j=1}^d f(\sum\limits_{i=1}^m \alpha_i \lambda_{ij})\Big)\widehat{G_{\pm}(\ba)} e(\ba \cdot \mathbf{v})  \, d\ba \\=  &\int\limits_{\ba \in\mathfrak{M}} \Big(\prod\limits_{j=1}^d v(\sum\limits_{i=1}^m \alpha_i \lambda_{ij})\Big)\widehat{G_{\pm}(\ba)} e(\ba \cdot \mathbf{v}) \, d\ba + O_B(N^{d-m} \exp(-c\sqrt{\log N})).
\end{align*}

Since \[ \vert v(\beta) \vert \ll N(1+ \vert \beta\vert N)^{-1},\] we may extend the above integral to all of $\mathbb{R}^m$ at the cost of an error of $O(N^{d-m} \log^{-B} N)$. In other the words, the contribution from the major arcs is\[\int\limits_{\mathbf{x} \in [0,N]^d} \int\limits_{\ba \in \mathbb{R}^m} \Big(\prod\limits_{j=1}^d e(\sum\limits_{i=1}^m \alpha_i \lambda_{ij} x_j)\Big) \widehat{G_{\pm}(\ba)} e(\ba \cdot \mathbf{v}) \, d\ba\, d\mathbf{x} + O_B(N^{d-m} \log^{-B}N),\] which is \[\int\limits_{\mathbf{x} \in [0,N]^d} G_{\pm}(L\mathbf{x} + \mathbf{v}) \, d\mathbf{x} + O_B(N^{d-m} \log^{-B}N).\] Fixing a large value of $B$, since $\delta = o(1)$ this expression is equal to \[ \int\limits_{ \mathbf{x} \in [0,N]^d} 1_{[-\varepsilon,\varepsilon]^m}(L\mathbf{x} + \mathbf{v}) \, d\mathbf{x} + o_{L,\varepsilon}(N^{d-m}),\] by Lemma \ref{Lemma moving error terms}. This completes the theorem.  
\end{proof}

\bibliographystyle{plain}
\bibliography{Linearinequalitiesinprimes}
\end{document}